\newcommand{\ep}{\epsilon}
\newcommand{\mr}{\mathcal{R}}
\newtheorem{thm}{Theorem}[section]
\newtheorem{lmm}[thm]{Lemma}
\newtheorem{cor}[thm]{Corollary}
\newcommand{\bigavg}[1]{\biggl\langle #1 \biggr\rangle}
\newcommand{\cc}{\mathbb{C}}
\newcommand{\ee}{\mathbb{E}}
\newcommand{\ma}{\mathcal{A}}
\newcommand{\cp}{\mathcal{P}}
\newcommand{\cs}{\mathcal{S}}
\newcommand{\mx}{\mathcal{X}}
\newcommand{\ra}{\rightarrow}
\newcommand{\rr}{\mathbb{R}}
\newcommand{\smallavg}[1]{\langle #1 \rangle}
\newcommand{\tr}{\operatorname{Tr}}
\newcommand{\zz}{\mathbb{Z}}
\newcommand{\fpar}[2]{\frac{\partial #1}{\partial #2}}
\newcommand{\spar}[2]{\frac{\partial^2 #1}{\partial #2^2}}
\newcommand{\mpar}[3]{\frac{\partial^2 #1}{\partial #2 \partial #3}}
\newcommand{\fs}{\mathbb{S}}
\newcommand{\fd}{\mathbb{D}}
\newcommand{\fst}{\mathbb{M}}
\newcommand{\fex}{\mathbb{E}}
\numberwithin{equation}{section}
\begin{document}
\title[$SU(N)$ Wilson loop expectations]{Wilson loop expectations in $SU(N)$ lattice gauge theory}
\author{Jafar Jafarov}
\address{\newline Department of Mathematics \newline Stanford University\newline 450 Serra Mall, Building 380 \newline Stanford, CA 94305\newline \textup{\tt jafarov@stanford.edu}}

\begin{abstract}
This article gives a rigorous formulation and proof of the $1/N$ expansion for Wilson loop expectations in strongly coupled $SU(N)$ lattice gauge theory in any dimension. The coefficients of the expansion are represented as absolutely convergent sums over trajectories in a string theory on the lattice, establishing a kind of gauge-string duality. Moreover, it is shown that in large $N$ limit, calculations in $SU(N)$ lattice gauge theory with coupling strength $2\beta$ corresponds to those in $SO(N)$ lattice gauge theory with coupling strength $\beta$ when $|\beta|$ is sufficiently small.
\end{abstract}
\maketitle

\tableofcontents

\section{Introduction}

Introduced by Wilson~\cite{wilson74} in $1974$, lattice gauge theories are  discrete versions of quantum Yang--Mills theories. They were initially investigated to gain a better understanding of quantum Yang--Mills theories, particularly with the aim of explaining the quark confinement phenomenon. The problem of taking a suitable scaling limit of lattice gauge theories, as the lattice spacing approaches zero, is one of the millennium prize problems posed by the Clay Mathematics Institute and remains unsolved to this date.

A lattice gauge theory involves  a lattice, which is usually $\zz^d$ for some $d\ge 2$, a compact Lie group that is called the gauge group of the theory, a matrix representation of the gauge group, and parameter $\beta$ that is called the inverse coupling strength. The main objects of interest in lattice gauge theories are Wilson loop variables and their expectations. The mathematical definitions of lattice gauge theories and Wilson loop variables are given in Section \ref{gauge}. 

When the gauge group is $SU(N)$, $U(N)$, $SO(N)$, $O(N)$ or $Sp(N)$ for some large $N$, the lattice gauge theory is called a ``large $N$ theory''.  The $1/N$ asymptotic expansion of Wilson loop expectations is an important theoretical tool in large $N$ lattice gauge theories. The earliest example of such an expansion was presented by 't Hooft \cite{thooft74} in $1974$. 't Hooft's perturbative expansion of weakly coupled large $N$ theories uses Feynman diagrams to represent the coefficients of the expansion. In a series of two recent papers, Chatterjee~\cite{chatterjee15} and Chatterjee and Jafarov~\cite{chatterjeejafarov16} gave a rigorous proof of a non-perturbative $1/N$ asymptotic expansion of the expectations of Wilson loop variables in $SO(N)$ lattice gauge theory at strong coupling. In these papers the coefficients of the expansion are represented as absolutely convergent sums over trajectories in a certain string theory on the lattice.

The main goal of this paper is to present the $1/N$ expansion of Wilson loop expectations in strongly coupled $SU(N)$ lattice gauge theory. As in \cite{chatterjee15} and \cite{chatterjeejafarov16}, the coefficients in the expansion will be represented as absolutely convergent sums over string trajectories in a kind of string theory on the lattice.

In the physics literature the connections between various lattice gauge theories in large $N$ limit was studied by Lovelace \cite{lovelace82}, who argued that $O(N)$, $U(N)$ and $Sp(N)$ lattice gauge theories have the same Wilson loop expectations in large $N$ limit. The relation between $SO(N)$ and $SU(N)$ gauge theories was also analyzed both theoretically and numerically in a number of papers~\cite{andreasetal15,blakecherman12,uy06}. In this manuscript we will rigorously prove that indeed in the large $N$ limit, $SU(N)$ lattice gauge theory with inverse coupling strength $2\beta$ corresponds to $SO(N)$ lattice gauge theory with inverse coupling strength $\beta$, provided that $|\beta|$ is sufficiently small. 

Although the general approach of this paper is similar to those in \cite{chatterjee15} and \cite{chatterjeejafarov16}, it includes many new results and equations. First of all, the Makeenko--Migdal equations for the $SU(N)$ group, computed here, are different than the equations for $SO(N)$ obtained in \cite{chatterjee15}. Secondly, the loop operations for lattice string theory that shows up in this paper are different in nature than the string theory that arises in the study of $SO(N)$ lattice gauge theory in \cite{chatterjee15, chatterjeejafarov16}. While an operation called  ``twisting'' from the earlier papers does not appear here, a new operation called ``expansion'' comes into play. In other words, whereas the general plan of attack is the same in this paper as in the earlier ones, the actual calculations are quite different and in many cases more complex. Lastly, this paper gives the first rigorous proof of the conjectured correspondence between $SO(N)$ and $SU(N)$ lattice gauge theories in the large $N$ limit.

The rest of the paper is organized as follows. Sections \ref{string} and~ \ref{gauge} introduce the basic definitions  and notations. The main results are presented in Section \ref{main}, together with some additional references to the literature (for extensive references, see \cite{chatterjee15, chatterjeejafarov16}). Section \ref{prelim} contains some preliminary lemmas. The Makeenko--Migdal equations for $SU(N)$ group are derived in Sections \ref{stein} and \ref{sd}. The rest of the paper is devoted to the proofs of the main results.

\section{A string theory on the lattice}\label{string}
Let $d\geq 2$ be an integer and $\zz^d$ be the $d$-dimensional integer lattice. Let $E$ be the set of all directed nearest neighbor edges of $\zz^d$. For any directed edge $e\in E$ let $u(e)$ and $v(e)$ be its starting and ending points, respectively, and represent this edge as $(u(e), v(e))$. If $e=(u, v)$, then denote the edge $(v, u)$ by $e^{-1}$. An edge $e$ is positively oriented if $u(e)$ is lexicographically smaller than $v(e)$ and negatively oriented otherwise. Let $E^+$ and $E^-$ respectively be the sets of all positively edges and the set of all negatively oriented edges in $\zz^d$. It is easy to see that if $e\in E^+$, then $e^{-1}\in E^-$ and vice versa. A path $\rho$ in the lattice $\zz^d$ is a sequence of edges $\rho=e_1e_2\cdots e_n$ such that $u(e_{i+1})=v(e_i)$ for all $1\leq i\leq n-1$. Such a path is called closed if $v(e_n)=u(e_1)$. We will say $\rho$ has length $n$ and denote this by $|\rho|=n$. The null path, $\emptyset$, is a path with no edges. By default the null path is assumed to be closed.

A plaquette $p$ is a closed path $e_1e_2e_3e_4$ of length four such that $e_i\neq e_j^{-1}$ for all $i$, $j$. Let $\cp(e)$ be the set of all plaquettes passing through $e$. If $p=e_1e_2e_3e_4$ is a plaquette, then denote $e_4^{-1}e_3^{-1}e_2^{-1}e_1^{-1}$ by $p^{-1}$. The plaquette $p=e_1e_2e_3e_4$ is called positively oriented if $u(e_1)$ is lexicographically the smallest and $u(e_2)$ is the second smallest among all starting points of its vertices. Otherwise $p$ is called negatively oriented. Let $\cp^+$ denote the set of all positively oriented plaquettes in $\zz^d$. It is easy to see that for any plaquette $p$ either $p$ or $p^{-1}$ belongs to $\cp^+$. Denote the set of all positively oriented plaquettes passing through edge $e$ or $e^{-1}$ by $\cp^+(e)$.

If $\rho=e_1e_2\cdots e_n$ is a closed path, then a path $\rho'$ is said to be cyclically equivalent to $\rho$ if
\[
\rho'=e_ie_{i+1}\cdots e_{n}e_1e_2\cdots e_{i-1}
\]
for some $1\leq i\leq n$. Each equivalence class will be called a cycle. Let the length of a cycle $l$ be the length of any path in this equivalence class and denote it by $|l|$. The equivalence class of the null path is called a null cycle and it has zero length.

We say a path $\rho=e_1e_2\cdots e_n$ has a backtrack at location $i$ if $e_{i+1}=e_i^{-1}$, where $e_{n+1}=e_1$. In this case the path 
$$\rho'=e_1e_2\cdots e_{i-1}e_{i+2}\cdots e_n$$
is said to be obtained from $\rho$ by a backtrack erasure at location $i$. It can be easily checked that $\rho'$ is also a path, and it is closed if $\rho$ is closed. A path without any backtrack is called a non-backtracking path. A loop is the cyclical equivalence class of a closed non-backtracking path. The equivalence class of the null path is called a null loop which is also denoted by $\emptyset$.

The path obtained by successfully deleting backtracks of a closed path $\rho$ until no backtrack is left is called a non-backtracking core of $\rho$ and it is denoted by $[\rho]$. It was proven in \cite{chatterjee15} that a path obtained by deleting backtracks of a closed path until there is no backtrack left does not depend on the order of backtrack erasures. Therefore, the non-backtracking core of a closed path is uniquely defined. Given a cycle $l$ define its non-backtracking core, $[l]$, to be the equivalence class of the non-backtracking core of any element of $l$.  Figure \ref{corefig} shows a closed path and its non-backtracking core.

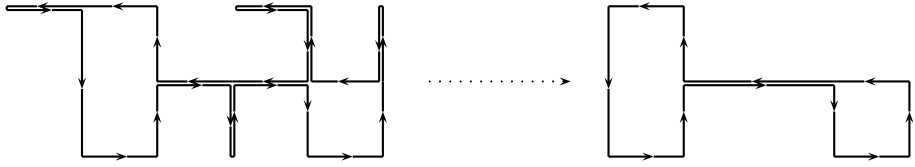
\begin{figure}[t]
\begin{pspicture}(-0.5,0.5)(12.5,3.55)
\psset{xunit=1cm,yunit=1cm}
\psline{->}(1,1)(1.6,1)
\psline{-}(1.6,1)(2,1)
\psline{->}(2,1)(2,1.6)
\psline{-}(2,1.6)(2,1.95)
\psline{->}(2,1.95)(2.6,1.95)
\psline{-}(2.6, 1.95)(2.975, 1.95)
\psline{->}(2.975, 1.95)(2.975, 1.4)
\psline{-}(2.975, 1.4)(2.975, 1)
\psline{-}(2.975, 1)(3.025, 1)
\psline{->}(3.025, 1)(3.025,1.6)
\psline{-}(3.025, 1.6)(3.025, 1.95)
\psline{->}(3.025, 1.95)(3.6,1.95)
\psline{-}(3.6, 1.95)(4, 1.95)
\psline{->}(4, 1.95)(4,1.6)
\psline{-}(4, 1.6)(4, 1)
\psline{->}(4, 1)(4.6,1)
\psline{-}(4.6, 1)(5, 1)
\psline{->}(5, 1)(5,1.6)
\psline{-}(5, 1.6)(5, 2)
\psline{->}(5, 2)(5,2.6)
\psline{-}(5, 2.6)(5, 3)
\psline{-}(5, 3)(4.95, 3)
\psline{->}(4.95, 3)(4.95,2.4)
\psline{-}(4.95, 2.4)(4.95, 2)
\psline{->}(4.95, 2)(4.4,2)
\psline{-}(4.4, 2)(4.05, 2)
\psline{->}(4.05, 2)(4.05, 2.6)
\psline{-}(4.05, 2.6)(4.05, 3)
\psline{->}(4.05, 3)(3.4, 3)
\psline{-}(3.4, 3)(3.05, 3)
\psline{-}(3.05, 3)(3.05, 2.95)
\psline{->}(3.05, 2.95)(3.6, 2.95)
\psline{-}(3.6, 2.95)(4, 2.95)
\psline{->}(4, 2.95)(4, 2.4)
\psline{-}(4, 2.4)(4, 2)
\psline{->}(4, 2)(3.4, 2)
\psline{-}(3.4, 2)(3, 2)
\psline{->}(3, 2)(2.4, 2)
\psline{-}(2.4, 2)(2, 2)
\psline{->}(2, 2)(2, 2.6)
\psline{-}(2, 2.6)(2, 3)
\psline{->}(2, 3)(1.4, 3)
\psline{-}(1.4, 3)(1, 3)
\psline{->}(1, 3)(0.4, 3)
\psline{-}(0.4, 3)(0, 3)
\psline{-}(0, 3)(0, 2.95)
\psline{->}(0, 2.95)(0.6, 2.95)
\psline{-}(0.6, 2.95)(1, 2.95)
\psline{->}(1, 2.95)(1, 1.9)
\psline{-}(1, 1.9)(1, 1)

\psline[linestyle = dotted]{->}(5.5, 2)(7.5, 2)

\psline{->}(8,1)(8.6,1)
\psline{-}(8.6,1)(9,1)
\psline{->}(9,1)(9,1.6)
\psline{-}(9,1.6)(9,1.95)
\psline{->}(9,1.95)(10.1,1.95)
\psline{-}(10.1, 1.95)(11, 1.95)
\psline{->}(11, 1.95)(11, 1.6)
\psline{-}(11, 1.6)(11, 1)
\psline{->}(11, 1)(11.6, 1)
\psline{-}(11.6, 1)(12, 1)
\psline{->}(12, 1)(12,1.6)
\psline{-}(12, 1.6)(12, 2)
\psline{->}(12, 2)(11.4,2)
\psline{-}(11.4, 2)(11, 2)
\psline{->}(11, 2)(9.9, 2)
\psline{-}(9.9, 2)(9, 2)
\psline{->}(9, 2)(9, 2.6)
\psline{-}(9, 2.6)(9, 3)
\psline{->}(9, 3)(8.4, 3)
\psline{-}(8.4, 3)(8, 3)
\psline{->}(8, 3)(8, 1.9)
\psline{-}(8, 1.9)(8, 1)
\end{pspicture}
\caption{A closed path and its nonbactracking core obtained by backtrack erasures.}
\label{corefig}
\end{figure}

Two finite sequences of loops $s=(l_1, l_2, \ldots, l_n)$ and $s'=(l_1', l_2', \ldots, l_m')$ are said to be equivalent if one can be obtained from another by inserting or deleting the null loop at various locations. The set of all equivalence classes is denoted by $\cs$, and an element of this set is called a loop sequence. Given a sequence of loops $s$, its equivalence class in $\cs$ has a representative without any null loop component. This representative is called the minimal representation of $s$. If $(l_1, l_2,\ldots, l_n)$ is the minimal representation of $s$ define its length as $$|s|:=|l_1|+|l_2|+\cdots+|l_n|\, ,$$
size as
$$\#s:=n\, ,$$
and index as
$$\iota(s):=|s|-\#s\, .$$
For a loop $l=e_1e_2\cdots e_n$ define the location of the edge $e_k$ in $l$ to be $k$. This is well defined if there is a fixed representation of each loop. This can be done by ordering the edges of $\zz^d$ in some predefined way, and then writing those edges in $l$ according to this order.

For any positively oriented edge $e$ let $A_r(e)$ be the set of locations in $l_r$ where $e$ occurs. Similarly, let $B_r(e)$ be the set of locations in $l_r$ where $e^{-1}$ occurs. Define $C_r(e):=A_r(e) \cup B_r(e)$. Note that one or more of the sets $A_r(e)$, $B_r(e)$, $C_r(e)$ can be empty. Let $m_r(e)$ be the size of the set $C_r(e)$ and $t_r(e)=|A_r(e)|-|B_r(e)|$. Define 
$$t(e):=t_1(e)+\cdots+t_n(e)\, .$$ 
In words, $t(e)$ denotes the difference between the number occurrences of $e$ and the number of occurrence of $e^{-1}$ in the loop components of $s$. We will simply write $A_r$, $B_r$, $C_r$, $m_r$, $t_r$ and $t$ whenever $e$ is clear from the context. Finally, let
$$\ell(s)=\sum_{e\in E^+} t(e)^2\, .$$
Note that since there are only finitely many edges in the loop components of $s$,  all but finitely many terms in above sum are zero.

Loop operations are ways of obtaining new loops from a given set of one or more loops. There are five types of operations in this paper, called merger, splitting, deformation, expansion and inaction, and the first four each have two subtypes, called positive and negative. Four of these types correspond to loop operations defined in \cite{chatterjee15} and \cite{chatterjeejafarov16}.

\textbf{Merger: } Let $l$ and $l'$ be two non-null loops. If an edge $e$ appears at location $x$ in $l$ and at location $y$ in $l'$, then write $l=aeb$ and $l'=ced$ where $a$, $b$, $c$ and $d$ are paths. Define positive merger of $l$ and $l'$ at locations $x$ and $y$ as
\[
l\oplus_{x,y} l' := [a e dceb]\,,
\]
where $[aedceb]$ is the nonbacktracking core of the cycle $aedceb$, and define the negative merger of $l$ and $l'$ at locations $x$ and $y$ as 
\[
l\ominus_{x,y} l' := [ac^{-1}d^{-1}b]\, .
\]
If an edge $e$ appears at location $x$ in $l$, and $e^{-1}$ appears at location $y$ in $l'$, then write $l=aeb$ and $l'=ce^{-1}d$. Define positive merger of $l$ and $l'$ at locations $x$ and $y$ as
 
\[
l\oplus_{x,y} l' := [ae c^{-1} d^{-1} e b]\, ,
\]
and define the negative merger of $l$ and $l'$ at locations $x$ and $y$ as
\[
l\ominus_{x,y} l' := [adcb]\, ,
\]
Positive merger is illustrated in Figure \ref{posstfig} and negative merger is illustrated in Figure \ref{negstfig}.

\begin{figure}[t]
\begin{pspicture}(0,0)(12,2.5)
\psset{xunit=1cm,yunit=1cm}
\psline{->}(0,0)(1.6,0)
\psline{-}(1.6,0)(3,0)
\psline{->}(3,0)(3,.6)
\psline{-}(3,.6)(3,1)
\psline{->}(3,1)(1.4,1)
\psline{-}(1.4,1)(0,1)
\psline{->}(0,1)(0,.4)
\psline{-}(0,.4)(0,0)

\psline{->}(4,0)(5.1,0)
\psline{-}(5.1,0)(6,0)
\psline{->}(6,0)(6,.6)
\psline{-}(6,.6)(6,1)
\psline{->}(6,1)(5.4,1)
\psline{-}(5.4, 1)(5,1)
\psline{->}(5,1)(5,1.6)
\psline{-}(5,1.6)(5,2)
\psline{->}(5,2)(4.4,2)
\psline{-}(4.4, 2)(4,2)
\psline{->}(4,2)(4,.9)
\psline{-}(4, .9)(4,0)

\psline[linestyle = dotted]{->}(6.5, .5)(8.5, .5)

\psline{->}(9,0)(9.6,0)
\psline{-}(9.6,0)(10,0)

\psline{->}(10,0)(11.1,0)
\psline{-}(11.1,0)(12,0)

\psline{->}(12,0)(12,.6)
\psline{-}(12,.6)(12,1)
\psline{->}(12,1)(11.4,1)
\psline{-}(11.4,1)(11,1)
\psline{->}(11,1)(11,1.6)
\psline{-}(11,1.6)(11,2)
\psline{->}(11,2)(10.4,2)
\psline{-}(10.4,2)(10,2)
\psline{->}(10,2)(10,.9)
\psline{-}(10,.9)(10,.05)
\psline{->}(10,.05)(11.1,.05)
\psline{-}(11.1,.05)(11.95,.05)
\psline{->}(11.95,.05)(11.95,.6)
\psline{-}(11.95,.6)(11.95,.95)
\psline{->}(11.95,0.95)(11.4,0.95)
\psline{-}(11.4,0.95)(11,0.95)
\psline{->}(11,0.95)(10.4,0.95)
\psline{-}(10.4,0.95)(10.06,0.95)
\psline{->}(9.94,0.95)(9.4,.95)
\psline{-}(9.4,.95)(9,0.95)
\psline{->}(9,0.95)(9,.4)
\psline{-}(9,.4)(9,0)

\rput(3.5,.5){$\oplus$}
\rput(2.8,.5){$e$}
\rput(5.8,.5){$e$}
\rput(11.75,.5){$e$}
\end{pspicture}
\caption{Positive merger.}
\label{posstfig} 
\end{figure}
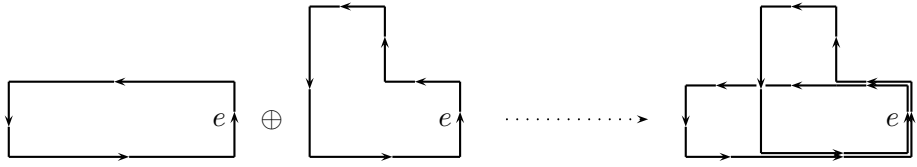

\begin{figure}[t]
\begin{pspicture}(0,0)(11.5,2.5)
\psset{xunit=1cm,yunit=1cm}
\psline{->}(0,0)(1.6,0)
\psline{-}(1.6,0)(3,0)
\psline{->}(3,0)(3,.6)
\psline{-}(3,.6)(3,1)
\psline{->}(3,1)(1.4,1)
\psline{-}(1.4,1)(0,1)
\psline{->}(0,1)(0,.4)
\psline{-}(0,.4)(0,0)

\psline{->}(6,0)(4.9,0)
\psline{-}(4.9,0)(4,0)
\psline{->}(6,1)(6,.4)
\psline{-}(6,.4)(6,0)
\psline{->}(5,1)(5.6,1)
\psline{-}(5.6, 1)(6,1)
\psline{->}(5,2)(5,1.4)
\psline{-}(5,1.4)(5,1)
\psline{->}(4,2)(4.6,2)
\psline{-}(4.6, 2)(5,2)
\psline{->}(4,0)(4,1.1)
\psline{-}(4,1.1)(4,2)

\psline[linestyle = dotted]{->}(7, .5)(9, .5)

\psline{->}(9.5,0)(10.1,0)
\psline{-}(10.1,0)(10.5,0)
\psline{->}(10.5,0)(10.5,1.1)
\psline{-}(10.5,1.1)(10.5,2)
\psline{->}(10.5,2)(11.1,2)
\psline{-}(11.1,2)(11.5,2)
\psline{->}(11.5,2)(11.5,1.4)
\psline{-}(11.5,1.4)(11.5,1)
\psline{->}(11.5,1)(10.9,1)
\psline{-}(10.9,1)(10.6,1)
\psline{->}(10.4,1)(9.9,1)
\psline{-}(9.9,1)(9.5,1)
\psline{->}(9.5,1)(9.5,.4)
\psline{-}(9.5,.4)(9.5,0)

\rput(3.5,.5){$\ominus$}
\rput(2.8,.5){$e$}
\rput(6.4,.5){$e^{-1}$}
\end{pspicture}
\caption{Negative merger.}
\label{negstfig} 
\end{figure}
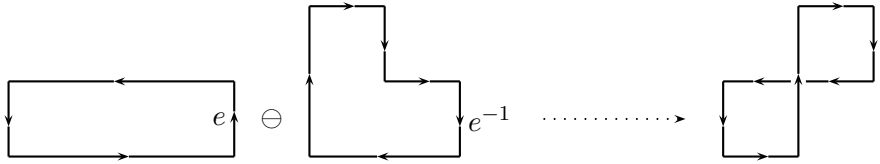

\textbf{Deformation: }Let $l$ be a non-null loop sequence and let $p$ be a plaquette. Suppose that $l$ contains an edge $e$ at location $x$, and $p$ passes through $e$ or $e^{-1}$. Note that since $p$ can pass through $e$ or its inverse $e^{-1}$ only once, the location $y$ of this edge in $p$ is unique. The positive deformation of $l$ at location $x$ by $p$ is defined to be the positive merger of $l$ and $p$ at locations $x$ and $y$, and it is denoted by $l\oplus_{x}p$. Similarly, the negative deformation of $l$ at location $x$ by $p$ is defined to be the negative merger of $l$ and $p$ at locations $x$ and $y$, and it is denoted by $l\ominus_{x}p$. Figure \ref{posdefig} illustrates a positive deformation and Figure \ref{negdefig} illustrates a negative deformation.

\begin{figure}[t]
\begin{pspicture}(0,0)(11,2.5)
\psset{xunit=1cm,yunit=1cm}
\psline{->}(0,0)(1.1,0)
\psline{-}(1.1,0)(2,0)
\psline{->}(2,0)(2,.6)
\psline{-}(2,.6)(2,1)
\psline{->}(2,1)(1.4,1)
\psline{-}(1.4,1)(1,1)
\psline{->}(1,1)(1,1.6)
\psline{-}(1,1.6)(1,2)
\psline{->}(1,2)(.4,2)
\psline{-}(.4,2)(0,2)
\psline{->}(0,2)(0,.9)
\psline{-}(0,0.9)(0,0)

\psline{->}(3.6,0)(4.2,0)
\psline{-}(4.2,0)(4.6,0)
\psline{->}(4.6,0)(4.6,.6)
\psline{-}(4.6,.6)(4.6,1)
\psline{->}(4.6,1)(4,1)
\psline{-}(4,1)(3.6,1)
\psline{->}(3.6,1)(3.6,.4)
\psline{-}(3.6,.4)(3.6,0)

\psline[linestyle = dotted]{->}(5.1, .5)(7.1, .5)

\psline{->}(7.6,0)(8.7,0)
\psline{-}(8.7,0)(9.6,0)
\psline{->}(9.6,0)(9.6,.6)
\psline{-}(9.6,.6)(9.6,0.95)
\psline[linearc=0.07]{-}(9.6,0.95)(9.6,1)(9.65,1)
\psline{->}(9.65,1)(10.2,1)
\psline{-}(10.2,1)(10.6,1)
\psline{->}(10.6,1)(10.6,.4)
\psline{-}(10.6,.4)(10.6,0)
\psline{->}(10.6,0)(10,0)
\psline{-}(10,0)(9.65,0)
\psline{->}(9.65,0)(9.65,.6)
\psline{-}(9.65,.6)(9.65,0.95)
\psline[linearc=0.07]{-}(9.65,0.95)(9.65,1)(9.6,1)
\psline{->}(9.6,1)(9,1)
\psline{-}(9,1)(8.6,1)
\psline{->}(8.6,1)(8.6,1.6)
\psline{-}(8.6,1.6)(8.6,2)
\psline{->}(8.6,2)(8,2)
\psline{-}(8,2)(7.6,2)
\psline{->}(7.6,2)(7.6,.9)
\psline{-}(7.6,0.9)(7.6,0)

\rput(1.8, 0.5){$e$}
\rput(9.4, 0.5){$e$}
\rput(3.2, 0.5){$e^{-1}$}
\rput(4.1, .5){$p$}
\rput(2.5, .5){$\oplus$}
\end{pspicture}
\caption{Positive deformation.}
\label{posdefig}
\end{figure}
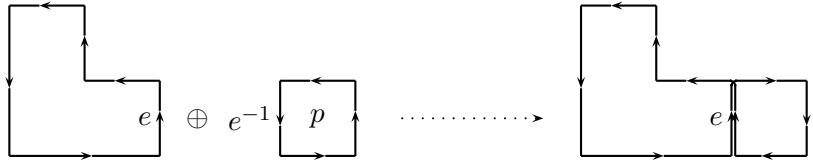

\begin{figure}[t]
\begin{pspicture}(0,0)(11,2.5)
\psset{xunit=1cm,yunit=1cm}
\psline{->}(0,0)(1.1,0)
\psline{-}(1.1,0)(2,0)
\psline{->}(2,0)(2,.6)
\psline{-}(2,.6)(2,1)
\psline{->}(2,1)(1.4,1)
\psline{-}(1.4,1)(1,1)
\psline{->}(1,1)(1,1.6)
\psline{-}(1,1.6)(1,2)
\psline{->}(1,2)(.4,2)
\psline{-}(.4,2)(0,2)
\psline{->}(0,2)(0,.9)
\psline{-}(0,0.9)(0,0)

\psline{->}(3.2,0)(3.2,.6)
\psline{-}(3.2,.6)(3.2,1)
\psline{->}(3.2,1)(3.8,1)
\psline{-}(3.8,1)(4.2,1)
\psline{->}(4.2,1)(4.2,.4)
\psline{-}(4.2,.4)(4.2,0)
\psline{->}(4.2,0)(3.6,0)
\psline{-}(3.6,0)(3.2,0)

\psline[linestyle = dotted]{->}(4.7, .5)(6.7, .5)

\psline{->}(7.2,0)(8.8,0)
\psline{-}(8.8,0)(10.2,0)
\psline{->}(10.2,0)(10.2,.6)
\psline{-}(10.2,.6)(10.2,1)
\psline{->}(10.2,1)(9.1,1)
\psline{-}(9.1,1)(8.2,1)
\psline{->}(8.2,1)(8.2,1.6)
\psline{-}(8.2,1.6)(8.2,2)
\psline{->}(8.2,2)(7.6,2)
\psline{-}(7.6,2)(7.2,2)
\psline{->}(7.2,2)(7.2,.9)
\psline{-}(7.2,0.9)(7.2,0)

\rput(1.8, 0.5){$e$}
\rput(3, 0.5){$e$}
\rput(3.7, .5){$p$}
\rput(2.5, .5){$\ominus$}

\end{pspicture}
\caption{Negative deformation.}
\label{negdefig}
\end{figure}
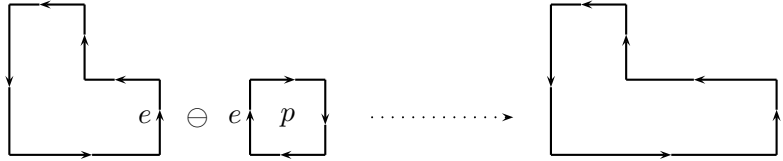

\textbf{Splitting:} Let $l$ be a non-null loop sequence. If $l$ contains an edge $e$ at two distinct locations $x$ and $y$, then write $l=aebced$. Define the positive splitting of $l$ at $x$ and $y$ to be the pair of loops 
\[
\times^1_{x,y} l := [aec]\, , \ \ \times^2_{x,y} l := [be]\, .
\]
If $l$ contains $e$ at location $x$ and $e^{-1}$ at location $y$, write $l = aeb e^{-1} c$ and define the negative splitting of $l$ at $x$ and $y$ to be the pair of loops
\[
\times^1_{x,y} l := [ac]\, , \ \ \times^2_{x,y} l := [b]\, .
\]
Figure \ref{posspfig} illustrates a positive splitting and Figure \ref{negspfig} illustrates a negative splitting.

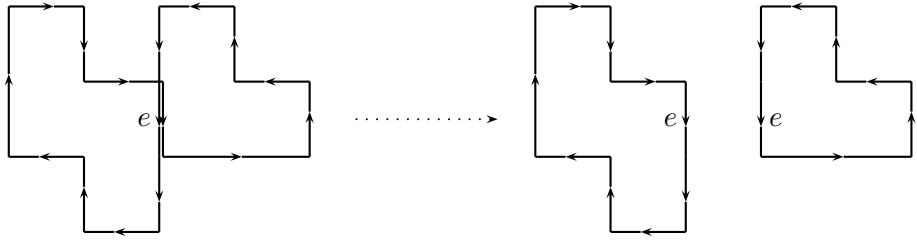
\begin{figure}[t]
\begin{pspicture}(0,0)(12,3.5)
\psset{xunit=1cm,yunit=1cm}
\psline{->}(1, 0)(1,.6)
\psline{-}(1,.6)(1,1)
\psline{->}(1,1)(.4,1)
\psline{-}(.4,1)(0,1)
\psline{->}(0,1)(0,2.1)
\psline{-}(0,2.1)(0,3)
\psline{->}(0,3)(.6,3)
\psline{-}(.6,3)(1,3)
\psline{->}(1,3)(1,2.4)
\psline{-}(1,2.4)(1,2)
\psline{->}(1,2)(1.6,2)
\psline{-}(1.6,2)(2.05,2)
\psline{->}(2.05, 2)(2.05, 1.4)
\psline{-}(2.05,1.4)(2.05,1)
\psline{->}(2.05,1)(3.1,1)
\psline{-}(3.1,1)(4,1)
\psline{->}(4,1)(4,1.6)
\psline{-}(4,1.6)(4,2)
\psline{->}(4,2)(3.4,2)
\psline{-}(3.4,2)(3,2)
\psline{->}(3,2)(3,2.6)
\psline{-}(3,2.6)(3,3)
\psline{->}(3,3)(2.4,3)
\psline{-}(2.4, 3)(2,3)
\psline{->}(2,3)(2,2.4)
\psline{-}(2,2.4)(2,2)
\psline{->}(2,2)(2,1.4)
\psline{-}(2,1.4)(2,1)
\psline{->}(2,1)(2,.4)
\psline{-}(2,.4)(2,0)
\psline{->}(2,0)(1.4,0)
\psline{-}(1.4,0)(1,0)

\psline[linestyle = dotted]{->}(4.5, 1.5)(6.5, 1.5)

\psline{->}(8, 0)(8,.6)
\psline{-}(8,.6)(8,1)
\psline{->}(8,1)(7.4,1)
\psline{-}(7.4,1)(7,1)
\psline{->}(7,1)(7,2.1)
\psline{-}(7,2.1)(7,3)
\psline{->}(7,3)(7.6,3)
\psline{-}(7.6,3)(8,3)
\psline{->}(8,3)(8,2.4)
\psline{-}(8,2.4)(8,2)
\psline{->}(8,2)(8.6,2)
\psline{-}(8.6,2)(9,2)
\psline{->}(9,2)(9,1.4)
\psline{-}(9,1.4)(9,1)
\psline{->}(9,1)(9,.4)
\psline{-}(9,.4)(9,0)
\psline{->}(9,0)(8.4,0)
\psline{-}(8.4,0)(8,0)

\psline{->}(10, 2)(10, 1.4)
\psline{-}(10,1.4)(10,1)
\psline{->}(10,1)(11.1,1)
\psline{-}(11.1,1)(12,1)
\psline{->}(12,1)(12,1.6)
\psline{-}(12,1.6)(12,2)
\psline{->}(12,2)(11.4,2)
\psline{-}(11.4,2)(11,2)
\psline{->}(11,2)(11,2.6)
\psline{-}(11,2.6)(11,3)
\psline{->}(11,3)(10.4,3)
\psline{-}(10.4,3)(10,3)
\psline{->}(10,3)(10,2.4)
\psline{-}(10,2.4)(10,2)

\rput(1.8,1.5){$e$}
\rput(8.8,1.5){$e$}
\rput(10.2,1.5){$e$}
\end{pspicture}
\caption{Positive splitting.}
\label{posspfig}
\end{figure}

\begin{figure}[t]
\begin{pspicture}(0,0)(11,3.5)
\psset{xunit=1cm,yunit=1cm}
\psline{->}(0,0)(0,1.1)
\psline{-}(0,1.1)(0,2)
\psline{->}(0,2)(.6,2)
\psline{-}(.6,2)(1,2)
\psline{->}(1,2)(1,1.4)
\psline{-}(1,1.4)(1,1)
\psline{->}(1,1)(1.6,1)
\psline{-}(1.6,1)(1.95,1)
\psline{->}(1.95,1)(1.95,1.6)
\psline{-}(1.95,1.6)(1.95, 2)
\psline{->}(1.95, 2)(2.6,2)
\psline{-}(2.6,2)(3,2)
\psline{->}(3, 2)(3,.9)
\psline{-}(3,.9)(3,0)
\psline{->}(3,0)(3.6,0)
\psline{-}(3.6,0)(4,0)
\psline{->}(4,0)(4,1.6)
\psline{-}(4,1.6)(4,3)
\psline{->}(4,3)(2.9, 3)
\psline{-}(2.9,3)(2,3)
\psline{->}(2,3)(2,2.4)
\psline{-}(2,2.4)(2,2)
\psline{->}(2,2)(2,1.4)
\psline{-}(2,1.4)(2,1)
\psline{->}(2,1)(2,.4)
\psline{-}(2,.4)(2,0)
\psline{->}(2,0)(.9,0)
\psline{-}(.9,0)(0,0)

\psline[linestyle = dotted]{->}(4.5, 1.5)(6.5, 1.5)

\psline{->}(7,0)(7,1.1)
\psline{-}(7,1.1)(7,2)
\psline{->}(7,2)(7.6,2)
\psline{-}(7.6,2)(8,2)
\psline{->}(8,2)(8,1.4)
\psline{-}(8,1.4)(8,1)
\psline{->}(8,1)(8.6,1)
\psline{-}(8.6,1)(9,1)
\psline{->}(9,1)(9,.4)
\psline{-}(9,.4)(9,0)
\psline{->}(9,0)(7.9,0)
\psline{-}(7.9,0)(7,0)

\psline{->}(9, 2)(9.6,2)
\psline{-}(9.6,2)(10,2)
\psline{->}(10, 2)(10,.9)
\psline{-}(10,.9)(10,0)
\psline{->}(10,0)(10.6,0)
\psline{-}(10.6,0)(11,0)
\psline{->}(11,0)(11,1.6)
\psline{-}(11,1.6)(11,3)
\psline{->}(11,3)(9.9, 3)
\psline{-}(9.9,3)(9,3)
\psline{->}(9,3)(9,2.4)
\psline{-}(9,2.4)(9,2)

\rput(1.75,1.5){$e$}
\rput(2.4,1.58){$e^{-1}$}
\end{pspicture}
\caption{Negative splitting.}
\label{negspfig}
\end{figure}
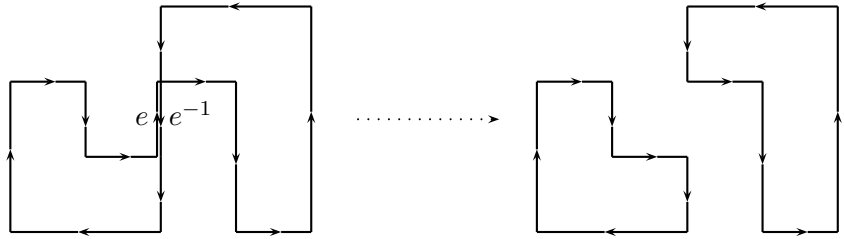

\textbf{Expansion:} Let $l$ be a non-null loop which contains an edge $e$ at location $x$. A positive expansion of $l$ at location $x$ by a plaquette $p$ passing through $e^{-1}$ replaces $l$ with the pair of loops $(l,p)$. A negative expansion of $l$ at location $x$ by a plaquette $p$ passing through $e$ also replaces $l$ with the pair of loops $(l,p)$. Although positive and negative deformations essentially yield the same result, the names are different to indicate whether the loop $l$ and the plaquette $p$ share the same edge $e$ or they have opposite edges $e$ and $e^{-1}$. Figure \ref{posexfig} illustrates a  positive expansion, and Figure \ref{negexfig} illustrates a negative expansion.

\begin{figure}[t]
\begin{pspicture}(0,0)(8,1.5)
\psset{xunit=1cm,yunit=1cm}
\psline{->}(0, 0)(0,.6)
\psline{-}(0,.6)(0,1)
\psline{->}(0,1)(1.1,1)
\psline{-}(1.1,1)(2,1)
\psline{->}(2,1)(2,.4)
\psline{-}(2, .4)(2,0)
\psline{->}(2,0)(.9,0)
\psline{-}(.9,0)(0,0)

\psline[linestyle = dotted]{->}(2.5, .5)(4.5, .5)

\psline{->}(5,0)(5,.6)
\psline{-}(5,.6)(5,1)
\psline{->}(5,1)(6.1,1)
\psline{-}(6.1,1)(7,1)
\psline{->}(7,1)(7,.4)
\psline{-}(7,.4)(7,0)
\psline{->}(7,0)(5.9,0)
\psline{-}(5.9,0)(5,0)

\psline{->}(7.05,0)(7.05,.6)
\psline{-}(7.05,.6)(7.05,1)
\psline{->}(7.05,1)(7.6,1)
\psline{-}(7.6,1)(8,1)
\psline{->}(8,1)(8,.4)
\psline{-}(8,.4)(8,0)
\psline{->}(8,0)(7.4,0)
\psline{-}(7.4,0)(7.05,0)

\rput(1.8,.5){$e$}
\rput(6.8,.5){$e$}
\rput(7.5,.5){$p$}
\end{pspicture}
\caption{Positive expansion.}
\label{posexfig}
\end{figure}
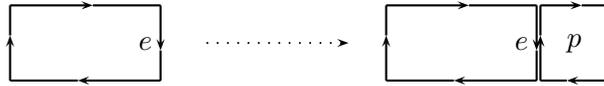

\begin{figure}[t]
\begin{pspicture}(0,0)(8,1.5)
\psset{xunit=1cm,yunit=1cm}
\psline{->}(0, 0)(0,.6)
\psline{-}(0,.6)(0,1)
\psline{->}(0,1)(1.1,1)
\psline{-}(1.1,1)(2,1)
\psline{->}(2,1)(2,.4)
\psline{-}(2, .4)(2,0)
\psline{->}(2,0)(.9,0)
\psline{-}(.9,0)(0,0)

\psline[linestyle = dotted]{->}(2.5, .5)(4.5, .5)

\psline{->}(5,0)(5,.6)
\psline{-}(5,.6)(5,1)
\psline{->}(5,1)(6.1,1)
\psline{-}(6.1,1)(7,1)
\psline{->}(7,1)(7,.4)
\psline{-}(7,.4)(7,0)
\psline{->}(7,0)(5.9,0)
\psline{-}(5.9,0)(5,0)

\psline{->}(7.05,0)(7.6,0)
\psline{-}(7.6,0)(8,0)
\psline{->}(8,0)(8,.6)
\psline{-}(8,.6)(8,1)
\psline{->}(8,1)(7.4,1)
\psline{-}(7.4,1)(7.05,1)
\psline{->}(7.05,1)(7.05,.4)
\psline{-}(7.05,.4)(7.05,0)

\rput(1.8,.5){$e$}
\rput(6.8,.5){$e$}
\rput(7.5,.5){$p$}
\end{pspicture}
\caption{Negative expansion.}
\label{negexfig}
\end{figure}
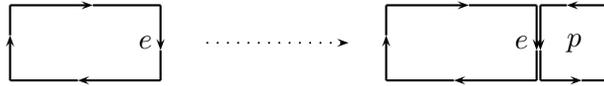

\textbf{Inaction:} The inaction operation does not change anything, and leaves the loop as it is. 

Let $s$ be a loop sequence. If $s'$ is obtained by merging one loop of $s$ to another one, we say that $s'$ is a merger of $s$. Similarly, $s'$ is a deformation of $s$ if it is created by applying deformation operation to one of the loop components of $s$, $s'$ is a splitting of $s$ if it is obtained by splitting one of the loop components $s$, and  $s'$ is an expansion $s$ if it is created by applying expansion operation to one of the loop components of $s$. If inaction is applied, then obviously $s'=s$. Let
\begin{align*}
\fd^+(s) &:= \{s': \text{ $s'$ is a positive deformation of $s$}\}\, ,\\
\fd^-(s) &:= \{s': \text{ $s'$ is a negative deformation of $s$}\}\, ,\\
\fs^+(s) &:= \{s': \text{ $s'$ is a positive splitting of $s$}\}\, ,\\
\fs^-(s) &:= \{s': \text{ $s'$ is a negative splitting of $s$}\}\, ,\\
\fst^+(s) &:= \{s': \text{ $s'$ is a positive merger of $s$}\}\, ,\\
\fst^-(s) &:= \{s': \text{ $s'$ is a negative merger of $s$}\}\, ,\\
\fex^+(s) &:= \{s': \text{ $s'$ is a positive expansion of $s$}\}\, ,\\
\fex^-(s) &:= \{s': \text{ $s'$ is a negative expansion of $s$}\}\, .
\end{align*}
Let $\fd(s)=\fd^+(s)\cup \fd^-(s)$, and define $\fs(s)$, $\fst(s)$ and $\fex(s)$ similarly.

Note that if a loop $l$ contains an edge $e$ at more than one location, then the expansions of $l$ at these locations by a plaquette $p$ passing through $e$ or $e^{-1}$ yield the same results, but we consider them as separate expansions. If $l$ contains edges $e$ or $e^{-1}$ at more than one location, say $x$ and $y$, then we can split $l$ at either $(x, y)$ or $(y,x)$, yielding the same pair of loops but in different order. Since the order of loops in a loop sequence is important, we will count these splittings separately. Similarly, if $s=(l_1, l_2, \ldots, l_n)$ and both $l_1$ and $l_r$ contain edges $e$ or $e^{-1}$ at locations $x$ and $y$, respectively, then we can merge $l_r$ to $l_1$ and obtain $s'=(l_1\oplus_{x,y}l_r, l_{2}, \ldots, l_{r-1}, l_{r+1},\ldots, l_n)$, and we can merge $l_1$ to $l_r$ and obtain $s''=(l_2, \ldots, l_{r-1}, l_r\oplus_{y,x}l_1, l_{r+1}, \ldots,l_n)$. Although $l_1\oplus_{x,y}l_r$ and $l_r\oplus_{y,x}l_1$ are the same loops, and $s'$ and $s''$ have the same loop components, we count them as separate mergers of $s$.

A trajectory is a sequence of loop sequences $(s_0, s_1, s_2, \ldots)$ such that for each non-negative $i$ the loop sequence $s_{i+1}$ is obtained from $s_i$ by applying one of the five loop operations. A trajectory can be finite or infinite. A finite trajectory whose last element is a null loop sequence and any other element is a non-null sequence is called a vanishing trajectory. Let $\mx_{a, b, c, d}(s)$ be the set of all vanishing trajectories starting with a loop sequence $s$ and containing $a$ deformations, $b$ expansions, $c$ mergers and $d$ inactions. We will later prove that the number of splitting operations in any of these trajectories is bounded in terms of $a$, $b$, $c$ and $s$. Let
\[
\mx_{a, b, k}(s):=\bigcup\limits_{c+d=k}\mx_{a, b, c, d}(s)\, ,
\] 
and
\[
\mx_{i, k}(s):=\bigcup\limits_{a+b=i}\mx_{a, b, k}(s)\,.
\]
Finally, let
\[
\mx_{k}(s):=\bigcup\limits_{i=0}^{\infty}\mx_{i, k}(s)\,.
\]
In words, $\mx_k(s)$ is the set of all vanishing trajectories starting at a loop sequence $s$ such that the total number of mergers and inactions is equal to $k$.  Figure \ref{trajfig} illustrates a vanishing trajectory. The performed operations are negative deformation, negative splitting, inaction, positive expansion, negative merger, negative deformation, negative merger,  negative deformation,  negative deformation,  negative deformation and  negative deformation. If $s$ is the loop sequence at the top left corner, then this trajectory is an element of $\mx_{6, 1, 2, 1}(s)$, $\mx_{6, 1, 3}(s)$, $\mx_{7,3}(s)$ and $\mx_{7}(s)$.

\begin{figure}[t]
\begin{pspicture}(0,6.5)(15,20.5)
\psline{->}(0,20)(1.6,20)
\psline{-}(1.6,20)(3,20)
\psline{->}(3,20)(3,19.4)
\psline{-}(3,19.4)(3,19)
\psline{->}(3,19)(1.4,19)
\psline{-}(1.4,19)(0,19)
\psline{->}(0,19)(0,19.6)
\psline{-}(0,19.6)(0,20)

\psline[linestyle = dotted]{->}(3.5, 19.5)(5.5, 19.5)

\psline{->}(6,20)(7.6,20)
\psline{-}(7.6,20)(9,20)
\psline{->}(9,20)(9,19.4)
\psline{-}(9,19.4)(9,19)
\psline{->}(9,19)(8.4,19)
\psline{-}(8.4,19)(8,19)
\psline{->}(8,19)(8,19.6)
\psline{-}(8,19.6)(8,19.95)
\psline{->}(8,19.95)(7.4,19.95)
\psline{-}(7.4,19.95)(7,19.95)
\psline{->}(7,19.95)(7,19.4)
\psline{-}(7,19.4)(7,19)
\psline{->}(7,19)(6.4,19)
\psline{-}(6.4,19)(6,19)
\psline{->}(6,19)(6,19.6)
\psline{-}(6,19.6)(6,20)

\psline[linestyle = dotted]{->}(9.5, 19.5)(11.5, 19.5)

\psline{->}(12,20)(12.6,20)
\psline{-}(12.6,20)(13,20)
\psline{->}(13,20)(13,19.4)
\psline{-}(13,19.4)(13,19)
\psline{->}(13,19)(12.4,19)
\psline{-}(12.4,19)(12,19)
\psline{->}(12,19)(12,19.6)
\psline{-}(12,19.6)(12,20)

\psline{->}(15,20)(15,19.4)
\psline{-}(15,19.4)(15,19)
\psline{->}(15,19)(14.4,19)
\psline{-}(14.4,19)(14,19)
\psline{->}(14,19)(14,19.6)
\psline{-}(14,19.6)(14,20)
\psline{->}(14,20)(14.6,20)
\psline{-}(14.6,20)(15,20)

\psline[linestyle = dotted]{->}(13.5, 18.5)(13.5, 17.5)

\psline{->}(12,17)(12.6,17)
\psline{-}(12.6,17)(13,17)
\psline{->}(13,17)(13,16.4)
\psline{-}(13,16.4)(13,16)
\psline{->}(13,16)(12.4,16)
\psline{-}(12.4,16)(12,16)
\psline{->}(12,16)(12,16.6)
\psline{-}(12,16.6)(12,17)

\psline{->}(15,17)(15,16.4)
\psline{-}(15,16.4)(15,16)
\psline{->}(15,16)(14.4,16)
\psline{-}(14.4,16)(14,16)
\psline{->}(14,16)(14,16.6)
\psline{-}(14,16.6)(14,17)
\psline{->}(14,17)(14.6,17)
\psline{-}(14.6,17)(15,17)

\psline[linestyle = dotted]{->}(11.5, 16.5)(9.5, 16.5)

\psline{->}(6,17)(6.6,17)
\psline{-}(6.6,17)(7,17)
\psline{->}(7,17)(7,16.4)
\psline{-}(7,16.4)(7,16)
\psline{->}(7,16)(6.4,16)
\psline{-}(6.4,16)(6,16)
\psline{->}(6,16)(6,16.6)
\psline{-}(6,16.6)(6,17)

\psline{->}(6,15.95)(6.6,15.95)
\psline{-}(6.6,15.95)(7,15.95)
\psline{->}(7,15.95)(7,15.35)
\psline{-}(7,15.35)(7,14.95)
\psline{->}(7,14.95)(6.4,14.95)
\psline{-}(6.4,14.95)(6,14.95)
\psline{->}(6,14.95)(6,15.55)
\psline{-}(6,15.55)(6,15.95)

\psline{->}(9,17)(9,16.4)
\psline{-}(9,16.4)(9,16)
\psline{->}(9,16)(8.4,16)
\psline{-}(8.4,16)(8,16)
\psline{->}(8,16)(8,16.6)
\psline{-}(8,16.6)(8,17)
\psline{->}(8,17)(8.6,17)
\psline{-}(8.6,17)(9,17)

\psline[linestyle = dotted]{->}(5.5, 16.5)(3.5, 16.5)

\psline{->}(0,17)(.6,17)
\psline{-}(.6,17)(1,17)
\psline{->}(1,17)(1,15.9)
\psline{-}(1,15.9)(1,15)
\psline{->}(1,15)(.4,15)
\psline{-}(.4,15)(0,15)
\psline{->}(0,15)(0,16.1)
\psline{-}(0,16.1)(0,17)

\psline{->}(3,17)(3,16.4)
\psline{-}(3,16.4)(3,16)
\psline{->}(3,16)(2.4,16)
\psline{-}(2.4,16)(2,16)
\psline{->}(2,16)(2,16.6)
\psline{-}(2,16.6)(2,17)
\psline{->}(2,17)(2.6,17)
\psline{-}(2.6,17)(3,17)

\psline[linestyle = dotted]{->}(1.5, 14.5)(1.5, 13.5)

\psline{->}(0,13)(.6,13)
\psline{-}(.6,13)(1,13)
\psline{->}(1,13)(1,12.4)
\psline{-}(1,12.4)(1,12)
\psline{->}(1,12)(1,11.4)
\psline{-}(1,11.4)(1,11)
\psline{->}(1,11)(.4,11)
\psline{-}(.4,11)(0,11)
\psline{->}(0,11)(0,12.1)
\psline{-}(0,12.1)(0,13)

\psline{->}(3,13)(3,12.4)
\psline{-}(3,12.4)(3,12)
\psline{->}(3,12)(1.9,12)
\psline{-}(1.9,12)(1.05,12)
\psline{->}(1.05,12)(1.05,12.6)
\psline{-}(1.05,12.6)(1.05,13)
\psline{->}(1.05,13)(2.1,13)
\psline{-}(2.1,13)(3,13)

\psline[linestyle = dotted]{->}(3.5, 12)(5.5, 12)

\psline{->}(6,13)(7.6,13)
\psline{-}(7.6,13)(9,13)
\psline{->}(9,13)(9,12.4)
\psline{-}(9,12.4)(9,12)
\psline{->}(9,12)(7.9,12)
\psline{-}(7.9,12)(7,12)
\psline{->}(7,12)(7,11.4)
\psline{-}(7,11.4)(7,11)
\psline{->}(7,11)(6.4,11)
\psline{-}(6.4,11)(6,11)
\psline{->}(6,11)(6,12.1)
\psline{-}(6,12.1)(6,13)

\psline[linestyle = dotted]{->}(9.5, 12)(11.5, 12)

\psline{->}(12,13)(13.1,13)
\psline{-}(13.1,13)(14,13)
\psline{->}(14,13)(14,12.4)
\psline{-}(14,12.4)(14,12)
\psline{->}(14,12)(13.4,12)
\psline{-}(13.4,12)(13,12)
\psline{->}(13,12)(13,11.4)
\psline{-}(13,11.4)(13,11)
\psline{->}(13,11)(12.4,11)
\psline{-}(12.4,11)(12,11)
\psline{->}(12,11)(12,12.1)
\psline{-}(12,12.1)(12,13)

\psline[linestyle = dotted]{->}(13, 10.5)(13, 9.5)

\psline{->}(12,9)(12.6,9)
\psline{-}(12.6,9)(13,9)
\psline{->}(13,9)(13,8.4)
\psline{-}(13,8.4)(13,8)
\psline{->}(13,8)(13,7.4)
\psline{-}(13,7.4)(13,7)
\psline{->}(13,7)(12.4,7)
\psline{-}(12.4,7)(12,7)
\psline{->}(12,7)(12,8.1)
\psline{-}(12,8.1)(12,9)

\psline[linestyle = dotted]{->}(11.5, 8)(9.5, 8)

\psline{->}(6,9)(6.6,9)
\psline{-}(6.6,9)(7,9)
\psline{->}(7,9)(7,8.4)
\psline{-}(7,8.4)(7,8)
\psline{->}(7,8)(6.4,8)
\psline{-}(6.4,8)(6,8)
\psline{->}(6,8)(6,8.6)
\psline{-}(6,8.6)(6,9)

\psline[linestyle = dotted]{->}(5.5, 8)(3.5, 8)

\end{pspicture}
\caption{A vanishing trajectory of a loop.}
\label{trajfig}
\end{figure}

Define the weight of the transition from a loop sequence $s$ to another loop sequence $s'$ at inverse coupling strength $\beta$ as
\[
w_\beta(s,s') := 
\begin{cases}
\ell(s)/|s| &\text{ if $s'=s$,}\\
-1/|s| &\text{ if $s'\in  \fst^+(s)\cup \fs^+(s)$,}\\
1/|s| &\text{ if  $s'\in \fst^-(s)\cup \fs^-(s)$,}\\
-\beta/(2|s|) &\text{ if $s'\in  \fd^+(s)\cup \fex^+(s)$,}\\
\beta/(2|s|) &\text{ if  $s'\in \fd^-(s)\cup \fex^-(s)$.}
\end{cases}
\]
The weight of a finite trajectory is defined to be the product of the transition weights of consecutive loops in the trajectory. That is, if $X=(s_0, s_1, \ldots, s_n)$, then
\[
w_{\beta}(X):=w_{\beta}(s_0, s_1)w_{\beta}(s_1, s_2)\cdots w_{\beta}(s_{n-1}, s_n)\, .
\]
Since the weight of the transition from a loop sequence to another one can be negative, the weight of a trajectory also can be negative. For example, the weight of trajectory in Figure \ref{trajfig} is $-\beta^{7}/226492416000$.

By picturing the loop sequences within a trajectory in the $d$-dimensional lattice on top of each other one can visualize the trajectory as a $d+1$-dimensional surface. For instance, the surface in Figure \ref{surffig} in traced out by the trajectory from Figure \ref{trajfig}. As time passes the splitting and merging operations give rise to handles in such an illustration of a trajectory. We define the genus of a trajectory to be the total number of mergers and inactions in the trajectory. Note that this definition is not same as traditional definition of a genus. For example, the genus of trajectory in Figure \ref{trajfig} is two but the topological genus of a surface in Figure \ref{surffig} is one. It is easy to see that these two definitions coincide only if the trajectory does not contain any inaction.

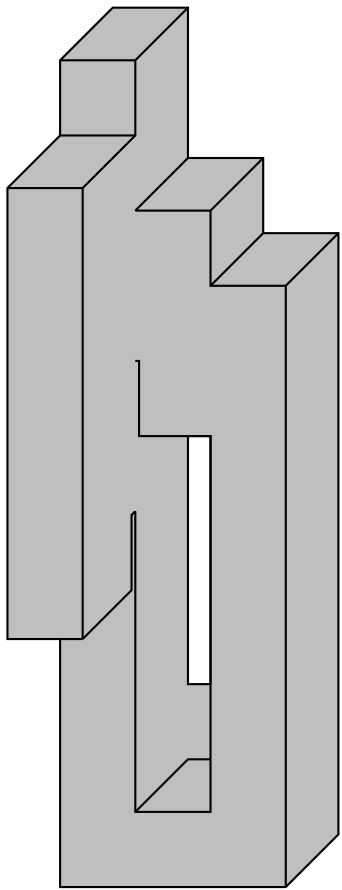
\begin{figure}[t]
\begin{pspicture}(1,.5)(6,13.2)
\psset{xunit=1cm,yunit=1cm}
\pspolygon[fillstyle=solid, fillcolor=lightgray](2,1)(5,1)(5.7,1.7)(5.7,9.7)(4.7,9.7)(4.7,10.7)(3.7,10.7)(3.7,12.7)(2.7,12.7)(2,12)(2,11) (1.3,10.3)(1.3,4.3)(2,4.3)(2,1)
\pspolygon[fillstyle=solid, fillcolor=white](3.7,3.7)(3.7,7)(4,7)(4,3.7)
\psline{-}(5,1)(5,9)(4,9)(4,10)(3,10)
\psline{-}(3.7,12.7)(3,12)(3,11)(2.3,10.3)(2.3,4.3)
\psline{-}(3,6)(3,2)(4,2)(4,7)(3.05,7)(3.05,8)(3,8)
\psline{-}(4,2.7)(3.7,2.7)(3,2)
\psline{-}(2.3, 4.3)(2.95,4.95)(2.95,5.95)(3,6)
\psline{-}(2,4.3)(2.3, 4.3)
\psline{-}(5,9)(5.7,9.7)
\psline{-}(4.7,9.7)(4,9)
\psline{-}(4,10)(4.7,10.7)
\psline{-}(3,10)(3.7,10.7)
\psline{-}(3,11)(2,11)
\psline{-}(2.3,10.3)(1.3,10.3)
\psline{-}(3,12)(2,12)
\end{pspicture}
\caption{The surface traced out by the trajectory from Figure \ref{trajfig}.}
\label{surffig}
\end{figure}

\section{Gauge theory on a lattice}\label{gauge}
We will assume that the reader is familiar with the notations introduced in Section \ref{string}. Let $G$ be a compact Lie group, and let $\rho$ be a matrix representation of $G$. Let $\beta$ be a real number and $d\geq 2$ be an integer. As in Section \ref{string}, let $\zz^d$ be the $d$-dimensional integer lattice. Let $\Lambda$ be a finite subset of $\zz^d$ and let $E^+_{\Lambda}$ be the set of all positively oriented edges with both starting and ending points lying in $\Lambda$. For any $e \in E^+_{\Lambda}$, assign a Haar distributed random element $Q_e$ of $G$, and let $Q_{e^{-1}}=Q_e^{-1} = Q_e^*$, where $Q_e^*$ is the adjoint of $Q_e$. Let $Q_{\Lambda}$ denote the set of all such assignments. For any plaquette $p=e_1e_2e_3e_4$, let $Q_p=Q_{e_1}Q_{e_2}Q_{e_3}Q_{e_4}$. Let $\mu_{\Lambda, N, \beta}$ be a probability measure on $Q_{\Lambda}$ such that at any $Q \in Q_{\Lambda}$
\[
d\mu_{\Lambda, N, \beta}(Q)=Z^{-1}_{\Lambda, N, \beta}\exp\biggl(N\beta\sum_{p\in \cp^+_{\Lambda}}\Re \tr \rho(Q_p)\biggr)\prod\limits_{e\in E^+_{\Lambda}}d\sigma(Q_e)
\]
where $\cp_{\Lambda}^+$ is the set of all positively oriented plaquettes with all edges lying in $\Lambda$, $\beta$ is a real number called the inverse coupling strength, $\sigma$ is the Haar measure on $G$, and $Z_{\Lambda, N, \beta}$ is the normalizing constant. The measure $\mu_{\Lambda, N, \beta}$ defines lattice gauge theory for the gauge group $G$ on the set $\Lambda$. Note that $\mu_{\Lambda, N, \beta}$ also depends on the representation $\rho$. In this paper, unless otherwise specified, we will assume that $G$ is $SU(N)$, the group of $N\times N$ unitary matrices, and $\rho$ is the identity representation.

For any real valued function defined on the set of configurations $Q_{\Lambda}$, let
\begin{align}
\smallavg{f}_{\Lambda, N, \beta}:=\int f(Q)d\mu_{\Lambda, N, \beta}(Q) \, . \label{avg}
\end{align}
We will omit subscripts and simply write $\smallavg{f}$ whenever $\Lambda$, $N$ and $\beta$ are clear from the context.

The main object of interest in lattice gauge theories are the Wilson loop expectations. For any loop $l=e_1e_2\cdots e_n$, with all edges lying in $\Lambda$, the Wilson loop variable $W_l$ is defined as
\[
W_l=\tr(Q_{e_1}Q_{e_2}\cdots Q_{e_n})\, ,
\]
and its expectation $\smallavg{W_l}$ is defined via \eqref{avg}. Understanding Wilson loop expectations is one of the main goals of lattice gauge theory. The $1/N$ expansion is an asymptotic series expansion for Wilson loop expectations and partition functions of large $N$ lattice gauge theories. The main result of this paper, presented in the next section, gives the $1/N$ expansion for Wilson loop expectations in $SU(N)$ lattice gauge theory, where the coefficients are expressed in terms of the string theory defined in Section~\ref{string}. 



\section{Results}\label{main}
For any non-null loop sequence $s$ with minimal representation $(l_1, \ldots, l_n)$ let
\begin{align*}
\phi_{N, \Lambda, \beta}(s)=\frac{\smallavg{W_{l_1}\cdots W_{l_n}}}{N^n} 
\end{align*}
We will simply write $\phi_N(s)$ whenever $\Lambda$ and $\beta$ are clear from the context. The following theorem is the main result of this paper. 
\begin{thm}\label{maintheorem}
There exists sequence of positive real numbers $\{\beta_0(d, k)\}_{k\geq 0}$, depending only on dimension $d$, such that for all $|\beta|\leq \beta_0(d,k)$ the following statements are true.
\begin{itemize}
\item[\text{(i)}] For any loop sequence $s$ the sum
\begin{align}
f_{2k}(s):=\sum_{X\in \mx_k(s)}w_\beta(X)  \label{f2kdef}
\end{align}
is absolutely convergent.
\item[\text{(ii)}] For any increasing $\Lambda_1\subseteq \Lambda_2\subseteq \ldots \subseteq \zz^d$ such that $\cup_{j=1}^{\infty}\Lambda_j=\zz^d$ and for any loop sequence~$s$,
\begin{align}
\lim_{N\to \infty}N^{2k}\biggr(\phi_{N, \Lambda_N, \beta}(s)-f_0(s)-\frac{1}{N^2}f_2(s)-\frac{1}{N^4}f_4(s)-\cdots-\frac{1}{N^{2k}}f_{2k}(s)\biggr)=0 \, . \label{maineq}
\end{align}
\item[\text{(iii)}] $|f_{2k}(s)|\leq (2^{3k+12}d)^{|s|}$ for any loop sequence $s$.
\end{itemize}
\end{thm}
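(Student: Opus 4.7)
The plan is to prove all three statements simultaneously by deriving a Makeenko--Migdal (master loop) equation for $\phi_N(s)$ and then iterating it. Fixing an edge $e$ of a loop component of $s$, I would apply integration by parts against the Haar measure on $SU(N)$ in the variable $Q_e$: differentiating along the one-parameter subgroups $Q_e\mapsto Q_e\exp(\I t T^a)$, summing over an orthonormal basis $\{T^a\}$ of $\mathfrak{su}(N)$, and using the $SU(N)$ completeness identity
\begin{align*}
\sum_a (T^a)_{ij}(T^a)_{kl}=\tfrac12\bigl(\delta_{il}\delta_{jk}-\tfrac1N\delta_{ij}\delta_{kl}\bigr).
\end{align*}
Each pairing of $e$ with another occurrence of $e^{\pm 1}$ (in the same loop, in another component, or in a plaquette coming from the density $e^{N\beta\sum_p\Re\tr Q_p}$) is then expanded into two pieces. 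The $\delta_{il}\delta_{jk}$ piece reproduces ordinary splittings, mergers and deformations as in \cite{chatterjee15}, while the trace-subtracted $-\tfrac1N\delta_{ij}\delta_{kl}$ piece---absent in $SO(N)$ and the genuinely new ingredient for $SU(N)$---produces the positive and negative \emph{expansion} operations (when paired with the plaquette contribution from the density) and, from self-pairings of $e$ with other occurrences of $e^{\pm 1}$ in its own loop, contributes the inaction term $\ell(s)\phi_N(s)/|s|$. Tracking the shift in the normalization factor $N^{-\#s}$ under each operation (mergers lose one Wilson loop factor, splittings and expansions gain one), the net effect is that deformations, splittings and expansions come with coefficients of order $1$, matching the weights from Section~\ref{string}, whereas mergers and the inaction term come with an explicit factor $N^{-2}$. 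Summing over all $|s|$ edges of $s$ and dividing by $|s|$ yields a master equation of the schematic form
\begin{align*}
\phi_N(s)=\sum_{s'\in\fd(s)\cup\fs(s)\cup\fex(s)}w_\beta(s,s')\,\phi_N(s')+\frac{1}{N^2}\biggl[w_\beta(s,s)\,\phi_N(s)+\sum_{s'\in\fst(s)}w_\beta(s,s')\,\phi_N(s')\biggr].
\end{align*}
Iterating this identity $m$ times expresses $\phi_N(s)$ as a sum over length-$m$ partial trajectories, each carrying an $N^{-2}$ per merger-or-inaction step, plus a remainder; grouping by the total number of mergers and inactions gives the desired decomposition into $f_{2j}(s)$ terms.

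For parts (i) and (iii) the crucial combinatorial input is the preliminary lemma promised just before $\mx_{a,b,k}(s)$ is defined: within any trajectory in $\mx_{a,b,c,d}(s)$, the number of splittings and the lengths $|s_j|$ of all intermediate sequences are controlled by $a,b,c,d$ and $|s|$. I would prove this by monitoring how the length and the index $\iota(\cdot)$ change along the trajectory: deformations and expansions shift these by bounded amounts, splittings and mergers preserve $|{\cdot}|$ while shifting $\#s$, and the vanishing requirement forces an exact balance of ``upward'' and ``downward'' moves, pinning down both the total number of splittings and a uniform length bound. Combined with the observations that at each step the number of edges to act on is at most the current length, the number of plaquettes through any edge is $2(d-1)$, and the number of pairs available for splittings or mergers is quadratic in the current length, one gets a bound of the shape
\begin{align*}
\sum_{X\in\mx_{a,b,c,d}(s)}|w_\beta(X)|\le (|\beta|/2)^{a+b}\,C(d,k)^{|s|+a+b}
\end{align*}
whenever $c+d=k$, for an explicit constant $C(d,k)$. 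Summing over $(a,b)$ and choosing $|\beta|\le\beta_0(d,k)$ small enough makes the $(a,b)$ sum a geometric series, which yields absolute convergence in (i) and, after careful bookkeeping, the explicit bound $(2^{3k+12}d)^{|s|}$ in (iii).

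For (ii), letting $m\to\infty$ in the iterated master equation, the partial trajectories of length $m$ contribute at most $(C|\beta|)^m$ per initial edge, so the tail vanishes in the $m\to\infty$ limit for small $|\beta|$; and the vanishing partial trajectories with more than $k$ mergers or inactions contribute $O(N^{-2k-2})$, which disappears after multiplication by $N^{2k}$ and passage to $N\to\infty$. The finite-box issue is handled by observing that for a fixed $s$ and fixed $m$ all length-$\le m$ trajectories live inside a finite sublattice of $\zz^d$, so once $\Lambda_N$ contains that sublattice the $\Lambda$-dependence drops out in the limit.

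The main obstacle I anticipate is obtaining the precise bound in (iii), in which $k$ appears only through the base $2^{3k+12}$ and not in the exponent of $|s|$, even though expansions---the new $SU(N)$ operation---can proliferate freely (they are not counted in $k$). Each positive or negative expansion produces an extra plaquette loop that must eventually be dismantled by a negative deformation, splitting, or merger later in the trajectory, so a priori the combinatorial proliferation could inflate the $|s|$-exponent with something like $a+b$ or $k$. Taming this will require a careful induction that exploits signed cancellations in $w_\beta$ between positive and negative moves, together with the preliminary length lemma, to keep the $|s|$-exponent uniform. This is the part of the argument most specific to the $SU(N)$ setting and has no direct analogue in the $SO(N)$ analyses of \cite{chatterjee15,chatterjeejafarov16}.
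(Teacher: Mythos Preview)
Your overall architecture---derive the master equation, then expand $\phi_N(s)$ by iteration into sums over partial trajectories graded by the number of mergers plus inactions---is a legitimate alternative to what the paper does, but it diverges from the paper's route at almost every step after the master equation, and two of your key estimates are not right as stated.

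\textbf{Comparison of approaches.} The paper does not iterate the master equation at all. Instead it (a) shows by compactness that $\phi_N$ has subsequential limits $f_0$, and that $H_{k,N}:=N^{2k}(\phi_N-\sum_{j<k}N^{-2j}f_{2j})$ has subsequential limits $f_{2k}$; (b) shows any such limit satisfies an ``unsymmetrized'' master equation; (c) proves \emph{uniqueness} of bounded solutions to that equation by a contraction argument on the generating function $F(\lambda)=\sum_{\delta}D(\delta)\lambda^{\iota(\delta)}$, which upgrades subsequential convergence to full convergence and gives (ii). For (i) and (iii) it never bounds $|\mathcal X_{a,b,c,d}(s)|$ directly; it introduces recursively defined coefficients $a_{i,k}(s)$ and $b_{i,k}(s)$, proves by a three-fold induction (on $k$, then $i$, then $\iota(s)$) the Catalan-number bound $|a_{i,k}(s)|\le K^{(5+2k)i+\iota(\delta)}|\delta|^{3k}\prod C_{\delta_j-1}$, and then identifies $\sum_{X\in\mathcal X_{i,k}(s)}w_\beta(X)=a_{i,k}(s)\beta^i$ and $\sum_{X\in\mathcal X_{i,k}(s)}|w_\beta(X)|=b_{i,k}(s)|\beta|^i$. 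The explicit constant $2^{3k+12}d$ in (iii) falls out by taking $K=1024d$ and using $|\delta|^{3k}\le 2^{3k|\delta|}$.

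\textbf{First gap.} Your remainder control for (ii) asserts that length-$m$ partial trajectories contribute at most $(C|\beta|)^m$. This is false: only deformations and expansions carry a factor of $\beta$; splittings, mergers and inactions do not. A trajectory can alternate one deformation with several splittings (the index $\iota$ allows up to four splittings per deformation), so a length-$m$ non-vanishing trajectory may carry as little as $|\beta|^{m/5}$. You can still make the remainder go to zero for small $|\beta|$, but the argument needs the index bookkeeping (Lemmas~\ref{expand}--\ref{iota2}) to bound the number of $\beta$-free steps between $\beta$-carrying ones, and the statement must be corrected accordingly. The paper sidesteps this entirely via the uniqueness argument.

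\textbf{Second gap.} Your plan for (iii) invokes ``signed cancellations in $w_\beta$ between positive and negative moves.'' No such cancellations are used, and they would not help: the bound in (iii) follows from the triangle-inequality estimate on $|a_{i,k}(s)|$, i.e.\ from the same Catalan induction that controls $b_{i,k}(s)$. The reason expansions do not blow up the $|s|$-exponent is simply that each expansion carries a $\beta$, so its cost is absorbed into the $K^{(5+2k)i}$ factor and then into the smallness of $\beta_0(d,k)$; the $|s|$-dependence comes only through $K^{\iota(\delta)}\prod C_{\delta_j-1}\le (4K)^{|s|}$ and $|\delta|^{3k}\le 2^{3k|s|}$. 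Your direct-counting bound $C(d,k)^{|s|+a+b}$ is in the right spirit but you will find it hard to get the precise base $2^{3k+12}d$ without the Catalan recursion; crude counting of splitting choices tends to produce factorial, not exponential, growth in the trajectory length, and it is exactly the Catalan identity $\sum_j C_{n-1-j}C_j=C_n$ that tames this.
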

The proof of the above theorem, like proofs of analogous results in \cite{chatterjee15} and \cite{chatterjeejafarov16}, is based on a rigorous formulation and proof of the Makeenko--Migdal equations for $SU(N)$ lattice gauge theory. The Makeenko--Migdal equations for lattice gauge theories were originally stated in \cite{makeenkomigdal79}, although that formulation was based on a certain unproved ``factorization'' property. The first rigorous version  was established for two-dimensional gauge theories in \cite{levy11}, with alternative proofs and extensions in \cite{driveretal2, driveretal}. In dimensions higher than two, the first proof was given in \cite{chatterjee15}. These equations belong to a general class of equations arising in random matrix theory,  known as Schwinger--Dyson equations. Schwinger--Dyson equations for random matrix models have been studied deeply by Alice Guionnet and coauthors over many years. In particular, the $1/N$ expansion for a large class of matrix models was established by Guionnet and Novak~\cite{guionnetnovak14}. The proof techniques in~\cite{chatterjee15} and~\cite{chatterjeejafarov16}, as well as in this paper, borrow several ideas from the paper of Collins, Guionnet and Maurel-Segala~\cite{collinsetal09}. Basu and Ganguly~\cite{bg16} have recently developed combinatorial techniques for analyzing the lattice string theories of~\cite{chatterjee15, chatterjeejafarov16}, which may be applicable to the lattice string theory developed in this paper as well. For further references, see~\cite{chatterjee15, chatterjee16, chatterjeejafarov16}.

The following theorem gives the Makeenko--Migdal equation for $SU(N)$ lattice gauge theory. It is basically a recursive equation for the Wilson loop expectation $\phi_N(s)$.  The main difference between the unrigorous equations derived in the physics literature, and the equation displayed below, is that the equation given here is defined on loop sequences instead of loops. Recall the definitions of $\fst^-(s)$, $\fst^+(s)$, $\fs^-(s)$, $\fs^+(s)$, $\fd^-(s)$, $\fd^+(s)$, $\fex^-(s)$, $\fex^+(s)$, $|s|$ and $\ell(s)$ from Section~\ref{string}.
\begin{thm}\label{symfinmaster}
For any non-null loop sequence $s$,
\begin{align}
\left(|s|-\frac{\ell(s)}{N^2}\right)&\phi_N(s)=\frac{1}{N^2}\sum_{s'\in \fst^-(s)}\phi_N(s')-\frac{1}{N^2}\sum_{s'\in \fst^+(s)}\phi_N(s')+\sum_{s'\in \fs^-(s)}\phi_N(s')-\sum_{s'\in \fs^+(s)}\phi_N(s')\notag\\
&+\frac{\beta}{2}\sum_{s'\in \fd^-(s)}\phi_N(s')-\frac{\beta}{2}\sum_{s'\in \fd^+(s)}\phi_N(s')+\frac{\beta}{2}\sum_{s'\in \fex^-(s)}\phi_N(s')-\frac{\beta}{2}\sum_{s'\in \fex^+(s)}\phi_N(s')\, . \label{symfinmastereq}
\end{align}
\end{thm}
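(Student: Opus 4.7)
The plan is to prove Theorem \ref{symfinmaster} by the Schwinger--Dyson (integration by parts) method, carried out edge-by-edge against the Haar measure on $SU(N)$, following the general template developed for $SO(N)$ in \cite{chatterjee15,chatterjeejafarov16}. Let $(l_1,\ldots,l_n)$ be the minimal representation of $s$, set $F(Q)=W_{l_1}\cdots W_{l_n}$, and fix a positively oriented edge $e\in E^+_\Lambda$ appearing in $s$. For each element $T_a$ of an orthonormal basis of $\mathfrak{su}(N)$, the invariance of $\sigma$ under the left translation $Q_e\mapsto e^{tT_a}Q_e$, combined with the explicit form of the Boltzmann density, yields the identity
\[
\int (\partial_a F)\, d\mu_{\Lambda,N,\beta}=-N\beta \int F\cdot \partial_a\Bigl(\sum_{p\in \cp^+_\Lambda}\Re\tr Q_p\Bigr)\, d\mu_{\Lambda,N,\beta},
\]
where $\partial_a$ denotes differentiation in the $T_a$-direction at $Q_e$. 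The plan is then to multiply both sides by a second $T_a$-insertion associated with a chosen occurrence of $e$, sum over $a$, and collapse the result using the $SU(N)$ completeness relation.

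The essential $SU(N)$-specific input is the identity
\[
\sum_a (T_a)_{ij}(T_a)_{kl}=\tfrac{1}{2}\delta_{il}\delta_{jk}-\tfrac{1}{2N}\delta_{ij}\delta_{kl},
\]
in which the correction $-\tfrac{1}{2N}\delta_{ij}\delta_{kl}$ is what distinguishes this calculation from its $SO(N)$ and $U(N)$ counterparts. Two sources of $T_a$-insertions contribute: (i)~self-insertions coming from a second occurrence of $Q_e$ or $Q_e^{-1}$ in the loop components $l_1,\ldots,l_n$, and (ii)~plaquette insertions coming from $\partial_a$ acting on the Boltzmann weight. The crossing piece $\tfrac{1}{2}\delta_{il}\delta_{jk}$, applied to (i), splices the two Wilson traces at the matched occurrences and so produces a merger (when the two occurrences lie in different loops) or a splitting (when they lie in the same loop); applied to (ii), it produces a deformation by a plaquette through $e$. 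The sign is dictated by whether the two matched occurrences have the same or opposite orientation (and whether the plaquette traverses $e$ or $e^{-1}$), which yields the eight sign-labeled sums $\fst^\pm(s),\fs^\pm(s),\fd^\pm(s),\fex^\pm(s)$ on the right-hand side of \eqref{symfinmastereq} with the correct coefficients once the powers of $N$ are tracked through the normalization $\phi_N(s)=\smallavg{\prod W_{l_i}}/N^n$.

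The traceless correction $-\tfrac{1}{2N}\delta_{ij}\delta_{kl}$ accounts for the two remaining features of \eqref{symfinmastereq}. Acting on type (ii), it separates the loop and the plaquette into two independent traces; the $1/N$ from the correction cancels one factor of $N$ from the $N\beta$ prefactor, producing the expansion terms $\fex^\pm(s)$ with coefficient $\pm \beta/2$. Acting on type (i), it does not splice the two traces and hence returns the original product $W_{l_1}\cdots W_{l_n}$ multiplied by an extra factor $1/N^2$; after organizing the sum over ordered pairs of same-edge occurrences with signs determined by orientation, the resulting count at each $e\in E^+$ is exactly $(|A(e)|-|B(e)|)^2=t(e)^2$, which yields the coefficient $-\ell(s)/N^2$ on the left-hand side of \eqref{symfinmastereq}. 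The factor $|s|$ on the left simply counts the total number of edge-occurrences at which the Schwinger--Dyson identity is applied. The main obstacle will be the combinatorial bookkeeping at this last step: correctly tracking signs (whether each occurrence is $e$ or $e^{-1}$), orientations (positive vs.\ negative operations, and $p$ vs.\ $p^{-1}$), and ordered-versus-unordered pair conventions spelled out in Section \ref{string}, and verifying that the non-backtracking cores appearing in the definitions of the loop operations are produced automatically by the cancellations $Q_eQ_e^{-1}=I$ and the cyclic invariance of the trace inside each $W_l$.
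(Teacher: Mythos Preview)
Your plan is correct and would yield Theorem~\ref{symfinmaster}, but it takes a genuinely different route from the paper's proof. You use the Lie-algebra Schwinger--Dyson identity directly, via the $\mathfrak{su}(N)$ completeness relation $\sum_a (T_a)_{ij}(T_a)_{kl}=\tfrac12\delta_{il}\delta_{jk}-\tfrac{1}{2N}\delta_{ij}\delta_{kl}$; the paper instead works in real coordinates $Q_e=(q_{uk}+it_{uk})$ and derives an integration-by-parts formula (Theorem~\ref{sdthm}) through Stein's method of exchangeable pairs, perturbing $Q_e$ by three carefully chosen one-parameter families of $SU(2)$ rotations (Lemmas~\ref{sdthm1}--\ref{sdthm3}) and then combining the three resulting identities. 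Only afterwards does the paper specialize $f=W_{l_1}$ and $g=W_{l_2}\cdots W_{l_n}\cdot(\text{Boltzmann weight})$, compute each term via Lemmas~\ref{lem2}--\ref{lem8} to obtain the unsymmetrized equation of Theorem~\ref{mastern}, and finally symmetrize over all edges and loop components. Your approach packages the same information more compactly: the crossing piece $\tfrac12\delta_{il}\delta_{jk}$ plays the role of Lemmas~\ref{lem3}, \ref{lem4}, \ref{lem6}, \ref{lem7}, while the traceless correction $-\tfrac{1}{2N}\delta_{ij}\delta_{kl}$ replaces Lemmas~\ref{lem5} and~\ref{lem8} and, applied to the plaquette insertion, produces the expansion operation directly. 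The paper's route is longer but elementary and self-contained (no Lie-algebra identities are assumed), and the explicit exchangeable-pair construction makes the origin of each term transparent; your route is shorter and more in line with the standard physics derivation of Makeenko--Migdal equations, at the cost of the sign and orientation bookkeeping you correctly flag as the main obstacle. One small clarification: in your outline the coefficient $|s|$ does not arise merely from ``counting occurrences''---it is the leading $-\tfrac{N}{2}$ part of the Casimir $\sum_a T_a^2=\tfrac{N^2-1}{2N}I$ summed over all edge-occurrences, while the subleading $\tfrac{1}{2N}$ part of the Casimir combines with the off-diagonal $-\tfrac{1}{2N}$ corrections to produce exactly $\sum_e t(e)^2=\ell(s)$.
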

The next result proves the so called ``factorization property'' of $SU(N)$ Wilson loop variables. It states that as $N\to \infty$ the Wilson loop variables become uncorrelated. Although this fact was not mathematically proven before, it was widely used in many theoretical computations in physics literature. The factorization property and area law upper bound for $SO(N)$ Wilson loop variables was proven in \cite{chatterjee15}. 
\begin{cor}\label{factor}
Let $\beta_{0,d}(0,d)$ and $\{\Lambda_j\}_{j\geq 1}$ be as in Theorem \ref{maintheorem}. Suppose that $|\beta|\le \beta_0(0,d)$. Then for any loop sequence $(l_1, l_2,\ldots, l_n)$
\begin{align}
\lim_{N\to \infty} \frac{\smallavg{W_{l_1}W_{l_2}\cdots W_{l_n}}_{\Lambda_N, N, \beta}}{N^n}=\prod_{i=1}^{n}\lim_{N\to \infty} \frac{\smallavg{W_{l_i}}_{\Lambda_N, N, \beta}}{N} \, .\label{factoreq}
\end{align}
\end{cor}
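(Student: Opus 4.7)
The plan is to invoke Theorem~\ref{maintheorem}(ii) with $k=0$ in order to replace both sides of \eqref{factoreq} by their limiting string-theoretic expressions, and then prove a purely combinatorial multiplicativity property of $f_0$. Applying the theorem to the loop sequence $s=(l_1,\ldots,l_n)$ and separately to each single-loop sequence $(l_i)$ shows that Corollary~\ref{factor} is equivalent to the identity
\[
f_0\bigl((l_1,\ldots,l_n)\bigr) \;=\; \prod_{i=1}^{n} f_0\bigl((l_i)\bigr).
\]
I would prove this identity directly from the definition of $f_0$ as the absolutely convergent sum over $\mx_0(s)$ given in \eqref{f2kdef}.

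The key structural observation is that $\mx_0(s)$ consists of vanishing trajectories having no mergers and no inactions. Since mergers are the only operations that combine distinct loops, every operation in a trajectory $X\in\mx_0(s)$ acts on a single loop, and one can trace the ``ancestry'' of each current loop back to a unique original $l_i$. Splittings and expansions only spawn additional descendants of the same original loop, while deformations modify a single descendant in place. This yields a natural bijection between $\mx_0((l_1,\ldots,l_n))$ and tuples $(X_1,\ldots,X_n,\pi)$, where $X_i\in\mx_0((l_i))$ is the projection onto operations affecting $l_i$-descendants and $\pi$ is an interleaving of the $m_i$ steps of each $X_i$. Moreover, the sign and $\beta/2$ factors in each transition weight $w_\beta(s_{j-1},s_j)$ depend only on the type of operation performed (positive/negative splitting, deformation, or expansion) and are thus intrinsic to the sub-trajectory containing that step; the only factor coupling the sub-trajectories is the denominator $|s_{j-1}|$, which equals the sum of the lengths of all $l_i$-descendants at step $j-1$.

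Consequently, the identity reduces to the following combinatorial lemma: if $a^{(i)}_k := |s^{(i)}_k|$ denotes the length of the state of $X_i$ after $k$ steps (so $a^{(i)}_0 = |l_i|$ and $a^{(i)}_{m_i} = 0$), and $T_j(\pi)$ denotes the total length in the combined trajectory before the $j$-th step under interleaving $\pi$, then
\[
\sum_{\pi}\prod_{j=1}^{m}\frac{1}{T_j(\pi)} \;=\; \prod_{i=1}^{n}\prod_{k=0}^{m_i-1}\frac{1}{a^{(i)}_k},
\]
with $m=m_1+\cdots+m_n$. This can be established by an easy induction on $m$: conditioning on which sub-trajectory contributes the first step, the inductive hypothesis applied to the shortened tuple and the identity $\sum_i a^{(i)}_0 = T_1(\pi)$ yields exactly the $1/T_1$ factor needed to complete the product. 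Combining this identity with the factorization of the signed constants gives $f_0(s) = \prod_i f_0((l_i))$, and absolute convergence from Theorem~\ref{maintheorem}(i) justifies all interchanges of summation.

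The main obstacle is setting up the ancestry bijection rigorously. One must verify that the projection of any $X\in\mx_0((l_1,\ldots,l_n))$ onto its $l_i$-ancestors is a well-defined vanishing trajectory in $\mx_0((l_i))$, which hinges on the absence of mergers (so that the ancestor labels never mix) and on the fact that the combined trajectory ends at the null sequence if and only if every projection does. A small amount of bookkeeping is also needed to confirm that distinct interleavings of fixed sub-trajectories yield distinct elements of $\mx_0(s)$, accounting correctly for the convention that operations at different locations in the same loop are counted separately. Once the bijection and the combinatorial identity are in place, the factorization of $f_0$, and hence Corollary~\ref{factor}, follow immediately.
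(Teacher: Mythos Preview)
Your proposal is correct and rests on the same two ingredients the paper uses: a bijection between vanishing genus-zero trajectories of a concatenated loop sequence and interleavings of vanishing trajectories of the pieces, together with a combinatorial identity summing $1/T_j$ over interleavings to obtain the product of the reciprocal lengths. The paper's Lemma~\ref{bijec} is precisely your ancestry bijection in the case $n=2$, and Lemma~\ref{general} is precisely your combinatorial lemma for $n=2$; your inductive proof of the latter is the natural $n$-fold extension.

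The one genuine difference is in how the argument is completed. You prove the full multiplicativity $f_0(l_1,\ldots,l_n)=\prod_i f_0(l_i)$ directly, which requires stating and checking the $n$-way bijection and the $n$-way combinatorial identity. The paper instead establishes only the special case $f_0(l,l)=f_0(l)^2$, deduces that $\mathrm{Var}(W_l/N)\to 0$, and then peels off one loop at a time via Cauchy--Schwarz and $|W_l|\le N$:
\[
\biggl|\frac{\langle W_{l_1}\cdots W_{l_n}\rangle}{N^n}-\frac{\langle W_{l_1}\rangle}{N}\cdot\frac{\langle W_{l_2}\cdots W_{l_n}\rangle}{N^{n-1}}\biggr|\le \frac{\langle (W_{l_1}-\langle W_{l_1}\rangle)^2\rangle^{1/2}}{N}\to 0,
\]
followed by induction on $n$. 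The paper's route is more economical in that it never needs the $n$-way bijection or identity, at the price of the extra analytic step; yours is purely combinatorial but requires the (straightforward) bookkeeping you flag in your last paragraph.
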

The next corollary shows that in $N\to\infty$ regime the Wilson loop expectations of $SO(N)$ and $SU(N)$ gauge theories coincide.
\begin{cor}\label{corresp}
Let $SO(N)$ be the group of $N\times N$ orthogonal matrices,  and let $\smallavg{\cdot }'_{\Lambda, N, \beta}$ denote the expectation with respect to $SO(N)$ gauge theory on $\Lambda$. There exists $\beta'(d)>0$, depending only on dimension $d$, such that for any $|\beta|\leq \beta'(d)$, and loop sequence $(l_1, l_2, \ldots, l_n)$
\begin{align}
\lim_{N\to \infty} \frac{\smallavg{W_{l_1}W_{l_2}\cdots W_{l_n}}_{\Lambda_N, N, 2\beta}}{N^n}=\lim_{N\to \infty} \frac{\smallavg{W_{l_1}W_{l_2}\cdots W_{l_n}}'_{\Lambda_N, N, \beta}}{N^n} \, .\label{correspondence}
\end{align}
\end{cor}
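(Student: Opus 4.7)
The strategy is to reduce the corollary to an identity between the leading-order coefficients in the $1/N$ expansions of the two theories. Applying Theorem \ref{maintheorem}(ii) with $k=0$ at inverse coupling $2\beta$ yields, for all $|\beta|$ sufficiently small,
\begin{align*}
\lim_{N\to\infty} \frac{\smallavg{W_{l_1}\cdots W_{l_n}}_{\Lambda_N,N,2\beta}}{N^n} = f_0(s;2\beta),
\end{align*}
where $s=(l_1,\ldots,l_n)$ and $f_0(s;2\beta)=\sum_{X\in\mx_0(s)} w_{2\beta}(X)$ converges absolutely. The analogous theorem for $SO(N)$ established in \cite{chatterjee15, chatterjeejafarov16} provides a leading coefficient $g_0(s;\beta)$ at inverse coupling $\beta$, represented as an absolutely convergent sum over the corresponding class of $SO(N)$ vanishing trajectories. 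Choosing $\beta'(d)$ to lie within the radius of convergence on both sides reduces the corollary to the identity $f_0(s;2\beta) = g_0(s;\beta)$ for every loop sequence $s$ and every $|\beta|\le\beta'(d)$.

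I would then establish this identity by showing that both functions satisfy the same recursion, to which they are the unique bounded solution. Dividing Theorem \ref{symfinmaster} by $|s|$, letting $N\to\infty$, and using Corollary \ref{factor} to decouple products of Wilson loops kills the $\ell(s)/N^2$ term and the merger contributions (which carry a $1/N^2$ factor), leaving
\begin{align*}
f_0(s;2\beta) = \frac{1}{|s|}\biggl(\sum_{s'\in\fs^-(s)} - \sum_{s'\in\fs^+(s)}\biggr) f_0(s';2\beta) + \frac{\beta}{|s|}\biggl(\sum_{s'\in\fd^-(s)\cup\fex^-(s)} - \sum_{s'\in\fd^+(s)\cup\fex^+(s)}\biggr) f_0(s';2\beta),
\end{align*}
where the deformation and expansion coefficient is $\beta$ rather than $\beta/2$ because the factor of $2$ in the coupling $2\beta$ compensates the $1/2$ from the weight definition in Section~\ref{string}. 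Applying the same procedure to the $SO(N)$ Makeenko--Migdal equations from \cite{chatterjee15} at coupling $\beta$ yields an analogous recursion for $g_0(s;\beta)$, whose splitting and deformation terms coincide with those above. A Banach-space contraction argument on functions $F:\cs\to\rr$ with norm $\sup_s |F(s)|\,C^{-|s|}$, justified by the exponential bound in Theorem \ref{maintheorem}(iii) and its $SO(N)$ counterpart, then shows that the common recursion has a unique bounded solution for $|\beta|\le\beta'(d)$.

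The main obstacle is verifying that the $SU(N)$ expansion contribution equals the $SO(N)$ twisting contribution in the recursion, since these are structurally distinct operations (as emphasized in the introduction: twisting appears only in the $SO(N)$ theory and expansion only in the $SU(N)$ theory). Both operations insert a plaquette adjacent to a chosen edge of $s$, and after invoking factorization each contribution reduces to a sum over plaquettes through that edge weighted by single-plaquette expectations that agree by symmetry; I expect a direct combinatorial identification, edge by edge and plaquette by plaquette, to carry out the match. If this direct identification proves elusive, a backup plan is to compare the formal power series in $\beta$ of $f_0(s;2\beta)$ and $g_0(s;\beta)$ coefficient by coefficient, using the recursions to generate them and the trivial coincidence at $\beta=0$ (where $\smallavg{W_l}/N=0$ for every non-null loop in either theory, by Haar invariance) as the base case.
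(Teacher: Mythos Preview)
Your overall strategy---show that the $SU(N)$ leading term at coupling $2\beta$ and the $SO(N)$ leading term at coupling $\beta$ satisfy the same recursion, then invoke uniqueness---is exactly the paper's approach. However, you misidentify the central computation.

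The ``main obstacle'' you flag, namely matching the $SU(N)$ expansion term against an $SO(N)$ twisting term, is not actually the issue. In the $SO(N)$ theory, twisting carries a $1/N$ factor and therefore does not appear in the recursion for $g_0$ at all; the $SO(N)$ leading-order equation from \cite{chatterjee15} is simply
\[
m\,g_0(s)=\sum_{\text{split }s} g_0+\beta\sum_{\text{deform }s} g_0 .
\]
So there is nothing on the $SO(N)$ side for the expansion term to match. What the paper does instead is show that the $SU(N)$ expansion term \emph{vanishes} outright at leading order: by Corollary~\ref{factor}, if $s'=(s,p)$ is an expansion then $f_0(s')=f_0(s)f_0(p)$, and the unsymmetrized expansion sum then telescopes to
\[
t_1 f_0(s)\Bigl(\sum_{p\in\cp(e)} f_0(p)-\sum_{p\in\cp(e^{-1})} f_0(p)\Bigr)=0,
\]
since $p\mapsto p^{-1}$ is a bijection between $\cp(e)$ and $\cp(e^{-1})$ with $f_0(p)=f_0(p^{-1})$. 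You have the right ingredients (factorization, plaquette symmetry) but aimed at the wrong target; once you redirect them to proving $\sum_{\text{expand}\,s} f_0=0$ rather than ``expansion $=$ twisting,'' the proof closes immediately.

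A minor point: the paper works with the unsymmetrized equation~\eqref{f0mastereq} (with $m$ on the left, operations applied only at the distinguished edge $e$ of $l_1$) rather than the symmetrized Theorem~\ref{symfinmaster}. This makes the cancellation above transparent, since the expansion sum is indexed by a single edge $e$ and its inverse. Your symmetrized version would also work but requires summing the cancellation over all edges.
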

Note that Corollary \ref{factor} can be obtained from Corollary \ref{corresp} and the factorization property of $SO(N)$ Wilson variables. We state Corollary \ref{factor} separately because it will be used to prove Corollary \ref{corresp}.
\section{Preliminary lemmas}\label{prelim}
Recall the definitions of the length, the size and the index of a loop sequence from Section \ref{string}. We first present how these attributes of a loop sequence change under the loop operations defined in Section \ref{string}.
\begin{lmm}\label{expand}
If $s$ is a non-null loop sequence and $s'$ is an expansion of $s$, then $|s'|=|s|+4$ and $\iota(s')=\iota(s)+3$.
\end{lmm}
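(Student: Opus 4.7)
The plan is to simply unwind the definitions of \emph{expansion}, \emph{length}, \emph{size} and \emph{index} and verify the claim by direct bookkeeping; I do not expect any conceptual obstacle here, so the proof should be short.

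First I would recall from Section~\ref{string} that a positive (resp.\ negative) expansion of $s$ at location $x$ in some loop component $l$ by a plaquette $p$ produces the loop sequence $s'$ obtained from $s$ by leaving every existing loop component (including $l$) untouched and appending the new loop $p$ to the sequence. The key observation — which distinguishes expansion from, say, deformation or merger — is that \emph{no non-backtracking core is taken} and no cancellations occur: $l$ itself is not modified. In particular, the multiset of edges used by $s'$ is exactly that of $s$ together with the four edges of $p$, each appearing once.

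From this observation the two numerical claims are immediate. For the length, the minimal representation of $s'$ is the minimal representation of $s$ with $p$ appended (note that $p$ is non-null, so it persists under minimization), hence
\[
|s'|=|s|+|p|=|s|+4,
\]
since every plaquette has length $4$ by definition. For the size, appending a single non-null loop increases $\#s$ by one, so $\#s'=\#s+1$. Combining these using the definition $\iota(\cdot)=|\cdot|-\#(\cdot)$ yields
\[
\iota(s')=|s'|-\#s'=(|s|+4)-(\#s+1)=\iota(s)+3,
\]
which completes the verification. The only minor subtlety to flag in the write-up is that the argument applies identically to both subtypes (positive and negative expansion), since the sole difference between them is a labelling convention about whether $p$ passes through $e$ or $e^{-1}$, and this has no effect on $|p|$ or on the number of components of $s'$.
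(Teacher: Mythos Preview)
Your proof is correct and follows essentially the same approach as the paper: both arguments simply observe that expansion appends a single plaquette (four edges, one new component) without altering the existing loops, and then compute $|s'|=|s|+4$, $\#s'=\#s+1$, hence $\iota(s')=\iota(s)+3$. Your write-up is slightly more detailed in flagging that no non-backtracking core is taken and that the positive/negative distinction is irrelevant here, but the substance is identical.
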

\begin{proof}
The expansion operation adds an extra plaquette near one of the loop components of $s$. Since each plaquette has four edges, $|s'|=|s|+4$. Since $\#s'=\#s+1$, we also get 
$$\iota(s')=|s'|-\#s'=(|s|+4)-(\#s+1)=\iota(s)+3\, .$$
\end{proof}

The following lemmas about the loop operations splitting, merging and deformation are taken from \cite{chatterjee15} and \cite{chatterjeejafarov16}.
\begin{lmm}[\cite{chatterjeejafarov16}]\label{merger}
If  $s$ is a non-null loop sequence and $s'$ is a merger of $s$, then $|s'|\leq |s|$ and $\iota(s')\leq \iota(s)+1$.
\end{lmm}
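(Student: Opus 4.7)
The plan is to unwind the definition of merger, track how the total length and number of loops change, and then read off the index inequality. Since merger only affects the two loop components being combined, I would write $s$ in its minimal representation, pick out the two non-null loops $l$ and $l'$ that are merged, and compare $s'$ to $s$ component by component.

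First I would bound the length. In each of the four cases of the definition of merger (positive/negative, shared edge $e$/shared edge $e$ and $e^{-1}$), the cycle obtained before taking the non-backtracking core is a concatenation of pieces of $l$ and $l'$ in which every original edge of $l$ and $l'$ appears at most once, with no new edges added except possibly a re-use of the merging edge. A direct count shows that the length of this pre-core cycle is at most $|l|+|l'|$. Since taking the non-backtracking core $[\,\cdot\,]$ can only remove edges in backtracking pairs, the merged loop $l\oplus_{x,y}l'$ or $l\ominus_{x,y}l'$ has length at most $|l|+|l'|$. All other loop components of $s$ are unchanged in $s'$, so summing gives $|s'|\le |s|$.

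Next I would compute the change in size. Generically, the two loops $l$ and $l'$ are replaced by a single non-null loop, so $\#s'=\#s-1$. Combining with $|s'|\le |s|$, this yields
\[
\iota(s')=|s'|-\#s'\le |s|-(\#s-1)=\iota(s)+1\, ,
\]
which is the desired index bound.

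The only remaining point is the degenerate case in which the merged cycle reduces to the null loop under the non-backtracking core; then the minimal representation of $s'$ drops both original loop components entirely, so $\#s'=\#s-2$ and $|s'|=|s|-|l|-|l'|$. In that case $\iota(s')=\iota(s)-|l|-|l'|+2$, and since $l$ and $l'$ are non-null loops on $\zz^d$ (each of length at least $4$), we get $\iota(s')\le \iota(s)$, which is even stronger than needed. I do not expect a real obstacle here: the content is purely combinatorial, and the only subtlety is making sure the four sub-cases of the merger definition are all covered by the same length count and that the null-loop-after-merger case is handled separately.
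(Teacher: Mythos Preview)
The paper does not actually prove this lemma; it is quoted from \cite{chatterjeejafarov16} and stated without proof here, so there is no in-paper argument to compare against. Your direct verification is correct and is exactly the kind of argument one would give.

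One minor clean-up: your description of the pre-core cycle (``every original edge of $l$ and $l'$ appears at most once, with no new edges added except possibly a re-use of the merging edge'') does not literally match all four cases --- e.g.\ in the negative merger $[ac^{-1}d^{-1}b]$ the paths $c,d$ appear reversed, and in the positive mergers the edge $e$ is used twice. The sharper statement, read straight off the definitions, is that the concatenated cycle before applying $[\,\cdot\,]$ has length exactly $|l|+|l'|$ for positive mergers and $|l|+|l'|-2$ for negative mergers; since passing to the non-backtracking core never increases length, $|s'|\le|s|$ follows. Your treatment of the size change and of the degenerate case where the merged loop collapses to $\emptyset$ is correct as written.
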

\begin{lmm}[\cite{chatterjeejafarov16}]\label{deform}
If $s$ is a non-null loop sequence and $s'$ is a deformation of $s$, then $|s'|\leq |s|+4$ and $\iota(s')\leq \iota(s)+4$.
\end{lmm}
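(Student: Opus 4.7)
The plan is to unpack the definition of deformation and reduce to a direct bookkeeping of the four merger subcases. Since a deformation of $s$ replaces a single loop component $l$ of the minimal representation of $s$ either by $l\oplus_{x} p$ or by $l\ominus_{x} p$ for some plaquette $p$, and leaves the other components untouched, the entire statement reduces to (a) bounding $|l\oplus_x p|-|l|$ and $|l\ominus_x p|-|l|$ above by $4$, and (b) tracking how $\#s$ can change.

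For step (a), I would take each of the four subcases in the merger definition and compute the length of the merged cycle \emph{before} taking its non-backtracking core. With $|p|=4$, in the shared-edge positive case $l=aeb$, $p=ced$, the cycle $aedceb$ has length $(|l|-1)+1+3+1=|l|+4$; in the shared-edge negative case, $ac^{-1}d^{-1}b$ has length $(|l|-1)+3=|l|+2$; in the opposite-edge positive case, $aec^{-1}d^{-1}eb$ has length $|l|+4$; and in the opposite-edge negative case, $adcb$ has length $|l|+2$. Since backtrack erasure only decreases length, the non-backtracking core of each of these cycles has length at most $|l|+4$. As the remaining loop components of $s$ are unchanged, this yields $|s'|\le|s|+4$.

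For step (b), note that a deformation never splits a loop, so $\#s'\in\{\#s,\#s-1\}$, the latter occurring precisely when the deformed loop has empty non-backtracking core. Using $\iota(s')-\iota(s)=(|s'|-|s|)-(\#s'-\#s)$: if $\#s'=\#s$, then part (a) gives $\iota(s')-\iota(s)\le 4$ directly; if $\#s'=\#s-1$, then $|s'|=|s|-|l|$, so $\iota(s')-\iota(s)=1-|l|\le 0\le 4$ since $l$ is non-null.

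The only (mild) obstacle is the case analysis itself; there is no substantive difficulty beyond unpacking the merger formulas and invoking that non-backtracking core reduction is length-non-increasing. In particular, the bound is quite loose in the negative subcases, reflecting the fact that the worst-case constant $4$ is set entirely by the positive merger cases.
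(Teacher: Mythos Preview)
Your argument is correct. The paper does not supply its own proof of this lemma; it is quoted verbatim from \cite{chatterjeejafarov16} and stated without proof here. Your direct case analysis---computing the pre-core cycle length in each of the four merger subcases with $|p|=4$, invoking monotonicity of backtrack erasure, and then handling the two possibilities for $\#s'$---is exactly the natural argument and matches what is done in the cited reference.
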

\begin{lmm}[\cite{chatterjee15}]\label{split2}
Let $l$ be a non-null loop and suppose that $x$ and $y$ are two distinct locations in $l$ such that $l$ admits a negative splitting at $x$ and $y$. Let $l_1 := \times^1_{x,y} l$ and $l_2 := \times^2_{x,y} l$. Then $l_1$ and $l_2$ are non-null loops, $|l_1|\le |l| - |y-x|-1$, and $|l_2|\le |y-x|-1$. 
\end{lmm}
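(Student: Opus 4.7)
The plan is to unpack the definition of negative splitting and then verify each claim in turn. Fix a non-backtracking representative of $l$ and write $l = a\,e\,b\,e^{-1}\,c$, where (after relabelling $e$ and $e^{-1}$ if necessary) $x<y$, so that $a$ is the initial segment of $x-1$ edges, the edge $e$ sits at position $x$, the subpath $b$ consists of the $y-x-1$ edges strictly between the two marked positions, $e^{-1}$ sits at position $y$, and $c$ consists of the remaining $|l|-y$ edges. With this labelling the identities $|b|=|y-x|-1$ and $|a|+|c|=|l|-|b|-2=|l|-|y-x|-1$ hold by direct counting.

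The two length bounds are then immediate, since backtrack erasure never increases the length of a path: $|l_2|=|[b]|\le|b|=|y-x|-1$ and $|l_1|=|[ac]|\le|a|+|c|=|l|-|y-x|-1$. To see that $b$ and $ac$ are genuine closed paths, so that passing to the non-backtracking core yields a loop in the sense of the paper, observe that $b$ begins at $v(e)$ and ends at the starting point of $e^{-1}$, which is also $v(e)$, while $ac$ inherits its endpoints from $l$, which coincide.

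The substantive point is non-nullness of $l_1$ and $l_2$, for which I would crucially use that the non-backtracking condition on a closed path is to be read cyclically, i.e., the last edge cannot be the inverse of the first. For $l_2$, the subpath $b$ is non-backtracking as a subword of a non-backtracking path, so $[b]=b$; and $b$ cannot be empty, because otherwise $l$ would contain the consecutive pair $e\,e^{-1}$, a backtrack. Hence $[b]$ is non-null. For $l_1$ I would argue by contradiction: if $[ac]$ is the null loop, then $ac$ reduces to the empty path under backtrack erasures, and since $a$ and $c$ are themselves already reduced, this forces either (i) $a$ and $c$ are both empty, or (ii) $c=a^{-1}$ as paths. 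Case (i) gives $l=e\,b\,e^{-1}$, so the cyclic wrap from $e^{-1}$ back to $e$ is a backtrack. In case (ii), writing $a=f_1\cdots f_k$ gives $c=f_k^{-1}\cdots f_1^{-1}$, and the cyclic wrap from the last edge $f_1^{-1}$ of $c$ to the first edge $f_1$ of $a$ is again a backtrack. Both cases contradict the non-backtracking assumption on $l$, so $[ac]$ is non-null.

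The main (though mild) subtlety, and really the only place one must be careful, is precisely this cyclic reading of the non-backtracking condition, which is what rules out the degenerate forms $l=e\,b\,e^{-1}$ and $l=a\,e\,b\,e^{-1}\,a^{-1}$. Everything else is arithmetic bookkeeping that follows directly from the decomposition $l=a\,e\,b\,e^{-1}\,c$ and the fact that backtrack erasure is length-non-increasing.
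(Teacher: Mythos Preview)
The paper does not give a proof of this lemma; it is simply cited from \cite{chatterjee15}. Your argument is the natural one and is correct for the length bounds and for the non-nullness of $l_1$, but there is a small gap in the non-nullness argument for $l_2$.

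You write that $b$, as a subword of the non-backtracking word $l$, is itself non-backtracking, ``so $[b]=b$''. This is only true for \emph{internal} backtracks. Since $b$ is a closed path, the non-backtracking condition is cyclic, and nothing prevents the last edge of $b$ from being the inverse of the first edge of $b$: these two edges are not adjacent in $l$, so the hypothesis on $l$ says nothing about them. Hence $[b]=b$ need not hold, and the inference ``$b$ non-empty $\Rightarrow [b]$ non-null'' is not yet justified.

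The conclusion is nevertheless correct, and the fix is easy. One option is to mimic your own argument for $l_1$: if $b=g_1\cdots g_m$ has no internal backtracks and $[b]=\emptyset$, then the only available erasures are cyclic, peeling off $g_1,g_m$, then $g_2,g_{m-1}$, and so on; this forces $g_{m+1-i}=g_i^{-1}$ for all $i$, and when the two ends meet one obtains either a single leftover edge (non-null) or an internal backtrack $g_rg_{r}^{-1}$, contradicting the hypothesis. Alternatively, observe that $[b]=\emptyset$ means $b$ represents the trivial conjugacy class, hence the identity, in the free group on directed edges; since $b$ is a reduced word this forces $b$ to be empty, and then $ee^{-1}$ sits inside $l$. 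Either route closes the gap with one extra sentence.
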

\begin{lmm}[\cite{chatterjee15}]\label{split1}
Let $l$ be a non-null loop and suppose that $x$ and $y$ are two distinct locations in $l$ such that $l$ admits a positive splitting at $x$ and $y$. Let $l_1 := \times^1_{x,y} l$ and $l_2 := \times^2_{x,y} l$. Then $l_1$ and $l_2$ are non-null loops, $|l_1|\le |l| - |y-x|$, and $|l_2|\le |y-x|$. 
\end{lmm}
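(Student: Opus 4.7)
The plan is to exhibit explicit non-backtracking representatives for $l_1$ and $l_2$, from which both the length bounds and non-nullity follow immediately. Fix a linear representative $e_1 e_2\cdots e_n$ of $l$ and assume without loss of generality that $x<y$, so that $e_x=e_y=e$. Decompose
\[
l = a\cdot e\cdot b\cdot e\cdot d, \qquad a:=e_1\cdots e_{x-1},\ \ b:=e_{x+1}\cdots e_{y-1},\ \ d:=e_{y+1}\cdots e_n,
\]
and observe that since the two copies of $e$ share the same endpoints $u(e)$ and $v(e)$, the concatenations $l_1':=a\cdot e\cdot d$ and $l_2':=b\cdot e$ are both closed paths, of lengths $|l|-(y-x)$ and $y-x$ respectively. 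These are the natural candidate representatives of $l_1$ and $l_2$.

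The crux is to show that $l_1'$ and $l_2'$ are already non-backtracking as \emph{cycles}, so that passing to the non-backtracking core $[\cdot]$ leaves their lengths unchanged. The only adjacencies that are not inherited verbatim from $l$ are those produced at the seams of the cut-and-paste: in $l_1'$, the joints $e_{x-1}\,e$ and $e\,e_{y+1}$ together with the cyclic wrap-around $e_n\,e_1$; in $l_2'$, the joint $e_{y-1}\,e$ and the cyclic wrap-around $e\,e_{x+1}$. Using $e_x=e_y=e$, every such pair is literally a pair of consecutive edges of $l$ itself, hence non-backtracking by hypothesis. The boundary cases in which one of $a$, $b$, or $d$ is empty are treated identically, and the absence of self-loops in $\zz^d$ together with non-backtracking of $l$ in fact forces $y-x\ge 4$, so neither candidate loop is degenerate.

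It follows that $[l_1']=l_1'$ and $[l_2']=l_2'$, whence $|l_1|=|l|-|y-x|$ and $|l_2|=|y-x|$ with equality, and both loops are non-null because their lengths are strictly positive. The only mildly delicate point is the seam analysis: one must remember that $l$ is non-backtracking \emph{as a cycle} (including the wrap-around $e_n\,e_1$) rather than merely as a linear path, so that the wrap-around seams in $l_1'$ and $l_2'$ are also controlled. Once this is noted, the rest is pure bookkeeping, paralleling the argument used for the corresponding $SO(N)$ splitting lemma in~\cite{chatterjee15}.
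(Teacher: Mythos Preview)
Your argument is correct and, in fact, establishes the sharper statement that $|l_1|=|l|-|y-x|$ and $|l_2|=|y-x|$ with equality, since the seam analysis shows the candidate representatives $l_1'=aed$ and $l_2'=be$ are already non-backtracking as cycles. The present paper does not prove this lemma at all but simply imports it from~\cite{chatterjee15}; your write-up is precisely the kind of direct verification one finds there, so there is nothing to compare. One small remark: you explicitly argue $y-x\ge 4$ to conclude $l_2$ is non-null, but the non-nullity of $l_1$ (i.e.\ $|l|-(y-x)\ge 4$) deserves the same one-line justification---it follows by the identical parity and seam-backtrack reasoning applied to $l_1'$.
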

\begin{lmm}[\cite{chatterjee15}]\label{iota2}
If $s'$ is obtained from $s$ by a splitting operation, then $s'\neq \emptyset$ and $\iota(s')<\iota(s)$.
\end{lmm}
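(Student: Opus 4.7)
The plan is to reduce the statement directly to the splitting length bounds in Lemmas \ref{split1} and \ref{split2}, which already do the combinatorial work. A splitting operation selects one non-null loop component $l$ of the (necessarily non-null) loop sequence $s$ and replaces it by the ordered pair $(l_1, l_2)$, where $l_1 = \times^1_{x,y} l$ and $l_2 = \times^2_{x,y} l$, leaving every other component of $s$ untouched. Hence the minimal representation of $s'$ contains exactly one more loop component than that of $s$, giving
\[
\#s' = \#s + 1.
\]

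Next I would control the total length. For a positive splitting, Lemma \ref{split1} gives $|l_1| \le |l| - |y-x|$ and $|l_2| \le |y-x|$, so
\[
|s'| - |s| = |l_1| + |l_2| - |l| \le 0.
\]
For a negative splitting, Lemma \ref{split2} gives $|l_1| \le |l| - |y-x| - 1$ and $|l_2| \le |y-x| - 1$, so
\[
|s'| - |s| = |l_1| + |l_2| - |l| \le -2 \le 0.
\]
In either case $|s'| \le |s|$.

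Combining these two observations,
\[
\iota(s') = |s'| - \#s' \le |s| - (\#s + 1) = \iota(s) - 1 < \iota(s),
\]
which is the index inequality. Finally, both Lemma \ref{split1} and Lemma \ref{split2} assert that $l_1$ and $l_2$ are non-null loops, so the minimal representation of $s'$ contains at least two non-null loop components and in particular $s' \neq \emptyset$.

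There is no serious obstacle here: the real work (the length estimates for the two halves produced by a splitting) has already been done in \cite{chatterjee15} and is imported via Lemmas \ref{split1}--\ref{split2}. Once those bounds are in hand, the proof is essentially bookkeeping on how $|s|$ and $\#s$ change, and the strict decrease of $\iota$ follows from the fact that splitting always increases $\#s$ by exactly one while never increasing $|s|$.
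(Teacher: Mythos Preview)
Your argument is correct. The paper does not supply its own proof of this lemma; it is quoted verbatim from \cite{chatterjee15}, so there is nothing to compare against beyond noting that your derivation from Lemmas~\ref{split1} and~\ref{split2} is exactly the intended (and standard) reduction.
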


\begin{lmm}\label{fintraj}
For any non-negative integers $a$, $b$, $c$, $d$, $i$, $k$ and loop sequence $s$ the sets $\mx_{a, b, c, d}(s)$, $\mx_{a, b, k}(s)$ and $\mx_{i, k}(s)$ are finite.
\end{lmm}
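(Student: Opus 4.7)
The plan is to bound every quantity that determines a trajectory in $\mx_{a,b,c,d}(s)$: the total number of operations, the total length of every intermediate loop sequence, and the region of $\zz^d$ on which these sequences can live. The only parameter not already fixed by $a,b,c,d$ is the number of splittings, so I will bound it first.

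Fix a trajectory $X = (s_0, s_1, \ldots, s_n) \in \mx_{a,b,c,d}(s)$, so that $s_0 = s$ and $s_n = \emptyset$, hence $\iota(s_0) = \iota(s)$ and $\iota(s_n) = 0$. By Lemmas~\ref{expand}, \ref{merger}, and~\ref{deform}, each deformation step increases $\iota$ by at most $4$, each expansion step by exactly $3$, and each merger step by at most $1$; inaction preserves $\iota$; and by Lemma~\ref{iota2} each splitting step strictly decreases $\iota$ and, since $\iota$ is integer-valued, the decrease is at least $1$. Summing the telescoping increments of $\iota$ gives $-\iota(s) \leq 4a + 3b + c - S$, where $S$ denotes the number of splittings in $X$; therefore $S \leq 4a + 3b + c + \iota(s)$ and the trajectory length satisfies $n = a + b + c + d + S \leq 5a + 4b + 2c + d + \iota(s)$.

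Next, Lemmas~\ref{split1}, \ref{split2}, and~\ref{merger} imply that splittings, mergers, and inactions do not increase $|s_i|$, while each deformation and expansion increases it by at most $4$. Therefore $|s_i| \leq |s| + 4(a+b)$ for every $i$. Moreover, among the five operations only deformations and expansions introduce new edges, and each such operation adjoins a plaquette sharing an edge with an existing loop, so its four vertices lie within graph-distance at most $1$ of the current vertex set. It follows by induction on $i$ that every $s_i$ is supported inside the $(a+b)$-neighborhood $V$ in $\zz^d$ of the vertex set of $s_0$, which is a finite subset of $\zz^d$.

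Consequently, the number of loop sequences that can possibly appear in any trajectory in $\mx_{a,b,c,d}(s)$ is finite, and from any such sequence $s'$ the number of admissible next operations is bounded by $O(d|s'|^2)$, since each operation is specified by at most two edge locations in $s'$ together with at most one of the $2(d-1)$ plaquettes through a chosen edge. Combined with the bound on $n$, this yields that $\mx_{a,b,c,d}(s)$ is finite. The finiteness of $\mx_{a,b,k}(s) = \bigcup_{c+d=k}\mx_{a,b,c,d}(s)$ and $\mx_{i,k}(s) = \bigcup_{a+b=i}\mx_{a,b,k}(s)$ follows immediately, as they are finite unions of finite sets. The only non-routine ingredient is the bound on splittings via the index monovariant; everything else is bookkeeping with the lemmas already established.
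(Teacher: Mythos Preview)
Your proof is correct and follows the same core idea as the paper: use the index $\iota$ as a monovariant to bound the number of splittings by $4a+3b+c+\iota(s)$, then conclude finiteness. The paper's proof is terser---it simply asserts that a bounded number of each operation type yields finitely many trajectories---whereas you spell out the supporting bookkeeping (bounded trajectory length, bounded $|s_i|$, and bounded spatial support), which makes the finiteness conclusion fully explicit.
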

\begin{proof}
By Lemma \ref{merger} the merger operation increases the index $\iota(s)$ at most by one, by Lemma \ref{deform} the deformation operation increases $\iota(s)$ at most by four and by Lemma \ref{expand} the expansion operation increases $\iota(s)$ exactly by three. On the other hand, by Lemma \ref{iota2} the splitting operation reduces index at least by one. Therefore, the number of splitting operations in any trajectory in  $\mx_{a, b, c, d}(s)$ is no more than $\iota(s)+4a+3b+c$. Thus there are only finitely many trajectories in this set. Since the equation $c+d=k$ has only finitely many non-negative solutions the set $\mx_{a, b, k}(s)$ is also finite. Similarly, $\mx_{i, k}(s)$ is finite too.
\end{proof}
Recall the definition of Catalan number:
\[
C_{n}=\frac{1}{n+1}\binom{2n}{n}\, .
\]
One can easily check that 
\begin{align}
C_{n+1}&\leq 4C_n \label{catalan0}
\end{align}
for any $n\geq 0$. Another well known property of the Catalan numbers, that we will use many times, is the following identity.
\begin{lmm}\label{catalan}
For any $n\geq 0$
\[
C_n=\sum_{k=0}^{n-1}C_{n-1-k}C_k
\]
where $C_n$ is the $n^{\textup{th}}$ Catalan number.
\end{lmm}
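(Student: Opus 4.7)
The plan is to give a short combinatorial proof using the Dyck path interpretation of the Catalan numbers. Recall that $C_n$ counts the number of lattice paths from $(0,0)$ to $(2n,0)$ using steps $(1,1)$ and $(1,-1)$ that never go below the $x$-axis (Dyck paths of semilength $n$), with the convention that $C_0 = 1$ corresponds to the empty path. This interpretation can be taken as known or verified directly from the closed form $C_n = \frac{1}{n+1}\binom{2n}{n}$ via the reflection principle.

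First, I would handle the base case $n=0$: the sum on the right-hand side is empty, and $C_0 = 1$ can be checked separately (or we may simply state the identity for $n \geq 1$ since that is what is needed). For $n \geq 1$, the idea is to decompose a nonempty Dyck path by its \emph{first return} to the $x$-axis. Every nonempty Dyck path $\pi$ of semilength $n$ begins with an up-step, and there is a uniquely defined smallest index $j \geq 1$ such that $\pi$ returns to height $0$ at position $2j$. Between positions $0$ and $2j$, the path consists of an initial up-step, a (possibly empty) Dyck path $\pi_1$ of semilength $j-1$ shifted up by one, and a terminal down-step. After position $2j$, the remainder $\pi_2$ is a Dyck path of semilength $n-j$.

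This decomposition is a bijection between Dyck paths of semilength $n$ and pairs $(\pi_1, \pi_2)$ where $\pi_1$ has semilength $k := j-1 \in \{0,1,\ldots,n-1\}$ and $\pi_2$ has semilength $n-1-k$. Summing over $k$ yields
\begin{equation*}
C_n \;=\; \sum_{k=0}^{n-1} C_k \, C_{n-1-k} \;=\; \sum_{k=0}^{n-1} C_{n-1-k}\, C_k,
\end{equation*}
which is the claimed identity. There is no serious obstacle here; the only thing to be careful about is the off-by-one bookkeeping between $j$ and $k = j-1$ in the first-return decomposition. An alternative (and essentially equivalent) route would be to introduce the generating function $C(x) = \sum_{n \geq 0} C_n x^n$, observe that the claimed recurrence is equivalent to the functional equation $C(x) = 1 + x\, C(x)^2$, and verify this equation directly from the closed-form expression for $C_n$; I would mention this only in passing, since the combinatorial argument above is self-contained.
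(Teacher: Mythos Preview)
Your proof via the first-return decomposition of Dyck paths is correct for $n \geq 1$, and you correctly flag the issue at $n=0$: as stated in the paper the identity fails there, since the empty sum equals $0$ while $C_0 = 1$. The paper itself does not give a proof of this lemma; it is introduced as ``another well known property of the Catalan numbers'' and stated without argument. In every application within the paper the index satisfies $n \geq 1$ (indeed $n \geq 3$, coming from $\delta_r \geq 4$), so the $n=0$ defect is harmless, and your combinatorial argument covers exactly the range that is used.
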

The following lemma about the Catalan numbers is taken from \cite{chatterjeejafarov16}.

\begin{lmm}[\cite{chatterjeejafarov16}]\label{catalan1}
Let $C_k$ be the $k^{\mathrm{th}}$ Catalan number. If $n, m$ are positive integers, then
$$C_{n+m-1}\leq (n+m)^2C_{n-1}C_{m-1}\,.$$
\end{lmm}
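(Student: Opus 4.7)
The plan is to derive the inequality via a sequence of three elementary steps: Vandermonde's identity, the maximality of the central binomial coefficient in its row, and a final polynomial inequality. By the symmetry of the statement under $n \leftrightarrow m$, I may assume $n \leq m$.

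First I would apply Vandermonde to write
\begin{equation*}
\binom{2n+2m-2}{n+m-1} = \sum_{k=0}^{2n-2}\binom{2n-2}{k}\binom{2m}{n+m-1-k},
\end{equation*}
bound each factor $\binom{2m}{n+m-1-k}$ by the central value $\binom{2m}{m}$, and collapse the remaining row sum to $4^{n-1}$, giving $\binom{2n+2m-2}{n+m-1} \leq 4^{n-1}\binom{2m}{m}$. Then I would convert $\binom{2m}{m}$ to $\binom{2m-2}{m-1}$ via the exact ratio $\binom{2m}{m}/\binom{2m-2}{m-1} = 2(2m-1)/m$, and replace $4^{n-1}$ by $(2n-1)\binom{2n-2}{n-1}$ using the bound that the maximum of $2n-1$ nonnegative terms summing to $4^{n-1}$ is at least $4^{n-1}/(2n-1)$. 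Dividing by $n+m$ and using the identity $\binom{2k-2}{k-1} = kC_{k-1}$ yields
\begin{equation*}
C_{n+m-1} \leq \frac{2n(2n-1)(2m-1)}{n+m}C_{n-1}C_{m-1}.
\end{equation*}
It then remains to verify the prefactor is at most $(n+m)^2$. Since $(2n-1)(2m-1) \leq 4nm$, the prefactor is at most $8n^2m/(n+m)$, and AM-GM together with $n \leq m$ gives $(n+m)^3 \geq 8(nm)^{3/2} \geq 8n^2m$, as required.

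The step I expect to be the main obstacle is the calibration of the intermediate bounds. A symmetric approach, such as bounding $\binom{2m}{m}$ by $4^m$ and then using a two-sided estimate of the form $4^k/((k+1)(2k+1)) \leq C_k \leq 4^k/(k+1)$, produces a prefactor on the order of $4nm(2n-1)(2m-1)/(n+m)$, which already exceeds $(n+m)^2$ when $n \simeq m$. The key refinement is to apply Vandermonde asymmetrically --- summing only over $\binom{2n-2}{k}$ while keeping the central term $\binom{2m}{m}$ intact --- so that the Stirling-type factor $\sqrt{m}$ hidden in $\binom{2m}{m}$ is preserved rather than thrown away; this gain is exactly what brings the inequality into its essentially tight form $(2n-1)^2 \leq (2n)^2$ at $n=m$.
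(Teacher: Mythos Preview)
Your argument is correct. Each step checks out: the Vandermonde split $\binom{2n+2m-2}{n+m-1}=\sum_{k}\binom{2n-2}{k}\binom{2m}{n+m-1-k}$, the bound $\binom{2m}{j}\le\binom{2m}{m}$, the row sum $\sum_k\binom{2n-2}{k}=4^{n-1}$, the reverse bound $4^{n-1}\le(2n-1)\binom{2n-2}{n-1}$, and the ratio $\binom{2m}{m}=\tfrac{2(2m-1)}{m}\binom{2m-2}{m-1}$ combine exactly as you say to give
\[
C_{n+m-1}\le\frac{2n(2n-1)(2m-1)}{n+m}\,C_{n-1}C_{m-1}.
\]
Your closing inequality $2n(2n-1)(2m-1)\le(n+m)^3$ via $(2n-1)(2m-1)\le 4nm$ and $(n+m)^3\ge 8(nm)^{3/2}\ge 8n^2m$ (using $n\le m$) is also fine, and indeed becomes the essentially sharp $(2n-1)^2\le(2n)^2$ at $n=m$.

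As for comparison: the present paper does not prove this lemma at all --- it is quoted verbatim from \cite{chatterjeejafarov16} and invoked as a black box. So there is no ``paper's own proof'' to set yours against; your self-contained elementary argument fills in what the paper leaves to the citation.
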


\section{Stein's exchangeable pair for $SU(N)$}\label{stein}
As in \cite{chatterjee15} and \cite{chatterjeejafarov16}, we will now derive an integration-by-parts identity for $SU(N)$ using an approach based on Stein's method of exchangeable pairs~\cite{chatterjeemeckes, meckes, stein72, stein86, stein95}. A pair of random variables $(X, Y)$ is said to be exchangeable if $(X,Y)$ and $(Y, X)$ have the same joint distribution. We start by constructing an exchangeable pair of $SU(N)$ random matrices. 

Fix $N$ and let $(U, V)$ be a uniformly chosen pair of numbers from the set $\{(u, v): 1\leq u\neq v\leq N\}$. Let $\eta$ and $\xi$ be independent random variables that take values $1$ and $-1$ with equal probability, and let $0\leq \theta , \phi \leq 2\pi$ be two non-random real numbers. For any $\ep \in (-1, 1)$ define a random $N\times N$ matrix $R_{\ep, \theta, \phi}=(r_{uv})_{1\leq u, v \leq N}$ as
\begin{align*}
r_{UU}&=\sqrt{1-\ep^2}+i\ep \eta\cos \theta\, , \qquad r_{UV}=\ep e^{i\phi}  \xi \sin\theta\, ,\\
r_{VV}&=\sqrt{1-\ep^2}-i\ep \eta\cos \theta \, , \qquad r_{VU}=-\ep e^{-i\phi}  \xi\sin \theta\, ,
\end{align*}
and for all $k\neq U, V$ and $1\leq k'\leq N$
\[
r_{Uk} = r_{Vk} = 0\, , \ \text{ and } \ 
r_{kk'} = 
\begin{cases}
1 &\text{ if } k=k'\, ,\\
0 &\text{ if } k\ne k'\, .
\end{cases}
\]
Of course, here $i$ stands for $\sqrt{-1}$. It is easy to check that $R_{\ep, \theta, \phi}$ is an $SU(N)$ matrix. For any $Q\in SU(N)$ let $Q_{\ep, \theta, \phi}:=R_{\ep, \theta, \phi}Q$.
\begin{lmm}
If $Q$ is a Haar distributed random $SU(N)$ matrix, then $(Q, Q_{\ep, \theta, \phi})$ is an exchangeable pair for all possible values of $\ep$, $\theta$ and $\phi$.
\end{lmm}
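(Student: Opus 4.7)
The plan is to split the verification into two clean distributional identities: first, that $R_{\ep,\theta,\phi}$ has the same distribution as its adjoint $R_{\ep,\theta,\phi}^*$; and second, that this symmetry, combined with left-invariance of Haar measure, forces the pair $(Q, R_{\ep,\theta,\phi} Q)$ to be exchangeable.

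\emph{Step 1: $R := R_{\ep,\theta,\phi}$ and $R^*$ have the same distribution.} Since $R\in SU(N)$, $R^{-1}=R^*$. I would write out the entries of $R^*$ directly from the formulas for the $r_{uv}$ and observe that the substitution $(\eta,\xi)\mapsto(-\eta,-\xi)$ carries $R$ to $R^*$ (the signs on the $r_{UU},r_{VV}$ diagonal come from flipping $\eta$, and the signs on the $r_{UV},r_{VU}$ off-diagonal come from flipping $\xi$, while $(U,V)$ stays put). Because $\eta$ and $\xi$ are independent symmetric $\pm 1$ variables, the joint law of $(U,V,\eta,\xi)$ is invariant under $(\eta,\xi)\mapsto(-\eta,-\xi)$; hence $R^*\stackrel{d}{=}R$. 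Moreover, by construction $R$ depends only on $(U,V,\eta,\xi)$, so $R$ (and likewise $R^*$) is independent of $Q$.

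\emph{Step 2: $(Q,RQ)$ is exchangeable.} Fix a bounded measurable $f:SU(N)\times SU(N)\to\rr$. Conditioning on $R$ and changing variables $Q'=RQ$ (so $Q=R^*Q'$), and using the fact that the Haar measure on $SU(N)$ is invariant under left multiplication by the fixed element $R$,
\[
\ee[f(Q,RQ)\mid R] \;=\; \ee[f(R^*Q',Q')\mid R].
\]
Integrating in $R$ and using the independence of $R$ and $Q'$ gives $\ee[f(Q,RQ)] = \ee[f(R^*Q,Q)]$. By Step 1 and the independence of $R$ from $Q$, the pair $(R^*,Q)$ has the same joint distribution as $(R,Q)$, so $\ee[f(R^*Q,Q)] = \ee[f(RQ,Q)]$. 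Combining,
\[
\ee[f(Q,RQ)] \;=\; \ee[f(RQ,Q)],
\]
which is exactly the exchangeability of $(Q,Q_{\ep,\theta,\phi})$.

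The only real work is in Step 1, verifying $R^*=R(-\eta,-\xi)$ entry by entry; this is a purely algebraic check using $\overline{e^{\pm i\phi}}=e^{\mp i\phi}$ and $\overline{i}=-i$, with no analytic subtleties. After that, Step 2 is formal and uses only left-invariance of Haar measure plus the independence built into the construction.
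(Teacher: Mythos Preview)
Your proof is correct and follows essentially the same approach as the paper: both arguments combine left-invariance of Haar measure (after conditioning on the random rotation $R$) with the observation that $R^*\stackrel{d}{=}R$ via the sign symmetry of $(\eta,\xi)$. Your write-up is in fact slightly cleaner, since you condition on all of $R$ (equivalently on $(U,V,\eta,\xi)$) rather than just on $(\eta,\xi)$ as the paper does.
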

\begin{proof}
Since $Q$ is Haar distributed, conditional on $\eta$ and $\xi$ the matrices $Q$ and $R^*_{\ep, \theta, \phi}Q$ have the same distribution. So, by replacing $Q$ with $R^*_{\ep, \theta, \phi}Q$ we see that, conditional on $\eta$ and $\xi$, the pairs $(Q,R_{\ep, \theta, \phi}Q)$ and $(R^*_{\ep, \theta, \phi}Q, Q)$ have the same distribution. So their unconditional distributions are also same. The matrices $R^*_{\ep, \theta, \phi}$ and $R_{\ep, \theta, \phi}$ also have the same distribution because both $\xi$ and $\eta$ are symmetrically distributed around zero. Thus, conditional on $Q$ the distribution of the pairs $(R^*_{\ep, \theta, \phi}Q, Q)$ and $(R_{\ep, \theta, \phi}Q, Q)$ are same. Therefore their unconditional distributions are also same. Hence, we showed that $(Q,R_{\ep, \theta, \phi}Q)$ and $(R_{\ep, \theta, \phi}Q, Q)$ have the same distribution.
\end{proof}

The following simple lemma is taken from \cite{chatterjee15}, where it was proved for $SO(N)$. Versions of this lemma in various contexts have appeared in many places in the literature on Stein's method, beginning with Stein's original paper~\cite{stein72}. It is essentially an abstract integration-by-parts identity. The proof, which we will omit, involves a simple application of exchangeability of the pair $(Q, Q_{\ep, \theta, \phi})$.

\begin{lmm}\label{exchlmm}
For any Borel measurable $f,g:SU(N)\ra \cc$,
\[
\ee((f(Q_{\ep, \theta, \phi}) - f(Q))g(Q)) = -\frac{1}{2} \ee((f(Q_{\ep, \theta, \phi})-f(Q))(g(Q_{\ep,\theta,\phi})-g(Q)))\, .
\]
\end{lmm}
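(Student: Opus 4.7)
The plan is to deduce this identity as a direct consequence of the exchangeability of the pair $(Q, Q_{\epsilon, \theta, \phi})$ established in the previous lemma. This is the standard abstract integration-by-parts trick from Stein's method: whenever $(X,Y)$ is exchangeable, one has $\mathbb{E}[(f(Y)-f(X))g(X)] = -\tfrac{1}{2}\mathbb{E}[(f(Y)-f(X))(g(Y)-g(X))]$ for any integrable $f,g$, and the present lemma is just an instance with $X=Q$ and $Y=Q_{\epsilon,\theta,\phi}$.

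First I would write $Y := Q_{\epsilon,\theta,\phi}$ for brevity and expand the right-hand side by bilinearity:
\[
-\tfrac{1}{2}\mathbb{E}\bigl((f(Y)-f(Q))(g(Y)-g(Q))\bigr)
= -\tfrac{1}{2}\bigl(\mathbb{E}[f(Y)g(Y)] - \mathbb{E}[f(Y)g(Q)] - \mathbb{E}[f(Q)g(Y)] + \mathbb{E}[f(Q)g(Q)]\bigr).
\]
Then I would invoke the preceding lemma, which guarantees that $(Q,Y)$ and $(Y,Q)$ have the same joint distribution. Exchangeability gives the two identifications
\[
\mathbb{E}[f(Y)g(Y)] = \mathbb{E}[f(Q)g(Q)] \qquad \text{and} \qquad \mathbb{E}[f(Y)g(Q)] = \mathbb{E}[f(Q)g(Y)].
\]
Substituting these into the expansion collapses the four terms into
\[
-\tfrac{1}{2}\bigl(2\mathbb{E}[f(Q)g(Q)] - 2\mathbb{E}[f(Q)g(Y)]\bigr) = \mathbb{E}[f(Y)g(Q)] - \mathbb{E}[f(Q)g(Q)] = \mathbb{E}\bigl((f(Y)-f(Q))g(Q)\bigr),
\]
which is the left-hand side of the claimed identity.

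Before any of this one should justify that the expectations involved are finite, so that the bilinear expansion and rearrangement are legitimate; since $Q$ ranges over the compact group $SU(N)$, all functions appearing can be truncated to bounded Borel functions (or the statement interpreted under an implicit integrability hypothesis inherited from the application in the main text), which makes this a routine concern rather than a real obstacle. There is essentially no hard step here: the entire content is (i) the exchangeability established in the previous lemma and (ii) symmetric rearrangement of four expectations. The only thing to be mildly careful about is to apply exchangeability to the correct pairings, keeping $f$ and $g$ straight so that $\mathbb{E}[f(Y)g(Q)]$ is matched with $\mathbb{E}[f(Q)g(Y)]$ rather than with itself.
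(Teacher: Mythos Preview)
Your proof is correct and follows exactly the approach the paper indicates: the paper omits the proof entirely, noting only that it ``involves a simple application of exchangeability of the pair $(Q, Q_{\ep, \theta, \phi})$,'' which is precisely the expand-and-symmetrize argument you carry out. Your remark on integrability is apt but, as you say, moot in the paper's applications since $f$ and $g$ are $C^2$ on the compact group $SU(N)$.
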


\section{Schwinger--Dyson equations for $SU(N)$}\label{sd}
The following theorem is the main result of this section. 
\begin{thm}\label{sdthm}
Let $\cc^{N\times N}$ be the space of $N\times N$ complex matrices with the Euclidean topology. Let $f$ and $g$ be $C^2$ functions defined on an open subset of $\cc^{N\times N}$ that contains $SU(N)$. Let $Q= (q_{uk}+it_{uk})_{1\le u, k\le N}$ be a Haar-distributed random element of $SU(N)$, and let $f$ and $g$ be shorthand notations for the random variables $f(Q)$ and $g(Q)$. 
Then
\begin{align}
&2(N^2-1)\ee\biggl[\sum_{u,k}\biggl(q_{uk} \fpar{f}{q_{uk}}+t_{uk} \fpar{f}{t_{uk}}\biggr) g\biggr]=N\ee\biggl[ \sum_{u,k} \left(\spar{f}{q_{uk}}g+\spar{f}{t_{uk}}g+\fpar{f}{q_{uk}}\fpar{g}{q_{uk}}+\fpar{f}{t_{uk}}\fpar{g}{t_{uk}}\right)\biggr]\notag\\
&\qquad-N\ee\biggl[\sum_{u,v,k,k'} \biggl((q_{vk}q_{uk'}-t_{vk}t_{uk'}) \left(\mpar{f}{q_{uk}}{q_{vk'}}-\mpar{f}{t_{uk}}{t_{vk'}}\right) +2(q_{vk}t_{uk'}+t_{vk}q_{uk'}) \mpar{f}{q_{uk}}{t_{vk'}} \biggr)g\biggr]\notag\\
&\qquad-2\ee\biggl[\sum_{u,v,k,k'} \biggl( t_{uk}t_{vk'} \mpar{f}{q_{uk}}{q_{vk'}}+q_{uk}q_{vk'} \mpar{f}{t_{uk}}{t_{vk'}}-2t_{uk}q_{vk'} \mpar{f}{q_{uk}}{t_{vk'}} \biggr)g\biggr]\notag\\
&\qquad-N\ee\biggl[\sum_{u,v,k,k'} \biggl((q_{vk}q_{uk'}-t_{vk}t_{uk'})\left(\fpar{f}{q_{uk}}\fpar{g}{q_{vk'}}- \fpar{f}{t_{uk}}\fpar{g}{t_{vk'}}\right)\notag\\
&\qquad \qquad \qquad \qquad \qquad \qquad \qquad \qquad \qquad \qquad  +(q_{vk}t_{uk'}+t_{vk}q_{uk'})\left(\fpar{f}{q_{uk}}\fpar{g}{t_{vk'}}+ \fpar{f}{t_{uk}}\fpar{g}{q_{vk'}} \right)\biggr)\biggr]\notag\\
&\qquad -2\ee\biggl[\sum_{u,v,k,k'} \biggl(t_{uk}t_{vk'} \fpar{f}{q_{uk}}\fpar{g}{q_{vk'}}+q_{uk}q_{vk'} \fpar{f}{t_{uk}}\fpar{g}{t_{vk'}}-t_{uk}q_{vk'} \fpar{f}{q_{uk}}\fpar{g}{t_{vk'}}-q_{uk}t_{vk'} \fpar{f}{t_{uk}}\fpar{g}{q_{vk'}} \biggr)\biggr]\notag\\ \label{schdys}
\end{align}
where all indices run from $1$ to $N$ and $\ee$ denotes integration with respect to the Haar measure. 
\end{thm}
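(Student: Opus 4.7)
The plan is to specialize Stein's exchangeable pair identity (Lemma~\ref{exchlmm}) to the pair $(Q, Q_{\ep,\theta,\phi})$, Taylor expand both sides in the perturbation parameter $\ep$ to order $\ep^2$, and then average over the auxiliary random variables $U$, $V$, $\eta$, $\xi$ and integrate over the angles $\theta$, $\phi$ uniformly on $[0,2\pi]^2$. The identity of Lemma~\ref{exchlmm} holds for every choice of $\ep, \theta, \phi$, so nothing is lost by taking these averages.

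First I would record the perturbation explicitly. Since $R_{\ep,\theta,\phi}$ differs from the identity only in its $U, V$ block, $\Delta_{uk} := (Q_{\ep,\theta,\phi})_{uk} - Q_{uk}$ vanishes unless $u \in \{U, V\}$; using $\sqrt{1-\ep^2} = 1 - \ep^2/2 + O(\ep^4)$ one obtains
\begin{align*}
\Delta_{Uk} &= \ep\bigl(i\eta\cos\theta\, Q_{Uk} + e^{i\phi}\xi\sin\theta\, Q_{Vk}\bigr) - \frac{\ep^2}{2}Q_{Uk} + O(\ep^3),\\
\Delta_{Vk} &= \ep\bigl(-e^{-i\phi}\xi\sin\theta\, Q_{Uk} - i\eta\cos\theta\, Q_{Vk}\bigr) - \frac{\ep^2}{2}Q_{Vk} + O(\ep^3).
\end{align*}
Writing $\Delta = \ep A + \ep^2 B + O(\ep^3)$ and splitting each entry into real and imaginary parts, I would Taylor expand $f(Q_\ep) - f(Q)$ to second order and $g(Q_\ep) - g(Q)$ to first order, substitute into Lemma~\ref{exchlmm}, and extract the coefficient of $\ep^2$ on both sides.

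The order-$\ep^1$ terms on both sides vanish after averaging since $\ee[\eta] = \ee[\xi] = 0$. At order $\ep^2$ the left side produces two pieces: a \emph{drift} coming from $B$ paired with $\nabla f$, and a \emph{curvature} coming from two copies of $A$ paired with the Hessian of $f$. The right side produces a single bilinear piece with one $A$-derivative on $f$ and one on $g$. Averaging over $\eta, \xi$ using $\ee[\eta^2] = \ee[\xi^2] = 1$ and $\ee[\eta\xi] = 0$, and then integrating over $\theta, \phi$ using $\int_0^{2\pi}\cos^2\theta\, d\theta = \int_0^{2\pi}\sin^2\theta\, d\theta = \pi$ together with $\int_0^{2\pi}\cos\phi\sin\phi\, d\phi = \int_0^{2\pi} e^{\pm 2i\phi}\, d\phi = 0$, kills all cross terms between the $\eta\cos\theta$ and $\xi\sin\theta$ channels and reduces every bilinear expression $\ee[A_{uk}A_{vk'}]$ and $\ee[A_{uk}\overline{A_{vk'}}]$ to an explicit quadratic polynomial in the real entries $q_{\cdot k}, t_{\cdot k}$. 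Finally, averaging over the uniformly random ordered pair $(U, V)$ with $U \neq V$ turns the $(U,V)$-localized sums into unconstrained sums over $u \neq v$; the diagonal $u = v$ contributions from $\ee[A_{uk}A_{uk'}]$ combine with the drift to produce the single-sum terms on the right of \eqref{schdys} and the prefactor $2(N^2-1)$ on the left.

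The main obstacle is bookkeeping rather than any conceptual difficulty. Unlike the $SO(N)$ case of \cite{chatterjee15}, every complex entry $Q_{uk} = q_{uk} + it_{uk}$ contributes two real partial derivatives, so each bilinear in $A$ expands into four real-bilinear pieces, and the phase factors $e^{\pm i\phi}$ multiplied by the Hessian entries $\mpar{f}{q_{uk}}{q_{vk'}}$, $\mpar{f}{q_{uk}}{t_{vk'}}$, and $\mpar{f}{t_{uk}}{t_{vk'}}$ must be combined carefully to recognize the real and imaginary combinations $q_{vk}q_{uk'} - t_{vk}t_{uk'}$ and $q_{vk}t_{uk'} + t_{vk}q_{uk'}$ appearing in \eqref{schdys}. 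Once a consistent convention is fixed the computation is mechanical, but it has roughly twice as many terms as its orthogonal analogue and requires care in tracking signs through the real/imaginary decomposition.
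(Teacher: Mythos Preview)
Your overall strategy---apply Lemma~\ref{exchlmm} to $(Q,Q_{\ep,\theta,\phi})$, Taylor expand to second order, and then average---is exactly the paper's, but the specific averaging you propose does not produce \eqref{schdys}. The paper does \emph{not} integrate over $(\theta,\phi)$; instead it fixes three special values $(\pi/2,0)$, $(\pi/2,\pi/2)$, $(0,0)$ that isolate the three $\mathfrak{su}(2)$ directions in the $(U,V)$ block (real off-diagonal, imaginary off-diagonal, diagonal), obtaining three separate identities (Lemmas~\ref{sdthm1}--\ref{sdthm3}). After multiplying each by $N(N-1)$ and using unitarity to collapse the $V$-sums, it takes the combination $N\cdot[(\ref{schdys1})+(\ref{schdys2})]+2\cdot(\ref{schdys3})$. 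This $N$-dependent weighting is forced: it is precisely the choice that makes the ``single-$u$'' terms $\sum_{u,k,k'} t_{uk}t_{uk'}\mpar{f}{q_{uk}}{q_{uk'}}$ (and their companions) cancel between the two pieces, and it is what produces the factor $2(N^2-1)=N\cdot 2(N-1)+2\cdot(N-1)$ on the left.

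Your uniform average over $(\theta,\phi)\in[0,2\pi]^2$ gives $\langle\cos^2\theta\rangle=\langle\sin^2\theta\rangle=1/2$, hence the diagonal and total off-diagonal channels receive \emph{equal} ($N$-independent) weights. A short Casimir-style computation shows that summing $X\otimes X$ over ordered pairs $(U,V)$ with off-diagonal weight $\alpha$ and diagonal weight $\gamma$ yields a term $(4\alpha-2\gamma N)\,\delta_{ij}\delta_{ik}\delta_{il}$, which vanishes only when $\alpha/\gamma=N/2$; your uniform averaging gives $\alpha/\gamma=1/2$. Consequently your identity carries extra terms of the form $\sum_{u,k,k'}(\cdots)$ that are absent from \eqref{schdys}, and the left-hand prefactor is $4(N-1)$ rather than $2(N^2-1)$. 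One averaged equation is not enough here: you need the three directions separately so that you can recombine them with the $N$-dependent weights $N,N,2$.
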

To prove Theorem \ref{sdthm} we will first find three different Schwinger--Dyson type equations for $SU(N)$ matrices and then we will combine them to get equation \eqref{schdys}. The proofs of the next three lemmas are obtained by applying Lemma \ref{exchlmm} to the exchangeable pair $(Q, Q_{\theta, \phi, \epsilon})$ from Section~\ref{stein} with some special choices of $\theta$ and $\phi$. Throughout this section $O(\epsilon^3)$ denotes any random or nan-random quantity that can be bounded by some constant multiple of $\epsilon^3$.
\begin{lmm}\label{sdthm1}
Under the assumptions of Theorem \ref{sdthm}
\begin{align}
&\ee\biggl[\sum_{k}\biggl(q_{Uk} \fpar{f}{q_{Uk}}+t_{Uk} \fpar{f}{t_{Uk}}\biggr) g\biggr]\notag\\
&= \ee\biggl[ \sum_{k,k'} \biggl(q_{Vk}q_{Vk'} \mpar{f}{q_{Uk}}{q_{Uk'}} + t_{Vk}t_{Vk'} \mpar{f}{t_{Uk}}{t_{Uk'}}+2q_{Vk}t_{Vk'} \mpar{f}{q_{Uk}}{t_{Uk'}} \biggr)g \biggr]\notag\\
&- \ee\biggl[\sum_{k,k'} \biggl(q_{Vk}q_{Uk'} \mpar{f}{q_{Uk}}{q_{Vk'}} +t_{Vk}t_{Uk'} \mpar{f}{t_{Uk}}{t_{Vk'}} +2q_{Vk}t_{Uk'} \mpar{f}{q_{Uk}}{t_{Vk'}} \biggr)g\biggr]\notag\\
&+\ee\biggl[\sum_{k,k'} \biggl(q_{Vk}q_{Vk'} \fpar{f}{q_{Uk}}\fpar{g}{q_{Uk'}}+ t_{Vk}t_{Vk'} \fpar{f}{t_{Uk}}\fpar{g}{t_{Uk'}}+q_{Vk}t_{Vk'} \fpar{f}{q_{Uk}}\fpar{g}{t_{Uk'}}+ t_{Vk}q_{Vk'} \fpar{f}{t_{Uk}}\fpar{g}{q_{Uk'}}\biggr)\biggr]\notag\\
& -\ee\biggl[\sum_{k,k'} \biggl(q_{Vk}q_{Uk'} \fpar{f}{q_{Uk}}\fpar{g}{q_{Vk'}}+t_{Vk}t_{Uk'} \fpar{f}{t_{Uk}}\fpar{g}{t_{Vk'}}+q_{Vk}t_{Uk'} \fpar{f}{q_{Uk}}\fpar{g}{t_{Vk'}}+t_{Vk}q_{Uk'} \fpar{f}{t_{Uk}}\fpar{g}{q_{Vk'}} \biggr)\biggr] \label{schdys1}
\end{align}
where all indices run from $1$ to $N$ and $\ee$ denotes integration with respect to the Haar measure. 
\end{lmm}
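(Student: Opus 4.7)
The plan is to apply Lemma \ref{exchlmm} to the exchangeable pair $(Q, Q_{\epsilon, \pi/2, 0}) = (Q, R_{\epsilon, \pi/2, 0} Q)$ with the specific parameter choice $\theta = \pi/2$ and $\phi = 0$, then Taylor expand in $\epsilon$ and match the $O(\epsilon^2)$ coefficients of the two sides. With these choices, $R_{\epsilon, \pi/2, 0}$ has entries $r_{UU} = r_{VV} = \sqrt{1-\epsilon^2}$ and $r_{UV} = -r_{VU} = \epsilon\xi$, so only rows $U$ and $V$ of $Q$ are perturbed, and a direct computation yields
\[
\delta q_{Uk} = -\tfrac{1}{2}\epsilon^2 q_{Uk} + \epsilon \xi q_{Vk} + O(\epsilon^3)\,, \qquad \delta q_{Vk} = -\tfrac{1}{2}\epsilon^2 q_{Vk} - \epsilon \xi q_{Uk} + O(\epsilon^3)\,,
\]
with the exact analogs (replace $q\to t$) for $\delta t_{Uk}$ and $\delta t_{Vk}$, while all other entries are unchanged.

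Next I would Taylor expand $f(Q_\epsilon) - f(Q) = \epsilon A_1^f + \epsilon^2 A_2^f + O(\epsilon^3)$, and similarly for $g$. The linear coefficient
\[
A_1^f = \xi \sum_k \Bigl[\fpar{f}{q_{Uk}} q_{Vk} + \fpar{f}{t_{Uk}} t_{Vk} - \fpar{f}{q_{Vk}} q_{Uk} - \fpar{f}{t_{Vk}} t_{Uk}\Bigr]
\]
is odd in $\xi$, so $\ee[A_1^f g] = 0$ by symmetry of $\xi$; hence the $O(\epsilon)$ contributions vanish on both sides. The quadratic coefficient $A_2^f$ splits into a first-derivative piece coming from the $-\epsilon^2/2$ corrections to $\delta$, namely $-\tfrac{1}{2}\sum_k (q_{Uk}\partial_{q_{Uk}} f + t_{Uk}\partial_{t_{Uk}} f + q_{Vk}\partial_{q_{Vk}} f + t_{Vk}\partial_{t_{Vk}} f)$, and a second-derivative piece $\tfrac{1}{2}\sum \partial^2 f \cdot (\delta \cdot \delta)\big|_{O(\epsilon^2)}$, which (using $\xi^2=1$) produces $UU$-second-derivatives with $VV$-coefficients, $VV$-second-derivatives with $UU$-coefficients, and cross $UV$-second-derivatives with $UV$-coefficients carrying a minus sign.

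Reading off the $\epsilon^2$ coefficient of Lemma \ref{exchlmm} gives the identity $\ee[A_2^f g] = -\tfrac{1}{2}\ee[A_1^f A_1^g]$. To reach the form in the statement, I would symmetrize under the swap $U\leftrightarrow V$, which is valid because $(U,V)$ is uniform on ordered pairs with $U\ne V$. The first-derivative $V$-row pieces of $A_2^f$ are then identified with the $U$-row pieces, and combining with the rearrangement places $\ee[\sum_k(q_{Uk}\partial_{q_{Uk}}f+t_{Uk}\partial_{t_{Uk}}f)g]$ alone on the left-hand side. The same symmetrization collapses the $UU$- and $VV$-type second-derivative contributions into the single line of $\partial^2_{UU}f\cdot VV$ coefficients appearing in the statement. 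Expanding the product $A_1^f A_1^g$ (sixteen terms) and again symmetrizing in $U\leftrightarrow V$ — using that $A_1^f$ and $A_1^g$ are each antisymmetric under this swap so their product is symmetric — produces precisely the two gradient-product sums on the right-hand side of \eqref{schdys1}.

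The main obstacle is purely bookkeeping: carefully tracking the sign conventions across the sixteen cross-terms in $A_1^f A_1^g$ and the analogous terms in $A_2^f$, and confirming that the symmetrization under $U\leftrightarrow V$ produces exactly the pattern of $UU$- and $UV$-coefficients listed. There is no conceptual difficulty — the method is the one already used for $SO(N)$ in \cite{chatterjee15} — but because $SU(N)$ perturbations mix real and imaginary components of matrix entries, the full list of second-order derivatives (all of $\partial^2_{qq}$, $\partial^2_{tt}$, and $\partial^2_{qt}$) is longer than in the orthogonal case.
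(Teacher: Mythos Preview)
Your proposal is correct and follows essentially the same approach as the paper: both apply Lemma \ref{exchlmm} to the pair $(Q, Q_{\ep,\pi/2,0})$, Taylor expand to second order in $\ep$ using the increments you wrote down, exploit $\ee[\xi]=0$ and $\xi^2=1$, and then symmetrize under $U\leftrightarrow V$ before reading off the $\ep^2$ identity. The only difference is organizational --- the paper writes out the full expansions \eqref{taylor1} and \eqref{taylor2} and takes expectations directly rather than packaging them as coefficients $A_1^f$ and $A_2^f$ --- but the content and the bookkeeping you anticipate are identical.
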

\begin{proof}
Let $Q_\ep:=Q_{\ep, \frac{\pi}{2}, 0}$ and denote the $(u,k)^{\textup{th}}$ entry of $Q_\ep$ by $q_{uk}^\ep+it_{uk}^\ep$. Then, for  any $1\le k\le N$,
\begin{align*}
q^\ep_{Uk} &= \sqrt{1-\ep^2}\, q_{Uk} +  \ep q_{Vk}\xi\, ,\\
t^\ep_{Uk} &= \sqrt{1-\ep^2}\, t_{Uk} + \ep t_{Vk} \xi\, ,\\
q^\ep_{Vk} &= -\ep q_{Uk} \xi + \sqrt{1-\ep^2}\, q_{Vk}\, ,\\
t^\ep_{Vk} &= -\ep t_{Uk} \xi + \sqrt{1-\ep^2}\, t_{Vk}\, ,\\
q^\ep_{k'k} &= q_{k'k} \ \text{ and } \ t^\ep_{k'k} = t_{k'k} \ \text{ for all } k' \neq U,V \, .
\end{align*}
For each $u$ and $k$, let $\delta_{uk} := q^\ep_{uk} - q_{uk}$ and $\gamma_{uk} := t^\ep_{uk} - t_{uk}$. Then
\begin{align*}
\delta_{Uk} &= -\frac{\ep^2}{2} q_{Uk} + \ep q_{Vk} \xi + O(\ep^3)\, ,\\
\gamma_{Uk} &= -\frac{\ep^2}{2} t_{Uk} + \ep t_{Vk} \xi + O(\ep^3)\, ,\\
\delta_{Vk} &= -\ep q_{Uk} \xi -\frac{\ep^2}{2} q_{Vk} + O(\ep^3)\, ,\\
\gamma_{Vk} &= -\ep t_{Uk} \xi-\frac{\ep^2}{2} t_{Vk} + O(\ep^3)\, , \ \text{ and }\\
\delta_{k'k} &= \gamma_{k'k} = 0 \ \text{ for all } k' \ne U, V\, .
\end{align*}
By compactness of $SU(N)$, $f$, $g$ and their derivatives are bounded on $SU(N)$, and therefore
\begin{align}
f(Q_\ep)&- f(Q)= \sum_k \biggl(\delta_{Uk} \fpar{f}{q_{Uk}} + \delta_{Vk} \fpar{f}{q_{Vk}}+\gamma_{Uk}\fpar{f}{t_{Uk}}+ \gamma_{Vk}\fpar{f}{t_{Vk}}\biggr)\notag \\
&+\frac{1}{2} \sum_{k,k'}\biggl(\delta_{Uk}\delta_{Uk'} \mpar{f}{q_{Uk}}{q_{Uk'}}+\delta_{Vk}\delta_{Vk'} \mpar{f}{q_{Vk}}{q_{Vk'}}+\gamma_{Uk}\gamma_{Uk'} \mpar{f}{t_{Uk}}{t_{Uk'}} + \gamma_{Vk}\gamma_{Vk'} \mpar{f}{t_{Vk}}{t_{Vk'}}\biggr)\nonumber\\
&+\sum_{k,k'}\biggl(\delta_{Uk}\delta_{Vk'} \mpar{f}{q_{Uk}}{q_{Vk'}}+\gamma_{Uk}\gamma_{Vk'} \mpar{f}{t_{Uk}}{t_{Vk'}}+\delta_{Uk}\gamma_{Vk'} \mpar{f}{q_{Uk}}{t_{Vk'}} +\gamma_{Uk}\delta_{Vk'} \mpar{f}{t_{Uk}}{q_{Vk'}}\biggr)\nonumber\\
&+\sum_{k,k'}\biggl(\delta_{Uk}\gamma_{Uk'} \mpar{f}{q_{Uk}}{t_{Uk'}}+\delta_{Vk}\gamma_{Vk'} \mpar{f}{q_{Vk}}{t_{Vk'}}\biggr)+ O(\ep^3)\, . \label{taylor1}
\end{align}
Note that $\xi$ has zero mean and $\xi^2=1$. So by replacing each quantity in \eqref{taylor1} with its value in terms of $q_{uk}$ and $t_{uk}$ and using symmetry between $U$ and $V$ we get

\begin{align}
\ee \biggl[ &\biggl(f(Q_\ep) - f(Q)\biggr)g(Q)\biggl] = -\ep^2\ee \biggl[ \sum_k \biggl(q_{Uk} \fpar{f}{q_{Uk}} + t_{Uk} \fpar{f}{t_{Uk}}\biggr)g\biggl]\notag \\
&+\ep^2 \ee \biggl[\sum_{k,k'}\biggl(q_{Vk}q_{Vk'} \mpar{f}{q_{Uk}}{q_{Uk'}}+t_{Vk}t_{Vk'} \mpar{f}{t_{Uk}}{t_{Uk'}}+2q_{Vk}t_{Vk'} \mpar{f}{q_{Uk}}{t_{Uk'}}\biggr)g\biggr]\nonumber\\
&-\ep^2 \ee \biggl[\sum_{k,k'}\biggl(q_{Vk}q_{Uk'} \mpar{f}{q_{Uk}}{q_{Vk'}}+t_{Vk}t_{Uk'} \mpar{f}{t_{Uk}}{t_{Vk'}}+2q_{Vk}t_{Uk'} \mpar{f}{q_{Uk}}{t_{Vk'}}\biggr]+ O(\ep^3)\, .
 \label{f1}
\end{align}
Similarly,
\begin{align}
(f(Q_\ep)-f(Q))&(g(Q_\ep)-g(Q))= \biggl[\sum_k \biggl(\delta_{Uk} \fpar{f}{q_{Uk}} + \gamma_{Uk}\fpar{f}{t_{Uk}}+ \delta_{Vk} \fpar{f}{q_{Vk}}+ \gamma_{Vk}\fpar{f}{t_{Vk}}\biggr)\biggr]\notag\\
&\qquad \cdot\biggl[\sum_{k'} \biggl(\delta_{Uk'} \fpar{g}{q_{Uk'}} + \gamma_{Uk'}\fpar{g}{t_{Uk'}}+ \delta_{Vk'} \fpar{g}{q_{Vk'}}+ \gamma_{Vk'}\fpar{g}{t_{Vk'}}\biggr)\biggr]+ O(\ep^3)\, . \label{taylor2}
\end{align}
Again replacing everything in terms of $q_{uk}$ and $t_{uk}$ in the equation \eqref{taylor2}, and using symmetry between $U$ and $V$ we obtain
\begin{align}
&\frac{1}{2}\ee\biggl[(f(Q_\ep)-f(Q))(g(Q_\ep)-g(Q))\biggr]\notag\\
&= \ep^2 \ee\biggl[\sum_{k,k'}\biggl( q_{Vk}q_{Vk'} \fpar{f}{q_{Uk}} \fpar{g}{q_{Uk'}}+t_{Vk}t_{Vk'} \fpar{f}{t_{Uk}} \fpar{g}{t_{Uk'}}+q_{Vk}t_{Vk'} \fpar{f}{q_{Uk}} \fpar{g}{t_{Uk'}}+t_{Vk}q_{Vk'} \fpar{f}{t_{Uk}} \fpar{g}{q_{Uk'}}\biggr)\biggr]\notag\\
&-\ep^2 \ee\biggl[\sum_{k,k'}\biggl( q_{Vk}q_{Uk'} \fpar{f}{q_{Uk}} \fpar{g}{q_{Vk'}}+t_{Vk}t_{Uk'} \fpar{f}{t_{Uk}} \fpar{g}{t_{Vk'}}+ q_{Vk}t_{Uk'} \fpar{f}{q_{Uk}} \fpar{g}{t_{Vk'}}+t_{Vk}q_{Uk'} \fpar{f}{t_{Uk}} \fpar{g}{q_{Vk'}}\biggr)\biggr] \notag \\
&+ O(\ep^3)\, .\label{f2}
\end{align}
The proof is completed by combining equations $\eqref{f1}$, $\eqref{f2}$ and Lemma \ref{exchlmm}, and taking $\ep$ to zero. 
\end{proof}
\begin{lmm}\label{sdthm2}
Under the assumptions of Theorem \ref{sdthm}
\begin{align}
&\ee\biggl[\sum_{k}\biggl(q_{Uk} \fpar{f}{q_{Uk}}+t_{Uk} \fpar{f}{t_{Uk}}\biggr) g\biggr]\notag\\
&= \ee\biggl[ \sum_{k,k'} \biggl(t_{Vk}t_{Vk'} \mpar{f}{q_{Uk}}{q_{Uk'}}+q_{Vk}q_{Vk'} \mpar{f}{t_{Uk}}{t_{Uk'}} -2t_{Vk}q_{Vk'} \mpar{f}{q_{Uk}}{t_{Uk'}} \biggr)g \biggr]\notag\\
&+\ee\biggl[\sum_{k,k'} \biggl( t_{Vk}t_{Uk'} \mpar{f}{q_{Uk}}{q_{Vk'}}+q_{Vk}q_{Uk'} \mpar{f}{t_{Uk}}{t_{Vk'}}-2t_{Vk}q_{Uk'} \mpar{f}{q_{Uk}}{t_{Vk'}} \biggr)g\biggr]\notag\\
&+\ee\biggl[\sum_{k,k'} \biggl(t_{Vk}t_{Vk'} \fpar{f}{q_{Uk}}\fpar{g}{q_{Uk'}}+q_{Vk}q_{Vk'} \fpar{f}{t_{Uk}}\fpar{g}{t_{Uk'}}- t_{Vk}q_{Vk'} \fpar{f}{q_{Uk}}\fpar{g}{t_{Uk'}}-q_{Vk}t_{Vk'} \fpar{f}{t_{Uk}}\fpar{g}{q_{Uk'}}\biggr)\biggr]\notag\\
&+\ee\biggl[\sum_{k,k'} \biggl(t_{Vk}t_{Uk'} \fpar{f}{q_{Uk}}\fpar{g}{q_{Vk'}}+q_{Vk}q_{Uk'} \fpar{f}{t_{Uk}}\fpar{g}{t_{Vk'}}-t_{Vk}q_{Uk'} \fpar{f}{q_{Uk}}\fpar{g}{t_{Vk'}}-q_{Vk}t_{Uk'} \fpar{f}{t_{Uk}}\fpar{g}{q_{Vk'}} \biggr)\biggr] \label{schdys2}
\end{align}
where all indices run from $1$ to $N$ and $\ee$ denotes integration with respect to the Haar measure.
\end{lmm}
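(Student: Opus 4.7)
The plan is to apply Lemma~\ref{exchlmm} to the exchangeable pair $(Q, Q_{\ep,\theta,\phi})$ constructed in Section~\ref{stein}, this time with $\theta=\pi/2$ and $\phi=\pi/2$, so that $e^{i\phi}=i$. With these values the perturbation matrix $R_{\ep,\pi/2,\pi/2}$ has purely imaginary off-diagonal entries $r_{UV}=i\ep\xi$ and $r_{VU}=i\ep\xi$. Writing out $Q_\ep = R_{\ep,\pi/2,\pi/2}Q$ in real and imaginary parts and expanding $\sqrt{1-\ep^2}$, a direct calculation gives the entrywise perturbations
\begin{align*}
\delta_{Uk} &= -\tfrac{\ep^2}{2}q_{Uk} - \ep\xi t_{Vk} + O(\ep^3)\,, &
\gamma_{Uk} &= -\tfrac{\ep^2}{2}t_{Uk} + \ep\xi q_{Vk} + O(\ep^3)\,,\\
\delta_{Vk} &= -\tfrac{\ep^2}{2}q_{Vk} - \ep\xi t_{Uk} + O(\ep^3)\,, &
\gamma_{Vk} &= -\tfrac{\ep^2}{2}t_{Vk} + \ep\xi q_{Uk} + O(\ep^3)\,,
\end{align*}
and $\delta_{k'k}=\gamma_{k'k}=0$ for $k'\ne U,V$. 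The point of choosing $\phi=\pi/2$ rather than $\phi=0$ (as in Lemma~\ref{sdthm1}) is exactly that multiplication by $i$ swaps the roles of the real and imaginary parts of the coefficients appearing at order $\ep$, with an extra sign.

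Next, I would Taylor expand $f(Q_\ep)-f(Q)$ to order $\ep^2$ as in equation~(6.5) of the proof of Lemma~\ref{sdthm1}, and then take expectations using $\ee[\xi]=0$ and $\xi^2=1$. Because $\ee[\xi]=0$, the only $\ep^2$ contribution from the linear part comes from the $-\tfrac{\ep^2}{2}q_{Uk}$ and $-\tfrac{\ep^2}{2}t_{Uk}$ pieces, and after using the $(U,V)$-exchange symmetry of the distribution of $(U,V)$, this produces the same left-hand side $-\ep^2\ee[\sum_k(q_{Uk}\partial_{q_{Uk}}f + t_{Uk}\partial_{t_{Uk}}f)g]$ as in Lemma~\ref{sdthm1}. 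The Hessian contributions pick up the new coefficients $\ee[\delta_{Uk}\delta_{Uk'}]=\ep^2 t_{Vk}t_{Vk'}$, $\ee[\gamma_{Uk}\gamma_{Uk'}]=\ep^2 q_{Vk}q_{Vk'}$, $\ee[\delta_{Uk}\gamma_{Uk'}]=-\ep^2 t_{Vk}q_{Vk'}$, together with the analogous mixed-index coefficients $\ee[\delta_{Uk}\delta_{Vk'}]=\ep^2 t_{Vk}t_{Uk'}$, $\ee[\gamma_{Uk}\gamma_{Vk'}]=\ep^2 q_{Vk}q_{Uk'}$, $\ee[\delta_{Uk}\gamma_{Vk'}]=-\ep^2 t_{Vk}q_{Uk'}$, and $\ee[\gamma_{Uk}\delta_{Vk'}]=-\ep^2 q_{Vk}t_{Uk'}$. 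These are precisely the building blocks of the first two Hessian lines of~\eqref{schdys2}.

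The product $(f(Q_\ep)-f(Q))(g(Q_\ep)-g(Q))$ is handled by the same mechanism as in equation~(6.7): at order $\ep^2$ only the product of the $O(\ep)$ pieces survives, producing bilinear combinations of $\partial f$ and $\partial g$ whose coefficients are the same as those appearing in the Hessian computation. Invoking Lemma~\ref{exchlmm}, equating the two sides, dividing through by $\ep^2$, and letting $\ep\to 0$ then yields~\eqref{schdys2}.

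The main obstacle will be bookkeeping: several of the final terms combine pairs that are not literally equal but are equal after applying the $(U,V)$-swap symmetry. For instance, $\ee[\sum_{k,k'}q_{Vk}t_{Uk'}\partial^2_{t_{Uk},q_{Vk'}}f\cdot g]$ and $\ee[\sum_{k,k'}t_{Vk}q_{Uk'}\partial^2_{q_{Uk},t_{Vk'}}f\cdot g]$ coincide after swapping $U\leftrightarrow V$ and relabeling $k\leftrightarrow k'$, and it is only this identification that produces the factor $-2$ in the mixed second-derivative term $-2t_{Vk}q_{Uk'}\mpar{f}{q_{Uk}}{t_{Vk'}}$ in~\eqref{schdys2}. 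Apart from this careful accounting — which runs exactly parallel to the corresponding step in the proof of Lemma~\ref{sdthm1} — the argument is a mechanical application of the Taylor expansion and Lemma~\ref{exchlmm}.
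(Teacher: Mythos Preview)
Your proposal is correct and follows essentially the same approach as the paper: you choose the parameters $\theta=\pi/2$, $\phi=\pi/2$, compute the resulting perturbations $\delta_{\cdot k},\gamma_{\cdot k}$ (which match the paper's exactly), Taylor expand, apply Lemma~\ref{exchlmm}, and use the $(U,V)$-exchange symmetry to consolidate terms. The paper's proof is identical in structure and content, so there is nothing to add.
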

\begin{proof}
Let $Q_\ep:=Q_{\epsilon, \frac{\pi}{2}, \frac{\pi}{2}}$ and let $q_{uk}^\ep+it_{uk}^\ep$ denote the $(u,k)^{\textup{th}}$ entry of $Q_\ep$. Then, for  any $1\le k\le N$,
\begin{align*}
q^\ep_{Uk} &= \sqrt{1-\ep^2}\, q_{Uk} - \ep t_{Vk} \xi\, ,\\
t^\ep_{Uk} &= \sqrt{1-\ep^2}\, t_{Uk} + \ep q_{Vk} \xi\, ,\\
q^\ep_{Vk} &= -\ep t_{Uk} \xi + \sqrt{1-\ep^2}\, q_{Vk}\, ,\\
t^\ep_{Vk} &= \ep q_{Uk} \xi + \sqrt{1-\ep^2}\, t_{Vk}\, , \ \text{ and }\\
q^\ep_{k'k} &= q_{k'k} \ \text{ and } \ t^\ep_{k'k} = t_{k'k} \ \text{ for all } k' \neq U,V \, .
\end{align*}
Again for each $u$ and $k$, let $\delta_{uk} := q^\ep_{uk} - q_{uk}$ and $\gamma_{uk} := t^\ep_{uk} - t_{uk}$. Then for all $1\leq k\leq N$,
\begin{align*}
\delta_{Uk} &= -\frac{\ep^2}{2} q_{Uk} - \ep t_{Vk} \xi + O(\ep^3)\, ,\\
\gamma_{Uk} &= -\frac{\ep^2}{2} t_{Uk} + \ep q_{Vk} \xi + O(\ep^3)\, ,\\
\delta_{Vk} &= -\ep t_{Uk} \xi -\frac{\ep^2}{2} q_{Vk} + O(\ep^3)\, ,\\
\gamma_{Vk} &= \ep q_{Uk} \xi -\frac{\ep^2}{2} t_{Vk} + O(\ep^3)\, , \ \text{ and }\\
\delta_{k'k} &= \gamma_{k'k} = 0 \ \text{ for all } k' \ne U, V\, .
\end{align*}
Thus by using equation \eqref{taylor1} and symmetry between $U$ and $V$ we get
\begin{align}
\ee \biggl[ &\biggl(f(Q_\ep) - f(Q)\biggr)g(Q)\biggl] = -\ep^2\ee \biggl[ \sum_k \biggl(q_{Uk} \fpar{f}{q_{Uk}} + t_{Uk} \fpar{f}{t_{Uk}}\biggr)g\biggl]\notag \\
&+\ep^2 \ee \biggl[\sum_{k,k'}\biggl(t_{Vk}t_{Vk'} \mpar{f}{q_{Uk}}{q_{Uk'}}+q_{Vk}q_{Vk'} \mpar{f}{t_{Uk}}{t_{Uk'}}-2t_{Vk}q_{Vk'} \mpar{f}{q_{Uk}}{t_{Uk'}}\biggr)g\biggr]\nonumber\\
&+\ep^2 \ee \biggl[\sum_{k,k'}\biggl(t_{Vk}t_{Uk'} \mpar{f}{q_{Uk}}{q_{Vk'}}+q_{Vk}q_{Uk'} \mpar{f}{t_{Uk}}{t_{Vk'}}-2t_{Vk}q_{Uk'} \mpar{f}{q_{Uk}}{t_{Vk'}}\biggr)g\biggr]+ O(\ep^3)\, .
 \label{f3}
\end{align}
Similarly, by equation \eqref{taylor2} and symmetry between $U$ and $V$ 
\begin{align}
&\frac{1}{2}\ee\biggl[(f(Q_\ep)-f(Q))(g(Q_\ep)-g(Q))\biggr]\notag\\
&= \ep^2 \ee\biggl[\sum_{k,k'}\biggl( t_{Vk}t_{Vk'} \fpar{f}{q_{Uk}} \fpar{g}{q_{Uk'}}+q_{Vk}q_{Vk'} \fpar{f}{t_{Uk}} \fpar{g}{t_{Uk'}}-t_{Vk}q_{Vk'} \fpar{f}{q_{Uk}} \fpar{g}{t_{Uk'}}-q_{Vk}t_{Vk'} \fpar{f}{t_{Uk}} \fpar{g}{q_{Uk'}}\biggr)\biggr]\notag\\
&+\ep^2 \ee\biggl[\sum_{k,k'}\biggl( t_{Vk}t_{Uk'} \fpar{f}{q_{Uk}} \fpar{g}{q_{Vk'}}+q_{Vk}q_{Uk'} \fpar{f}{t_{Uk}} \fpar{g}{t_{Vk'}}- t_{Vk}q_{Uk'} \fpar{f}{q_{Uk}} \fpar{g}{t_{Vk'}}-q_{Vk}t_{Uk'} \fpar{f}{t_{Uk}} \fpar{g}{q_{Vk'}}\biggr)\biggr]\notag \\
&+ O(\ep^3)\, .\label{f4}
\end{align}
We finish the proof by combining equations $\eqref{f3}$, $ \eqref{f4}$ and Lemma \ref{exchlmm}, and taking $\ep$ to zero. 
\end{proof}
\begin{lmm}\label{sdthm3}
Under the assumptions of Theorem \ref{sdthm}
\begin{align}
&\ee\biggl[\sum_{k}\biggl(q_{Uk} \fpar{f}{q_{Uk}}+t_{Uk} \fpar{f}{t_{Uk}}\biggr) g\biggr]\notag\\
&=\ee\biggl[\sum_{k,k'} \biggl(t_{Uk}t_{Uk'} \fpar{f}{q_{Uk}}\fpar{g}{q_{Uk'}}+q_{Uk}q_{Uk'} \fpar{f}{t_{Uk}}\fpar{g}{t_{Uk'}}- t_{Uk}q_{Uk'} \fpar{f}{q_{Uk}}\fpar{g}{t_{Uk'}}-q_{Uk}t_{Uk'} \fpar{f}{t_{Uk}}\fpar{g}{q_{Uk'}}\biggr)\biggr]\notag\\
&+\ee\biggl[ \sum_{k,k'} \biggl(t_{Uk}t_{Uk'} \mpar{f}{q_{Uk}}{q_{Uk'}}+q_{Uk}q_{Uk'} \mpar{f}{t_{Uk}}{t_{Uk'}}-2t_{Uk}q_{Uk'} \mpar{f}{q_{Uk}}{t_{Uk'}}\biggr)g \biggr]\notag\\
&-\ee\biggl[\sum_{k,k'} \biggl(t_{Uk}t_{Vk'} \fpar{f}{q_{Uk}}\fpar{g}{q_{Vk'}}+ q_{Uk}q_{Vk'} \fpar{f}{t_{Uk}}\fpar{g}{t_{Vk'}} -t_{Uk}q_{Vk'} \fpar{f}{q_{Uk}}\fpar{g}{t_{Vk'}}- q_{Uk}t_{Vk'} \fpar{f}{t_{Uk}}\fpar{g}{q_{Vk'}} \biggr)\biggr] \notag\\
&-\ee\biggl[\sum_{k,k'} \biggl( t_{Uk}t_{Vk'} \mpar{f}{q_{Uk}}{q_{Vk'}}+ q_{Uk}q_{Vk'} \mpar{f}{t_{Uk}}{t_{Vk'}}-2t_{Uk}q_{Vk'} \mpar{f}{q_{Uk}}{t_{Vk'}} \biggr)g\biggr]\label{schdys3}
\end{align}
where all indices run from $1$ to $N$ and $\ee$ denotes integration with respect to the Haar measure.
\end{lmm}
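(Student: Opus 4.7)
The plan is to mirror the proofs of Lemmas \ref{sdthm1} and \ref{sdthm2}, but now applying Lemma \ref{exchlmm} to the exchangeable pair $(Q, Q_\epsilon)$ with $Q_\epsilon := Q_{\ep, 0, 0}$, i.e.\ taking $\theta = 0$ (so $\cos\theta = 1$, $\sin\theta = 0$). With this choice the matrix $R_{\ep, 0, 0}$ is diagonal: it multiplies row $U$ of $Q$ by $\sqrt{1-\ep^2} + i\ep\eta$ and row $V$ by $\sqrt{1-\ep^2} - i\ep\eta$, leaving every other row fixed. Note that $\xi$ and $\phi$ never enter, so the randomness in the increment reduces to the single sign $\eta$.

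First I would record the entrywise increments $\delta_{uk} := q_{uk}^\ep - q_{uk}$ and $\gamma_{uk} := t_{uk}^\ep - t_{uk}$. A direct computation gives, for every $1 \le k \le N$,
\begin{align*}
\delta_{Uk} &= -\ep\eta\, t_{Uk} - \tfrac{\ep^2}{2}\, q_{Uk} + O(\ep^3), & \gamma_{Uk} &= +\ep\eta\, q_{Uk} - \tfrac{\ep^2}{2}\, t_{Uk} + O(\ep^3),\\
\delta_{Vk} &= +\ep\eta\, t_{Vk} - \tfrac{\ep^2}{2}\, q_{Vk} + O(\ep^3), & \gamma_{Vk} &= -\ep\eta\, q_{Vk} - \tfrac{\ep^2}{2}\, t_{Vk} + O(\ep^3),
\end{align*}
and $\delta_{k'k} = \gamma_{k'k} = 0$ for $k' \ne U,V$. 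The decisive feature, absent in the two $\theta = \pi/2$ cases, is the sign flip between the $U$- and $V$-row $\eta$-coefficients.

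Next I would substitute these expressions into the second-order Taylor expansions \eqref{taylor1} and \eqref{taylor2}, take expectations, and use $\ee[\eta] = 0$, $\ee[\eta^2] = 1$. The linear-in-$\ep$ terms vanish. The $\ep^2$-coefficient of $\ee[(f(Q_\ep) - f(Q))g]$ then decomposes into (i) the Euler-type first-derivative piece $-\ep^2 \sum_k (q_{Uk}\fpar{f}{q_{Uk}} + t_{Uk}\fpar{f}{t_{Uk}})$, obtained by using the $U \leftrightarrow V$ exchangeability of the uniformly chosen pair to collapse the $U$-row and $V$-row diagonal contributions, and (ii) the two second-derivative lines on the right-hand side of \eqref{schdys3}, where the $U$--$V$ sign flip causes each $(U,V)$-mixed product to acquire a minus sign relative to the $(U,U)$ and $(V,V)$ products. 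Relabeling $k \leftrightarrow k'$ and invoking the symmetry of mixed partials combines pairs such as $t_{Uk}q_{Vk'}\mpar{f}{q_{Uk}}{t_{Vk'}}$ and $q_{Uk}t_{Vk'}\mpar{f}{t_{Uk}}{q_{Vk'}}$ into a single doubled term, producing the coefficient $-2t_{Uk}q_{Vk'}\mpar{f}{q_{Uk}}{t_{Vk'}}$ in \eqref{schdys3}.

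A parallel calculation for $\tfrac{1}{2}\ee[(f(Q_\ep) - f(Q))(g(Q_\ep) - g(Q))]$, retaining only the $\eta^2$ pairings, yields the two $\fpar{f}{\cdot}\fpar{g}{\cdot}$ lines of \eqref{schdys3} with the same sign convention on $(U,V)$-mixed terms. Applying Lemma \ref{exchlmm}, dividing by $\ep^2$ and sending $\ep \downarrow 0$ then produces \eqref{schdys3}. I expect the main obstacle to be not conceptual but purely combinatorial: carefully tracking the dozen or so quadratic cross products in $\delta$ and $\gamma$, the minus signs induced by the $U$--$V$ sign flip, and correctly invoking the $U \leftrightarrow V$ exchangeability at each step.
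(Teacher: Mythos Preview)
Your proposal is correct and follows essentially the same approach as the paper: the paper also sets $Q_\ep := Q_{\ep,0,0}$, obtains exactly the increments you wrote, substitutes into the Taylor expansions \eqref{taylor1} and \eqref{taylor2}, uses $\ee[\eta]=0$, $\eta^2=1$ and the $U\leftrightarrow V$ symmetry, and then applies Lemma \ref{exchlmm} before letting $\ep\to 0$. Your identification of the diagonal structure of $R_{\ep,0,0}$ and the resulting sign flip between the $U$- and $V$-rows as the source of the minus signs on the $(U,V)$-mixed terms is exactly the mechanism at work.
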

\begin{proof}
Let $Q_\ep:=Q_{\ep, 0, 0}$ and denote the $(u,k)^{\textup{th}}$ entry of $Q_\ep$ by $q_{uk}^\ep+it_{uk}^\ep$. Then, for any $1\le k\le N$,
\begin{align*}
q^\ep_{Uk} &= \sqrt{1-\ep^2}\, q_{Uk} - \eta \ep t_{Uk}\, ,\\
t^\ep_{Uk} &= \sqrt{1-\ep^2}\, t_{Uk} + \eta \ep q_{Uk}\, ,\\
q^\ep_{Vk} &= \eta \ep t_{Vk} + \sqrt{1-\ep^2}\, q_{Vk}\, ,\\
t^\ep_{Vk} &= -\eta \ep q_{Vk} + \sqrt{1-\ep^2}\, t_{Vk}\, , \ \text{ and }\\
q^\ep_{k'k} &= q_{k'k} \ \text{ and } \ t^\ep_{k'k} = t_{k'k} \ \text{ for all } k' \neq U,V \, .
\end{align*}
As before, for each $u$ and $k$, let $\delta_{uk} := q^\ep_{uk} - q_{uk}$ and $\gamma_{uk} := t^\ep_{uk} - t_{uk}$. Then, for all $1\leq k\leq N$,
\begin{align*}
\delta_{Uk} &= -\frac{\ep^2}{2} q_{Uk} - \eta \ep t_{Uk} + O(\ep^3)\, ,\\
\gamma_{Uk} &= -\frac{\ep^2}{2} t_{Uk} + \eta \ep q_{Uk} + O(\ep^3)\, ,\\
\delta_{Vk} &= \eta \ep t_{Vk} -\frac{\ep^2}{2} q_{Vk} + O(\ep^3)\, ,\\
\gamma_{Vk} &= -\eta \ep q_{Vk} -\frac{\ep^2}{2} t_{Vk} + O(\ep^3)\, , \ \text{ and }\\
\delta_{k'k} &= \gamma_{k'k} = 0 \ \text{ for all } k' \ne U,V\, .
\end{align*}
Thus by equation \eqref{taylor1} and symmetry between $U$ and $V$ we get
\begin{align}
\ee \biggl[ &\biggl(f(Q_\ep) - f(Q)\biggr)g(Q)\biggl]= -\ep^2\ee \biggl[ \sum_k \biggl(q_{Uk} \fpar{f}{q_{Uk}} + t_{Uk} \fpar{f}{t_{Uk}}\biggr)g\biggl]\notag \\
&+\ep^2 \ee \biggl[\sum_{k,k'}\biggl(t_{Uk}t_{Uk'} \mpar{f}{q_{Uk}}{q_{Uk'}}+q_{Uk}q_{Uk'} \mpar{f}{t_{Uk}}{t_{Uk'}}-2t_{Uk}q_{Uk'} \mpar{f}{q_{Uk}}{t_{Uk'}}\biggr)g\biggr]\nonumber\\
&-\ep^2 \ee \biggl[\sum_{k,k'}\biggl(t_{Uk}t_{Vk'} \mpar{f}{q_{Uk}}{q_{Vk'}}+q_{Uk}q_{Vk'} \mpar{f}{t_{Uk}}{t_{Vk'}}-2t_{Uk}q_{Vk'} \mpar{f}{q_{Uk}}{t_{Vk'}}\biggr)g\biggr]+ O(\ep^3)\, .
 \label{f5}
\end{align}
Similarly, by equation \eqref{taylor2} and symmetry between $U$ and $V$ 
\begin{align}
&\frac{1}{2}\ee\biggl[(f(Q_\ep)-f(Q))(g(Q_\ep)-g(Q))\biggr]\notag\\
&= \ep^2 \ee\biggl[\sum_{k,k'}\biggl(t_{Uk}t_{Uk'} \fpar{f}{q_{Uk}} \fpar{g}{q_{Uk'}}+q_{Uk}q_{Uk'} \fpar{f}{t_{Uk}} \fpar{g}{t_{Uk'}}-t_{Uk}q_{Uk'} \fpar{f}{q_{Uk}} \fpar{g}{t_{Uk'}}-q_{Uk}t_{Uk'} \fpar{f}{t_{Uk}} \fpar{g}{q_{Uk'}}\biggr)\biggr]\notag\\
&-\ep^2 \ee\biggl[\sum_{k,k'}\biggl(t_{Uk}t_{Vk'} \fpar{f}{q_{Uk}} \fpar{g}{q_{Vk'}}+q_{Uk}q_{Vk'} \fpar{f}{t_{Uk}} \fpar{g}{t_{Vk'}}-t_{Uk}q_{Vk'} \fpar{f}{q_{Uk}} \fpar{g}{t_{Vk'}}-q_{Uk}t_{Vk'} \fpar{f}{t_{Uk}} \fpar{g}{q_{Vk'}}\biggr)\biggr]\notag \\
& + O(\ep^3)\, .\label{f6}
\end{align}
The proof is completed by combining equations $\eqref{f5}$, $ \eqref{f6}$ and Lemma \ref{exchlmm}, and letting $\ep \to 0$. 
\end{proof}
Now we are ready to prove Theorem \ref{sdthm}. The proof combines the terms on each side of the equations obtained in previous three lemmas. 
\begin{proof}[Proof of Theorem \ref{sdthm}]
First note that since $Q$ is unitary,
\begin{align*}
\ee\biggl[q_{Vk}q_{Vk'}+t_{Vk}t_{Vk'} \mid U, Q\biggr]&=\dfrac{1}{N-1}\sum_{v\neq U}(q_{vk}q_{vk'}+t_{vk}t_{vk'})\notag\\
&=\begin{cases}
-(q_{Uk}q_{Uk'}+t_{Uk}t_{Uk'})/(N-1) &\text{ if } k\neq k'\\
\left(1-q_{Uk}^2-t^2_{Uk}\right)/(N-1) &\text{ if } k= k'
\end{cases}
\end{align*}
and
\begin{align*}
\ee\biggl[q_{Vk}t_{Vk'}-t_{Vk}q_{Vk'} \mid U, Q\biggr]&=\dfrac{1}{N-1}\sum_{v\neq U}(q_{vk}t_{vk'}-t_{vk}q_{vk'})\notag\\
&=-(q_{Uk}t_{Uk'}-t_{Uk}q_{Uk'})/(N-1)\, .
\end{align*}
Now, we add equations \eqref{schdys1} and \eqref{schdys2} side by side and multiply both sides with $N(N-1)$. On the left hand side we get
\begin{align}
2N(N-1)\ee\biggl[\sum_{k}\biggl(q_{Uk} \fpar{f}{q_{Uk}}+t_{Uk} \fpar{f}{t_{Uk}}\biggr) g\biggr]=2(N-1)\ee\biggl[\sum_{u,k}\biggl(q_{uk} \fpar{f}{q_{uk}}+t_{uk} \fpar{f}{t_{uk}}\biggr) g\biggr]\, .  \label{left}
\end{align}
Combine corresponding terms on the right hand sides of these equations in the following way.\\\\
$1^{\text{st}}$ terms:
\begin{align}
N(N-1)\ee\biggl[ \sum_{k,k'}(q_{Vk}q_{Vk'}&+t_{Vk}t_{Vk'})\mpar{f}{q_{Uk}}{q_{Uk'}}g\biggr]\notag\\
&=N\ee\biggl[ \sum_{k} \spar{f}{q_{Uk}}g\biggr]-N\ee\biggl[ \sum_{k, k'} (q_{Uk}q_{Uk'}+t_{Uk}t_{Uk'})\mpar{f}{q_{Uk}}{q_{Uk'}}g\biggr]\notag\\
&=\ee\biggl[ \sum_{u,k} \spar{f}{q_{uk}}g\biggr]-\ee\biggl[ \sum_{u, k, k'} (q_{uk}q_{uk'}+t_{uk}t_{uk'})\mpar{f}{q_{uk}}{q_{uk'}}g\biggr]\, , \label{aa1}
\end{align}
$2^{\text{nd}}$ terms:
\begin{align}
N(N-1)\ee\biggl[ \sum_{k,k'}(q_{Vk}q_{Vk'}&+t_{Vk}t_{Vk'}) \mpar{f}{t_{Uk}}{t_{Uk'}}g\biggr]\notag\\
&=N\ee\biggl[ \sum_{k} \spar{f}{t_{Uk}}g\biggr]-N\ee\biggl[ \sum_{k, k'} (q_{Uk}q_{Uk'}+t_{Uk}t_{Uk'})\mpar{f}{t_{Uk}}{t_{Uk'}}g\biggr] \notag\\
&=\ee\biggl[ \sum_{u,k} \spar{f}{t_{uk}}g\biggr]-\ee\biggl[ \sum_{u,k, k'} (q_{uk}q_{uk'}+t_{uk}t_{uk'})\mpar{f}{t_{uk}}{t_{uk'}}g\biggr]\, ,   \label{aa2}
\end{align}
$3^{\text{rd}}$ terms:
\begin{align}
N(N-1)\ee\biggl[ \sum_{k,k'} 2(q_{Vk}t_{Vk'}&-t_{Vk}q_{Vk'})\mpar{f}{q_{Uk}}{t_{Uk'}}g\biggr]=N\ee\biggl[ \sum_{k, k'} -2(q_{Uk}t_{Uk'}-t_{Uk}q_{Uk'})\mpar{f}{q_{Uk}}{t_{Uk'}}g\biggr] \notag\\
&\quad=\ee\biggl[ \sum_{u, k, k'} (2t_{uk}q_{uk'}-2q_{uk}t_{uk'})\mpar{f}{q_{uk}}{t_{uk'}}g\biggr]\, , \label{aa3}
\end{align}
$4^{\text{th}}$, $5^{\text{th}}$ and $6^{\text{th}}$ terms:
\begin{align}
-N(N-1)&\ee\biggl[\sum_{k,k'} \biggl(q_{Vk}q_{Uk'} \mpar{f}{q_{Uk}}{q_{Vk'}} +t_{Vk}t_{Uk'} \mpar{f}{t_{Uk}}{t_{Vk'}} +2q_{Vk}t_{Uk'} \mpar{f}{q_{Uk}}{t_{Vk'}} \biggr)g\biggr]\notag\\
+N(N-1)&\ee\biggl[\sum_{k,k'} \biggl( t_{Vk}t_{Uk'} \mpar{f}{q_{Uk}}{q_{Vk'}}+q_{Vk}q_{Uk'} \mpar{f}{t_{Uk}}{t_{Vk'}}-2t_{Vk}q_{Uk'} \mpar{f}{q_{Uk}}{t_{Vk'}} \biggr)g\biggr]\notag\\
&\quad=-\ee\biggl[\sum_{u,v,k,k'} \biggl(q_{vk}q_{uk'} \mpar{f}{q_{uk}}{q_{vk'}} +t_{vk}t_{uk'} \mpar{f}{t_{uk}}{t_{vk'}} +2q_{vk}t_{uk'} \mpar{f}{q_{uk}}{t_{vk'}} \biggr)g\biggr]\notag\\
&\qquad+\ee\biggl[\sum_{u,v,k,k'} \biggl( t_{vk}t_{uk'} \mpar{f}{q_{uk}}{q_{vk'}}+q_{vk}q_{uk'} \mpar{f}{t_{uk}}{t_{vk'}}-2t_{vk}q_{uk'} \mpar{f}{q_{uk}}{t_{vk'}} \biggr)g\biggr]\notag\\
&\qquad+\ee\biggl[\sum_{u,k,k'} \biggl(q_{uk}q_{uk'} \mpar{f}{q_{uk}}{q_{uk'}} +t_{uk}t_{uk'} \mpar{f}{t_{uk}}{t_{uk'}} +2q_{uk}t_{uk'} \mpar{f}{q_{uk}}{t_{uk'}} \biggr)g\biggr]\notag\\
&\qquad-\ee\biggl[\sum_{u,k,k'} \biggl( t_{uk}t_{uk'} \mpar{f}{q_{uk}}{q_{uk'}}+q_{uk}q_{uk'} \mpar{f}{t_{uk}}{t_{uk'}}-2t_{uk}q_{uk'} \mpar{f}{q_{uk}}{t_{uk'}} \biggr)g\biggr]\, .\label{aa4}
\end{align}
If we add equations \eqref{aa1}, \eqref{aa2}, \eqref{aa3} and \eqref{aa4} side by side we can see that on the right hand side six out of twenty terms cancel each other and three of the remaining terms is same as the another three. Thus, after simplification on the right hand side we get
\begin{align}
&\ee\biggl[ \sum_{u,k} \left(\spar{f}{q_{uk}}+\spar{f}{t_{uk}}\right)g\biggr]\notag\\
&\quad-\ee\biggl[\sum_{u,v,k,k'} \biggl(q_{vk}q_{uk'} \mpar{f}{q_{uk}}{q_{vk'}} +t_{vk}t_{uk'} \mpar{f}{t_{uk}}{t_{vk'}} +2q_{vk}t_{uk'} \mpar{f}{q_{uk}}{t_{vk'}} \biggr)g\biggr]\notag\\
&\quad+\ee\biggl[\sum_{u,v,k,k'} \biggl( t_{vk}t_{uk'} \mpar{f}{q_{uk}}{q_{vk'}}+q_{vk}q_{uk'} \mpar{f}{t_{uk}}{t_{vk'}}-2t_{vk}q_{uk'} \mpar{f}{q_{uk}}{t_{vk'}} \biggr)g\biggr]\notag\\
&\quad-2\ee\biggl[ \sum_{u, k, k'}\left(t_{uk}t_{uk'}\mpar{f}{q_{uk}}{q_{uk'}}+q_{uk}q_{uk'}\mpar{f}{t_{uk}}{t_{uk'}}-2t_{uk}q_{uk'}\mpar{f}{q_{uk}}{t_{uk'}}\right)g\biggr]\, . \label{rhs1}
\end{align}
We apply the same kind of grouping to the rest of the terms.\\\\
$7^{\text{th}}$ terms:
\begin{align}
N(N-1)\ee\biggl[\sum_{k,k'}& \left(q_{Vk}q_{Vk'}+t_{Vk}t_{Vk'}\right) \fpar{f}{q_{Uk}}\fpar{g}{q_{Uk'}}\biggr]\notag\\
&=N\ee\biggl[\sum_{k} \fpar{f}{q_{Uk}}\fpar{g}{q_{Uk}}\biggr]-N\ee\biggl[\sum_{k,k'} \left(q_{Uk}q_{Uk'}+t_{Uk}t_{Uk'}\right) \fpar{f}{q_{Ik}}\fpar{g}{q_{Ik'}}\biggr]\notag\\
&=\ee\biggl[\sum_{u,k} \fpar{f}{q_{uk}}\fpar{g}{q_{uk}}\biggr]- \ee\biggl[\sum_{u,k,k'} \left(q_{uk}q_{uk'}+t_{uk}t_{uk'}\right) \fpar{f}{q_{uk}}\fpar{g}{q_{uk'}}\biggr]\, , \label{aa5}
\end{align}
$8^{\text{th}}$ terms:
\begin{align}
N(N-1)\ee\biggl[\sum_{k,k'}& \left(q_{Vk}q_{Vk'}+t_{Vk}t_{Vk'}\right) \fpar{f}{t_{Uk}}\fpar{g}{t_{Uk'}}\biggr]\notag\\
&=N\ee\biggl[\sum_{k} \fpar{f}{t_{Uk}}\fpar{g}{t_{Uk}}\biggr]-N\ee\biggl[\sum_{k,k'} \left(q_{Uk}q_{Uk'}+t_{Uk}t_{Uk'}\right) \fpar{f}{t_{Uk}}\fpar{g}{t_{Uk'}}\biggr]\notag\\
&=\ee\biggl[\sum_{k} \fpar{f}{t_{uk}}\fpar{g}{t_{uk}}\biggr]-\ee\biggl[\sum_{u,k,k'} \left(q_{uk}q_{uk'}+t_{uk}t_{uk'}\right) \fpar{f}{t_{uk}}\fpar{g}{t_{uk'}}\biggr]\, , \label{aa6}
\end{align}
$9^{\text{th}}$ terms:
\begin{align}
N(N-1)\ee\biggl[\sum_{k,k'}\left(q_{Vk}t_{Vk'}-t_{Vk}q_{Vk'}\right) \fpar{f}{q_{Uk}}\fpar{g}{t_{Uk'}}\biggr]&=N\ee\biggl[\sum_{k,k'} -\left(q_{Uk}t_{Uk'}-t_{Uk}q_{Uk'}\right) \fpar{f}{q_{Uk}}\fpar{g}{t_{Uk'}}\biggr] \notag\\
&=\ee\biggl[\sum_{u, k,k'} \left(t_{uk}q_{uk'}-q_{uk}t_{uk'}\right) \fpar{f}{q_{uk}}\fpar{g}{t_{uk'}}\biggr]\, , \label{aa7}
\end{align}
$10^{\text{th}}$ terms:
\begin{align}
N(N-1)\ee\biggl[\sum_{k,k'}\left(t_{Vk}q_{Vk'}-q_{Vk}t_{Vk'}\right) \fpar{f}{t_{Uk}}\fpar{g}{q_{Uk'}}\biggr]&=N\ee\biggl[\sum_{k,k'} -\left(t_{Uk}q_{Uk'}-q_{Uk}t_{Uk'}\right) \fpar{f}{t_{Uk}}\fpar{g}{q_{Uk'}}\biggr] \notag\\
&=\ee\biggl[\sum_{u, k,k'} \left(q_{uk}t_{uk'}-t_{uk}q_{uk'}\right) \fpar{f}{t_{uk}}\fpar{g}{q_{uk'}}\biggr]\, , \label{aa8}
\end{align}
$11^{\text{th}}$, $12^{\text{th}}$, $13^{\text{th}}$ and $14^{\text{th}}$ terms:
\begin{align}
& -N(N-1)\ee\biggl[\sum_{k,k'} \biggl(q_{Vk}q_{Uk'} \fpar{f}{q_{Uk}}\fpar{g}{q_{Vk'}}+t_{Vk}t_{Uk'} \fpar{f}{t_{Uk}}\fpar{g}{t_{Vk'}} \notag\\
& \qquad \qquad \qquad \qquad \qquad \qquad \qquad \qquad \qquad \qquad +q_{Vk}t_{Uk'} \fpar{f}{q_{Uk}}\fpar{g}{t_{Vk'}}+t_{Vk}q_{Uk'} \fpar{f}{t_{Uk}}\fpar{g}{q_{Vk'}} \biggr)\biggr]\notag\\
&+N(N-1)\ee\biggl[\sum_{k,k'} \biggl(t_{Vk}t_{Uk'} \fpar{f}{q_{Uk}}\fpar{g}{q_{Vk'}}+q_{Vk}q_{Uk'} \fpar{f}{t_{Uk}}\fpar{g}{t_{Vk'}}\notag\\
& \qquad \qquad \qquad \qquad \qquad \qquad \qquad \qquad \qquad \qquad -t_{Vk}q_{Uk'} \fpar{f}{q_{Uk}}\fpar{g}{t_{Vk'}}-q_{Vk}t_{Uk'} \fpar{f}{t_{Uk}}\fpar{g}{q_{Vk'}} \biggr)\biggr]\notag\\
&=-\ee\biggl[\sum_{u,v,k,k'} \biggl(q_{vk}q_{uk'} \fpar{f}{q_{uk}}\fpar{g}{q_{vk'}}+t_{vk}t_{uk'} \fpar{f}{t_{uk}}\fpar{g}{t_{vk'}}+q_{vk}t_{uk'} \fpar{f}{q_{uk}}\fpar{g}{t_{vk'}}+t_{vk}q_{uk'} \fpar{f}{t_{uk}}\fpar{g}{q_{vk'}} \biggr)\biggr]\notag\\
&\quad+\ee\biggl[\sum_{u,v,k,k'} \biggl(t_{vk}t_{uk'} \fpar{f}{q_{uk}}\fpar{g}{q_{vk'}}+q_{vk}q_{uk'} \fpar{f}{t_{uk}}\fpar{g}{t_{vk'}}-t_{vk}q_{uk'} \fpar{f}{q_{uk}}\fpar{g}{t_{vk'}}-q_{vk}t_{uk'} \fpar{f}{t_{uk}}\fpar{g}{q_{vk'}}\biggr)\biggr]\notag\\
&\quad+\ee\biggl[\sum_{u,k,k'} \biggl(q_{uk}q_{uk'} \fpar{f}{q_{uk}}\fpar{g}{q_{uk'}}+t_{uk}t_{uk'} \fpar{f}{t_{uk}}\fpar{g}{t_{uk'}}+q_{uk}t_{uk'} \fpar{f}{q_{uk}}\fpar{g}{t_{uk'}}+t_{uk}q_{uk'} \fpar{f}{t_{uk}}\fpar{g}{q_{uk'}} \biggr)\biggr]\notag\\
&\quad-\ee\biggl[\sum_{u,k,k'} \biggl(t_{uk}t_{uk'} \fpar{f}{q_{uk}}\fpar{g}{q_{uk'}}+q_{uk}q_{uk'} \fpar{f}{t_{uk}}\fpar{g}{t_{uk'}}-t_{uk}q_{uk'} \fpar{f}{q_{uk}}\fpar{g}{t_{uk'}}-q_{uk}t_{uk'} \fpar{f}{t_{uk}}\fpar{g}{q_{uk'}}\biggr)\biggr]\, . \label{aa9}
\end{align}
By adding equations \eqref{aa5}, \eqref{aa6}, \eqref{aa7}, \eqref{aa8} and \eqref{aa9} we see that on the right hand side eight out of twenty six terms cancel each other and four of the remaining terms are same as the another four. So, after simplification on the right hand side we get
\begin{align}
&\ee\biggl[\sum_{u,k} \left(\fpar{f}{q_{uk}}\fpar{g}{q_{uk}}+\fpar{f}{t_{uk}}\fpar{g}{t_{uk}}\right)\biggr]\notag\\
&-\ee\biggl[\sum_{u,v,k,k'} \biggl(q_{vk}q_{uk'} \fpar{f}{q_{uk}}\fpar{g}{q_{vk'}}+t_{vk}t_{uk'} \fpar{f}{t_{uk}}\fpar{g}{t_{vk'}}+q_{vk}t_{uk'} \fpar{f}{q_{uk}}\fpar{g}{t_{vk'}}+t_{vk}q_{uk'} \fpar{f}{t_{uk}}\fpar{g}{q_{vk'}} \biggr)\biggr]\notag\\
&+\ee\biggl[\sum_{u,v,k,k'} \biggl(t_{vk}t_{uk'} \fpar{f}{q_{uk}}\fpar{g}{q_{vk'}}+q_{vk}q_{uk'} \fpar{f}{t_{uk}}\fpar{g}{t_{vk'}}-t_{vk}q_{uk'} \fpar{f}{q_{uk}}\fpar{g}{t_{vk'}}-q_{vk}t_{uk'} \fpar{f}{t_{uk}}\fpar{g}{q_{vk'}}\biggr)\biggr]\notag\\
&-2\ee\biggl[\sum_{u,k,k'} \biggl(t_{uk}t_{uk'} \fpar{f}{q_{uk}}\fpar{g}{q_{uk'}}+q_{uk}q_{uk'} \fpar{f}{t_{uk}}\fpar{g}{t_{uk'}}-t_{uk}q_{uk'} \fpar{f}{q_{uk}}\fpar{g}{t_{uk'}}-q_{uk}t_{uk'} \fpar{f}{t_{uk}}\fpar{g}{q_{uk'}}\biggr)\biggr]\, . \label{rhs2}
\end{align}
Thus equations \eqref{left}, \eqref{rhs1} and \eqref{rhs2} together give
\begin{align}
&2(N-1)\ee\biggl[\sum_{u,k}\biggl(q_{uk} \fpar{f}{q_{uk}}+t_{uk} \fpar{f}{t_{uk}}\biggr) g\biggr]=\ee\biggl[ \sum_{u,k} \left(\spar{f}{q_{uk}}g+\spar{f}{t_{uk}}g+\fpar{f}{q_{uk}}\fpar{g}{q_{uk}}+\fpar{f}{t_{uk}}\fpar{g}{t_{uk}}\right)\biggr]\notag\\
&\quad-\ee\biggl[\sum_{u,v,k,k'} \biggl(q_{vk}q_{uk'} \mpar{f}{q_{uk}}{q_{vk'}} +t_{vk}t_{uk'} \mpar{f}{t_{uk}}{t_{vk'}} +2q_{vk}t_{uk'} \mpar{f}{q_{uk}}{t_{vk'}} \biggr)g\biggr]\notag\\
&\quad+\ee\biggl[\sum_{u,v,k,k'} \biggl( t_{vk}t_{uk'} \mpar{f}{q_{uk}}{q_{vk'}}+q_{vk}q_{uk'} \mpar{f}{t_{uk}}{t_{vk'}}-2t_{vk}q_{uk'} \mpar{f}{q_{uk}}{t_{vk'}} \biggr)g\biggr]\notag\\
&\quad-\ee\biggl[\sum_{u,v,k,k'} \biggl(q_{vk}q_{uk'} \fpar{f}{q_{uk}}\fpar{g}{q_{vk'}}+t_{vk}t_{uk'} \fpar{f}{t_{uk}}\fpar{g}{t_{vk'}}+q_{vk}t_{uk'} \fpar{f}{q_{uk}}\fpar{g}{t_{vk'}}+t_{vk}q_{uk'} \fpar{f}{t_{uk}}\fpar{g}{q_{vk'}} \biggr)\biggr]\notag\\
&\quad+\ee\biggl[\sum_{u,v,k,k'} \biggl(t_{vk}t_{uk'} \fpar{f}{q_{uk}}\fpar{g}{q_{vk'}}+q_{vk}q_{uk'} \fpar{f}{t_{uk}}\fpar{g}{t_{vk'}}-t_{vk}q_{uk'} \fpar{f}{q_{uk}}\fpar{g}{t_{vk'}}-q_{vk}t_{uk'} \fpar{f}{t_{uk}}\fpar{g}{q_{vk'}} \biggr)\biggr]\notag\\
&\quad -2\ee\biggl[\sum_{u,k,k'} \biggl(t_{uk}t_{uk'} \fpar{f}{q_{uk}}\fpar{g}{q_{uk'}}+q_{uk}q_{uk'} \fpar{f}{t_{uk}}\fpar{g}{t_{uk'}}-t_{uk}q_{uk'} \fpar{f}{q_{uk}}\fpar{g}{t_{uk'}}-q_{uk}t_{uk'} \fpar{f}{t_{uk}}\fpar{g}{q_{uk'}} \biggr)\biggr]\notag\\
&\quad-2\ee\biggl[ \sum_{u, k, k'}\left(t_{uk}t_{uk'}\mpar{f}{q_{uk}}{q_{uk'}}+q_{uk}q_{uk'}\mpar{f}{t_{uk}}{t_{uk'}}-2t_{uk}q_{uk'}\mpar{f}{q_{uk}}{t_{uk'}}\right)g\biggr] \, . \label{sbys1}
\end{align}
Observe that the last two lines on the right side of the above equation are similar to the first two lines on the right side of equation \eqref{schdys3}. We will use this observation to simplify equation \eqref{sbys1}. To do so we will first simplify equation \eqref{schdys3}. After multiplying both sides of this equation by $N(N-1)$ on the left side we get
\begin{align}
(N-1)\ee\biggl[\sum_{u,k}\biggl(q_{uk} \fpar{f}{q_{uk}}+t_{uk} \fpar{f}{t_{uk}}\biggr) g\biggr]\, . \label{aa10}
\end{align}
The first two lines on the right side are
\begin{align}
&(N-1)\ee\biggl[\sum_{u,k,k'} \biggl(t_{uk}t_{uk'} \fpar{f}{q_{uk}}\fpar{g}{q_{uk'}}+q_{uk}q_{uk'} \fpar{f}{t_{uk}}\fpar{g}{t_{uk'}}- t_{uk}q_{uk'} \fpar{f}{q_{uk}}\fpar{g}{t_{uk'}}-q_{uk}t_{uk'} \fpar{f}{t_{uk}}\fpar{g}{q_{uk'}}\biggr)\biggr]\notag \\
&+(N-1)\ee\biggl[ \sum_{u,k,k'} \biggl(t_{uk}t_{uk'} \mpar{f}{q_{uk}}{q_{uk'}}+q_{uk}q_{uk'} \mpar{f}{t_{uk}}{t_{uk'}}-2t_{uk}q_{uk'} \mpar{f}{q_{uk}}{t_{uk'}}\biggr)g \biggr] \, . \label{aa11}
\end{align}
Moreover, the last two lines can be written as
\begin{align}
&-\ee\biggl[\sum_{u,v,k,k'} \biggl(t_{uk}t_{vk'} \fpar{f}{q_{uk}}\fpar{g}{q_{vk'}}+ q_{uk}q_{vk'} \fpar{f}{t_{uk}}\fpar{g}{t_{vk'}}- t_{uk}q_{vk'} \fpar{f}{q_{uk}}\fpar{g}{t_{vk'}}- q_{uk}t_{vk'} \fpar{f}{t_{uk}}\fpar{g}{q_{vk'}} \biggr)\biggr]\notag \\
&-\ee\biggl[\sum_{u,v,k,k'} \biggl( t_{uk}t_{vk'} \mpar{f}{q_{uk}}{q_{vk'}}+ q_{uk}q_{vk'} \mpar{f}{t_{uk}}{t_{vk'}}- 2t_{uk}q_{vk'} \mpar{f}{q_{uk}}{t_{vk'}} \biggr)g\biggr]\notag\\
&+\ee\biggl[\sum_{u,k,k'} \biggl(t_{uk}t_{uk'} \fpar{f}{q_{uk}}\fpar{g}{q_{uk'}}+ q_{uk}q_{uk'} \fpar{f}{t_{uk}}\fpar{g}{t_{uk'}}- t_{uk}q_{uk'} \fpar{f}{q_{uk}}\fpar{g}{t_{uk'}}- q_{uk}t_{uk'} \fpar{f}{t_{uk}}\fpar{g}{q_{uk'}} \biggr)\biggr] \notag\\
&+\ee\biggl[\sum_{u,k,k'} \biggl( t_{uk}t_{uk'} \mpar{f}{q_{uk}}{q_{uk'}}+ q_{uk}q_{uk'} \mpar{f}{t_{uk}}{t_{uk'}}- 2t_{uk}q_{uk'} \mpar{f}{q_{uk}}{t_{uk'}} \biggr)g\biggr]\, . \label{aa12}
\end{align}
By combining \eqref{aa10}, \eqref{aa11} and \eqref{aa12} we get
\begin{align}
&(N-1)\ee\biggl[\sum_{u,k}\biggl(q_{uk} \fpar{f}{q_{uk}}+t_{uk} \fpar{f}{t_{uk}}\biggr) g\biggr]\notag\\
&=-\ee\biggl[\sum_{u,v,k,k'} \biggl( t_{uk}t_{vk'} \mpar{f}{q_{uk}}{q_{vk'}}+ q_{uk}q_{vk'} \mpar{f}{t_{uk}}{t_{vk'}}- 2t_{uk}q_{vk'} \mpar{f}{q_{uk}}{t_{vk'}} \biggr)g\biggr]\notag\\
&\quad -\ee\biggl[\sum_{u,v,k,k'} \biggl(t_{uk}t_{vk'} \fpar{f}{q_{uk}}\fpar{g}{q_{vk'}}+ q_{uk}q_{vk'} \fpar{f}{t_{uk}}\fpar{g}{t_{vk'}}- t_{uk}q_{vk'} \fpar{f}{q_{uk}}\fpar{g}{t_{vk'}} - q_{uk}t_{vk'} \fpar{f}{t_{uk}}\fpar{g}{q_{vk'}}\biggr)\biggr]\notag \\
&\quad+N\ee\biggl[\sum_{u,k,k'} \biggl(t_{uk}t_{uk'} \fpar{f}{q_{uk}}\fpar{g}{q_{uk'}}+ q_{uk}q_{uk'} \fpar{f}{t_{uk}}\fpar{g}{t_{uk'}}- t_{uk}q_{uk'} \fpar{f}{q_{uk}}\fpar{g}{t_{uk'}} - q_{uk}t_{uk'} \fpar{f}{t_{uk}}\fpar{g}{q_{uk'}}\biggr)\biggr] \notag\\
&\quad+N\ee\biggl[\sum_{u,k,k'} \biggl( t_{uk}t_{uk'} \mpar{f}{q_{uk}}{q_{uk'}}+ q_{uk}q_{uk'} \mpar{f}{t_{uk}}{t_{uk'}}- 2t_{uk}q_{uk'} \mpar{f}{q_{uk}}{t_{uk'}} \biggr)g\biggr]\, . \label{sbys2}
\end{align}
We finish the proof by multiplying \eqref{sbys1} by $N$ and multiplying \eqref{sbys2} by $2$, and then adding them side by side.
\end{proof}
\section{The master loop equation for finite $N$}\label{mm1sec}
The goal of this section is to prove Theorem~\ref{symfinmaster}. For any edge $e$ let $\cp(e)$ and $\cp^+(e)$ be as in Section \ref{string}. Recall that for any loop $l=e_1e_2\cdots e_k$,
\[
W_{l}=\tr(Q_{e_1}Q_{e_2}\cdots Q_{e_k})\, .
\]
The following theorem will be the main part of our proof. 
\begin{thm}\label{mastern}
Take any $N\ge 2$, $\beta\in \rr$ and a finite set $\Lambda\subseteq \zz^d$, and consider $SU(N)$ lattice gauge theory on $\Lambda$ at inverse coupling strength $\beta$. Take a loop sequence $s$ with minimal representation $(l_1,\ldots, l_n)$ such that all vertices of $\zz^d$ that are at distance $\le 1$ from any of the loops in $s$ are contained in $\Lambda$. Let $e$ be the first edge of $l_1$. For each $1\le r\le n$, let $A_r$ be the set of locations in $l_r$ where $e$ occurs, and let $B_r$ be the set of locations in $l_r$ where $e^{-1}$ occurs. Let $C_r = A_r \cup B_r$ and let $m$ be the size of $C_1$. Let $t_r=|A_r|-|B_r|$ and $t=t_1+\cdots+t_n$. Then 
\begin{align}
\biggl(mN-\frac{t_1t}{N}\biggr)\smallavg{W_{l_1}W_{l_2}\cdots W_{l_n}} &=\textup{splitting term}+ \textup{merger term}+ \textup{deformation term}\notag \\
&\quad +\textup{expansion term}  \label{unsym}\, , 
\end{align}
where the splitting term is given by
\begin{align*}
&\sum_{x\in A_1, \, y\in B_1} \smallavg{W_{\times_{x,y}^1 l_1} W_{\times_{x,y}^2 l_1}W_{l_2}\cdots W_{l_n}}+\sum_{x\in B_1, \, y\in A_1} \smallavg{W_{\times_{x,y}^1 l_1} W_{\times_{x,y}^2 l_1}W_{l_2}\cdots W_{l_n}}\\
&\quad - \sum_{\substack{x,y\in A_1\\ x\ne y}} \smallavg{W_{\times^1_{x,y} l_1} W_{\times^2_{x,y} l_1}W_{l_2}\cdots W_{l_n}} - \sum_{\substack{x,y\in B_1\\ x\ne y}} \smallavg{W_{\times^1_{x,y} l_1} W_{\times^2_{x,y} l_1}W_{l_2}\cdots W_{l_n}}\, ,
\end{align*}
the merger term is given by
\begin{align*}
&\sum_{r=2}^n\sum_{x\in A_1, \, y\in B_r} \bigavg{W_{l_1 \ominus_{x,y} l_r}\prod_{\substack{2\le t\le n\\t\ne r}} W_{l_t}}+\sum_{r=2}^n\sum_{x\in B_1, \, y\in A_r} \bigavg{W_{l_1 \ominus_{x,y} l_r}\prod_{\substack{2\le t\le n\\t\ne r}} W_{l_t}}\\
&\quad-\sum_{r=2}^n\sum_{x\in A_1, \, y\in A_r}\bigavg{ W_{l_1 \oplus_{x,y} l_r}\prod_{\substack{2\le t\le n\\t\ne r}} W_{l_t}}-\sum_{r=2}^n\sum_{x\in B_1, \, y\in B_r}\bigavg{ W_{l_1 \oplus_{x,y} l_r}\prod_{\substack{2\le t\le n\\t\ne r}} W_{l_t}}\, ,
\end{align*}
the deformation term is given by
\begin{align*}
&\frac{N\beta}{2} \sum_{p\in \cp^+(e)}\sum_{x\in C_1}\smallavg{ W_{l_1 \ominus_{x}p}W_{l_2}\cdots W_{l_n}}-\frac{N\beta}{2} \sum_{p\in \cp^+(e)}\sum_{x\in C_1}\smallavg{ W_{l_1 \oplus_{x}p}W_{l_2}\cdots W_{l_n}}  \, ,
\end{align*}
and the expansion term is given by
\begin{align*}
&\frac{\beta}{2}\sum_{x\in A_1}\sum_{p\in \cp(e)}\smallavg{ W_{l_1}W_{p}W_{l_2}\cdots W_{l_n}} + \frac{\beta}{2}\sum_{x\in B_1}\sum_{p\in \cp(e^{-1})}\smallavg{ W_{l_1}W_{p}W_{l_2}\cdots W_{l_n}} \\
&\quad - \frac{\beta}{2}\sum_{x\in B_1}\sum_{p\in \cp(e)}\smallavg{ W_{l_1}W_{p}W_{l_2}\cdots W_{l_n}} - \frac{\beta}{2}\sum_{x\in A_1}\sum_{p\in \cp(e^{-1})}\smallavg{ W_{l_1}W_{p}W_{l_2}\cdots W_{l_n}} \, .
\end{align*}
In all of the above, empty sums denote zero.
\end{thm}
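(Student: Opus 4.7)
The plan is to apply Theorem \ref{sdthm} conditionally on all edge variables other than $Q := Q_e$, taking
\[
f(Q) := W_{l_1}(Q), \qquad g(Q) := Z^{-1}_{\Lambda, N, \beta}\,W_{l_2}(Q)\cdots W_{l_n}(Q)\,\exp\Bigl(N\beta\sum_{p\in \cp^+_\Lambda}\Re\tr Q_p\Bigr),
\]
where $W_{l_r}(Q)$ is the trace of the matrix product defining $W_{l_r}$ with $Q$ substituted at every occurrence of $Q_e$ and $Q^*$ at every occurrence of $Q_{e^{-1}}$. With this choice $\ee[f(Q) g(Q)] = \smallavg{W_{l_1}\cdots W_{l_n}}$, so any identity produced by Theorem \ref{sdthm} yields an identity among Wilson loop expectations after taking outer Haar expectation over the remaining edges.

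Since each entry of $Q$ equals $q_{uk} + i t_{uk}$ and each entry of $Q^*$ equals $q_{ku} - i t_{ku}$, the function $f$ is multilinear in the real variables $q, t$, hence homogeneous of degree $m = |A_1| + |B_1|$. Euler's identity gives $\sum_{u,k}(q_{uk}\partial_{q_{uk}} + t_{uk}\partial_{t_{uk}}) f = m f$, producing the leading $mN\smallavg{W_{l_1}\cdots W_{l_n}}$ on the left of \eqref{unsym} after dividing \eqref{schdys} by $2N$. Multilinearity also kills every pure second derivative $\partial^2 f / \partial q_{uk}^2$ and $\partial^2 f / \partial t_{uk}^2$, so the Laplacian piece in \eqref{schdys} drops out. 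The $-t_1 t / N$ correction on the left comes from second-derivative terms on the right of \eqref{schdys} whose coefficients are $q_{uk}q_{uk'}$ or $t_{uk}t_{uk'}$: after index contraction these collapse (via unitarity of $Q$) to a scalar multiple of $\ee[f g]$ with coefficient proportional to $t_1 t$, which is then moved to the left-hand side.

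Every remaining term on the right of \eqref{schdys} is a sum of products of first or second derivatives of $f$ and $g$, contracted with a real quadratic in $q, t$. Each differentiation at a location $x$ in a loop cuts that loop open and releases an $N \times N$ matrix indexed by the freed row and column; differentiating the plaquette exponential produces a factor of $N\beta / 2$ together with an open plaquette product of $Q_p$ or $Q_p^*$ running through $e$ or $e^{-1}$. The quadratic coefficients reassemble into bilinears of the form $\Re(Q_{vk} Q_{uk'})$ or $\Re(Q_{uk} (Q^*)_{k'v})$ (and their imaginary parts), and when contracted with the two released matrix products they collapse, via the unitarity identity $\sum_u Q_{uk}(Q^*)_{k'u} = \delta_{kk'}$ and its variants, into either a single trace or a product of two traces. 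A pair of derivatives both inside $f$ collapses to a splitting of $l_1$; a pair spanning $f$ and some $W_{l_r}$ for $r \ge 2$ collapses to a merger of $l_1$ with $l_r$; and a pair spanning $f$ and the plaquette factor collapses either to a deformation of $l_1$ by $p$ (when the contraction fuses everything into a single trace) or to an expansion (when it leaves two separate traces). The positive/negative label of the resulting operation is determined by whether the two paired factors are both $Q$, both $Q^*$, or mixed.

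The main obstacle is the combinatorial bookkeeping. Theorem \ref{sdthm} has more than a dozen distinct summands, and each must be matched to precisely one of the splitting, merger, deformation, or expansion terms in \eqref{unsym} with the correct sign; the sub-cases at every location (whether $e$ or $e^{-1}$ occurs at $x$) double the casework, and many real/imaginary contributions must cancel between the terms produced by Lemmas \ref{sdthm1}, \ref{sdthm2}, \ref{sdthm3}. The contrast with the $SO(N)$ setting of \cite{chatterjee15} is that the $SU(N)$ unitarity identity takes the form $\sum_u Q_{uk}(Q^*)_{k'u} = \delta_{kk'}$ rather than $\sum_u Q_{uk} Q_{uk'} = \delta_{kk'}$, so $Q$--$Q$ and $Q$--$Q^*$ pairings no longer collapse symmetrically; this asymmetry is what forces the introduction of the expansion operation in addition to the splitting, merger, and deformation operations present in the $SO(N)$ theory, and it also produces the additional $-t_1 t / N$ correction on the left of \eqref{unsym} that is absent from the $SO(N)$ master equation.
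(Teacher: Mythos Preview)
Your overall strategy---apply Theorem \ref{sdthm} conditionally on $(Q_{e'})_{e'\ne e}$ with $f=W_{l_1}$ and $g$ equal to the rest of the Gibbs integrand---is exactly the paper's approach, and your description of how the various pieces of \eqref{schdys} should organize into splitting, merger, deformation and expansion contributions is broadly correct in spirit. But there is one concrete error and one serious imprecision.

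The concrete error is the claim that multilinearity kills the pure second derivatives $\partial^2 f/\partial q_{uk}^2$ and $\partial^2 f/\partial t_{uk}^2$. The function $W_{l_1}$ is homogeneous of degree $m$ in the real variables $q_{uk},t_{uk}$, but it is \emph{not} multilinear in them once $m\ge 2$: a monomial such as $(q_{11}+it_{11})^2$ can and does occur. In the paper (Lemma \ref{lem3}) the Laplacian $\sum_{u,k}\bigl(\partial_{q_{uk}}^2+\partial_{t_{uk}}^2\bigr)W_{l_1}$ is computed explicitly and shown to equal the \emph{negative splitting} sums; it does not vanish. So the very terms you discard are responsible for half of the splitting contribution in \eqref{unsym}.

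The imprecision concerns the origin of the $-t_1t/N$ correction and of the expansion term. These do not arise from a single block of \eqref{schdys} collapsing via unitarity. In the paper the third line of \eqref{schdys}, applied to $f$, yields $(m-t_1^2)W_{l_1}$ (Lemma \ref{lem5}), contributing $t_1^2$ to the left-hand side; the analogous first-derivative block (last line of \eqref{schdys}) applied to the pair $(f,W_{l_r})$ yields $-t_1t_r\,W_{l_1}W_{l_r}$ (Lemma \ref{lem8}), contributing the remaining $t_1(t-t_1)$; and the \emph{same} block applied to the pair $(f,\text{plaquette factor})$ is what produces the expansion term, not a ``leftover two traces'' from the deformation computation. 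No unitarity identity is invoked in any of these steps---the collapse to $W_{l_1}W_{l'}$ happens purely by reassembling the real bilinears $t_{uk}t_{vk'},q_{uk}q_{vk'},\ldots$ into $(q_{uk}\pm it_{uk})(q_{vk'}\mp it_{vk'})$.

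So your sketch has the right architecture but misidentifies which block of \eqref{schdys} produces which piece of \eqref{unsym}, and in one case drops a block that is actually needed. The paper handles this by isolating each block as a separate lemma (Lemmas \ref{lem2}--\ref{lem8}) before assembling them; the bookkeeping you flag as the main obstacle is genuinely the content of the proof, and your outline does not yet carry it out correctly.
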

The proof of Theorem \ref{mastern} requires several preparatory lemmas. In all of the following lemmas $A_1$, $B_1$, $C_1$, $m$, $t_1$ and $t$ are defined to be as in Theorem \ref{mastern}.
\begin{lmm}\label{lem1}
For any edge $e$,
\[
\fpar{Q_e}{t_{uk}^e}=i\fpar{Q_e}{q_{uk}^e} \, , \ \text{ and } \ \fpar{Q^*_e}{t_{uk}^e}=-i\fpar{Q^*_e}{q_{uk}^e} \, .
\]
Moreover,
\[
\sum_{u,k} \biggl(q_{uk}^e \fpar{Q_e}{q_{uk}^e}+t_{uk}^e \fpar{Q_e}{t_{uk}^e}\biggr) = Q_e\, , \ \text{ and } \ \sum_{u,k} \biggl(q_{uk}^e \fpar{Q_e^*}{q_{uk}^e}+t_{uk}^e \fpar{Q_e^*}{t_{uk}^e}\biggr) = Q_e^*\, .
\]
\end{lmm}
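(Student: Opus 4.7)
The plan is to verify everything by direct computation in the ambient space $\cc^{N\times N}$, treating the real variables $q_{uk}^e$ and $t_{uk}^e$ (for $1\le u,k\le N$) as independent coordinates on $\cc^{N\times N}\cong \rr^{2N^2}$ and differentiating the entries of $Q_e$ and $Q_e^*$ componentwise. Let $E_{uk}$ denote the standard matrix unit whose $(u,k)$ entry is $1$ and whose other entries are $0$.

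For the first identity, I would observe that the $(u,k)$ entry of $Q_e$ is by definition $q_{uk}^e+i t_{uk}^e$, hence
\[
\fpar{Q_e}{q_{uk}^e}=E_{uk}\,,\qquad \fpar{Q_e}{t_{uk}^e}=i E_{uk}\,,
\]
which gives $\partial Q_e/\partial t_{uk}^e=i\,\partial Q_e/\partial q_{uk}^e$ immediately. For $Q_e^*$ I would note that its $(u,k)$ entry equals $\overline{(Q_e)_{ku}}=q_{ku}^e-i t_{ku}^e$, so $\partial Q_e^*/\partial q_{uk}^e=E_{ku}$ and $\partial Q_e^*/\partial t_{uk}^e=-i E_{ku}$, giving the dual sign in $\partial Q_e^*/\partial t_{uk}^e=-i\,\partial Q_e^*/\partial q_{uk}^e$.

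For the two Euler-type identities I would simply substitute the formulas above and sum. For $Q_e$,
\[
\sum_{u,k}\biggl(q_{uk}^e \fpar{Q_e}{q_{uk}^e}+t_{uk}^e \fpar{Q_e}{t_{uk}^e}\biggr)=\sum_{u,k}(q_{uk}^e+i t_{uk}^e)E_{uk}=Q_e\,,
\]
and for $Q_e^*$ the analogous sum equals $\sum_{u,k}(q_{uk}^e-i t_{uk}^e)E_{ku}$, whose $(v,w)$ entry is $q_{wv}^e-i t_{wv}^e=(Q_e^*)_{vw}$, so the sum equals $Q_e^*$.

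The only conceptual point that needs attention (and the closest thing to an obstacle) is that $Q_e$ ranges over $SU(N)$, a submanifold of $\cc^{N\times N}$ on which the entries are not independent; but the setup of Theorem \ref{sdthm} extends $f$ and $g$ to an open neighborhood of $SU(N)$ in $\cc^{N\times N}$, so the partial derivatives appearing here are unconstrained partial derivatives with respect to the $2N^2$ independent real coordinates $q_{uk}^e,t_{uk}^e$, and this is exactly the setting in which the elementary identities above hold. Once this is flagged, the lemma follows from the two-line calculation sketched above.
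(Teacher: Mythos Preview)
Your proof is correct and essentially identical to the paper's: the paper writes the matrix units as $b_u b_k^T$ (with $b_u$ the standard basis vector) instead of $E_{uk}$, but the computation is otherwise line-for-line the same. Your added remark about working in the ambient coordinates of $\cc^{N\times N}$ is a useful clarification that the paper leaves implicit.
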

\begin{proof}
Note that 
\[
\fpar{Q_e}{q^e_{uk}} = b_u b_k^T \, , \ \text{ and } \  \fpar{Q_e}{t^e_{uk}} = ib_u b_k^T \, ,
\]
where $b_u\in \rr^N$ is the vector whose $u^{\text{th}}$ standard basis vector. Thus, 
\[
\sum_{u,k} \biggl(q_{uk}^e \fpar{Q_e}{q_{uk}^e}+t_{uk}^e \fpar{Q_e}{t_{uk}^e}\biggr) = \sum_{u,k} (q_{uk}^e+it_{uk}) b_u b_k^T = Q_e\, ,
\]
Similarly,
\[
\fpar{Q^*_e}{q^e_{uk}} = b_k b_u^T \, , \ \text{ and } \  \fpar{Q^*_e}{t^e_{uk}} = -ib_k b_u^T \, ,
\]
and 
\[
\sum_{u,k} \biggl(q_{uk}^e \fpar{Q^*_e}{q_{uk}^e}+t_{uk}^e \fpar{Q^*_e}{t_{uk}^e}\biggr) = \sum_{u,k} (q_{uk}^e-it_{uk}) b_k b_u^T = Q^*_e\, .
\]
This completes the proof of the lemma. 
\end{proof}
\begin{lmm}\label{lem2}
For any edge $e$,
\[
\sum_{u,k} \biggl( q_{uk}^e \fpar{W_{l_1}}{q_{uk}^e}+t_{uk}^e \fpar{W_{l_1}}{t_{uk}^e}\biggr) = m W_{l_1} \, .
\]
\end{lmm}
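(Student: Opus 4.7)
The plan is to prove this by a direct application of the multivariable chain rule to $W_{l_1} = \tr(Q_{e_1} Q_{e_2} \cdots Q_{e_k})$, combined with the Euler-type identities established in Lemma \ref{lem1}. The only edge variables in $W_{l_1}$ that depend on $q_{uk}^e$ and $t_{uk}^e$ are those factors $Q_{e_j}$ with $e_j = e$ (giving $Q_e$) or $e_j = e^{-1}$ (giving $Q_e^*$). Hence by the Leibniz rule, the operator
\[
D := \sum_{u,k}\left( q_{uk}^e\,\fpar{}{q_{uk}^e} + t_{uk}^e\,\fpar{}{t_{uk}^e}\right)
\]
acts on $W_{l_1}$ by, for each location $j$ in $l_1$ with $e_j \in \{e, e^{-1}\}$, replacing the $j$-th factor $Q_{e_j}$ by $D(Q_{e_j})$ while leaving the other factors alone, then summing over all such $j$.

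First I would carry out the reduction: write
\[
D(W_{l_1}) = \sum_{j=1}^{k} \tr\bigl(Q_{e_1} \cdots Q_{e_{j-1}}\, D(Q_{e_j})\, Q_{e_{j+1}} \cdots Q_{e_k}\bigr),
\]
and observe that $D(Q_{e_j}) = 0$ unless $e_j \in \{e, e^{-1}\}$, because the remaining edges have their own independent coordinate variables. Thus the sum collapses to $j \in C_1 = A_1 \cup B_1$. Next, for $j \in A_1$ Lemma \ref{lem1} gives $D(Q_{e_j}) = D(Q_e) = Q_e = Q_{e_j}$, and for $j \in B_1$ Lemma \ref{lem1} gives $D(Q_{e_j}) = D(Q_e^*) = Q_e^* = Q_{e_j}$. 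Therefore every term in the surviving sum equals $\tr(Q_{e_1} Q_{e_2} \cdots Q_{e_k}) = W_{l_1}$, yielding $D(W_{l_1}) = |C_1|\,W_{l_1} = m\,W_{l_1}$.

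There is no real obstacle here beyond bookkeeping; the only mild subtlety is to make sure the chain rule is applied correctly when $e$ appears multiple times in $l_1$, which is why the argument is phrased positionally (summing over locations $j$) rather than by grouping repeated occurrences. The statement follows immediately.
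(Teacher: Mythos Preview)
Your proof is correct and follows essentially the same approach as the paper: apply the Leibniz rule to the trace, observe that only the factors equal to $Q_e$ or $Q_e^*$ contribute, and invoke Lemma~\ref{lem1} to see that each such factor is reproduced unchanged, yielding $m$ copies of $W_{l_1}$. The only cosmetic difference is that the paper first groups the non-$e$ factors into blocks $P_1,\ldots,P_{m+1}$ before differentiating, whereas you work directly with the full product and note that $D$ annihilates the irrelevant factors; the substance is identical.
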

\begin{proof}
Let
\[
W_{l_1} = \tr(P_1 Q_1P_2Q_2\cdots P_m Q_m P_{m+1})\, ,
\]
where each $Q_1,\ldots, Q_m$ is either $Q_e$ or $Q_e^*$, and each $P_1,\ldots, P_{m+1}$ is a product of $Q_{e'}$ where $e'$ is neither $e$ nor $e^{-1}$. Note that
\[
\fpar{W_{l_1}}{q_{uk}^e} = \sum_{r=1}^m \tr\biggl(P_1Q_1\cdots P_r \fpar{Q_r}{q_{uk}^e} P_{r+1}\cdots P_mQ_mP_{m+1}\biggr)
\]
and
\[
\fpar{W_{l_1}}{t_{uk}^e} = \sum_{r=1}^m \tr\biggl(P_1Q_1\cdots P_r \fpar{Q_r}{t_{uk}^e} P_{r+1}\cdots P_mQ_mP_{m+1}\biggr)\, .
\]
Thus, by Lemma \ref{lem1}
\begin{align*}
\sum_{u,k} \biggl( q_{uk}^e \fpar{W_{l_1}}{q_{uk}^e}+t_{uk}^e \fpar{W_{l_1}}{t_{uk}^e}\biggr)
&= \sum_{u,k} \biggl(q_{uk}^e\sum_{r=1}^m\tr\biggl(P_1Q_1\cdots P_r \fpar{Q_r}{q_{uk}^e} P_{r+1}\cdots P_mQ_mP_{m+1}\biggr)\\
&\qquad\quad+t_{uk}^e\sum_{r=1}^m \tr\biggl(P_1Q_1\cdots P_r \fpar{Q_r}{t_{uk}^e} P_{r+1}\cdots P_mQ_mP_{m+1}\biggr)\biggr)\\
&= \sum_{r=1}^m \tr\biggl(P_1Q_1\cdots P_r\biggl(\sum_{u,k} \biggl(q_{uk}^e \fpar{Q_r}{q_{uk}^e}+t_{uk}^e \fpar{Q_r}{t_{uk}^e}\biggr)\biggr)P_{r+1}\cdots P_mQ_mP_{m+1}\biggr)\\
&= \sum_{r=1}^m \tr(P_1Q_1\cdots P_r Q_rP_{r+1}\cdots P_mQ_mP_{m+1}) = m W_{l_1}\, .
\end{align*}
This completes the proof the lemma. 
\end{proof}
\begin{lmm}\label{lem3}
\begin{align*}
\sum_{u,k}\biggl(\spar{W_{l_1}}{{q^e_{uk}}} +\spar{W_{l_1}}{{t^e_{uk}}}\biggr) &= 2\sum_{x\in A_1,\, y\in B_1} W_{\times^1_{x,y} l_1} W_{\times^2_{x,y} l_1}+2\sum_{x\in B_1,\, y\in A_1} W_{\times^1_{x,y} l_1} W_{\times^2_{x,y} l_1}\, .
\end{align*}
\end{lmm}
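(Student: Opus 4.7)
The plan is to differentiate $W_{l_1}$ twice using the product rule, invoke Lemma~\ref{lem1} to see that only the mixed second-order terms survive, and then collapse the resulting expression via a sewing identity for traces.

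First, as in the proof of Lemma~\ref{lem2}, I would write $W_{l_1}=\tr(P_1 Q_1 P_2 Q_2 \cdots P_m Q_m P_{m+1})$, where each $Q_r \in \{Q_e, Q_e^*\}$ with $r$ indexing the positions in $C_1$, and the $P_i$'s depend only on $Q_{e'}$ for $e' \ne e, e^{-1}$. Since $Q_e$ and $Q_e^*$ are affine in the entries $q^e_{uk}, t^e_{uk}$, their pure second partial derivatives vanish. Hence two applications of the product rule give
\[
\spar{W_{l_1}}{q^e_{uk}} + \spar{W_{l_1}}{t^e_{uk}} \;=\; \sum_{\substack{r,r' \in C_1 \\ r \ne r'}} \tr\bigl(\cdots T_{r,r'}(u,k) \cdots\bigr),
\]
where $T_{r,r'}(u,k)$ denotes the bilinear combination $\fpar{Q_r}{q^e_{uk}}\fpar{Q_{r'}}{q^e_{uk}} + \fpar{Q_r}{t^e_{uk}}\fpar{Q_{r'}}{t^e_{uk}}$ inserted at the two positions occupied by $Q_r$ and $Q_{r'}$.

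Next, I would apply the complex-linear identities $\partial Q_e/\partial t^e_{uk}=i\,\partial Q_e/\partial q^e_{uk}$ and $\partial Q_e^*/\partial t^e_{uk}=-i\,\partial Q_e^*/\partial q^e_{uk}$ from Lemma~\ref{lem1}. These imply $T_{r,r'}(u,k)=0$ whenever $Q_r$ and $Q_{r'}$ are both $Q_e$ or both $Q_e^*$ (since $1+i^2=0$), and $T_{r,r'}(u,k)=2\,\fpar{Q_r}{q^e_{uk}}\fpar{Q_{r'}}{q^e_{uk}}$ in the mixed cases (since $1+i(-i)=2$). Thus only ordered pairs $(r,r')\in (A_1\times B_1)\cup(B_1\times A_1)$ contribute, each with an overall factor of $2$.

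Finally, substituting $\partial Q_e/\partial q^e_{uk}=b_u b_k^T$ and $\partial Q_e^*/\partial q^e_{uk}=b_k b_u^T$, each surviving term becomes a trace of the form $\tr(A\, b_u b_k^T\, B\, b_k b_u^T\, C)$ (or the transpose arrangement when the $B_1$-position precedes the $A_1$-position along the loop), where $A$, $B$, $C$ are the products of $Q_{\tilde e}$'s along the three arcs of $l_1$ cut at the two chosen locations. Cyclicity of the trace rewrites this as $(b_u^T C A b_u)(b_k^T B b_k)$, and summing over $u,k$ via $\sum_u b_u^T M b_u=\tr(M)$ collapses it to $\tr(CA)\tr(B)$. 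These are exactly $W_{\times^1_{x,y} l_1}$ and $W_{\times^2_{x,y} l_1}$ for the negative splitting at $(x,y)$, since any backtracks produced when passing to the non-backtracking core cancel inside the trace via $Q_e Q_e^*=I$. The only bookkeeping point, which should be easy but must be checked, is that the two ordered pairs $(r,r')$ and $(r',r)$ yield identical traces (so that summing over ordered pairs produces precisely the two symmetric sums over $A_1\times B_1$ and $B_1\times A_1$ in the claim, each with coefficient $2$); this is immediate because within the trace the insertions are ordered by their positions in $l_1$, not by the order in which the derivatives are taken.
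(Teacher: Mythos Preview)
Your proposal is correct and follows essentially the same approach as the paper's proof: both write $W_{l_1}$ as a trace product, apply the product rule to obtain a double sum over positions, use Lemma~\ref{lem1} to eliminate the same-type pairs $(Q_e,Q_e)$ and $(Q_e^*,Q_e^*)$, and then collapse the surviving mixed terms via the rank-one insertions $b_u b_k^T$ and $b_k b_u^T$ into products of traces identified with the negative splittings. The only cosmetic difference is that the paper organizes the double sum as $2\sum_{r<s}$ while you sum over ordered pairs and reconcile at the end; the content is the same.
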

\begin{proof}
Using the notations from Lemma \ref{lem2}
\begin{align*}
\sum_{u,k} \spar{W_{l_1}}{{q^e_{uk}}}
 &= 2\sum_{1\le r< s\le m}\sum_{u,k}\tr\biggl(P_1Q_1\cdots P_r \fpar{Q_r}{q_{uk}^e} P_{r+1}\cdots P_s \fpar{Q_s}{q_{uk}^e}P_{s+1}\cdots P_mQ_mP_{m+1}\biggr)
\end{align*}
and
\begin{align*}
\sum_{u,k} \spar{W_{l_1}}{{t^e_{uk}}}
 &= 2\sum_{1\le r< s\le m}\sum_{u,k}\tr\biggl(P_1Q_1\cdots P_r \fpar{Q_r}{t_{uk}^e} P_{r+1}\cdots P_s \fpar{Q_s}{t_{uk}^e}P_{s+1}\cdots P_mQ_mP_{m+1}\biggr)\, .
\end{align*}
Observe that if $(Q_r, Q_s)=(Q_e, Q_e)$ or $(Q_r, Q_s)=(Q_e^*, Q_e^*)$, then by Lemma \ref{lem1} 
\begin{align*}
P_1Q_1\cdots P_r &\fpar{Q_r}{q_{uk}^e} P_{r+1}\cdots P_s \fpar{Q_s}{q_{uk}^e}P_{s+1}\cdots P_mQ_mP_{m+1}\\
&+ P_1Q_1\cdots P_r \fpar{Q_r}{t_{uk}^e} P_{r+1}\cdots P_s \fpar{Q_s}{t_{uk}^e}P_{s+1}\cdots P_mQ_mP_{m+1} = 0\, . 
\end{align*}
So these pairs do not contribute anything to the sum
$$\sum_{u,k}\biggl(\spar{W_{l_1}}{{q^e_{uk}}} +\spar{W_{l_1}}{{t^e_{uk}}}\biggr)\, .$$
Now, suppose that $Q_r = Q_e$ and $Q_s = Q_e^*$. Let $b_u$ be as in the proof of Lemma \ref{lem1}. Then 
\begin{align}
&\tr\biggl(P_1Q_1\cdots P_r \fpar{Q_r}{q_{uk}^e} P_{r+1}\cdots P_s \fpar{Q_s}{q_{uk}^e}P_{s+1}\cdots P_mQ_mP_{m+1}\biggr) \nonumber\\
&\quad= \tr(P_1Q_1\cdots P_r b_u b_k^T P_{r+1}\cdots P_s b_k b_{u}^TP_{s+1}\cdots P_mQ_mP_{m+1})\nonumber\\
&\quad= (b_{u}^TP_{s+1}\cdots P_mQ_mP_{m+1}P_1Q_1\cdots P_r b_u)( b_k^T P_{r+1}\cdots P_s b_k)\nonumber\\
&\quad= (P_{s+1}\cdots P_mQ_mP_{m+1}P_1Q_1\cdots P_r)_{uu}(P_{r+1}\cdots P_s)_{kk}\, . \nonumber
\end{align}
where we are following the convention that $M_{ij}$ denotes the $(i,j)$th entry of a matrix $M$. Similarly,
\begin{align}
&\tr\biggl(P_1Q_1\cdots P_r \fpar{Q_r}{t_{uk}^e} P_{r+1}\cdots P_s \fpar{Q_s}{t_{uk}^e}P_{s+1}\cdots P_mQ_mP_{m+1}\biggr) \nonumber\\
&\quad= (P_{s+1}\cdots P_mQ_mP_{m+1}P_1Q_1\cdots P_r)_{uu}(P_{r+1}\cdots P_s)_{kk}\, . \nonumber
\end{align}
Then
\begin{align*}
&\sum_{u,k} \tr\biggl(P_1Q_1\cdots P_r \fpar{Q_r}{q_{uk}^e} P_{r+1}\cdots P_s \fpar{Q_s}{q_{uk}^e}P_{s+1}\cdots P_mQ_mP_{m+1}\biggr)\\
&\qquad+\sum_{u,k} \tr\biggl(P_1Q_1\cdots P_r \fpar{Q_r}{t_{uk}^e} P_{r+1}\cdots P_s \fpar{Q_s}{t_{uk}^e}P_{s+1}\cdots P_mQ_mP_{m+1}\biggr)\\
&= 2\sum_{u,k}(P_{s+1}\cdots P_mQ_mP_{m+1}P_1Q_1\cdots P_r)_{uu}(P_{r+1}\cdots P_s)_{kk}\\
&= 2\tr (P_{s+1}\cdots P_mQ_mP_{m+1}P_1Q_1\cdots P_r)\tr(P_{r+1}Q_{r+1}\cdots Q_{s-1}P_s)\\
&= 2\tr (P_1Q_1\cdots P_rP_{s+1}\cdots P_mQ_mP_{m+1})\tr(P_{r+1}Q_{r+1}\cdots Q_{s-1}P_s)\, .
\end{align*}
By the definition of a negative splitting, the sum of above quantities over such $(r,s)$ pairs gives
\[
2\sum_{x\in A_1, y\in B_1}W_{\times^1_{x,y} l_1} W_{\times^2_{x,y} l_1}\, .
\]
This produces the first term on the right. Similarly, if $Q_r = Q_e^*$ and $Q_s = Q_e$, then 
\begin{align*}
&\sum_{u,k} \tr\biggl(P_1Q_1\cdots P_r \fpar{Q_r}{q_{uk}^e} P_{r+1}\cdots P_s \fpar{Q_s}{q_{uk}^e}P_{s+1}\cdots P_mQ_mP_{m+1}\biggr)\\
&\qquad+\sum_{u,k} \tr\biggl(P_1Q_1\cdots P_r \fpar{Q_r}{t_{uk}^e} P_{r+1}\cdots P_s \fpar{Q_s}{t_{uk}^e}P_{s+1}\cdots P_mQ_mP_{m+1}\biggr)\\
&= 2\sum_{u,k}(P_{s+1}\cdots P_mQ_mP_{m+1}P_1Q_1\cdots P_r)_{kk}(P_{r+1}\cdots P_s)_{uu}\\
&= 2\tr (P_{s+1}\cdots P_mQ_mP_{m+1}P_1Q_1\cdots P_r)\tr(P_{r+1}Q_{r+1}\cdots Q_{s-1}P_s)\\
&= 2\tr (P_1Q_1\cdots P_rP_{s+1}\cdots P_mQ_mP_{m+1})\tr(P_{r+1}Q_{r+1}\cdots Q_{s-1}P_s)\, ,
\end{align*}
and the sum of these quantities over such $(r,s)$ pairs is equal to
\[
2\sum_{x\in B_1, y\in A_1} W_{\times^1_{x,y} l_1} W_{\times^2_{x,y} l_1}\, .
\]
This gives the second sum on the right side.
\end{proof}
\begin{lmm}\label{lem4}
\begin{align*}
\sum_{u,v,k,k'} \biggl((q_{vk}q_{uk'}-t_{vk}t_{uk'})\biggl(\mpar{W_{l_1}}{q_{uk}}{q_{vk'}}&-\mpar{W_{l_1}}{t_{uk}}{t_{vk'}}\biggr) +2(q_{vk}t_{uk'}+t_{vk}q_{uk'})\mpar{W_{l_1}}{q_{uk}}{t_{vk'}}\biggr)\\
&=  2\sum_{\substack{x,y\in A_1\\ x\ne y}}W_{\times^1_{x,y} l_1} W_{\times^2_{x,y} l_1} + 2\sum_{\substack{x,y\in B_1\\ x\ne y}}W_{\times^1_{x,y} l_1} W_{\times^2_{x,y} l_1} \, .
\end{align*}
\end{lmm}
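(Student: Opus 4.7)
The plan is to adapt the approach of Lemma~\ref{lem3}, expanding each mixed second partial derivative of $W_{l_1}$ as a sum over ordered pairs of positions in $l_1$ and then organising the result by which pair of matrix variables is being differentiated.

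\textbf{Setup.} As in Lemma~\ref{lem2}, factor $W_{l_1}=\tr(P_1Q_1P_2Q_2\cdots P_mQ_mP_{m+1})$, with each $Q_r$ equal to either $Q_e$ or $Q_e^*$. Each mixed second derivative expands as
\[
\mpar{W_{l_1}}{\alpha}{\beta}=\sum_{r\ne s} T_{r,s}\biggl[\fpar{Q_r}{\alpha},\fpar{Q_s}{\beta}\biggr],
\]
where $T_{r,s}[A,B]$ denotes the trace obtained from $W_{l_1}$ by replacing $Q_r$ with $A$ and $Q_s$ with $B$. I will substitute the rank-one derivatives $\partial Q_e/\partial q_{uk}=b_ub_k^T$, $\partial Q_e/\partial t_{uk}=ib_ub_k^T$, $\partial Q_e^*/\partial q_{uk}=b_kb_u^T$, $\partial Q_e^*/\partial t_{uk}=-ib_kb_u^T$ from Lemma~\ref{lem1} and case-split on the types of $(Q_r,Q_s)$.

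\textbf{Diagonal pairs yield the positive splittings.} For a pair $(r,s)$ with $Q_r=Q_s=Q_e$, all four derivative tensors are scalar multiples of $b_ub_k^T\otimes b_vb_{k'}^T$, and the prefactor on the left of Lemma~\ref{lem4} collapses via $1-i^2=2$ and $1\cdot i=i$ to
\[
2(q_{vk}q_{uk'}-t_{vk}t_{uk'})+2i(q_{vk}t_{uk'}+t_{vk}q_{uk'})=2(Q_e)_{vk}(Q_e)_{uk'}.
\]
Writing $X=P_1Q_1\cdots P_r$, $Y=P_{r+1}Q_{r+1}\cdots Q_{s-1}P_s$, $Z=P_{s+1}\cdots P_{m+1}$, cyclic invariance gives $T_{r,s}[b_ub_k^T,b_vb_{k'}^T]=Y_{kv}(ZX)_{k'u}$; the sum over $u,v,k,k'$ then factorises into $2\tr(YQ_e)\tr(ZXQ_e)$. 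The positive splitting construction of Section~\ref{string} identifies these two traces with $W_{\times^1_{x_r,x_s}l_1}$ and $W_{\times^2_{x_r,x_s}l_1}$, so summing over ordered pairs of distinct positions in $A_1$ produces the first sum on the right of the lemma. The case $Q_r=Q_s=Q_e^*$ is parallel: the prefactor becomes $2\overline{(Q_e)_{vk}}\,\overline{(Q_e)_{uk'}}=2(Q_e^*)_{kv}(Q_e^*)_{k'u}$ and the analogous factorisation delivers $2\tr(YQ_e^*)\tr(ZXQ_e^*)$, giving the second sum.

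\textbf{Mixed pairs cancel.} For a pair $(r,s)$ with $(Q_r,Q_s)=(Q_e,Q_e^*)$, the $(q_{vk}q_{uk'}-t_{vk}t_{uk'})$ group vanishes because $(i)(-i)=1$ forces the $\partial_q\otimes\partial_q$ and $\partial_t\otimes\partial_t$ contributions to be equal, leaving only $-2i(q_{vk}t_{uk'}+t_{vk}q_{uk'})\,T_{r,s}[b_ub_k^T,b_{k'}b_v^T]$; the analogous computation for the reversed pair $(s,r)$ with $(Q_s,Q_r)=(Q_e^*,Q_e)$ produces $+2i(q_{vk}t_{uk'}+t_{vk}q_{uk'})\,T_{s,r}[b_kb_u^T,b_vb_{k'}^T]$. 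The main step where I expect the most care is verifying this cancellation: relabelling $(u,k)\leftrightarrow(v,k')$ in one of the two sums and using the manifest symmetry of the coefficient under this swap shows that the two trace templates coincide, so their opposite prefactors annihilate exactly. Once every mixed pair is eliminated in this way, only the diagonal contributions survive and the identity of Lemma~\ref{lem4} follows.
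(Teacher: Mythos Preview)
Your proposal is correct and follows essentially the same route as the paper: expand the mixed second derivatives over pairs of positions, show that mixed-type pairs $(Q_e,Q_e^*)$ contribute nothing, and identify the same-type contributions with positive splittings via the rank-one substitutions of Lemma~\ref{lem1}. The only organisational difference is in the cancellation step: the paper first rewrites $2\,\mpar{W_{l_1}}{q_{uk}}{t_{vk'}}$ as $\mpar{W_{l_1}}{q_{uk}}{t_{vk'}}+\mpar{W_{l_1}}{q_{vk'}}{t_{uk}}$ by a single global relabeling $(u,v,k,k')\to(v,u,k',k)$, after which each mixed-type pair $(r,s)$ with $r<s$ gives zero directly from Lemma~\ref{lem1} (both $D_{qq}-D_{tt}$ and $D_{qt}+D_{tq}$ vanish); you instead keep the sum over ordered pairs and cancel $(r,s)$ against $(s,r)$ via a local relabeling. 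These are the same computation, just grouped differently.
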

\begin{proof}
First observe that by renaming $(u, v, k, k')$ as $(v, u, k', k)$ we get
\[
\sum_{u,v,k,k'} (q_{vk}t_{uk'}+t_{vk}q_{uk'})\mpar{W_{l_1}}{q_{uk}}{t_{vk'}}=\sum_{u,v,k,k'} (q_{uk'}t_{vk}+t_{uk'}q_{vk})\mpar{W_{l_1}}{q_{vk'}}{t_{uk}} \, .
\]
Thus
\[
\sum_{u,v,k,k'} 2(q_{vk}t_{uk'}+t_{vk}q_{uk'})\mpar{W_{l_1}}{q_{uk}}{t_{vk'}}=\sum_{u,v,k,k'} (q_{vk}t_{uk'}+t_{vk}q_{uk'})\biggl(\mpar{W_{l_1}}{q_{uk}}{t_{vk'}}+\mpar{W_{l_1}}{q_{vk'}}{t_{uk}}\biggr) \, .
\]
Note that
\begin{align}
\mpar{W_{l_1}}{q_{uk}^e}{q_{vk'}^e}
&= \sum_{1\le r< s\le m}\tr\biggl(P_1Q_1\cdots P_r \fpar{Q_r}{q_{uk}^e} P_{r+1}\cdots P_s \fpar{Q_s}{q_{vk'}^e}P_{s+1}\cdots P_mQ_mP_{m+1}\biggr)\notag\\
&\quad + \sum_{1\le r<s\le m} \tr\biggl(P_1Q_1\cdots P_r \fpar{Q_r}{q_{vk'}^e} P_{r+1}\cdots P_s \fpar{Q_s}{q_{uk}^e}P_{s+1}\cdots P_mQ_mP_{m+1}\biggr)\, \label{pqeq0}.
\end{align}
Similarly,
\begin{align*}
\mpar{W_{l_1}}{t_{uk}^e}{t_{vk'}^e}&=\sum_{1\le r< s\le m} \tr\biggl(P_1Q_1\cdots P_r \fpar{Q_r}{t_{uk}^e} P_{r+1}\cdots P_s \fpar{Q_s}{t_{vk'}^e}P_{s+1}\cdots P_mQ_mP_{m+1}\biggr)\\
&\quad +\sum_{1\le r< s\le m} \tr\biggl(P_1Q_1\cdots P_r \fpar{Q_r}{t_{vk'}^e} P_{r+1}\cdots P_s \fpar{Q_s}{t_{uk}^e}P_{s+1}\cdots P_mQ_mP_{m+1}\biggr)\,,
\end{align*}
\begin{align*}
\mpar{W_{l_1}}{q_{uk}^e}{t_{vk'}^e}&= \sum_{1\le r< s\le m} \tr\biggl(P_1Q_1\cdots P_r \fpar{Q_r}{q_{uk}^e} P_{r+1}\cdots P_s \fpar{Q_s}{t_{vk'}^e}P_{s+1}\cdots P_mQ_mP_{m+1}\biggr)\\
&\quad + \sum_{1\le r<s\le m} \tr\biggl(P_1Q_1\cdots P_r \fpar{Q_r}{t_{vk'}^e} P_{r+1}\cdots P_s \fpar{Q_s}{q_{uk}^e}P_{s+1}\cdots P_mQ_mP_{m+1}\biggr)\,,
\end{align*}
and
\begin{align*}
\mpar{W_{l_1}}{t_{uk}^e}{q_{vk'}^e}&= \sum_{1\le r< s\le m} \tr\biggl(P_1Q_1\cdots P_r \fpar{Q_r}{t_{uk}^e} P_{r+1}\cdots P_s \fpar{Q_s}{q_{vk'}^e}P_{s+1}\cdots P_mQ_mP_{m+1}\biggr)\\
&\quad + \sum_{1\le r<s\le m} \tr\biggl(P_1Q_1\cdots P_r \fpar{Q_r}{q_{vk'}^e} P_{r+1}\cdots P_s \fpar{Q_s}{t_{uk}^e}P_{s+1}\cdots P_mQ_mP_{m+1}\biggr)\,.
\end{align*}
As in the proof of Lemma \ref{lem3} we can see that by Lemma \ref{lem1} the pairs $(Q_r, Q_s)=(Q_e, Q_e^*)$ and $(Q_r, Q_s)=(Q_e^*, Q_e)$ do not contribute anything to the sum on the left hand side of the equation in the statement of the lemma. Now, let $1\le r<s\le m$ and suppose that $Q_r = Q_s = Q_e$. If $b_u$ is as in the proof of Lemma \ref{lem1}, then 
\begin{align}
&\tr\biggl(P_1Q_1\cdots P_r \fpar{Q_r}{q_{uk}^e} P_{r+1}\cdots P_s \fpar{Q_s}{q_{vk'}^e}P_{s+1}\cdots P_mQ_mP_{m+1}\biggr) \nonumber\\
&\quad= \tr(P_1Q_1\cdots P_r b_u b_k^T P_{r+1}\cdots P_s b_v b_{k'}^TP_{s+1}\cdots P_mQ_mP_{m+1})\nonumber\\
&\quad= (b_{k'}^TP_{s+1}\cdots P_mQ_mP_{m+1}P_1Q_1\cdots P_r b_u)( b_k^T P_{r+1}\cdots P_s b_v)\nonumber\\
&\quad= (P_{s+1}\cdots P_mQ_mP_{m+1}P_1Q_1\cdots P_r)_{k'u}(P_{r+1}\cdots P_s)_{kv}\, , \label{pqeq1}
\end{align}
Similarly,
\begin{align*}
&\tr\biggl(P_1Q_1\cdots P_r \fpar{Q_r}{t_{uk}^e} P_{r+1}\cdots P_s \fpar{Q_s}{t_{vk'}^e}P_{s+1}\cdots P_mQ_mP_{m+1}\biggr) \nonumber\\
&\qquad=-(P_{s+1}\cdots P_mQ_mP_{m+1}P_1Q_1\cdots P_r)_{k'u}(P_{r+1}\cdots P_s)_{kv}\, ,
\end{align*}
\begin{align*}
&\tr\biggl(P_1Q_1\cdots P_r \fpar{Q_r}{q_{uk}^e} P_{r+1}\cdots P_s \fpar{Q_s}{t_{vk'}^e}P_{s+1}\cdots P_mQ_mP_{m+1}\biggr) \nonumber\\
&\qquad=i(P_{s+1}\cdots P_mQ_mP_{m+1}P_1Q_1\cdots P_r)_{k'u}(P_{r+1}\cdots P_s)_{kv}
\end{align*}
and
\begin{align*}
&\tr\biggl(P_1Q_1\cdots P_r \fpar{Q_r}{t_{uk}^e} P_{r+1}\cdots P_s \fpar{Q_s}{q_{vk'}^e}P_{s+1}\cdots P_mQ_mP_{m+1}\biggr) \nonumber\\
&\qquad=i(P_{s+1}\cdots P_mQ_mP_{m+1}P_1Q_1\cdots P_r)_{k'u}(P_{r+1}\cdots P_s)_{kv}\, .
\end{align*}
Thus plugging these in above equations and using symmetry between pairs $(u,k)$ and $(v, k')$ we get
\begin{align*}
&2\sum_{u, v, k, k'}(q_{vk}q_{uk'}-t_{vk}t_{uk'}+iq_{vk}t_{uk'}+it_{vk}q_{uk'})(P_{s+1}\cdots P_mQ_mP_{m+1}P_1Q_1\cdots P_r)_{k'u}(P_{r+1}\cdots P_s)_{kv}\\
&\quad= 2\sum_{u, v, k, k'}(q_{vk}+it_{vk})(q_{uk'}+it_{uk'})(P_{s+1}\cdots P_mQ_mP_{m+1}P_1Q_1\cdots P_r)_{k'u}(P_{r+1}\cdots P_s)_{kv}\\
&\quad= 2\tr(P_{s+1}\cdots P_mQ_mP_{m+1}P_1Q_1\cdots P_rQ_r)\tr(P_{r+1}\cdots P_sQ_s)\\
&\quad= 2\tr(P_1Q_1\cdots P_rQ_rP_{s+1}\cdots P_mQ_mP_{m+1})\tr(P_{r+1}\cdots P_sQ_s)\,.
\end{align*}
By the definition of a positive splitting, the sum of these variables over all such $(r, s)$ pairs equals
\[
2\sum_{\substack{x,y\in A_1\\ x\ne y}}W_{\times^1_{x,y} l_1} W_{\times^2_{x,y} l_1}\, .
\]
Next, note that if $Q_r=Q_s=Q_e^*$, then
\begin{align}
&\tr\biggl(P_1Q_1\cdots P_r \fpar{Q_r}{q_{uk}^e} P_{r+1}\cdots P_s \fpar{Q_s}{q_{vk'}^e}P_{s+1}\cdots P_mQ_mP_{m+1}\biggr) \nonumber\\
&\quad= \tr(P_1Q_1\cdots P_r b_k b_u^T P_{r+1}\cdots P_s b_{k'} b_{v}^TP_{s+1}\cdots P_mQ_mP_{m+1})\nonumber\\
&\quad= (b_{v}^TP_{s+1}\cdots P_mQ_mP_{m+1}P_1Q_1\cdots P_r b_k)( b_u^T P_{r+1}\cdots P_s b_{k'})\nonumber\\
&\quad= (P_{s+1}\cdots P_mQ_mP_{m+1}P_1Q_1\cdots P_r)_{vk}(P_{r+1}\cdots P_s)_{uk'}\, . \label{pqeq2}
\end{align}
Similarly,
\begin{align*}
&\tr\biggl(P_1Q_1\cdots P_r \fpar{Q_r}{t_{uk}^e} P_{r+1}\cdots P_s \fpar{Q_s}{t_{vk'}^e}P_{s+1}\cdots P_mQ_mP_{m+1}\biggr) \nonumber\\
&\qquad=-(P_{s+1}\cdots P_mQ_mP_{m+1}P_1Q_1\cdots P_r)_{vk}(P_{r+1}\cdots P_s)_{uk'}\, ,
\end{align*}
\begin{align*}
&\tr\biggl(P_1Q_1\cdots P_r \fpar{Q_r}{q_{uk}^e} P_{r+1}\cdots P_s \fpar{Q_s}{t_{vk'}^e}P_{s+1}\cdots P_mQ_mP_{m+1}\biggr) \nonumber\\
&\qquad=-i(P_{s+1}\cdots P_mQ_mP_{m+1}P_1Q_1\cdots P_r)_{vk}(P_{r+1}\cdots P_s)_{uk'} 
\end{align*}
and
\begin{align*}
&\tr\biggl(P_1Q_1\cdots P_r \fpar{Q_r}{t_{uk}^e} P_{r+1}\cdots P_s \fpar{Q_s}{q_{vk'}^e}P_{s+1}\cdots P_mQ_mP_{m+1}\biggr) \nonumber\\
&\qquad=-i(P_{s+1}\cdots P_mQ_mP_{m+1}P_1Q_1\cdots P_r)_{vk}(P_{r+1}\cdots P_s)_{uk'}\, . 
\end{align*}
Thus, by symmetry between pairs $(u,k)$ and $(v, k')$ we get
\begin{align*}
& 2\sum_{u, v, k, k'}(q_{vk}q_{uk'}-t_{vk}t_{uk'}-iq_{vk}t_{uk'}-it_{vk}q_{uk'})(P_{s+1}\cdots P_mQ_mP_{m+1}P_1Q_1\cdots P_r)_{vk}(P_{r+1}\cdots P_s)_{uk'}\\
&\quad= 2\sum_{u, v, k, k'}(q_{vk}-it_{vk})(q_{uk'}-it_{uk'})(P_{s+1}\cdots P_mQ_mP_{m+1}P_1Q_1\cdots P_r)_{vk}(P_{r+1}\cdots P_s)_{uk'}\\
&\quad= 2\tr(P_{s+1}\cdots P_mQ_mP_{m+1}P_1Q_1\cdots P_rQ_r)\tr(P_{r+1}\cdots P_sQ_s)\\
&\quad= 2\tr(P_1Q_1\cdots P_rQ_rP_{s+1}\cdots P_mQ_mP_{m+1})\tr(P_{r+1}\cdots P_sQ_s)\, .
\end{align*}
Again by the definition of a positive splitting, the sum of above quantities over all such $(r,s)$ pairs gives
\[
2\sum_{\substack{x,y\in B_1\\ x\ne y}}W_{\times^1_{x,y} l_1} W_{\times^2_{x,y} l_1}\, .
\]
This completes the proof of the lemma.
\end{proof}
\begin{lmm}\label{lem5}
\begin{align*}
\sum_{u,v,k,k'} \biggl( t_{uk}t_{vk'} \mpar{W_{l_1}}{q_{uk}}{q_{vk'}}+q_{uk}q_{vk'} \mpar{W_{l_1}}{t_{uk}}{t_{vk'}}-2t_{uk}q_{vk'} \mpar{W_{l_1}}{q_{uk}}{t_{vk'}} \biggr)&=(m-t_1^2)W_{l_1}\, .
\end{align*}
\end{lmm}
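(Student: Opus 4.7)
The plan is to introduce the single vector field
\[
L := \sum_{u,k}\left(q^e_{uk}\,\fpar{}{t^e_{uk}} - t^e_{uk}\,\fpar{}{q^e_{uk}}\right),
\]
to recognize the left-hand side of the lemma as $L^2 W_{l_1} + m W_{l_1}$, and then to evaluate $L^2 W_{l_1}$ directly using the derivation property of $L$.

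For the first step, write $L_{u,k} := q^e_{uk}\,\partial_{t^e_{uk}} - t^e_{uk}\,\partial_{q^e_{uk}}$ and expand $L_{u,k} L_{v,k'} W_{l_1}$ via the Leibniz rule. This produces four second-order terms together with a diagonal first-order correction $-\delta_{uv}\delta_{kk'}\bigl(q^e_{uk}\,\partial_{q^e_{uk}} + t^e_{uk}\,\partial_{t^e_{uk}}\bigr) W_{l_1}$ that arises when $L_{u,k}$ differentiates the coefficients $q^e_{vk'}, t^e_{vk'}$ of $L_{v,k'}$. Summing over $u,v,k,k'$, using commutativity of mixed partials to merge the two cross-terms into $-2\,t^e_{uk} q^e_{vk'}\,\partial_{q^e_{uk}}\partial_{t^e_{vk'}} W_{l_1}$, and applying Lemma~\ref{lem2} to the diagonal first-order sum, one obtains
\[
\sum_{u,v,k,k'} L_{u,k} L_{v,k'} W_{l_1} = [\text{left-hand side of the lemma}] - m W_{l_1},
\]
i.e.\ $[\text{LHS}] = L^2 W_{l_1} + m W_{l_1}$.

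For the second step, Lemma~\ref{lem1} gives $L Q_e = i Q_e$ and $L Q_e^* = -i Q_e^*$, while $L$ annihilates $Q_{e'}$ for $e'\ne e, e^{-1}$. Since $L$ acts as a derivation on the matrix product inside the trace defining $W_{l_1}$, it multiplies each occurrence of $Q_e$ by $i$ and each $Q_e^*$ by $-i$, yielding $L W_{l_1} = i(|A_1| - |B_1|) W_{l_1} = i t_1 W_{l_1}$. Because $it_1$ is a scalar constant, applying $L$ again gives $L^2 W_{l_1} = (it_1)^2 W_{l_1} = -t_1^2 W_{l_1}$, so $[\text{LHS}] = (m - t_1^2) W_{l_1}$, as claimed.

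The only delicate step is the first one: one must carefully track the diagonal commutator arising when $L_{u,k}$ differentiates the coefficients of $L_{v,k'}$, as this is precisely what produces the extra $mW_{l_1}$ term via Lemma~\ref{lem2}. A more laborious direct alternative, in the spirit of Lemmas~\ref{lem3} and~\ref{lem4}, would be to sum over ordered pairs $(r,s)$ with $r\ne s$ split into the four subcases $(Q_r, Q_s)\in\{(Q_e,Q_e),(Q_e,Q_e^*),(Q_e^*,Q_e),(Q_e^*,Q_e^*)\}$ together with a diagonal $r=s$ contribution computed from second derivatives of a single factor; each off-diagonal subcase collapses to $\pm W_{l_1}$ after contracting with entries of $Q_e$ or $Q_e^*$, and the case counts add up to $-|A_1|(|A_1|-1) - |B_1|(|B_1|-1) + 2|A_1||B_1|$, which combines with the diagonal piece to give $m - t_1^2$.
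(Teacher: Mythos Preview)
Your proof is correct and takes a genuinely different route from the paper. The paper proceeds by direct case-by-case computation: it expands each second derivative as a sum over pairs $1\le r<s\le m$ (using equation~\eqref{pqeq0} and its analogues), splits into the four cases $(Q_r,Q_s)\in\{(Q_e,Q_e),(Q_e^*,Q_e^*),(Q_e,Q_e^*),(Q_e^*,Q_e)\}$, and verifies by explicit matrix-entry calculation that each pair contributes $\pm 2W_{l_1}$; the final count $-2\binom{|A_1|}{2}-2\binom{|B_1|}{2}+2|A_1||B_1|=m-t_1^2$ finishes the proof. Your approach is more conceptual: the vector field $L$ generates the $U(1)$ action $Q_e\mapsto e^{i\theta}Q_e$, the loop variable $W_{l_1}$ is an eigenfunction of $L$ with eigenvalue $it_1$, and the identity $\text{LHS}=L^2W_{l_1}+mW_{l_1}$ (coming from the commutator term and Lemma~\ref{lem2}) reduces everything to a one-line eigenvalue computation. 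This bypasses all the entrywise trace manipulations. One small correction to your closing remark on the alternative approach: there is in fact \emph{no} diagonal $r=s$ contribution, since $Q_e$ and $Q_e^*$ are linear in their entries and hence have vanishing second partials; the off-diagonal case counts already sum to $m-t_1^2$ on their own.
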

\begin{proof}
First note that since
\[
\sum_{u,v,k,k'}t_{uk}q_{vk'} \mpar{W_{l_1}}{q_{uk}}{t_{vk'}}=\sum_{u,v,k,k'}t_{vk'}q_{uk} \mpar{W_{l_1}}{q_{vk'}}{t_{uk}}
\]
we can write
\[
2\sum_{u,v,k,k'}t_{uk}q_{vk'} \mpar{W_{l_1}}{q_{uk}}{t_{vk'}}=\sum_{u,v,k,k'}\biggl(t_{uk}q_{vk'} \mpar{W_{l_1}}{q_{uk}}{t_{vk'}}+q_{uk}t_{vk'} \mpar{W_{l_1}}{t_{uk}}{q_{vk'}}\biggr)\, .
\]
So, if $Q_r=Q_s=Q_e$, then by \eqref{pqeq0}, \eqref{pqeq1} and similar equations we get
\begin{align*}
2\sum_{u,v,k,k'}& (t_{uk}t_{vk'}-q_{uk}q_{vk'}-it_{uk}q_{vk'}-iq_{uk}t_{vk'})(P_{s+1}\cdots P_mQ_mP_{m+1}P_1Q_1\cdots P_r)_{k'u}(P_{r+1}\cdots P_s)_{kv}\\
&=-2(q_{uk}+it_{uk})(q_{vk'}+it_{vk'})(P_{s+1}\cdots P_mQ_mP_{m+1}P_1Q_1\cdots P_r)_{k'u}(P_{r+1}\cdots P_s)_{kv}\\
&=-2\tr((P_{s+1}\cdots P_mQ_mP_{m+1}P_1Q_1\cdots P_r)Q_r(P_{r+1}\cdots P_s)Q_s)\\
&=-2W_{l_1}\,.
\end{align*}
Similarly, by \eqref{pqeq0}, \eqref{pqeq2} and similar equations if $Q_r=Q_s=Q_e^*$, then 
\begin{align*}
2\sum_{u,v,k,k'}& (t_{uk}t_{vk'}-q_{uk}q_{vk'}+it_{uk}q_{vk'}+iq_{uk}t_{vk'})(P_{s+1}\cdots P_mQ_mP_{m+1}P_1Q_1\cdots P_r)_{vk}(P_{r+1}\cdots P_s)_{uk'}\\
&=-2(q_{uk}-it_{uk})(q_{vk'}-it_{vk'})(P_{s+1}\cdots P_mQ_mP_{m+1}P_1Q_1\cdots P_r)_{vk}(P_{r+1}\cdots P_s)_{uk'}\\
&=-2\tr((P_{s+1}\cdots P_mQ_mP_{m+1}P_1Q_1\cdots P_r)Q_r(P_{r+1}\cdots P_s)Q_s)\\
&=-2W_{l_1}
\end{align*}
On the other hand, if $Q_r=Q_e$ and $Q_s=Q_e^*$, then 
\begin{align}
&\tr\biggl(P_1Q_1\cdots P_r \fpar{Q_r}{q_{uk}^e} P_{r+1}\cdots P_s \fpar{Q_s}{q_{vk'}^e}P_{s+1}\cdots P_mQ_mP_{m+1}\biggr) \nonumber\\
&= \tr(P_1Q_1\cdots P_r b_u b_k^T P_{r+1}\cdots P_s b_{k'} b_{v}^TP_{s+1}\cdots P_mQ_mP_{m+1})\nonumber\\
&= (b_{v}^TP_{s+1}\cdots P_mQ_mP_{m+1}P_1Q_1\cdots P_r b_u)( b_k^T P_{r+1}\cdots P_s b_{k'})\nonumber\\
&= (P_{s+1}\cdots P_mQ_mP_{m+1}P_1Q_1\cdots P_r)_{vu}(P_{r+1}\cdots P_s)_{kk'}\, . \nonumber
\end{align}
Hence we get
\begin{align*}
2\sum_{u,v,k,k'}& (t_{uk}t_{vk'}+q_{uk}q_{vk'}+it_{uk}q_{vk'}-iq_{uk}t_{vk'})(P_{s+1}\cdots P_mQ_mP_{m+1}P_1Q_1\cdots P_r)_{vu}(P_{r+1}\cdots P_s)_{kk'}\\
&=2(q_{uk}+it_{uk})(q_{vk'}-it_{vk'})(P_{s+1}\cdots P_mQ_mP_{m+1}P_1Q_1\cdots P_r)_{vu}(P_{r+1}\cdots P_s)_{kk'}\\
&=2\tr((P_{s+1}\cdots P_mQ_mP_{m+1}P_1Q_1\cdots P_r)Q_r(P_{r+1}\cdots P_s)Q_s)\\
&=2W_{l_1}\, .
\end{align*}
Similarly, if $Q_r=Q_e^*$ and $Q_s=Q_e$, then
\begin{align}
&\tr\biggl(P_1Q_1\cdots P_r \fpar{Q_r}{q_{uk}^e} P_{r+1}\cdots P_s \fpar{Q_s}{q_{vk'}^e}P_{s+1}\cdots P_mQ_mP_{m+1}\biggr) \nonumber\\
&= \tr(P_1Q_1\cdots P_r b_k b_u^T P_{r+1}\cdots P_s b_{v} b_{k'}^TP_{s+1}\cdots P_mQ_mP_{m+1})\nonumber\\
&= (b_{k'}^TP_{s+1}\cdots P_mQ_mP_{m+1}P_1Q_1\cdots P_r b_{k})( b_{u}^T P_{r+1}\cdots P_s b_{v})\nonumber\\
&= (P_{s+1}\cdots P_mQ_mP_{m+1}P_1Q_1\cdots P_r)_{k'k}(P_{r+1}\cdots P_s)_{uv}\, . \nonumber
\end{align}
So we get
\begin{align*}
2\sum_{u,v,k,k'}& (t_{uk}t_{vk'}+q_{uk}q_{vk'}-it_{uk}q_{vk'}+iq_{uk}t_{vk'})(P_{s+1}\cdots P_mQ_mP_{m+1}P_1Q_1\cdots P_r)_{k'k}(P_{r+1}\cdots P_s)_{uv}\\
&=2(q_{uk}-it_{uk})(q_{vk'}+it_{vk'})(P_{s+1}\cdots P_mQ_mP_{m+1}P_1Q_1\cdots P_r)_{k'k}(P_{r+1}\cdots P_s)_{uv}\\
&=2\tr((P_{s+1}\cdots P_mQ_mP_{m+1}P_1Q_1\cdots P_r)Q_r(P_{r+1}\cdots P_s)Q_s)\\
&=2W_{l_1}\, .
\end{align*} 
By combining all these we get
\begin{align*}
\sum_{u,v,k,k'} &\biggl( t_{uk}t_{vk'} \mpar{W_{l_1}}{q_{uk}}{q_{vk'}}+q_{uk}q_{vk'} \mpar{W_{l_1}}{t_{uk}}{t_{vk'}}-t_{uk}q_{vk'} \mpar{W_{l_1}}{q_{uk}}{t_{vk'}}-q_{uk}t_{vk'} \mpar{W_{l_1}}{t_{uk}}{q_{vk'}} \biggr)\\
&\qquad=\left(-2\dbinom{|A_1|}{2}-2\dbinom{|B_1|}{2}+2|A_1||B_1|\right)W_{l_1}=(m-t_1^2)W_{l_1}\, .
\end{align*}
\end{proof}
\begin{lmm}\label{lem6}
Let $l'$ be a non-null loop such that all points within distance $1$ of $l'$ belong to $\Lambda$. Let $A'$ be the set of locations in $l'$ where $e$ occurs, and let $B'$ be the set of locations in $l'$ where $e^{-1}$ occurs. Let $C' = A'\cup B'$ and assume that $C'$ is nonempty. Then 
\[
\sum_{u,k} \biggl(\fpar{W_{l_1}}{q^e_{uk}} \fpar{W_{l'}}{q^e_{uk}}+\fpar{W_{l_1}}{t^e_{uk}} \fpar{W_{l'}}{t^e_{uk}}\biggr) = 2\sum_{x\in A_1,\, y\in B'} W_{l_1\ominus_{x,y} l'}+2\sum_{x\in B_1,\, y\in A'} W_{l_1\ominus_{x,y} l'}\, .
\]
\end{lmm}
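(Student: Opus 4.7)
The plan is to follow the proof of Lemma \ref{lem3} almost verbatim, but with one occurrence of $e^{\pm 1}$ coming from each of $l_1$ and $l'$ rather than two occurrences in a single loop. First I would write
\[
W_{l_1} = \tr(P_1 Q_1 P_2 Q_2 \cdots P_m Q_m P_{m+1}), \qquad W_{l'} = \tr(P'_1 Q'_1 P'_2 Q'_2 \cdots P'_{m'} Q'_{m'} P'_{m'+1}),
\]
where each $Q_r$ (resp.\ $Q'_s$) equals $Q_e$ or $Q_e^*$ and the $P_i, P'_j$ are products of matrices $Q_{e'}$ with $e' \neq e, e^{-1}$. Differentiating each factor in turn gives
\[
\fpar{W_{l_1}}{q^e_{uk}} = \sum_{r=1}^m \tr\!\Bigl(P_1 Q_1 \cdots P_r \fpar{Q_r}{q^e_{uk}} P_{r+1} \cdots Q_m P_{m+1}\Bigr),
\]
and analogously for the $t^e_{uk}$-derivative and for $W_{l'}$. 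Substituting these into the left-hand side yields a double sum over pairs $(r,s)$ with $1 \le r \le m$ and $1 \le s \le m'$.

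Next I would classify each $(r,s)$-pair according to whether $(Q_r, Q'_s)$ equals $(Q_e, Q_e)$, $(Q_e^*, Q_e^*)$, $(Q_e, Q_e^*)$, or $(Q_e^*, Q_e)$. By Lemma \ref{lem1}, in the two ``same-type'' cases the derivatives in $t^e_{uk}$ acquire a factor $i \cdot i = -1$ (resp.\ $(-i)(-i) = -1$) relative to the derivatives in $q^e_{uk}$, so those contributions exactly cancel. In the two ``opposite-type'' cases the $t$-derivatives acquire $i \cdot (-i) = 1$, so they add and produce twice the $q$-term; this exactly mimics the mechanism in the proofs of Lemmas \ref{lem3}--\ref{lem5}.

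It then remains to compute the $q$-term in each of the two surviving cases. Using $\partial Q_e/\partial q^e_{uk} = b_u b_k^T$ and $\partial Q_e^*/\partial q^e_{uk} = b_k b_u^T$ exactly as in the proofs of Lemmas \ref{lem3}--\ref{lem5}, the product of the two traces factors as
\[
\bigl(\text{matrix entry}\bigr)_{?\,?} \cdot \bigl(\text{matrix entry}\bigr)_{?\,?},
\]
and summing over $u,k$ collapses this to a single trace of the form $\tr(M M')$, where $M$ and $M'$ are appropriate cyclic rearrangements of the matrix products defining $W_{l_1}$ and $W_{l'}$ with $Q_r$ and $Q'_s$ removed. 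Specifically, the $(Q_e, Q_e^*)$ case with $Q_r$ at a location $x \in A_1$ and $Q'_s$ at a location $y \in B'$ produces
\[
\tr\bigl(P_1 Q_1 \cdots P_r \cdot P'_{s+1} \cdots P'_{m'+1} P'_1 Q'_1 \cdots P'_s \cdot P_{r+1} \cdots P_{m+1}\bigr),
\]
which, writing $l_1 = aeb$ and $l' = ce^{-1}d$, is $\tr$ of the cycle $adcb$ and therefore equals $W_{l_1 \ominus_{x,y} l'}$ (cyclic invariance of the trace plus invariance of $W$ under backtrack erasure). The $(Q_e^*, Q_e)$ case with $x \in B_1$ and $y \in A'$ gives the mirror-image trace, which equals $W_{l_1 \ominus_{x,y} l'}$ for that case by the same argument.

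Collecting the factor $2$ from the ``$q$ and $t$ add'' step and summing over $(r,s) = (x,y)$ with $x \in A_1, y \in B'$ and with $x \in B_1, y \in A'$ gives the stated identity. The main bookkeeping obstacle is verifying that the cyclic class of the trace produced by each of the two surviving cases matches the definition of the negative merger $l_1 \ominus_{x,y} l'$ in Section~\ref{string}; this just requires carefully identifying $a,b,c,d$ with the corresponding subproducts and using cyclicity of the trace, exactly as in Lemma \ref{lem3}.
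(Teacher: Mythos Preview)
Your proposal is correct and follows essentially the same approach as the paper: expand each derivative as a sum over occurrences of $e^{\pm1}$, observe via Lemma~\ref{lem1} that same-type pairs $(Q_e,Q_e)$ and $(Q_e^*,Q_e^*)$ cancel while opposite-type pairs double, then reduce the surviving $(u,k)$-sum of a product of matrix entries to a single trace which is identified with $W_{l_1\ominus_{x,y}l'}$. The paper's proof is organized identically, with the only cosmetic difference being that it records the explicit matrix-entry formulas $(\,\cdot\,)_{ku}(\,\cdot\,)_{uk}$ before collapsing to the trace.
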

\begin{proof}
Let
\[
W_{l'} = \tr(P_1'Q_1'P_2'Q_2'\cdots P_{m'}'Q_{m'}' P_{m'+1}')\,  
\]
where $m'$ is the size of $C'$, each of $Q_1',\ldots, Q_{m'}'$ is either $Q_e$ or $Q_e^*$, and each of $P_1',\ldots, P_{m'+1}'$ is a product of $Q_{e'}$ where $e'$ is neither $e$ nor $e^{-1}$. Then
\begin{align*}
&\sum_{u,k}\fpar{W_{l_1}}{q_{uk}^e}\fpar{W_{l'}}{q_{uk'}^e}\\
&= \sum_{\substack{1\le r\le m\\1\le s\le m'}}\sum_{u,k}\tr\biggl(P_1Q_1\cdots P_r \fpar{Q_r}{q_{uk}^e} P_{r+1}\cdots Q_m P_{m+1}\biggr)\tr\biggl( P_1'Q_1'\cdots P_s'\fpar{Q_s'}{q_{uk}^e}P_{s+1}'\cdots Q'_{m'}P'_{m'+1}\biggr)
\end{align*}
and
\begin{align*}
&\sum_{u,k}\fpar{W_{l_1}}{t_{uk}^e}\fpar{W_{l'}}{t_{uk'}^e}\\
&= \sum_{\substack{1\le r\le m\\1\le s\le m'}}\sum_{u,k}\tr\biggl(P_1Q_1\cdots P_r \fpar{Q_r}{t_{uk}^e} P_{r+1}\cdots Q_m P_{m+1}\biggr)\tr\biggl( P_1'Q_1'\cdots P_s'\fpar{Q_s'}{t_{uk}^e}P_{s+1}'\cdots Q'_{m'}P'_{m'+1}\biggr)\, .
\end{align*}
Observe that by Lemma \ref{lem1} the pairs $(Q_r, Q'_s)=(Q_e, Q_e)$ and $(Q_r, Q'_s)=(Q_e^*, Q_e^*)$ do not contribute anything to the sum on the left hand side of the equation in the statement of the lemma. Moreover, for any other pair
\begin{align*}
\sum_{u,k}\tr\biggl(&P_1Q_1\cdots P_r\fpar{Q_r}{q_{uk}^e} P_{r+1}\cdots Q_m P_{m+1}\biggr)\tr\biggl( P_1'Q_1'\cdots P_s'\fpar{Q_s'}{q_{uk}^e}P_{s+1}'\cdots Q'_{m'}P'_{m'+1}\biggr)\\
&= \sum_{u,k}\tr\biggl(P_1Q_1\cdots P_r \fpar{Q_r}{t_{uk}^e} P_{r+1}\cdots Q_m P_{m+1}\biggr)\tr\biggl( P_1'Q_1'\cdots P_s'\fpar{Q_s'}{t_{uk}^e}P_{s+1}'\cdots Q'_{m'}P'_{m'+1}\biggr) \, .
\end{align*}
Next, note that if $(Q_r, Q'_s)= (Q_e, Q_e^*)$, then we get
\begin{align*}
&2\sum_{u,k} \tr\biggl(P_1Q_1\cdots P_r \fpar{Q_r}{q_{uk}^e} P_{r+1}\cdots Q_m P_{m+1}\biggr)\tr\biggl( P_1'Q_1'\cdots P_s'\fpar{Q_s'}{q_{uk}^e}P_{s+1}'\cdots Q'_{m'}P'_{m'+1}\biggr)\\
&=2\sum_{u,k} (P_{r+1}\cdots Q_m P_{m+1}P_1Q_1\cdots P_r )_{ku}( P_{s+1}'\cdots Q'_{m'}P'_{m'+1}P_1'Q_1'\cdots P_s')_{uk}\\
&= 2\tr(P_{r+1}\cdots Q_m P_{m+1}P_1Q_1\cdots P_rP_{s+1}'\cdots Q'_{m'}P'_{m'+1}P_1'Q_1'\cdots P_s')\\
&= 2\tr(P_1Q_1\cdots P_rP_{s+1}'\cdots Q'_{m'}P'_{m'+1}P_1'Q_1'\cdots P_s'P_{r+1}\cdots Q_m P_{m+1})\, . 
\end{align*}
This expression equals $2W_{l_1\ominus_{x,y} l'}$ where $x\in A_1$ and $y\in B'$. Similarly, if $(Q_r, Q'_s) = (Q_e^*, Q_e)$, then we obtain
\begin{align*}
&2\sum_{u,k} \tr\biggl(P_1Q_1\cdots P_r \fpar{Q_r}{q_{uk}^e} P_{r+1}\cdots Q_m P_{m+1}\biggr)\tr\biggl( P_1'Q_1'\cdots P_s'\fpar{Q_s'}{q_{uk}^e}P_{s+1}'\cdots Q'_{m'}P'_{m'+1}\biggr)\\
&=2\sum_{u,k} (P_{r+1}\cdots Q_m P_{m+1}P_1Q_1\cdots P_r )_{uk}( P_{s+1}'\cdots Q'_{m'}P'_{m'+1}P_1'Q_1'\cdots P_s')_{ku}\\
&=2\tr(P_{r+1}\cdots Q_m P_{m+1}P_1Q_1\cdots P_rP_{s+1}'\cdots Q'_{m'}P'_{m'+1}P_1'Q_1'\cdots P_s')\\
&=2 \tr(P_1Q_1\cdots P_rP_{s+1}'\cdots Q'_{m'}P'_{m'+1}P_1'Q_1'\cdots P_s'P_{r+1}\cdots Q_m P_{m+1})\, .
\end{align*}
This is equal to $2W_{l_1\ominus_{x,y} l'}$ where $x\in B_1$ and $y\in A'$. 
\end{proof}
\begin{lmm}\label{lem7}
Let $l'$, $A'$ and $B'$ be as in the previous lemma. Then 
\begin{align*}
\sum_{u,v,k,k'} \biggl((q_{vk}q_{uk'}&-t_{vk}t_{uk'})\left(\fpar{W_{l_1}}{q_{uk}}\fpar{W_{l'}}{q_{vk'}}- \fpar{W_{l_1}}{t_{uk}}\fpar{W_{l'}}{t_{vk'}}\right)\\
&\qquad\qquad\qquad\qquad+(q_{vk}t_{uk'}+t_{vk}q_{uk'})\left(\fpar{W_{l_1}}{q_{uk}}\fpar{W_{l'}}{t_{vk'}}+ \fpar{W_{l_1}}{t_{uk}}\fpar{W_{l'}}{q_{vk'}} \right)\biggr)\\
&=2\sum_{x\in A_1\,, y\in A'}W_{l_1\oplus_{x,y} l'}+2\sum_{x\in B_1\,, y\in B'}W_{l_1\oplus_{x,y} l'}\, .
\end{align*}
\end{lmm}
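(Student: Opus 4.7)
The proof follows the same pattern as Lemmas \ref{lem4} and \ref{lem6}. Write
$W_{l_1} = \tr(P_1 Q_1 \cdots P_m Q_m P_{m+1})$ and
$W_{l'} = \tr(P_1' Q_1' \cdots P_{m'}' Q_{m'}' P_{m'+1}')$, where each $Q_r$ and $Q_s'$ is either $Q_e$ or $Q_e^*$, and where the $P_i$, $P_j'$ are products of $Q_{e'}$ with $e' \ne e, e^{-1}$. Applying Lemma \ref{lem1}, the left-hand side of the claim splits into a double sum over pairs $(r,s)$ with $1 \le r \le m$ and $1 \le s \le m'$, and it suffices to determine the contribution of each type of pair.

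First I would show that the mixed pairs $(Q_r, Q_s') = (Q_e, Q_e^*)$ and $(Q_e^*, Q_e)$ contribute nothing. For the pair $(Q_e, Q_e^*)$, the partial derivatives carry factors $(1, i)$ on the $l_1$-side and $(1, -i)$ on the $l'$-side, so the combination $(q_{vk}q_{uk'}-t_{vk}t_{uk'})(1\cdot 1 - i\cdot(-i)) + (q_{vk}t_{uk'}+t_{vk}q_{uk'})(1\cdot(-i)+i\cdot 1)$ vanishes identically; the same cancellation occurs for $(Q_e^*, Q_e)$. Hence only homogeneous pairs survive.

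For a pair $(Q_r, Q_s')=(Q_e, Q_e)$, corresponding to some $x \in A_1$ and $y \in A'$, the coefficient combining all four derivative products collapses to $2(q_{vk}+it_{vk})(q_{uk'}+it_{uk'}) = 2(Q_e)_{vk}(Q_e)_{uk'}$. Summing over the free indices $u, v, k, k'$ inserts a factor of $Q_e$ at each of the two cut points, and by cyclicity of the trace the contribution equals $2\tr(a_r Q_e d_s c_s Q_e b_r)$, where $a_r, b_r$ are the matrix products corresponding to the portions of $l_1$ before and after position $r$, and $c_s, d_s$ are the analogous factors for $l'$. By the definition of positive merger given in Section \ref{string}, the cycle $a_r e d_s c_s e b_r$ represents $l_1 \oplus_{x,y} l'$, and the Wilson loop variable is invariant under backtrack erasure, so this equals $2 W_{l_1 \oplus_{x,y} l'}$. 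Analogously, for $(Q_e^*, Q_e^*)$ corresponding to $x \in B_1$, $y \in B'$, the coefficient collapses to $2(q_{vk}-it_{vk})(q_{uk'}-it_{uk'}) = 2(Q_e^*)_{kv}(Q_e^*)_{k'u}$, and summation yields $2\tr(a_r Q_e^* d_s c_s Q_e^* b_r) = 2 W_{l_1 \oplus_{x,y} l'}$. Summing over all such $(r,s)$ produces the two terms on the right.

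The main obstacle is the careful bookkeeping of the $\pm i$ factors from the $t$-derivatives combined with the transposed index structure of $\fpar{Q_e^*}{q_{uk}^e} = b_k b_u^T$ versus $\fpar{Q_e}{q_{uk}^e} = b_u b_k^T$: the two cases place the $(u,k)$-indices in opposite positions of the resulting trace, so contracting against $Q_e$ in the first case and $Q_e^*$ in the second is what makes the identifications go through. The cancellation on mixed pairs is the key algebraic point, and it is precisely what forces the positive merger operation to couple only like-oriented occurrences of $e$ (both $e$, or both $e^{-1}$), mirroring the structure already observed for positive splitting in Lemma \ref{lem4}.
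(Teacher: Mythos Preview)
Your proposal is correct and follows essentially the same approach as the paper: decompose the sum over pairs $(r,s)$, observe via Lemma~\ref{lem1} that mixed pairs $(Q_e,Q_e^*)$ and $(Q_e^*,Q_e)$ cancel, and for the homogeneous pairs collapse the coefficient to $2(q_{vk}\pm it_{vk})(q_{uk'}\pm it_{uk'})$ so that the index contraction reinstates $Q_e$ (resp.\ $Q_e^*$) and produces the trace of the positively merged loop. Your presentation is slightly more compressed---using the shorthand $a_r,b_r,c_s,d_s$ and the ``$(1,\pm i)$ factor'' bookkeeping---but the argument is the same as the paper's explicit computation.
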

\begin{proof}
We continue using notation in the proof of Lemma \ref{lem6}. First, note that
\begin{align}
&\fpar{W_{l_1}}{q_{uk}^e}\fpar{W_{l'}}{q_{vk'}^e} \notag\\
&= \sum_{\substack{1\le r\le m\\1\le s\le m'}}\tr\biggl(P_1Q_1\cdots P_r \fpar{Q_r}{q_{uk}^e} P_{r+1}\cdots Q_m P_{m+1}\biggr)\tr\biggl(P_1'Q_1'\cdots P_s'\fpar{Q_s'}{q_{vk'}^e}P_{s+1}'\cdots Q'_{m'}P'_{m'+1}\biggr)\, , \label{pqeq3}
\end{align}
\begin{align*}
&\fpar{W_{l_1}}{t_{uk}^e}\fpar{W_{l'}}{t_{vk'}^e}\\
&= \sum_{\substack{1\le r\le m\\1\le s\le m'}}\tr\biggl(P_1Q_1\cdots P_r \fpar{Q_r}{t_{uk}^e} P_{r+1}\cdots Q_m P_{m+1}\biggr)\tr\biggl(P_1'Q_1'\cdots P_s'\fpar{Q_s'}{t_{vk'}^e}P_{s+1}'\cdots Q'_{m'}P'_{m'+1}\biggr)\, ,
\end{align*}
\begin{align*}
&\fpar{W_{l_1}}{q_{uk}^e}\fpar{W_{l'}}{t_{vk'}^e}\\
&= \sum_{\substack{1\le r\le m\\1\le s\le m'}}\tr\biggl(P_1Q_1\cdots P_r \fpar{Q_r}{q_{uk}^e} P_{r+1}\cdots Q_m P_{m+1}\biggr)\tr\biggl(P_1'Q_1'\cdots P_s'\fpar{Q_s'}{t_{vk'}^e}P_{s+1}'\cdots Q'_{m'}P'_{m'+1}\biggr)\, ,
\end{align*}
and
\begin{align*}
&\fpar{W_{l_1}}{t_{uk}^e}\fpar{W_{l'}}{q_{vk'}^e}\\
&= \sum_{\substack{1\le r\le m\\1\le s\le m'}}\tr\biggl(P_1Q_1\cdots P_r \fpar{Q_r}{t_{uk}^e} P_{r+1}\cdots Q_m P_{m+1}\biggr)\tr\biggl(P_1'Q_1'\cdots P_s'\fpar{Q_s'}{q_{vk'}^e}P_{s+1}'\cdots Q'_{m'}P'_{m'+1}\biggr)\, .
\end{align*}
It is easy to see that if $(Q_r, Q_s') = (Q_e, Q_e^*)$ or $(Q_r, Q_s') = (Q_e^*, Q_e)$, then by Lemma \ref{lem1} the corresponding terms cancel each other. This means these pairs do not contribute anything to our sum. Now, suppose $Q_r=Q_s'=Q_e$. Note that
\begin{align}
&\tr\biggl(P_1Q_1\cdots P_r \fpar{Q_r}{q_{uk}^e} P_{r+1}\cdots Q_m P_{m+1}\biggr)\tr\biggl( P_1'Q_1'\cdots P_s'\fpar{Q_s'}{q_{vk'}^e}P_{s+1}'\cdots Q'_{m'}P'_{m'+1}\biggr) \nonumber\\
&= \tr(P_1Q_1\cdots P_r b_u b_k^T P_{r+1}\cdots Q_m P_{m+1})\tr( P_1'Q_1'\cdots P_s'b_v b_{k'}^TP_{s+1}'\cdots Q'_{m'}P'_{m'+1}) \nonumber\\
&= (b_k^T P_{r+1}\cdots Q_m P_{m+1}P_1Q_1\cdots P_r b_u )( b_{k'}^TP_{s+1}'\cdots Q'_{m'}P'_{m'+1}P_1'Q_1'\cdots P_s'b_v )\nonumber \\
&= (P_{r+1}\cdots Q_m P_{m+1}P_1Q_1\cdots P_r )_{ku}( P_{s+1}'\cdots Q'_{m'}P'_{m'+1}P_1'Q_1'\cdots P_s')_{k'v}\, .\label{pqeq4}
\end{align}
By writing similar expressions for the other terms we get 
\begin{align*}
&2\sum_{u,v,k,k'} (q_{vk}^eq_{uk'}^e-t_{vk}^et_{uk'}^e+iq_{vk}^et_{uk'}^e+it_{vk}^eq_{uk'}^e)\\
&\qquad \qquad\qquad (P_{r+1}\cdots Q_m P_{m+1}P_1Q_1\cdots P_r )_{ku}(P_{s+1}'\cdots Q'_{m'}P'_{m'+1}P_1'Q_1'\cdots P_s')_{k'v}\\
&=2\sum_{u,v,k,k'} (q_{vk}^e+it_{vk}^e)(q_{uk'}^e+it_{uk'}^e)\\
&\qquad \qquad\qquad (P_{r+1}\cdots Q_m P_{m+1}P_1Q_1\cdots P_r )_{ku}(P_{s+1}'\cdots Q'_{m'}P'_{m'+1}P_1'Q_1'\cdots P_s')_{k'v}\\
&= 2\tr(P_{r+1}\cdots Q_m P_{m+1}P_1Q_1\cdots P_rQ_r P_{s+1}'\cdots Q'_{m'}P'_{m'+1}P_1'Q_1'\cdots P_s'Q_s')\\
&= 2\tr(P_1Q_1\cdots P_rQ_r P_{s+1}'\cdots Q'_{m'}P'_{m'+1}P_1'Q_1'\cdots P_s'Q_s'P_{r+1}\cdots Q_m P_{m+1})\, .
\end{align*}
By definition this is equal to $2W_{l_1\oplus_{x,y} l'}$ where $x\in A$ and $y\in A'$. Next, suppose that $Q_r = Q_s' = Q_e^*$. Then 
\begin{align}
&\tr\biggl(P_1Q_1\cdots P_r \fpar{Q_r}{q_{uk}^e} P_{r+1}\cdots Q_m P_{m+1}\biggr)\tr\biggl( P_1'Q_1'\cdots P_s'\fpar{Q_s'}{q_{vk'}^e}P_{s+1}'\cdots Q'_{m'}P'_{m'+1}\biggr) \nonumber\\
&= \tr(P_1Q_1\cdots P_r b_k b_u^T P_{r+1}\cdots Q_m P_{m+1})\tr( P_1'Q_1'\cdots P_s'b_{k'} b_{v}^TP_{s+1}'\cdots Q'_{m'}P'_{m'+1}) \nonumber\\
&= (b_u^T P_{r+1}\cdots Q_m P_{m+1}P_1Q_1\cdots P_r b_k)( b_{v}^TP_{s+1}'\cdots Q'_{m'}P'_{m'+1}P_1'Q_1'\cdots P_s'b_{k'} ) \nonumber\\
&= (P_{r+1}\cdots Q_m P_{m+1}P_1Q_1\cdots P_r )_{uk}( P_{s+1}'\cdots Q'_{m'}P'_{m'+1}P_1'Q_1'\cdots P_s')_{vk'}\, .\label{pqeq5}
\end{align}
Therefore by writing similar expression for the other terms we get, 
\begin{align*}
&2\sum_{u,v,k,k'} (q_{vk}^eq_{uk'}^e-t_{vk}^et_{uk'}^e-iq_{vk}^et_{uk'}^e-it_{vk}^eq_{uk'}^e)\\
&\qquad \qquad\qquad (P_{r+1}\cdots Q_m P_{m+1}P_1Q_1\cdots P_r )_{uk}( P_{s+1}'\cdots Q'_{m'}P'_{m'+1}P_1'Q_1'\cdots P_s')_{vk'}\\
&=2\sum_{u,v,k,k'} (q_{vk}^e-it_{vk}^e)(q_{uk'}^e-it_{uk'}^e)\\
&\qquad \qquad\qquad (P_{r+1}\cdots Q_m P_{m+1}P_1Q_1\cdots P_r )_{uk}( P_{s+1}'\cdots Q'_{m'}P'_{m'+1}P_1'Q_1'\cdots P_s')_{vk'}\\
&= 2\tr(P_{r+1}\cdots Q_m P_{m+1}P_1Q_1\cdots P_rQ_r P_{s+1}'\cdots Q'_{m'}P'_{m'+1}P_1'Q_1'\cdots P_s'Q_s')\\
&= 2\tr(P_1Q_1\cdots P_rQ_r P_{s+1}'\cdots Q'_{m'}P'_{m'+1}P_1'Q_1'\cdots P_s'Q_s'P_{r+1}\cdots Q_m P_{m+1})\, .
\end{align*}
This corresponds to the term $2W_{l_1\oplus_{x,y} l'}$ with $x\in B_1$ and $y\in B'$. 
\end{proof}
\begin{lmm}\label{lem8}
Let $l'$, $A'$ and $B'$ be as in the previous lemmas. Then 
\begin{align*}
\sum_{u,v,k,k'} \biggl(t_{uk}t_{vk'} \fpar{W_{l_1}}{q_{uk}}\fpar{W_{l'}}{q_{vk'}}+q_{uk}q_{vk'} \fpar{W_{l_1}}{t_{uk}}\fpar{W_{l'}}{t_{vk'}}&-t_{uk}q_{vk'} \fpar{W_{l_1}}{q_{uk}}\fpar{W_{l'}}{t_{vk'}}-q_{uk}t_{vk'} \fpar{W_{l_1}}{t_{uk}}\fpar{W_{l'}}{q_{vk'}} \biggr)\\
&=-(A_1-B_1)(A'-B')W_{l_1}W_{l'}\, .
\end{align*}
\end{lmm}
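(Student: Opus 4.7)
The plan is to observe that the left-hand side factors as a product. Define the first-order differential operator
\[
\Phi(f) := \sum_{u,k}\left(t_{uk}^e \fpar{f}{q_{uk}^e} - q_{uk}^e \fpar{f}{t_{uk}^e}\right).
\]
A direct algebraic check (just collecting the four cross terms) shows
\[
\sum_{u,v,k,k'} \left(t_{uk}t_{vk'} \fpar{W_{l_1}}{q_{uk}}\fpar{W_{l'}}{q_{vk'}}+q_{uk}q_{vk'} \fpar{W_{l_1}}{t_{uk}}\fpar{W_{l'}}{t_{vk'}} -t_{uk}q_{vk'} \fpar{W_{l_1}}{q_{uk}}\fpar{W_{l'}}{t_{vk'}}-q_{uk}t_{vk'} \fpar{W_{l_1}}{t_{uk}}\fpar{W_{l'}}{q_{vk'}}\right) = \Phi(W_{l_1})\,\Phi(W_{l'}).
\]
So it suffices to evaluate $\Phi(W_{l_1})$ and $\Phi(W_{l'})$ separately.

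Next I would compute $\Phi$ on the basic building blocks $Q_e$ and $Q_e^*$ using Lemma \ref{lem1}. Writing $\fpar{Q_e}{q_{uk}^e}=b_ub_k^T$ and applying $\fpar{Q_e}{t_{uk}^e}=i\fpar{Q_e}{q_{uk}^e}$ gives
\[
\Phi(Q_e) = \sum_{u,k}(t_{uk}^e - i q_{uk}^e)\,b_u b_k^T = -i\sum_{u,k}(q_{uk}^e + i t_{uk}^e)\,b_u b_k^T = -iQ_e,
\]
and symmetrically, using $\fpar{Q_e^*}{t_{uk}^e}=-i\fpar{Q_e^*}{q_{uk}^e}$ with $\fpar{Q_e^*}{q_{uk}^e}=b_kb_u^T$,
\[
\Phi(Q_e^*) = i\sum_{u,k}(q_{uk}^e - i t_{uk}^e)\,b_k b_u^T = iQ_e^*.
\]

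Finally, differentiating $W_{l_1} = \tr(P_1Q_1\cdots P_mQ_mP_{m+1})$ term-by-term (exactly as in Lemma \ref{lem2}) yields
\[
\Phi(W_{l_1}) = \sum_{r=1}^{m} \tr\bigl(P_1Q_1\cdots P_r\,\Phi(Q_r)\,P_{r+1}\cdots Q_m P_{m+1}\bigr).
\]
Each $r\in A_1$ contributes $-iW_{l_1}$ and each $r\in B_1$ contributes $+iW_{l_1}$, so $\Phi(W_{l_1}) = -i(|A_1|-|B_1|)W_{l_1} = -it_1 W_{l_1}$. The identical argument gives $\Phi(W_{l'}) = -i(|A'|-|B'|)W_{l'}$. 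Multiplying,
\[
\Phi(W_{l_1})\,\Phi(W_{l'}) = -\,(|A_1|-|B_1|)(|A'|-|B'|)\,W_{l_1}W_{l'},
\]
which is precisely the claimed right-hand side. The only subtle step is the factorization in the first display together with the correct bookkeeping of the $i$'s coming from Lemma \ref{lem1}; everything else is a one-line computation, so no serious obstacle is expected.
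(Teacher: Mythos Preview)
Your proof is correct and is genuinely cleaner than the paper's. The key observation you make---that the left-hand side factors as $\Phi(W_{l_1})\Phi(W_{l'})$ with $\Phi(f)=\sum_{u,k}(t_{uk}\partial_{q_{uk}}f - q_{uk}\partial_{t_{uk}}f)$---is exactly right (the four cross terms match on the nose), and once you know $\Phi(Q_e)=-iQ_e$ and $\Phi(Q_e^*)=iQ_e^*$ from Lemma~\ref{lem1}, the Leibniz expansion of $W_{l_1}$ immediately gives $\Phi(W_{l_1})=-i(|A_1|-|B_1|)W_{l_1}$, and the result follows.

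The paper does not notice this factorization. Instead it proceeds as in Lemmas~\ref{lem6} and~\ref{lem7}: it writes the double sum as a sum over pairs $(r,s)$ with $1\le r\le m$, $1\le s\le m'$, and for each of the four cases $(Q_r,Q_s')\in\{(Q_e,Q_e),(Q_e^*,Q_e^*),(Q_e,Q_e^*),(Q_e^*,Q_e)\}$ computes the $(u,v,k,k')$-sum directly, finding that each pair contributes $\pm W_{l_1}W_{l'}$. Summing over all pairs then yields $(-|A_1||A'|-|B_1||B'|+|A_1||B'|+|B_1||A'|)W_{l_1}W_{l'}$. This is of course the expanded form of your factored answer; your route gets there with far less bookkeeping because the case analysis over $(Q_r,Q_s')$ is replaced by two independent single-index computations. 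The paper's approach has the minor virtue of being uniform with the proofs of the neighboring lemmas, but yours is the more illuminating argument.
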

\begin{proof}
We continue using the notations from the previous lemmas. If $(Q_r, Q_s')=(Q_e, Q_e)$, then by \eqref{pqeq3}, \eqref{pqeq4} and similar equations we get
\begin{align*}
&\sum_{u,k} (t_{uk}t_{vk'}-q_{uk}q_{vk'}-it_{uk}q_{vk'}-iq_{uk}t_{vk'})\\
&\qquad\qquad\qquad(P_{r+1}\cdots Q_m P_{m+1}P_1Q_1\cdots P_r )_{ku}( P_{s+1}'\cdots Q'_{m'}P'_{m'+1}P_1'Q_1'\cdots P_s')_{k'v}\\
&=-\sum_{u,k} (q_{uk}+it_{uk})(q_{vk'}+it_{vk'})\\
&\qquad\qquad\qquad(P_{r+1}\cdots Q_m P_{m+1}P_1Q_1\cdots P_r )_{ku}( P_{s+1}'\cdots Q'_{m'}P'_{m'+1}P_1'Q_1'\cdots P_s')_{k'v}\\
&=-\tr(P_{r+1}\cdots Q_m P_{m+1}P_1Q_1\cdots P_r Q_r)\tr(P_{s+1}'\cdots Q'_{m'}P'_{m'+1}P_1'Q_1'\cdots P_s'Q_s')\\
&=-\tr(P_1Q_1\cdots P_r Q_rP_{r+1}\cdots Q_m P_{m+1})\tr(P_1'Q_1'\cdots P_s'Q_s'P_{s+1}'\cdots Q'_{m'}P'_{m'+1})\\
&=-W_{l_1}W_{l'}\, .
\end{align*}
If $(Q_r, Q_s')=(Q^*_e, Q^*_e)$, then by equations \eqref{pqeq3}, \eqref{pqeq5} and similar equations we obtain
\begin{align*}
&\sum_{u,k} (t_{uk}t_{vk'}-q_{uk}q_{vk'}+it_{uk}q_{vk'}+iq_{uk}t_{vk'})\\
&\qquad\qquad\qquad(P_{r+1}\cdots Q_m P_{m+1}P_1Q_1\cdots P_r )_{uk}( P_{s+1}'\cdots Q'_{m'}P'_{m'+1}P_1'Q_1'\cdots P_s')_{vk'}\\
&=-\sum_{u,k} (q_{uk}-it_{uk})(q_{vk'}-it_{vk'})\\
&\qquad\qquad\qquad(P_{r+1}\cdots Q_m P_{m+1}P_1Q_1\cdots P_r )_{uk}( P_{s+1}'\cdots Q'_{m'}P'_{m'+1}P_1'Q_1'\cdots P_s')_{vk'}\\
&=-\tr(P_{r+1}\cdots Q_m P_{m+1}P_1Q_1\cdots P_r Q_r)\tr(P_{s+1}'\cdots Q'_{m'}P'_{m'+1}P_1'Q_1'\cdots P_s'Q_s')\\
&=-\tr(P_1Q_1\cdots P_r Q_rP_{r+1}\cdots Q_m P_{m+1})\tr(P_1'Q_1'\cdots P_s'Q_s'P_{s+1}'\cdots Q'_{m'}P'_{m'+1})\\
&=-W_{l_1}W_{l'}\, .
\end{align*}
Using the same ideas we get that if $(Q_r, Q_s')=(Q_e, Q^*_e)$, then
\begin{align*}
&\sum_{u,k} (t_{uk}t_{vk'}+q_{uk}q_{vk'}+it_{uk}q_{vk'}-iq_{uk}t_{vk'})\\
&\qquad\qquad\qquad(P_{r+1}\cdots Q_m P_{m+1}P_1Q_1\cdots P_r )_{ku}( P_{s+1}'\cdots Q'_{m'}P'_{m'+1}P_1'Q_1'\cdots P_s')_{vk'}\\
&=\sum_{u,k} (q_{uk}+it_{uk})(q_{vk'}-it_{vk'})\\
&\qquad\qquad\qquad(P_{r+1}\cdots Q_m P_{m+1}P_1Q_1\cdots P_r )_{ku}( P_{s+1}'\cdots Q'_{m'}P'_{m'+1}P_1'Q_1'\cdots P_s')_{vk'}\\
&=\tr(P_{r+1}\cdots Q_m P_{m+1}P_1Q_1\cdots P_r Q_r)\tr(P_{s+1}'\cdots Q'_{m'}P'_{m'+1}P_1'Q_1'\cdots P_s'Q_s')\\
&=\tr(P_1Q_1\cdots P_r Q_rP_{r+1}\cdots Q_m P_{m+1})\tr(P_1'Q_1'\cdots P_s'Q_s'P_{s+1}'\cdots Q'_{m'}P'_{m'+1})\\
&=W_{l_1}W_{l'}\, .
\end{align*}
Finally, if $(Q_r, Q_s')=(Q^*_e, Q_e)$, then
\begin{align*}
&\sum_{u,k} (t_{uk}t_{vk'}+q_{uk}q_{vk'}-it_{uk}q_{vk'}+iq_{uk}t_{vk'})\\
&\qquad\qquad\qquad(P_{r+1}\cdots Q_m P_{m+1}P_1Q_1\cdots P_r )_{uk}( P_{s+1}'\cdots Q'_{m'}P'_{m'+1}P_1'Q_1'\cdots P_s')_{k'v}\\
&=\sum_{u,k} (q_{uk}-it_{uk})(q_{vk'}+it_{vk'})\\
&\qquad\qquad\qquad(P_{r+1}\cdots Q_m P_{m+1}P_1Q_1\cdots P_r )_{uk}( P_{s+1}'\cdots Q'_{m'}P'_{m'+1}P_1'Q_1'\cdots P_s')_{k'v}\\
&=\tr(P_{r+1}\cdots Q_m P_{m+1}P_1Q_1\cdots P_r Q_r)\tr(P_{s+1}'\cdots Q'_{m'}P'_{m'+1}P_1'Q_1'\cdots P_s'Q_s')\\
&=\tr(P_1Q_1\cdots P_r Q_rP_{r+1}\cdots Q_m P_{m+1})\tr(P_1'Q_1'\cdots P_s'Q_s'P_{s+1}'\cdots Q'_{m'}P'_{m'+1})\\
&=W_{l_1}W_{l'}\, .
\end{align*}
By combining all of above equations we get
\[
(-A_1A'-B_1B'+A_1B'+B_1A')W_{l_1}W_{l'}=-(A_1-B_1)(A'-B')W_{l_1}W_{l'}\, .
\]
\end{proof}
Next, we will combine above lemmas and prove Theorem \ref{mastern}. 
\begin{proof}[Proof of Theorem \ref{mastern}]
For an edge $e$ let $\mr^+(e)$ and $\mr^-(e)$ respectively be the sets of positively and negatively oriented plaquettes passing through $e$. Then $\mr^+(e)\cup \mr^+(e^{-1})=\cp^+(e)$ and $\mr^+(e)\cup \mr^-(e)=\cp(e)$. Let $Q = (Q_{e'})_{e'\in E^+_\Lambda}$ denote a collection of independent Haar-distributed random $SU(N)$ matrices and let $\cp^+_{\Lambda}$ denote the set of all positively oriented plaquettes contained in $\Lambda$.
Define
\[
f(Q) := W_{l_1}\, , 
\]
and
\[
g(Q):= Z_{\Lambda, N, \beta}^{-1} W_{l_2}W_{l_3}\cdots W_{l_n} \exp\biggl(N\beta\sum_{p\in \cp^+_\Lambda} \Re W_p\biggr)\, .
\]
Note that since $2\Re W_p=W_p+W_{p^{-1}}$ we can write
\[
g(Q)=Z_{\Lambda, N, \beta}^{-1}W_{l_2}W_{l_3}\cdots W_{l_n} \exp\biggl(\frac{N\beta}{2}\sum_{p\in \cp^+_{\Lambda}}(W_p+W_{p^{-1}})\biggr)\, .
\]
By Lemma \ref{lem2},
\begin{align*}
\ee\biggl[\sum_{u,k} \biggl(q_{uk}^e \fpar{f}{q_{uk}^e}+t_{uk}^e \fpar{f}{t_{uk}^e}\biggr) g \biggr] &= m\smallavg{W_{l_1}W_{l_2}\cdots W_{l_n}}\, .
\end{align*}
By Lemma \ref{lem3}, 
\begin{align*}
\ee\biggl[\sum_{i,k}\biggl(\spar{f}{{q^e_{uk}}} +\spar{f}{{t^e_{uk}}}\biggr)g\biggr] &= 2\sum_{x\in A_1,\, y\in B_1} \smallavg{W_{\times^1_{x,y} l_1} W_{\times^2_{x,y} l_1}W_{l_2}\cdots W_{l_n}}\\
&\qquad +2\sum_{x\in B_1,\, y\in A_1} \smallavg{W_{\times^1_{x,y} l_1} W_{\times^2_{x,y} l_1}W_{l_2}\cdots W_{l_n}}\, .
\end{align*}
By Lemma \ref{lem4}, 
\begin{align*}
\ee\biggl[\sum_{u,v,k,k'} \biggl((q_{vk}q_{uk'}&-t_{vk}t_{uk'}) \biggl(\mpar{f}{q_{uk}}{q_{vk'}}-\mpar{f}{t_{uk}}{t_{vk'}}\biggr) +2(q_{vk}t_{uk'}+t_{vk}q_{uk'})\mpar{f}{q_{uk}}{t_{vk'}}\biggr)g\biggr]\\
&=  2\sum_{\substack{x,y\in A_1\\ x\ne y}}\smallavg{W_{\times^1_{x,y} l_1} W_{\times^2_{x,y} l_1}W_{l_2}\cdots W_{l_n}} + 2\sum_{\substack{x,y\in B_1\\ x\ne y}}\smallavg{W_{\times^1_{x,y} l_1} W_{\times^2_{x,y} l_1}W_{l_2}\cdots W_{l_n}}\, .
\end{align*}
By Lemma \ref{lem5}
\begin{align*}
\ee\biggl[\sum_{u,v,k,k'} \biggl( t_{uk}t_{vk'} \mpar{f}{q_{uk}}{q_{vk'}}+q_{uk}q_{vk'} \mpar{f}{t_{uk}}{t_{vk'}}-2t_{uk}q_{vk'} &\mpar{f}{q_{uk}}{t_{vk'}} \biggr)g\biggr]\\
&=(m-t_1^2)\smallavg{W_{l_1}W_{l_2}\cdots W_{l_n}}\, .
\end{align*}
Next, note that
\begin{align}
\fpar{g}{q_{uk}^e} &= \sum_{r=2}^n Z_{\Lambda, N,\beta}^{-1} W_{l_2}\cdots W_{l_{r-1}} \fpar{W_{l_r}}{q_{uk}^e} W_{l_{r+1}}\cdots W_{l_n}\exp\biggl(\frac{N\beta}{2}\sum_{p\in \cp^+_{\Lambda}}(W_p+W_{p^{-1}})\biggr)\nonumber\\
&\quad + \sum_{p\in \cp^+(e)} Z_{\Lambda, N,\beta}^{-1} W_{l_2}\cdots W_{l_n}\frac{N\beta}{2} \fpar{W_p}{q_{uk}^e}\exp\biggl(\frac{N\beta}{2}\sum_{p'\in \cp^+_{\Lambda}}(W_{p'}+W_{p'^{-1}})\biggr)\notag\\
&\quad+ \sum_{p\in \cp^+(e)} Z_{\Lambda, N,\beta}^{-1} W_{l_2}\cdots W_{l_n}\frac{N\beta}{2} \fpar{W_{p^{-1}}}{q_{uk}^e}\exp\biggl(\frac{N\beta}{2}\sum_{p'\in \cp^+_{\Lambda}}(W_{p'}+W_{p'^{-1}})\biggr)\, .\label{gq}
\end{align}
A similar equality also holds for $\fpar{g}{t_{uk}^e}$. So, by Lemma \ref{lem6} and these identities
\begin{align*}
\ee\biggl[&\sum_{u,k} \biggl(\fpar{f}{q^e_{uk}} \fpar{g}{q^e_{uk}}+\fpar{f}{t^e_{uk}} \fpar{g}{t^e_{uk}}\biggr)\biggr]\\
&= 2\sum_{r=2}^n \sum_{x\in A_1, \, y\in B_r} \bigavg{W_{l_1\ominus_{x,y} l_r} \prod_{\substack{2\le t\le n\\t\ne r}} W_{l_t}}+2\sum_{r=2}^n \sum_{x\in B_1, \, y\in A_r} \bigavg{W_{l_1\ominus_{x,y} l_r} \prod_{\substack{2\le t\le n\\t\ne r}} W_{l_t}}\\
&\quad + N\beta \sum_{p\in \mr^+(e^{-1})} \sum_{x\in A_1} \smallavg{W_{l_1\ominus_x p}W_{l_2}\cdots W_{l_n}}+N\beta \sum_{p\in \mr^+(e)} \sum_{x\in B_1} \smallavg{W_{l_1\ominus_x p}W_{l_2}\cdots W_{l_n}}\\
&\quad + N\beta \sum_{p\in \mr^+(e)} \sum_{x\in A_1} \smallavg{W_{l_1\ominus_x p^{-1}}W_{l_2}\cdots W_{l_n}}+N\beta \sum_{p\in \mr^+(e^{-1})} \sum_{x\in B_1} \smallavg{W_{l_1\ominus_x p^{-1}}W_{l_2}\cdots W_{l_n}}\, .
\end{align*}
Note that $W_{l_1\ominus_x p^{-1}}=W_{l_1\ominus_x p}$ for any $x\in A_1$ and plaquette $p\in \mr^+(e)$, and for any $x\in B_1$ and $p\in \mr^+(e^{-1})$. Therefore the above equality can be written as 
\begin{align*}
\ee\biggl[&\sum_{u,k} \biggl(\fpar{f}{q^e_{uk}} \fpar{g}{q^e_{uk}}+\fpar{f}{t^e_{uk}} \fpar{g}{t^e_{uk}}\biggr)\biggr]\\
&= 2\sum_{r=2}^n \sum_{x\in A_1, \, y\in B_r} \bigavg{W_{l_1\ominus_{x,y} l_r} \prod_{\substack{2\le t\le n\\t\ne r}} W_{l_t}}+2\sum_{r=2}^n \sum_{x\in B_1, \, y\in A_r} \bigavg{W_{l_1\ominus_{x,y} l_r} \prod_{\substack{2\le t\le n\\t\ne r}} W_{l_t}}\\
&\qquad + N\beta \sum_{p\in \cp^+(e)} \sum_{x\in C_1} \smallavg{W_{l_1\ominus_x p}W_{l_2}\cdots W_{l_n}}\, .
\end{align*}
Similarly, by Lemma \ref{lem7} and equation \eqref{gq}
\begin{align*}
&\ee\biggl[\sum_{u,v,k,k'} \biggl((q_{vk}q_{uk'}-t_{vk}t_{uk'})\left(\fpar{f}{q_{uk}}\fpar{g}{q_{vk'}}- \fpar{f}{t_{uk}}\fpar{g}{t_{vk'}}\right)\\
&\qquad\qquad\qquad+(q_{vk}t_{uk'}+t_{vk}q_{uk'})\left(\fpar{f}{q_{uk}}\fpar{g}{t_{vk'}}+ \fpar{f}{t_{uk}}\fpar{g}{q_{vk'}} \right)\biggr)\biggr]\\
&=2\sum_{r=2}^n \sum_{x\in A_1, \, y\in A_r} \bigavg{W_{l_1\oplus_{x,y} l_r} \prod_{\substack{2\le t\le n\\t\ne r}} W_{l_t}}+2\sum_{r=2}^n \sum_{x\in B_1, \, y\in B_r} \bigavg{W_{l_1\oplus_{x,y} l_r} \prod_{\substack{2\le t\le n\\t\ne r}} W_{l_t}} 
\\
&\quad+ N\beta \sum_{p\in \mr^+(e)} \sum_{x\in A_1} \smallavg{W_{l_1\oplus_x p}W_{l_2}\cdots W_{l_n}}+ N\beta \sum_{p\in \mr^+(e^{-1})} \sum_{x\in B_1} \smallavg{W_{l_1\oplus_x p}W_{l_2}\cdots W_{l_n}}\\
&\quad+ N\beta \sum_{p\in \mr^+(e^{-1})} \sum_{x\in A_1} \smallavg{W_{l_1\oplus_x p^{-1}}W_{l_2}\cdots W_{l_n}}+ N\beta \sum_{p\in \mr^+(e)} \sum_{x\in B_1} \smallavg{W_{l_1\oplus_x p^{-1}}W_{l_2}\cdots W_{l_n}}\, .
\end{align*}
Since $W_{l_1\oplus_x p^{-1}}=W_{l_1\oplus_x p}$ for any $x\in A_1$ and $p\in\mr^+(e^{-1})$, and for any $x\in B_1$ and $p\in \mr^+(e)$, the above equality can be written as
\begin{align*}
&\ee\biggl[\sum_{u,v,k,k'} \biggl((q_{vk}q_{uk'}-t_{vk}t_{uk'})\left(\fpar{f}{q_{uk}}\fpar{g}{q_{vk'}}- \fpar{f}{t_{uk}}\fpar{g}{t_{vk'}}\right)\\
&\qquad\qquad\qquad+(q_{vk}t_{uk'}+t_{vk}q_{uk'})\left(\fpar{f}{q_{uk}}\fpar{g}{t_{vk'}}+ \fpar{f}{t_{uk}}\fpar{g}{q_{vk'}} \right)\biggr)\biggr]\\
&=2\sum_{r=2}^n \sum_{x\in A_1, \, y\in A_r} \bigavg{W_{l_1\oplus_{x,y} l_r} \prod_{\substack{2\le t\le n\\t\ne r}} W_{l_t}}+2\sum_{r=2}^n \sum_{x\in B_1, \, y\in B_r} \bigavg{W_{l_1\oplus_{x,y} l_r} \prod_{\substack{2\le t\le n\\t\ne r}} W_{l_t}} 
\\
&\qquad + N\beta \sum_{p\in \cp^+(e)} \sum_{x\in C_1} \smallavg{W_{l_1\oplus_x p}W_{l_2}\cdots W_{l_n}}\, .
\end{align*}
By Lemma \ref{lem8} and equation \eqref{gq}
\begin{align*}
&\ee\biggl[\sum_{u,v,k,k'} \biggl(t_{uk}t_{vk'}\fpar{f}{q_{uk}}\fpar{g}{q_{vk'}}+q_{uk}q_{vk'} \fpar{f}{t_{uk}}\fpar{g}{t_{vk'}}-t_{uk}q_{vk'} \fpar{f}{q_{uk}}\fpar{g}{t_{vk'}}-q_{uk}t_{vk'} \fpar{f}{t_{uk}}\fpar{g}{q_{vk'}} \biggr)\biggr]\\
&=-t_1\biggl(\sum_{r=2}^nt_r\biggr)\smallavg{W_{l_1}W_{l_2}\cdots W_{l_n}}\\
&\quad -\frac{N\beta t_1}{2}\sum_{p\in \mr^+(e)} \smallavg{W_{l_1}W_{p}W_{l_2}\cdots W_{l_n}}+\frac{N\beta t_1}{2} \sum_{p\in \mr^+(e^{-1})} \smallavg{W_{l_1}W_{p}W_{l_2}\cdots W_{l_n}}\\
&\quad +\frac{N\beta t_1}{2}\sum_{p\in \mr^+(e)} \smallavg{W_{l_1}W_{p^{-1}}W_{l_2}\cdots W_{l_n}}-\frac{N\beta t_1}{2} \sum_{p\in \mr^+(e^{-1})} \smallavg{W_{l_1}W_{p^{-1}}W_{l_2}\cdots W_{l_n}}\, .
\end{align*}
Note that if $p \in \mr^+(e)$, then $p^{-1}\in \mr^-(e^{-1})$, and if $p \in \mr^+(e^{-1})$, then $p^{-1}\in \mr^-(e)$. Thus
\begin{align*}
&\ee\biggl[\sum_{u,v,k,k'} \biggl(t_{uk}t_{vk'}\fpar{f}{q_{uk}}\fpar{g}{q_{vk'}}+q_{uk}q_{vk'} \fpar{f}{t_{uk}}\fpar{g}{t_{vk'}}-t_{uk}q_{vk'} \fpar{f}{q_{uk}}\fpar{g}{t_{vk'}}-q_{uk}t_{vk'} \fpar{f}{t_{uk}}\fpar{g}{q_{vk'}} \biggr)\biggr]\\
&=-t_1\biggl(\sum_{r=2}^nt_r\biggr)\smallavg{W_{l_1}W_{l_2}\cdots W_{l_n}}\\
&\quad -\frac{N\beta t_1}{2}\sum_{p\in \cp(e)} \smallavg{W_{l_1}W_pW_{l_2}\cdots W_{l_n}}+\frac{N\beta t_1}{2} \sum_{p\in \cp(e^{-1})} \smallavg{W_{l_1}W_{p}W_{l_2}\cdots W_{l_n}}\\
&=-t_1\biggl(\sum_{r=2}^nt_r\biggr)\smallavg{W_{l_1}W_{l_2}\cdots W_{l_n}}\\
&\quad-\frac{N\beta}{2}\sum_{x\in A_1}\sum_{p\in \cp(e)} \smallavg{W_{l_1}W_pW_{l_2}\cdots W_{l_n}}-\frac{N\beta}{2}\sum_{x\in B_1} \sum_{p\in \cp(e^{-1})} \smallavg{W_{l_1}W_{p}W_{l_2}\cdots W_{l_n}}\\
&\quad+\frac{N\beta}{2}\sum_{x\in B_1}\sum_{p\in \cp(e)} \smallavg{W_{l_1}W_pW_{l_2}\cdots W_{l_n}}+\frac{N\beta}{2}\sum_{x\in A_1} \sum_{p\in \cp(e^{-1})} \smallavg{W_{l_1}W_{p}W_{l_2}\cdots W_{l_n}}\, .
\end{align*}
To finish the proof first condition on $(Q_{e'})_{e'\neq e}$ and apply Theorem \ref{sdthm} with above calculations, and then take unconditional expectation.
\end{proof}
Now we are ready to prove Theorem~\ref{symfinmaster}. Recall that if $(l_1, \ldots, l_n)$ is the minimal representation of $s$, then
\[
\phi_N(s)=\frac{\smallavg{W_{l_1}\cdots W_{l_n}}}{N^n}\, .
\]
\begin{proof}[Proof of Theorem~\ref{symfinmaster}]
Recall that equation \eqref{unsym} was obtained by choosing the first edge $e$ of the first loop $l_1$ in minimal representation $(l_1, \ldots,l_n)$ of $s$. Since 
\[
\phi_N(l_{\pi(1)}, \ldots, l_{\pi(n)})=\phi_N(l_1, \ldots, l_n) 
\]
for any permutation $\pi$ of $\{1, 2, \ldots,n \}$, we can write the same equation for any $l_k$ and any edge $e$. By doing so and dividing both sides by $N^{n+1}$ we get
\begin{align}
\biggl(m_k(e)-\frac{t_k(e)t(e)}{N^2}\biggr)\phi_N(s) &=\text{split }l_k+\text{merge }l_k+\text{deform }l_k+\text{expand }l_k \label{c1}
\end{align}
where the split $l_k$ term is given by
\begin{align*}
&\sum_{x\in A_k, \, y\in B_k}\phi_N(l_1, \ldots, l_{k-1}, \times_{x,y}^1 l_k, \times_{x,y}^2 l_k, l_{k+1},\ldots, l_n) \\
&\quad  + \sum_{x\in B_k, \, y\in A_k}\phi_N(l_1, \ldots, l_{k-1}, \times_{x,y}^1 l_k, \times_{x,y}^2 l_k, l_{k+1},\ldots, l_n)\\
&\quad  - \sum_{\substack{x,y\in A_k\\ x\ne y}} \phi_N(l_1, \ldots, l_{k-1}, \times_{x,y}^1 l_k, \times_{x,y}^2 l_k, l_{k+1},\ldots, l_n)\\
&\quad  - \sum_{\substack{x,y\in B_k\\ x\ne y}} \phi_N(l_1, \ldots, l_{k-1}, \times_{x,y}^1 l_k, \times_{x,y}^2 l_k, l_{k+1},\ldots, l_n)\, ,
\end{align*}
the merge $l_k$ term is given by
\begin{align*}
&\frac{1}{N^2}\sum_{1\leq r<k}\sum_{\substack{x\in A_k, \, y\in B_r\\\textup{or }\\x\in B_k, \, y\in A_r}}\phi_N(l_1, \ldots, l_{r-1}, l_{r+1}, \ldots, l_{k-1}, l_k \ominus_{x,y} l_r, l_{k+1}, \ldots, l_n)\\
&+\frac{1}{N^2}\sum_{k<r\leq n}\sum_{\substack{x\in A_k, \, y\in B_r\\\textup{or }\\x\in B_k, \, y\in A_r}}\phi_N(l_1, \ldots, l_{k-1}, l_k \ominus_{x,y} l_r, l_{k+1}, \ldots, l_{r-1}, l_{r+1}, \ldots, l_n)\\
&-\frac{1}{N^2}\sum_{1\leq r<k}\sum_{\substack{x\in A_k, \, y\in A_r\\\textup{or }\\x\in B_k, \, y\in B_r}}\phi_N(l_1, \ldots, l_{r-1}, l_{r+1}, \ldots, l_{k-1}, l_k \oplus_{x,y} l_r, l_{k+1}, \ldots, l_n)\\
&-\frac{1}{N^2}\sum_{k<r\leq n}\sum_{\substack{x\in A_k, \, y\in A_r\\\textup{or }\\x\in B_k, \, y\in B_r}}\phi_N(l_1, \ldots, l_{k-1}, l_k \oplus_{x,y} l_r, l_{k+1}, \ldots, l_{r-1}, l_{r+1}, \ldots, l_n)\, ,
\end{align*}
the deform $l_k$ term is given by
\begin{align*}
&\frac{\beta}{2} \sum_{p\in \cp^+(e)}\sum_{x\in C_k}\phi_N(l_1, \ldots, l_{k-1}, l_k\ominus_{x}p, l_{k+1}, \ldots,l_n)\\
&- \frac{\beta}{2}\sum_{p\in \cp^+(e)}\sum_{x\in C_k}\phi_N(l_1, \ldots, l_{k-1}, l_k\oplus_{x}p, l_{k+1}, \ldots,l_n) \, ,
\end{align*}
and the expansion term is given by
\begin{align*}
&\frac{\beta}{2}\sum_{x\in A_k}\sum_{p\in \cp(e)}\phi_N(l_1,\ldots,l_{k-1}, l_k, p, l_{k+1}, \ldots, l_n)\\
&\quad + \frac{\beta}{2}\sum_{x\in B_k} \sum_{p\in \cp(e^{-1})}\phi_N(l_1,\ldots,l_{k-1}, l_k, p, l_{k+1}, \ldots, l_n)\\
&\quad - \frac{\beta}{2}\sum_{x\in B_k}\sum_{p\in \cp(e)}\phi_N(l_1,\ldots,l_{k-1}, l_k, p, l_{k+1}, \ldots, l_n)\\
&\quad - \frac{\beta}{2}\sum_{x\in A_k} \sum_{p\in \cp(e^{-1})}\phi_N(l_1,\ldots,l_{k-1}, l_k, p, l_{k+1}, \ldots, l_n) \, .
\end{align*}
Note that if the loop $l_k$ contains neither $e$ nor $e^{-1}$ then $m_k=0$ and $t_k=0$, and in this case both sides of equation \eqref{c1} are zero. Next, observe that
\[
\sum_{e\in E^+}m_{k}(e)=|l_k|\, .
\]
Then by writing equation \eqref{c1} for each element of $E^+$ and adding them side by side we get
\begin{align}
\biggl(|l_k|-\frac{1}{N^2}\sum_{e\in E^+}t_k(e)t(e)\biggr)\phi_N(s)&=\frac{1}{N^2}\sum_{s'\in \fst_k^-(s)}\phi_N(s')-\frac{1}{N^2}\sum_{s'\in \fst^+_k(s)}\phi_N(s')\notag\\
&\quad + \sum_{s'\in \fs^-_k(s)}\phi_N(s')-\sum_{s'\in \fs^+_k(s)}\phi_N(s')\notag\\
&\quad + \frac{\beta}{2}\sum_{s'\in \fd^-_k(s)}\phi_N(s')-\frac{\beta}{2}\sum_{s'\in \fd^+_k(s)}\phi_N(s')\notag \\
&\quad + \frac{\beta}{2}\sum_{s'\in \fex^-_k(s)}\phi_N(s')-\frac{\beta}{2}\sum_{s'\in \fex^+_k(s)}\phi_N(s')\, .\label{c2}
\end{align}
Since
\[
\sum_{k=1}^{n}|l_k|=|s| \, ,
\]
and
\[
\sum_{k=1}^{n}\sum_{e\in E^+}t_k(e)t(e)=\sum_{e\in E^+}t(e)\sum_{k=1}^{n}t_k(e)=\sum_{e \in E^+}t(e)^2=\ell(s)\, ,
\]
we finish the proof by summing both sides of equation \eqref{c2} over all $k=1, \ldots, n$.
\end{proof}
\section{Unsymmetrized master loop equation for $f_0$}\label{unsymf0}
Recall the definition of $A_r$, $B_r$, $m_r$ and $t_r$ from Section \ref{string}. For any loop function $h: \cs \to \cc$, let
\begin{align*}
\sum_{\text{merge}\,s}h&=\sum_{r=2}^n\sum_{x\in A_1, \, y\in B_r} h(l_1 \ominus_{x,y} l_r, \ldots, l_n)+\sum_{r=2}^n\sum_{x\in B_1, \, y\in A_r} h(l_1 \ominus_{x,y} l_r, \ldots, l_n)\\
&\quad - \sum_{r=2}^n\sum_{x\in A_1, \, y\in A_r} h(l_1 \oplus_{x,y} l_r, \ldots, l_n) - \sum_{r=2}^n\sum_{x\in B_1, \, y\in B_r}h(l_1 \oplus_{x,y} l_r, \ldots, l_n)\, .
\end{align*}
Similarly, let
\begin{align*}
\sum_{\text{split}\,s}h&=\sum_{x\in A_1, \, y\in B_1} h(\times_{x,y}^1 l_1, \times_{x,y}^2 l_1, \ldots, l_n) + \sum_{x\in B_1, \, y\in A_1} h(\times_{x,y}^1 l_1, \times_{x,y}^2 l_1, \ldots, l_n)\\
&\quad - \sum_{\substack{x,y\in A_1\\ x\ne y}} h(\times_{x,y}^1 l_1, \times_{x,y}^2 l_1, \ldots, l_n) - \sum_{\substack{x,y\in B_1\\ x\ne y}} h(\times_{x,y}^1 l_1, \times_{x,y}^2 l_1, \ldots, l_n)\, ,
\end{align*}
\begin{align*}
\sum_{\text{deform}\,s}h&=\sum_{p\in \cp^+(e)}\sum_{x\in C_1}h(l_1 \ominus_{x}p, \ldots, l_n)-  \sum_{p\in \cp^+(e)}\sum_{x\in C_1}h(l_1 \oplus_{x}p, \ldots, l_n)\, ,
\end{align*}
and
\begin{align*}
\sum_{\text{expand}\,s}h&=\sum_{x\in A_1}\sum_{p\in \cp(e)}h(l_1, p, l_2, \ldots, l_n)  + \sum_{x\in B_1} \sum_{p\in \cp(e^{-1})}h(l_1, p, l_2, \ldots, l_n)\\
&-\quad \sum_{x\in B_1}\sum_{p\in \cp(e)}h(l_1, p, l_2, \ldots, l_n)-\sum_{x\in A_1}\sum_{p\in \cp(e^{-1})}h(l_1, p, l_2, \ldots, l_n)  \, .
\end{align*}
Let 
\[
\sideset{}{^+}\sum\limits_{\text{merge}\,s}, \ \ \sideset{}{^+}\sum\limits_{\text{split}\,s}, \  \ \sideset{}{^+}\sum\limits_{\text{deform}\,s}, \ \ \sideset{}{^+}\sum\limits_{\text{expand}\,s}
\]
denote the sums as above with minus signs between terms replaced with plus signs. By dividing both sides of equation \eqref{unsym} by $N^{n+1}$ we can write it with above notation as
\begin{align}
\biggl(m-\frac{t_1t}{N^2}\biggr)\phi_N(s) &=\frac{1}{N^2}\sum_{\text{merge } s} \phi_N+\sum_{\text{split }s}\phi_N+\frac{\beta}{2}\sum_{\text{deform }s}\phi_N+\frac{\beta}{2}\sum_{\text{expand }s}\phi_N \label{finmastereq}
\end{align}
The goal of this section is to prove the following theorem.
\begin{thm}\label{f0}
There exists $\beta_0(d,0)>0$, depending only on dimension $d$, such that for any $|\beta|\leq \beta_0(d,0)$  and loop sequence $s$, the limit
\begin{align}
f_0(s)&=\lim_{N\to \infty} \phi_N(s) \label{limf0}
\end{align}
exists. Moreover, $f_0$ satisfies the equation
\begin{align}
mf_0(s)&=\sum_{\text{split }s}f_0+\frac{\beta}{2}\sum_{\text{deform }s}f_0+\frac{\beta}{2}\sum_{\text{expand }s}f_0 \label{f0mastereq}
\end{align}
\end{thm}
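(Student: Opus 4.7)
The plan is to construct $f_0(s)$ as an absolutely convergent sum over the zero-merger, zero-inaction vanishing trajectories in $\mx_0(s)$, to show $\phi_N(s)\to f_0(s)$ via the unsymmetrized finite-$N$ master equation \eqref{finmastereq}, and then to deduce \eqref{f0mastereq} by letting $N\to\infty$ in \eqref{finmastereq}. Throughout I would use the elementary a priori bound $|\phi_N(s)|\le 1$, which is immediate from $|W_l|\le N$ for $Q_e\in SU(N)$ and the normalization $\phi_N=\smallavg{W_{l_1}\cdots W_{l_n}}/N^n$.

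I would first set $f_0(s):=\sum_{X\in\mx_0(s)}w_\beta(X)$ and prove absolute convergence by strong induction, aiming for a bound of the form $\sum_{X\in\mx_0(s)}|w_\beta(X)|\le K^{|s|}$ for a constant $K=K(d,|\beta|)$. A trajectory in $\mx_0(s)$ must begin with a splitting, deformation, or expansion (mergers and inactions are forbidden), so the sum can be partitioned according to the first step:
\[
\sum_{X\in\mx_0(s)}|w_\beta(X)|\;\le\;\sum_{s'}|w_\beta(s,s')|\sum_{X'\in\mx_0(s')}|w_\beta(X')|.
\]
For splittings, Lemmas \ref{split1}--\ref{split2} guarantee that the two offspring loops have combined length at most $|s|$, and the sum over splitting positions factorizes via the Catalan identity (Lemmas \ref{catalan}, \ref{catalan1}) into a product of sums over smaller loops, which lets the induction close. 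For deformations and expansions, Lemmas \ref{deform} and \ref{expand} bound the length increase by $4$ and the number of plaquettes through a given edge is at most $2(d-1)$; provided $|\beta|\le\beta_0(d,0)$ is chosen small enough, the $|\beta|/2$ weight absorbs these combinatorial factors and the inductive bound closes.

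Next, to establish $\phi_N(s)\to f_0(s)$, I would iterate \eqref{finmastereq} solved for $\phi_N(s)$,
\[
\phi_N(s)=\frac{N^2}{mN^2-t_1t}\biggl[\frac{1}{N^2}\sum_{\text{merge}\,s}\phi_N+\sum_{\text{split}\,s}\phi_N+\frac{\beta}{2}\sum_{\text{deform}\,s}\phi_N+\frac{\beta}{2}\sum_{\text{expand}\,s}\phi_N\biggr].
\]
Unfolding this recursion expresses $\phi_N(s)$ as a sum over trajectories starting at $s$, each terminating at $\emptyset$ (where $\phi_N(\emptyset)=1$) or at a long sequence still controlled by the uniform bound. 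Trajectories that use any merger pick up a factor $1/N^{2}$ per merger, and expanding the prefactor $1/(mN^2-t_1t)$ produces analogous $1/N^2$ inaction-type contributions; these total $O(1/N^{2})$ by the same Catalan-type bounds already used for $f_0$. The surviving trajectories are exactly those in $\mx_0(s)$, summing to $f_0(s)$, so $\phi_N(s)=f_0(s)+O(1/N^2)$ and \eqref{limf0} follows. Equation \eqref{f0mastereq} is then read off \eqref{finmastereq} by passing $N\to\infty$: the $t_1t/N^2$ term and $N^{-2}\sum_{\text{merge}\,s}\phi_N$ both vanish thanks to the uniform bound, and each $\phi_N$ inside the surviving sums converges to the corresponding $f_0$.

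The main technical obstacle is the combinatorial bookkeeping in the splitting induction. Unlike deformations and expansions, splittings carry no $\beta$ factor and do not reduce the total length $|s|$; the index $\iota$ does go down by at least one (Lemma \ref{iota2}), but the number of splitting positions grows with $|s|$, so a naive term-by-term bound fails. The Catalan-type factorization of Lemma \ref{catalan1} is the essential device, converting a sum over splittings of $s$ into a sum of products over two smaller loops whose lengths add to at most $|s|$, which lets the bound $K^{|s|}$ propagate. Simultaneously, $\beta_0(d,0)$ must be chosen small enough that the geometric series in $|\beta|$ generated by repeated deformations and expansions also closes in the same bound.
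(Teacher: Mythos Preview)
Your approach is genuinely different from the paper's, and while it is in the spirit of what the paper does in its later sections (Sections 12--14), as a proof of Theorem \ref{f0} it has a real gap. The paper's proof is soft: since $|\phi_N(s)|\le 1$, a diagonal argument produces a subsequential limit $f_0$; passing to the limit in \eqref{finmastereq} shows any such limit satisfies \eqref{f0mastereq}; and Lemma \ref{uniquef0} proves that \eqref{f0mastereq} has a \emph{unique} bounded solution with $f(\emptyset)=1$ (via a contraction argument on the generating function $F(\lambda)=\sum_\delta D(\delta)\lambda^{\iota(\delta)}$), so all subsequential limits coincide. No trajectory expansion is needed here; the representation \eqref{f2kdef} is established only afterwards, once $f_0$ is already known to exist.

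The gap in your route is the mismatch between the \emph{symmetrized} trajectory space $\mx_0(s)$ (whose weights carry $1/|s_j|$ and whose operations may act at any location in any component) and the \emph{unsymmetrized} recursion \eqref{finmastereq} (which acts only at occurrences of the first edge of $l_1$ and carries $1/m$). Iterating \eqref{finmastereq} does not produce the trajectories in $\mx_0(s)$, so ``the surviving trajectories are exactly those in $\mx_0(s)$'' is not literally true. In the paper this discrepancy is resolved only via the uniqueness lemma: one shows the symmetrized trajectory sum satisfies \eqref{f0mastereq} (this is Lemma \ref{tiklmm} combined with \eqref{symaik}), and then invokes Lemma \ref{uniquef0} to identify it with $f_0$. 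If you want to avoid uniqueness altogether, you would need to iterate \eqref{finmastereq} into an \emph{unsymmetrized} trajectory expansion and prove convergence for that expansion, together with a quantitative remainder bound for the truncated iteration of $\phi_N$; your sketch of the latter (``or at a long sequence still controlled by the uniform bound'') is where the real work lies, and the Catalan bookkeeping you describe is essentially what the paper carries out in Lemmas \ref{boundaik} and \ref{boundbik}.
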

\begin{proof}[Proof of Theorem~\ref{f0}]
Note that since $|W_{l}|\leq N$ we have $|\phi_N(s)|\leq 1$. So by standard diagonal method the limit in \eqref{limf0} exists along some subsequence. By letting $N \to \infty$ in equation \eqref{finmastereq} we see that any subsequential limit $f_0$ satisfies equation \eqref{f0mastereq}. So it is enough to prove that equation \eqref{f0mastereq} has a unique solution for all sufficiently small $\beta$.
\begin{lmm}\label{uniquef0}
There exists $\beta_1(d, 0)>0$, depending only on dimension $d$, such that for all $|\beta|<\beta_1(d, 0)$, there exists a unique loop function $f: \cs \to\cc$ such that $(1)$ $f(\emptyset)=1 \,$, $(2)$ $|f(s)|\leq L^{|s|}$ for some $L\geq 1$, and $(3)$ $f$ satisfies equation \eqref{f0mastereq}.  
\end{lmm}
\begin{proof}
Since $\phi_N(\emptyset)=1$, we get $f_0(\emptyset)=1$. So, from this and above discussion we deduce that any subsequencial limit $f_0$ satisfies conditions $(1)$,  $(2)$ and $(3)$. So we just need to prove uniqueness. Let $f$ and $\tilde{f}$ be two such functions. Define $g:\cs \to \cc$ as
\[
g(s):=f(s)-\tilde{f}(s) \, .
\]
Let $\Delta$ be the set of all finite sequences of integers including the null sequence. Let $\Delta^+$ be the subset of $\Delta$ consisting of those sequences with each term $\geq 4$. If $(l_1, \ldots, l_n)$ is the minimal representation of a loop sequence $s$, then let $\delta(s)=(|l_1|, \ldots, |l_n|)$ be the degree sequence of $s$. Since each non-null loop has at least four edges, $\delta(s)\in \Delta^+$ for any non-null $s$. Given 
$\delta=(\delta_1, \ldots, \delta_n)$ define the length of $\delta$ as
\[
|\delta|:=|\delta_1|+\cdots+|\delta_n|\, ,
\]
the size as 
\[
\#\delta:= n \, ,
\]
and the index as 
\[\iota(\delta):=|\delta|-\#\delta \, .\]
If $\delta'=(\delta'_1, \ldots, \delta'_m)$ is another element of $\Delta$, then let $\delta'\leq \delta$ whenever $n=m$ and $\delta'_i\leq \delta_i$ for all $1\leq i \leq n$. For any $\delta \in \Delta^+$ let
\[
D(\delta)=\max_{s\in \cs:\, \delta(s)\leq \delta} |g(s)|,
\]
and extend this definition to $\delta \in \Delta\setminus\Delta^+$ by letting $D(\delta)=0$. For any $\lambda \in (0, 1)$ let
\[
F(\lambda):=\sum_{\delta \in \Delta}D(\delta)\lambda^{\iota(\delta)} \, .
\]
Note that the number of $\delta\in \Delta$ with $|\delta|=r$ and $\#\delta=n$ is at most $\binom{r-1}{n-1}$. Moreover, since
\[
|g(s)|=|f(s)-\tilde{f}(s)|\leq L^{|s|}+L^{|s|}=2L^{|s|} \, ,
\]
we have $D(\delta)\leq 2L^{|\delta|}$. Since $|\delta|\geq 4\#\delta$ for any $\delta \in \Delta^+$
\begin{align*}
F(\lambda)&=\sum_{r=4}^{\infty}\sum_{n=1}^{r}\sum_{\substack{\delta \in \Delta^+:\, |\delta|=r\\\#\delta=n}}D(\delta)\lambda^{\iota(\delta)}\\
&\leq \sum_{r=4}^{\infty}\sum_{n=1}^{r}2L^{r}\lambda^{r-n}\dbinom{r-1}{n-1}\\
&\leq \sum_{r=4}^{\infty}2L^{r}\lambda^{3r/4}2^{r}<\infty
\end{align*}
for all $\lambda< (2L)^{-4/3}$. Note that since both $f$ and $\tilde{f}$ satisfy equation \eqref{f0mastereq} so does $g$. Therefore,
\begin{align}
|g(s)|\leq \frac{1}{m}\sideset{}{^+}\sum_{\text{split }s}|g|+\frac{|\beta|}{2m}\sideset{}{^+}\sum_{\text{deform }s}|g|+\frac{|\beta|}{2m}\sideset{}{^+}\sum_{\text{expand }s}|g| \label{a0}
\end{align}
Now, take any $\delta \in \Delta^+$ and let $s$ be so that $\delta(s)\leq \delta$. Recall that
\begin{align*}
\frac{1}{m}\sideset{}{^+}\sum_{\text{split}\,s}|g|&=\frac{1}{m}\sum_{x\in A_1, \, y\in B_1} |g(\times_{x,y}^1 l_1, \times_{x,y}^2 l_1, \ldots, l_n)|+ \frac{1}{m}\sum_{x\in B_1, \, y\in A_1} |g(\times_{x,y}^1 l_1, \times_{x,y}^2 l_1, \ldots, l_n)|\\
&\quad + \frac{1}{m}\sum_{\substack{x,y\in A_1\\ x\ne y}} |g(\times_{x,y}^1 l_1, \times_{x,y}^2 l_1, \ldots, l_n)| + \frac{1}{m} \sum_{\substack{x,y\in B_1\\ x\ne y}} |g(\times_{x,y}^1 l_1, \times_{x,y}^2 l_1, \ldots, l_n)|\, .
\end{align*}
By Lemma \ref{split2}
\begin{align*}
\frac{1}{m}\sum_{x\in A_1, \, y\in B_1} |g(\times_{x,y}^1 l_1, \times_{x,y}^2 l_1, \ldots, l_n)|&\leq \frac{1}{m}\sum_{x\in A_1, \, y\in B_1}D(\delta_1-|y-x|-1, |y-x|-1, \ldots, \delta_n)\\
&\leq \frac{2}{m} \sum_{x\in A_1}\sum_{k=1}^{\infty} D(\delta_1-k-1, k-1, \ldots, \delta_n)\\
&\leq 2 \sum_{k=1}^{\infty} D(\delta_1-k-1, k-1, \ldots, \delta_n)\, .
\end{align*}
The same inequality also holds if $A_1$ and $B_1$ are swapped. Similarly by Lemma \ref{split1}
\begin{align*}
\frac{1}{m}\sum_{\substack{x,y\in A_1\\ x\ne y}} |g(\times_{x,y}^1 l_1, \times_{x,y}^2 l_1, \ldots, l_n)|&\leq \frac{1}{m}\sum_{\substack{x,y\in A_1\\ x\ne y}}D(\delta_1-|y-x|, |y-x|, \ldots, \delta_n)\\
&\leq \frac{2}{m} \sum_{x\in A_1}\sum_{k=1}^{\infty} D(\delta_1-k, k, \ldots, \delta_n)\\
&\leq 2\sum_{k=1}^{\infty} D(\delta_1-k, k, \ldots, \delta_n)\, .
\end{align*}
The same inequality also holds if we replace $A_1$ with $B_1$. Thus
\begin{align}
\frac{1}{m}\sideset{}{^+}\sum_{\text{split}\,s}|g|&\leq 4\sum_{k=1}^{\infty} D(\delta_1-k-1, k-1, \ldots, \delta_n)+4\sum_{k=1}^{\infty} D(\delta_1-k, k, \ldots, \delta_n)\, . \label{s0}
\end{align}
Next, note that
\begin{align*}
\frac{1}{2m}\sideset{}{^+}\sum_{\text{deform}\,s}|g|&=\frac{1}{2m}\sum_{p\in \cp^+(e)}\sum_{x\in C_1}|g(l_1 \ominus_{x}p, \ldots, l_n)|+ \frac{1}{2m}\sum_{p\in \cp^+(e)}\sum_{x\in C_1}|g(l_1 \ominus_{x}p, \ldots, l_n)|\, .
\end{align*}
By Lemma \ref{deform} $|l_1 \ominus_{x}p|\leq |l_1|+4$. Since a deformation of the loop $l_1$ may also result in a null loop we have
\begin{align*}
|g(l_1 \ominus_{x}p, \ldots, l_n)|&\leq D(\delta_1+4, \ldots, \delta_n)+D(\delta_2, \ldots, \delta_n) \, .
\end{align*}
Since there are $2(d-1)$ positively oriented plaquettes passing through $e$ or $e^{-1}$ we get
\begin{align*}
&\frac{1}{2m}\sum_{p\in \cp^+(e)}\sum_{x\in C_1}|g(l_1 \ominus_{x}p, \ldots, l_n)|\leq d D(\delta_1+4, \ldots, \delta_n)+d D(\delta_2, \ldots, \delta_n)\, .
\end{align*}
The same inequality also holds for the second term in the deformation sum. Therefore,
\begin{align}
\frac{1}{2m}\sideset{}{^+}\sum_{\text{deform}\,s}|g|&\leq 2d D(\delta_1+4, \ldots, \delta_n)+2d D(\delta_2, \ldots, \delta_n)\, . \label{d0}
\end{align}
Finally, note that
\begin{align*}
\frac{1}{2m}\sideset{}{^+}\sum_{\text{expand}\,s}|g|&=\frac{1}{2}\sum_{p\in \cp(e)}|g(l_1, p, l_2, \ldots, l_n)|+ \frac{1}{2}\sum_{p\in \cp(e^{-1})}|g(l_1, p, l_2, \ldots, l_n)| \, .
\end{align*}
Since $|p|=4$ for any plaquette $p$ and $|\cp(e)| = 2(d-1)$
\begin{align*}
\frac{1}{2}\sum_{p\in \cp(e)}|g(l_1, p, l_2, \ldots, l_n)|\leq dD(\delta_1, 4, \ldots, \delta_n) \, .
\end{align*}
The same inequality also holds for the second term in the expansion sum. Hence
\begin{align}
\frac{1}{2m}\sideset{}{^+}\sum_{\text{expand}\,s}|g|&\leq 2dD(\delta_1, 4, \delta_2, \ldots, \delta_n)\, . \label{e0}
\end{align}
By combining inequalities  \eqref{a0}, \eqref{s0}, \eqref{d0} and \eqref{e0} we get
\begin{align*}
|g(s)|&\leq 4\sum_{k=1}^{\infty} D(\delta_1-k-1, k-1, \ldots, \delta_n)+4\sum_{k=1}^{\infty} D(\delta_1-k, k, \ldots, \delta_n)\\
&\quad+2d|\beta| D(\delta_1+4, \ldots, \delta_n)+2d|\beta| D(\delta_2, \ldots, \delta_n)+2d|\beta| D(\delta_1, 4, \delta_2 \ldots, \delta_n)\, .
\end{align*}
Note that this inequality is valid for all $s$ such that $\delta(s)\leq \delta$. Hence by taking the maximum of both sides over all such $s$ we obtain
\begin{align}
D(\delta)&\leq 4\sum_{k=1}^{\infty} D(\delta_1-k-1, k-1, \ldots, \delta_n)+4\sum_{k=1}^{\infty} D(\delta_1-k, k, \ldots, \delta_n)\notag\\
&\quad+2d|\beta| D(\delta_1+4, \ldots, \delta_n)+2d|\beta| D(\delta_2, \ldots, \delta_n)+2d|\beta| D(\delta_1, 4, \delta, \ldots, \delta_n)\, . \notag
\end{align}
Therefore,
\begin{align}
F(\lambda)&\leq \sum_{\delta \in \Delta^+}\lambda^{\iota(\delta)}\biggl(4\sum_{k=1}^{\infty} D(\delta_1-k-1, k-1, \ldots, \delta_n)+4\sum_{k=1}^{\infty} D(\delta_1-k, k, \ldots, \delta_n)\notag\\
&\quad+2d|\beta| D(\delta_1+4, \ldots, \delta_n)+2d|\beta| D(\delta_2, \ldots, \delta_n)+2d|\beta| D(\delta_1, 4, \delta_2, \ldots, \delta_n) \biggr)\,. \label{a1}
\end{align}
For $k\geq 1$ define 
\begin{align*}
\rho_k(\delta_1, \delta_2, \ldots, \delta_n)&:=(\delta_1-k-1, k-1, \delta_2, \ldots, \delta_n)\, ,\\
\sigma_k(\delta_1, \delta_2, \ldots, \delta_n)&:=(\delta_1-k, k, \delta_2, \ldots, \delta_n)\, .
\end{align*}
Also let 
\begin{align*}
\alpha(\delta_1, \delta_2, \ldots, \delta_n)&:=(\delta_1+4, \delta_2, \ldots, \delta_n)\, ,\\
\gamma(\delta_1, \delta_2, \ldots, \delta_n)&:=(\delta_1, 4, \delta_2, \ldots, \delta_n)\, ,\\
\theta(\delta_1, \delta_2, \ldots, \delta_n)&:=(\delta_2, \ldots, \delta_n)
\end{align*}
with $\theta(\emptyset)=\emptyset$. Note that map $\rho_k$ is injective for all $k$. Moreover images of these maps are disjoint since the second component of $\rho_k(\delta)$ is $k-1$. Since $\iota(\delta)=\iota(\rho_k(\delta))+3$, we have
\begin{align}
\sum_{\delta \in \Delta^+}\sum_{k=1}^{\infty} D(\delta_1-k-1, k-1, \delta_2, \ldots, \delta_n) \lambda^{\iota(\delta)}&=\sum_{\delta \in \Delta^+}\sum_{k=1}^{\infty} D(\rho_k(\delta)) \lambda^{\iota(\rho_k(\delta))+3}\notag\\
&\leq \sum_{\delta \in \Delta^+}D(\delta) \lambda^{\delta+3}=\lambda^3F(\lambda)\, . \label{a2}
\end{align}
The map $\sigma_k$ is also injective for all $k$ and images of these maps are also disjoint since the second component of $\sigma_k(\delta)$ is $k$. Since $\iota(\delta)=\iota(\sigma_k(\delta))+1$, we get
\begin{align}
\sum_{\delta \in \Delta^+}\sum_{k=1}^{\infty} D(\delta_1-k, k, \delta_2, \ldots, \delta_n) \lambda^{\iota(\delta)}&=\sum_{\delta \in \Delta^+}\sum_{k=1}^{\infty} D(\sigma_k(\delta)) \lambda^{\iota(\sigma_k(\delta))+1}\notag\\
&\leq \sum_{\delta \in \Delta^+}D(\delta) \lambda^{\delta+1}=\lambda F(\lambda)\, . \label{a3}
\end{align}
Next, note that the maps $\alpha$ and $\gamma$ are also injective, $\iota(\delta)=\iota(\alpha(\delta))-4$ and $\iota(\delta)=\iota(\gamma(\delta))-3$. Thus
\begin{align}
\sum_{\delta \in \Delta^+} D(\delta_1+4, \delta_2, \ldots, \delta_n) \lambda^{\iota(\delta)}&=\sum_{\delta \in \Delta^+} D(\alpha(\delta)) \lambda^{\iota(\alpha(\delta))-4}\notag\\
&\leq \sum_{\delta \in \Delta^+}D(\delta) \lambda^{\delta-4}=\frac{F(\lambda)}{\lambda^{4}} \label{a4}\, ,
\end{align}
and
\begin{align}
\sum_{\delta \in \Delta^+} D(\delta_1, 4, \delta_2, \ldots, \delta_n) \lambda^{\iota(\delta)}&=\sum_{\delta \in \Delta^+} D(\gamma(\delta)) \lambda^{\iota(\gamma(\delta))-3}\notag\\
&\leq \sum_{\delta \in \Delta^+}D(\delta) \lambda^{\delta-3}=\frac{F(\lambda)}{\lambda^{3}}  \label{a5}\, .
\end{align}
Finally, note that $\theta^{-1}(\delta_2, \ldots, \delta_n)\cap \Delta^+=\{(k, \delta_2, \ldots, \delta_n ):  k= 4, 5, \ldots\}$ and
\begin{align}
\sum_{\delta \in \Delta^+} D(\delta_2, \ldots, \delta_n) \lambda^{\iota(\delta)}&=\sum_{k=4}^{\infty}\sum_{\delta \in \Delta^+:\,\delta_1=k} D(\theta(\delta)) \lambda^{\iota(\theta(\delta))+k-1}\notag\\
&\leq \sum_{k=1}^{\infty}\sum_{\delta' \in \Delta^+} D(\delta') \lambda^{\iota(\delta')+k-1}=\frac{F(\lambda)}{1-\lambda} \label{a6}\, .
\end{align}
By combining inequalities \eqref{a2}, \eqref{a3}, \eqref{a4}, \eqref{a5}, \eqref{a6} with \eqref{a1} we obtain
\[
F(\lambda)\leq \biggl(4\lambda^3+4\lambda+\frac{2|\beta|d}{\lambda^4}+\frac{2|\beta|d}{\lambda^3}+\frac{2|\beta|d}{1-\lambda}\biggr)F(\lambda)\, .
\]
If we choose $\lambda$ so that $4\lambda^3+4\lambda<1/4$, then for all $\beta$ such that 
\[
\frac{2|\beta|d}{\lambda^4}+\frac{2|\beta|d}{\lambda^3}+\frac{2|\beta|d}{1-\lambda}<\frac{1}{4}
\]
we get $F(\lambda)\leq  0$ which in turn means that $f(s)=\tilde{f}(s)$ for all $s \in \cs$. Therefore for all sufficiently small $\beta$, depending on $d$, there exists a unique function satisfying conditions $(a)$, $(b)$ and $(c)$.
\end{proof}
This completes the proof of Theorem~\ref{f0}.
\end{proof}
\section{Unsymmetrized master loop equation for $f_{2k}$}
Let $e$, $m$, $t_1$ and $t$ be as in Section \ref{unsymf0}. The goal of this section is to prove the following theorem.
\begin{thm}\label{fk}
There exists sequence of positive numbers $\{\beta_0(d, k)\}_{k\geq 0}$, depending only on dimension $d$, and loop functions $\{f_{2k}\}_{k\geq 0}$ depending on $\beta$ such that for any $|\beta|\leq \beta_0(d, k)$
\begin{align}
\lim_{N\to \infty} N^{2k}\biggl(\phi_N(s)-f_0(s)-\frac{1}{N^2}f_2(s)-\cdots - \frac{1}{N^{2k}}f_{2k}(s)\biggr)=0\, . \label{limfk}
\end{align}
Moreover, for any non-null loop sequence $s$
\begin{align}
mf_{2k}(s)=t_1tf_{2k-2}(s)+\sum_{\text{merge}\,s}f_{2k-2}+\sum_{\text{split}\,s}f_{2k}+\frac{\beta}{2}\sum_{\text{deform}\,s}f_{2k}+\frac{\beta}{2}\sum_{\text{expand}\,s}f_{2k}\label{fkmastereq}
\end{align}
where $f_j(s)$ is assumed to be zero for all $s$ when $j<0$.
\end{thm}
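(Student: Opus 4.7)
The plan is to proceed by induction on $k$, using Theorem~\ref{f0} as the base case ($k=0$). For the inductive step, suppose \eqref{limfk} and \eqref{fkmastereq} have been established for $0,1,\ldots,k-1$, and introduce the rescaled remainder
\[
\psi_N(s) := N^{2k}\biggl(\phi_N(s) - \sum_{j=0}^{k-1}\frac{f_{2j}(s)}{N^{2j}}\biggr).
\]
My first task is to derive a finite-$N$ equation for $\psi_N$. Starting from the unsymmetrized master equation \eqref{finmastereq} for $\phi_N$, I substitute $\phi_N = \sum_{j<k} N^{-2j}f_{2j} + N^{-2k}\psi_N$, then use the equations \eqref{f0mastereq} and \eqref{fkmastereq} already known for $f_0,\ldots,f_{2k-2}$ to cancel matching powers of $1/N$. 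A careful bookkeeping (the $\frac{t_1t}{N^2}\phi_N$ and $\frac{1}{N^2}\sum_{\mathrm{merge}\,s}\phi_N$ terms each shift index by two, producing exactly a $1/N^{2k}$ surplus involving $f_{2k-2}$) yields
\[
\biggl(m-\frac{t_1 t}{N^2}\biggr)\psi_N(s) = \frac{1}{N^2}\sum_{\text{merge }s}\psi_N + \sum_{\text{split }s}\psi_N + \frac{\beta}{2}\sum_{\text{deform }s}\psi_N + \frac{\beta}{2}\sum_{\text{expand }s}\psi_N + t_1 t\, f_{2k-2}(s) + \sum_{\text{merge }s}f_{2k-2}.
\]
This has the same shape as the $\phi_N$-equation with an additional forcing term assembled from $f_{2k-2}$, and matches \eqref{fkmastereq} in the formal $N\to\infty$ limit.

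The central obstacle is to establish an $N$-independent pointwise bound of the form $|\psi_N(s)|\le L(k,d)^{|s|}$ for $|\beta|$ sufficiently small. My plan is to iterate the recursion above. The splitting term is handled by Lemma~\ref{iota2}, which guarantees that each splitting strictly decreases the index, so splittings contribute only finitely often before the sequence becomes null; Lemmas~\ref{split1} and~\ref{split2} further control the sum over splitting locations in terms of the degree sequence. The deformation and expansion terms carry a factor $\beta$ and change the size and index of $s$ by amounts bounded via Lemmas~\ref{deform} and~\ref{expand}, while the merger term carries a prefactor $1/N^2$ and is controlled by Lemma~\ref{merger}. Substituting the inductive bound $|f_{2k-2}(s)|\le L(k-1,d)^{|s|}$ into the forcing term and grouping summands by how many times each operation has been applied, the iteration majorizes $|\psi_N(s)|$ by an absolutely convergent sum of trajectory weights; this convergence is what dictates the choice of $\beta_0(d,k)$.

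With the a priori bound in hand, a standard diagonal argument extracts a subsequential limit $f_{2k}$. Because each of the sums split, deform, expand, merge is finite for any fixed loop sequence, the recursion passes to the limit term-by-term and shows $f_{2k}$ satisfies \eqref{fkmastereq}. Finally, uniqueness of solutions to \eqref{fkmastereq} within the class of loop functions $g$ satisfying $|g(s)|\le L^{|s|}$ is proved by replicating the generating function argument of Lemma~\ref{uniquef0}: the difference $g := f_{2k} - \tilde f_{2k}$ of two solutions obeys \emph{the same recursion with zero forcing} (the $t_1 t\, f_{2k-2} + \sum_{\mathrm{merge}\,s} f_{2k-2}$ terms cancel identically), so the derivation bounding $F(\lambda)$ by a contraction factor times $F(\lambda)$ proceeds verbatim and forces $g\equiv 0$, provided $|\beta|$ is chosen small enough. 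Uniqueness promotes subsequential convergence to full convergence, closing the induction.
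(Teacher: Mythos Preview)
Your overall architecture matches the paper's: induct on $k$, derive a recursion for the rescaled remainder, prove an $N$-uniform pointwise bound, extract a subsequential limit, and finish with uniqueness. Your recursion for $\psi_N$ is algebraically equivalent to the paper's equation \eqref{equationhk} (just substitute $H_{k-1,N}=N^{-2}\psi_N+f_{2k-2}$ there), and your uniqueness argument is essentially the same as Lemma~\ref{uniquefk}; the paper adds $f_0$ to the difference only so as to quote Lemma~\ref{uniquef0} verbatim, but your version (difference obeys \eqref{f0mastereq} with value $0$ at $\emptyset$) is handled by the same generating-function contraction.

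The gap is in the a priori bound. You keep $\tfrac{1}{N^2}\sum_{\text{merge}}\psi_N$ (and implicitly $\tfrac{t_1t}{N^2}\psi_N$) as recursive terms and propose to iterate, majorizing $|\psi_N(s)|$ by a sum of trajectory weights. The problem is that the number of admissible mergers at a given step scales like $|s|$ (after dividing by $m$), so in a generating-function or direct-iteration estimate the merger contribution comes with a factor of order $|\delta|/N^2$ rather than a fixed constant; this is not a contraction uniformly in $s$. Equivalently, your iterated trajectory sum ranges over \emph{all} genera with weight $(1/N^2)^{\#\text{mergers}}$, and the fixed-genus bounds in the paper (Lemma~\ref{boundbik}) require $|\beta|<K^{-(5+2k)}$, so the $\beta$-window collapses when you sum over $k$. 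You do not get an $N$-independent $L^{|s|}$ bound this way.

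The paper's fix is exactly to avoid this: it writes the merger and ``inaction'' terms as $t_1t\,H_{k-1,N}(s)+\sum_{\text{merge}}H_{k-1,N}$ and bounds $|H_{k-1,N}(s')|\le L_{k-1}^{|s'|}$ by an inner induction on the order $q$ of the remainder (Lemma~\ref{ineqfk}). This turns the merger contribution into a pure forcing term bounded by $2|s|L_{k-1}^{|s|}$, with no recursive $\psi_N$ left in it. The generating function $F(\lambda)=\sum_\delta \lambda^{\iota(\delta)}D(\delta)$ then satisfies $F(\lambda)\le C+(4\lambda^3+4\lambda+O(|\beta|))F(\lambda)$ with the forcing $C$ finite for $\lambda=L_k^{-1}=(20L_{k-1})^{-4/3}$, giving $F(\lambda)\le 1$ and hence $|\psi_N(s)|\le L_k^{|s|}$. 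In short: do not split $H_{k-1,N}$ into $N^{-2}\psi_N+f_{2k-2}$; bound it as a whole by the inductive hypothesis on the lower-order remainder, and only then run the contraction.
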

The proof will be by induction on $k$. We already proved the case $k=0$ in Theorem \ref{f0}. Now, fix $k\geq 1$ and assume the claim holds for all $k'<k$. Equation \eqref{finmastereq} will be the main ingredient of the proof. Define sequence of increasing positive real numbers $\{L_{q}\}_{q\geq 0}$ as $L_0=1$ and
\[
L_{q}=(20L_{q-1})^{4/3}
\]
for $q\geq 1$.
\begin{lmm}\label{ineqfk}
Fix $N$ and $\Lambda_N$, and let $s$ be a loop sequence such that any edge with distance $\leq 1$ from $s$ lies inside $\Lambda_N$. Then, for any $0\leq q\leq k$ 
\begin{align}
\left| N^{2q}\biggl(\phi_N(s)-f_0(s)-\frac{1}{N^2}f_2(s)-\cdots - \frac{1}{N^{2q-2}}f_{2q-2}(s) \biggr)\right|\leq L_q^{|s|}\, . \label{ineqfk1}
\end{align}
\end{lmm}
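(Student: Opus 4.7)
The plan is to proceed by induction on $q$ from $0$ to $k$, using a recursive equation for the scaled remainders
\[
g_q(s) := N^{2q}\Bigl(\phi_N(s) - \sum_{j=0}^{q-1} N^{-2j} f_{2j}(s)\Bigr),
\]
combined with a generating-function argument modeled on Lemma \ref{uniquef0}. The base case $q=0$ is immediate: since $W_l$ is the trace of an $SU(N)$ matrix, $|W_l| \leq N$, so $|\phi_N(s)| \leq 1 = L_0^{|s|}$.

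First I would derive a master loop equation for $g_q$. Multiplying \eqref{finmastereq} by $N^{2q}$ and subtracting $N^{2q-2j}$ times equation \eqref{fkmastereq} for $j = 0, 1, \ldots, q-1$ (valid by the outer induction hypothesis for Theorem \ref{fk}, setting $f_{-2} := 0$), a short telescoping on the merger and $t_1 t$ terms yields
\begin{align*}
m\, g_q(s) = t_1 t\, g_{q-1}(s) + \sum_{\text{merge }s} g_{q-1} + \sum_{\text{split }s} g_q + \frac{\beta}{2}\sum_{\text{deform }s} g_q + \frac{\beta}{2}\sum_{\text{expand }s} g_q.
\end{align*}
The two source terms (those involving $g_{q-1}$) can be bounded using the inductive hypothesis: $|t_1 t|/m \leq |s|$ since $|t_1| \leq m$ and $|t| \leq |s|$, and there are at most $O(|s|^2)$ mergers, each producing a sequence of length $\leq |s|$ by Lemma \ref{merger}, so each source term is bounded by a polynomial in $|s|$ times $L_{q-1}^{|s|}$.

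Next, mirror the argument of Lemma \ref{uniquef0}: set $D(\delta) := \sup\{|g_q(s)| : \delta(s) \leq \delta\}$ (extended by zero off $\Delta^+$) and $F(\lambda) := \sum_{\delta \in \Delta} D(\delta) \lambda^{\iota(\delta)}$. A crude apriori bound $|g_q(s)| \leq C_q L_{q-1}^{|s|}$, obtained by iterating the recursion once against the inductive estimate, shows $F(\lambda)$ is finite for $\lambda$ sufficiently small relative to $L_{q-1}^{-4/3}$. The splitting, deformation and expansion contributions combine, exactly as in Lemma \ref{uniquef0}, into a factor $\bigl(4\lambda^3 + 4\lambda + 2d|\beta|\lambda^{-4} + 2d|\beta|\lambda^{-3} + 2d|\beta|/(1-\lambda)\bigr) F(\lambda)$, while the source terms produce a remainder $R(\lambda, L_{q-1})$ that is a convergent power series in $\lambda$ for $\lambda L_{q-1}^{4/3}$ small. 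Choosing $\lambda$ so that $4\lambda^3 + 4\lambda \leq 1/4$ and then shrinking $\beta$ so the coefficient of $F(\lambda)$ drops below $1/2$, one obtains $F(\lambda) \leq 2R(\lambda, L_{q-1})$. Since $D(\delta) \leq \lambda^{-\iota(\delta)} F(\lambda)$ and $\iota(\delta) \geq 3|\delta|/4$ on $\Delta^+$, unwinding the estimates yields $|g_q(s)| \leq L_q^{|s|}$ with $L_q = (20 L_{q-1})^{4/3}$; the exponent $4/3$ arises precisely from trading $\iota \sim (3/4)|s|$ against the bound $\lambda^{|s|}$, and the constant $20$ must be chosen large enough to absorb the polynomial factors $|s|, |s|^2$ produced by the source terms.

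The main obstacle will be the careful bookkeeping of $R(\lambda, L_{q-1})$ in this generating-function bound: one must verify that the polynomial factors from the merger and $t_1 t$ contributions do not spoil convergence when combined with the Catalan-type growth of loop-sequence counts at fixed degree profile, and that the constants selected at step $q-1$ propagate cleanly through the recursion $L_q = (20 L_{q-1})^{4/3}$ for all $q \leq k$, uniformly in $N$ and $\Lambda_N$. Once this is achieved, the bound holds for all $q \in \{0, 1, \ldots, k\}$, providing the uniform control needed in the next section to extract the limit $f_{2k}(s)$ and complete the proof of Theorem \ref{fk}.
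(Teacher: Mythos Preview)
Your proposal is correct and follows essentially the same approach as the paper: induction on $q$, the recursive equation for $g_q$ (the paper's $H_{q,N}$) derived by subtracting the $f_{2j}$ master equations from \eqref{finmastereq}, and the generating-function argument with $D(\delta)$ and $F(\lambda)$ lifted from Lemma~\ref{uniquef0}, culminating in the choice $\lambda = L_q^{-1} = (20L_{q-1})^{-4/3}$ that forces $F(\lambda)\le 1$. Two small points of precision: the unsymmetrized merger sum has at most $m|s|$ terms (not $|s|^2$), so after dividing by $m$ the source term is $2|s|L_{q-1}^{|s|}$; and the a priori finiteness of $F(\lambda)$ comes most cleanly from $g_q = N^2 g_{q-1} - N^2 f_{2q-2}$, giving the ($N$-dependent but harmless) bound $|g_q(s)|\le 2N^2 L_{q-1}^{|s|}$.
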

\begin{proof}
We will use induction on $q$. Let $H_{0, N}(s)=\phi_N(s)$ and
\begin{align}
H_{q, N}(s)&=N^{2q}\biggl(\phi_N(s)-f_0(s)-\frac{1}{N^2}f_2(s)-\cdots - \frac{1}{N^{2q-2}}f_{2q-2}(s) \biggr)\, . \label{hk}
\end{align}
for any $q\geq 1$. If $q=0$, then $|H_{0, N}(s)|=|\phi_N(s)|\leq 1$ since $|W_{l}|\leq N$ for any loop $l$. So the claim holds for $q=0$. Since $\phi_N(\emptyset)=f_0(\emptyset)=1$, we have $f_{2k}(\emptyset)=0$ for all $k\geq 1$. So inequality \eqref{hk} also holds for $s=\emptyset$ and $k\geq 1$. From now on we will assume that $s\neq \emptyset$. Next, suppose that the claim is true for all $q'<q$. From equation \eqref{limfk} we get
\begin{align*}
|f_{q'}(s)| = \lim_{N\to \infty}|H_{q', N}(s)| \leq L_{q'}^{|s|} \,.
\end{align*}
Since equation \eqref{fkmastereq} holds for all $k'<k$ we can write
\begin{align}
m\biggl(f_0(s)+\frac{1}{N^2}f_2(s)+\cdots+\frac{1}{N^{2q-2}}f_{2q-2}&(s)\biggr)=\frac{t_1t}{N^2}\biggl(f_0(s)+\frac{1}{N^2}f_2(s)\cdots+\frac{1}{N^{2q-4}}f_{2q-4}(s)\biggr) \notag \\
&\quad + \frac{1}{N^2}\sum_{\text{merge}\,s}\biggl(f_0+\frac{1}{N^2}f_2+\cdots+\frac{1}{N^{2q-4}}f_{2q-4}\biggr) \notag \\
&\quad + \sum_{\text{split}\,s}\biggl(f_0+\frac{1}{N^2}f_2+\cdots+\frac{1}{N^{2q-2}}f_{2q-2}\biggr) \notag \\
&\quad + \frac{\beta}{2}\sum_{\text{deform}\,s}\biggl(f_0+\frac{1}{N^2}f_2+\cdots+\frac{1}{N^{2q-2}}f_{2q-2}\biggr) \notag \\
&\quad + \frac{\beta}{2}\sum_{\text{expand}\,s}\biggl(f_0+\frac{1}{N^2}f_2+\cdots+\frac{1}{N^{2q-2}}f_{2q-2}\biggr)\, . \label{sum}
\end{align}
On the other hand, by equation \eqref{finmastereq}
\begin{align}
m\phi_N(s) &=\frac{t_1t}{N^2}\phi_N(s)+\frac{1}{N^2}\sum_{\text{merge}\,s} \phi_N+\sum_{\text{split}\,s}\phi_N+\frac{\beta}{2}\sum_{\text{deform}\,s}\phi_N+\frac{\beta}{2}\sum_{\text{expand}\,s}\phi_N\, . \label{finmastereq1}
\end{align}
Now, subtract equation \eqref{sum} from equation \eqref{finmastereq1} and multiply both sides by $N^{2q}$ to get 
\begin{align}
mH_{q, N}(s) &=t_1tH_{q-1, N}(s)+\sum_{\text{merge}\, s} H_{q-1,N}+\sum_{\text{split}\,s}H_{q, N}+\frac{\beta}{2}\sum_{\text{deform}\,s}H_{q, N}+\frac{\beta}{2}\sum_{\text{expand}\,s}H_{q, N} \, . \label{equationhk}
\end{align}
By Lemma \ref{merger} $|s'|\leq |s|$ for any $s'\in\fst(s)$. So, by induction hypothesis
\[
H_{q-1,N}(s') \leq L_{q-1}^{|s'|}\leq L_{q-1}^{|s|}\, .
\]
Note that if $(l_1, \ldots, l_n)$ is the minimal representation of a loop sequence $s$, then for each $2\leq i\leq n$ the loop $l_i$ can be merged to the loop $l_1$ at locations $e$ and $e^{-1}$ at most in $m|l_i|$ different ways. So there are at most $m|s|$ mergers of $l_1$ with other loop components of $s$. Hence
\[
\sum_{\text{merge}\,s} H_{q-1,N} \leq m|s|L_{q-1}^{|s|}\,.
\]
Since $|t_1|\leq m$ and $|t|\leq |s|$, by equation \eqref{equationhk}, induction hypothesis and above inequality 
\begin{align}
|H_{q, N}(s)|&\leq 2|s|L_{q-1}^{|s|}+\frac{1}{m}\sideset{}{^+}\sum_{\text{split}\,s}|H_{q, N}|+\frac{\beta}{2m}\sideset{}{^+}\sum_{\text{deform}\,s}|H_{q, N}|+\frac{\beta}{2m}\sideset{}{^+}\sum_{\text{expand}\,s}|H_{q, N}| \, . \label{g0}
\end{align}
Let $\Delta$ and $\Delta^+$ be as in the proof of Lemma~\ref{uniquef0}. For any $\delta \in\Delta^+$ let
\[
D(\delta)=\max_{s\in \cs: \delta(s)\leq \delta}|H_{q, N}(s)|\, .
\]
and let $D(\delta)=0$ for all $\delta\in \Delta\setminus\Delta^+$. For $\lambda\in (0, 1)$ define
\[
F(\lambda)=\sum_{\delta\in \Delta^+}\lambda^{\iota(\delta)}D(\delta)=\sum_{\delta\in \Delta}\lambda^{\iota(\delta)}D(\delta)\, .
\] 
First we show that $F(\lambda)$ is finite for all sufficiently small $\lambda$. By induction hypothesis
\[
|H_{q, N}(s)|=|N^2H_{q-1}(s)-N^2f_{2q-2}(s)|\leq N^2|H_{q-1, N}(s)|+N^2|f_{2q-2}(s)|\leq 2N^2L_{q-1}^{|s|}\, .
\]
Therefore
\[
|D(\delta)| \leq 2N^2L_{q-1}^{|\delta|}\, .
\]
Since the number of $\delta \in \Delta^+$ such that $|\delta|=r$ and $\#\delta=n$ is bounded by $\dbinom{r-1}{n-1}$ and $|\delta|\geq 4\#\delta$ for any such $\delta$, we get
\begin{align}
F(\lambda)&=\sum_{r=4}^{\infty}\sum_{n=1}^{r}\sum_{\substack{\delta \in \Delta^+:\, |\delta|=r\\\#\delta=n}}D(\delta)\lambda^{\iota(\delta)}\notag \\
&\leq 2N^2\sum_{r=4}^{\infty}\sum_{n=1}^{r}L_{q-1}^{r}\lambda^{r-n}\dbinom{r-1}{n-1}\notag\\
&\leq 2N^2\sum_{r=4}^{\infty}2L_{q-1}^{r}\lambda^{3r/4}2^{r}<\infty \label{g10}
\end{align}
for all $\lambda< (2L_{q-1})^{-4/3}$. Next, note that
\begin{align*}
\frac{1}{m}\sideset{}{^+}\sum_{\text{split}\,s}|H_{q, N}|&=\frac{1}{m}\sum_{x\in A_1, \, y\in B_1} |H_{q, N}(\times_{x,y}^1 l_1, \times_{x,y}^2 l_1, \ldots, l_n)|+ \frac{1}{m}\sum_{x\in B_1, \, y\in A_1} |H_{q, N}(\times_{x,y}^1 l_1, \times_{x,y}^2 l_1, \ldots, l_n)|\\
&\quad + \frac{1}{m}\sum_{\substack{x,y\in A_1\\ x\ne y}} |H_{q, N}(\times_{x,y}^1 l_1, \times_{x,y}^2 l_1, \ldots, l_n)| + \frac{1}{m} \sum_{\substack{x,y\in B_1\\ x\ne y}} |H_{q, N}(\times_{x,y}^1 l_1, \times_{x,y}^2 l_1, \ldots, l_n)|\, .
\end{align*}
By Lemma \ref{split2}
\begin{align*}
\frac{1}{m}\sum_{x\in A_1, \, y\in B_1} |H_{q, N}(\times_{x,y}^1 l_1, \times_{x,y}^2 l_1, \ldots, l_n)|&\leq \frac{1}{m}\sum_{x\in A_1, \, y\in B_1}D(\delta_1-|y-x|-1, |y-x|-1, \ldots, \delta_n)\\
&\leq \frac{2}{m} \sum_{x\in A_1}\sum_{k=1}^{\infty} D(\delta_1-k-1, k-1, \ldots, \delta_n)\\
&\leq 2 \sum_{k=1}^{\infty} D(\delta_1-k-1, k-1, \ldots, \delta_n)\, .
\end{align*}
The same inequality still holds if we swap $A_1$ and $B_1$. Similarly, by Lemma \ref{split1}
\begin{align*}
\frac{1}{m}\sum_{\substack{x,y\in A_1\\ x\ne y}} |H_{q, N}(\times_{x,y}^1 l_1, \times_{x,y}^2 l_1, \ldots, l_n)|&\leq \frac{1}{m}\sum_{\substack{x,y\in A_1\\ x\ne y}}D(\delta_1-|y-x|, |y-x|, \ldots, \delta_n)\\
&\leq \frac{2}{m} \sum_{x\in A_1}\sum_{k=1}^{\infty} D(\delta_1-k, k, \ldots, \delta_n)\\
&\leq 2\sum_{k=1}^{\infty} D(\delta_1-k, k, \ldots, \delta_n)\, .
\end{align*}
The same inequality is still valid if $A_1$ is replaced by $B_1$. Therefore,
\begin{align}
\frac{1}{m}\sideset{}{^+}\sum_{\text{split}\,s}|H_{q, N}|&\leq 4\sum_{k=1}^{\infty} D(\delta_1-k-1, k-1, \ldots, \delta_n)+4\sum_{k=1}^{\infty} D(\delta_1-k, k, \ldots, \delta_n)\, . \label{g1}
\end{align}
Next, recall that
\begin{align*}
\frac{1}{2m}\sideset{}{^+}\sum_{\text{deform}\,s}|H_{q, N}|&=\frac{1}{2m}\sum_{p\in \cp^+(e)}\sum_{x\in C_1}|H_{q, N}(l_1 \ominus_{x}p, \ldots, l_n)|+ \frac{1}{2m}\sum_{p\in \cp^+(e)}\sum_{x\in C_1}|H_{q, N}(l_1 \oplus_{x}p, \ldots, l_n)|\, .
\end{align*}
Note that by Lemma \ref{deform} $|l_1 \ominus_{x}p|\leq |l_1|+4$. Moreover, a deformation of the loop $l_1$ can also yield a null loop. Therefore,
\begin{align*}
|H_{q, N}(l_1 \ominus_{x}p, \ldots, l_n)|&\leq D(\delta_1+4, \ldots, \delta_n)+D(\delta_2, \ldots, \delta_n)\, .
\end{align*}
Since there are less than $2d$ positively oriented plaquettes passing through an edge or its inverse
\begin{align*}
&\frac{1}{2m}\sum_{p\in \cp^+(e)}\sum_{x\in C_1}|H_{q, N}(l_1 \ominus_{x}p, \ldots, l_n)|\leq d D(\delta_1+4, \ldots, \delta_n)+d D(\delta_2, \ldots, \delta_n)\, .
\end{align*}
The same inequality also holds for the sum over positive deformations. Therefore
\begin{align}
\frac{1}{2m}\sideset{}{^+}\sum_{\text{deform}\,s}|H_{q, N}|&\leq 2d D(\delta_1+4, \ldots, \delta_n)+2d D(\delta_2, \ldots, \delta_n)\, . \label{g2}
\end{align}
Recall that
\begin{align*}
\frac{1}{2m}\sideset{}{^+}\sum_{\text{expand}\,s}|H_{q, N}|&=\frac{1}{2}\sum_{p\in \cp(e)}|H_{q, N}(l_1, p, l_2, \ldots, l_n)|+ \frac{1}{2}\sum_{p\in \cp(e^{-1})}|H_{q, N}(l_1, p, l_2, \ldots, l_n)| \, .
\end{align*}
Since $|p|=4$ for any plaquette $p$ and $|\cp(e)|=2(d-1)$
\begin{align*}
\frac{1}{2}\sum_{p\in \cp(e)}|H_{q, N}(l_1, p, l_2, \ldots, l_n)|\leq dD(\delta_1, 4, \ldots, \delta_n)\, .
\end{align*}
The above inequality is also valid for the second term in the expansion sum. So
\begin{align}
\frac{1}{2m}\sideset{}{^+}\sum_{\text{expand}\,s}|H_{q, N}|&\leq 2dD(\delta_1, 4, \delta_2, \ldots, \delta_n)\, . \label{g3}
\end{align}
By combining inequalities  \eqref{g0}, \eqref{g1}, \eqref{g2} and \eqref{g3} we get
\begin{align*}
|H_{q, N}(s)|&\leq 2|s|L_{q-1}^{|s|}+4\sum_{k=1}^{\infty} D(\delta_1-k-1, k-1, \ldots, \delta_n)+4\sum_{k=1}^{\infty} D(\delta_1-k, k, \ldots, \delta_n)\\
&\quad+2d|\beta| D(\delta_1+4, \ldots, \delta_n)+2d|\beta| D(\delta_2, \ldots, \delta_n)+2d|\beta| D(\delta_1, 4, \delta_2 \ldots, \delta_n)\, .
\end{align*}
By taking maximum of both sides over all $s$ such that $\delta(s)\leq \delta$ we obtain
\begin{align}
D(\delta)&\leq 2|\delta|L_{q-1}^{|\delta|}+4\sum_{k=1}^{\infty} D(\delta_1-k-1, k-1, \ldots, \delta_n)+4\sum_{k=1}^{\infty} D(\delta_1-k, k, \ldots, \delta_n)\notag\\
&\quad+2d|\beta| D(\delta_1+4, \ldots, \delta_n)+2d|\beta| D(\delta_2, \ldots, \delta_n)+2d|\beta| D(\delta_1, 4, \delta, \ldots, \delta_n)\, . \notag
\end{align}
Therefore
\begin{align}
F(\lambda)\leq 2\sum_{\delta\in \Delta^+}\lambda^{\iota(\delta)}|\delta|L_{q-1}^{|\delta|}+\sum_{\delta\in \Delta^+}\lambda^{\iota(\delta)}&\biggl(4\sum_{k=1}^{\infty} D(\delta_1-k-1, k-1, \ldots, \delta_n)\notag\\
&+4\sum_{k=1}^{\infty} D(\delta_1-k, k, \ldots, \delta_n)\notag+2d|\beta| D(\delta_1+4, \ldots, \delta_n)\notag\\
&+2d|\beta| D(\delta_2, \ldots, \delta_n)+2d|\beta| D(\delta_1, 4, \delta_2, \ldots, \delta_n) \biggr)\,. \label{g4}
\end{align}
Proceeding as in the derivation of inequality \eqref{g10}, for $\lambda<(4L_{q-1})^{-4/3}$ we have 
\begin{align}
\sum_{\delta\in \Delta^+}\lambda^{\iota(\delta)}|\delta|L_{q-1}^{|\delta|}&=\sum_{r=4}^{\infty}\sum_{n=1}^{r}\sum_{\substack{\delta \in \Delta^+:\, |\delta|=r\\\#\delta=n}}\lambda^{r-n}rL_{q-1}^r\notag \\
&\leq \sum_{r=4}^{\infty}\sum_{n=1}^{r}\lambda^{r-n}rL_{q-1}^{r}\dbinom{r-1}{n-1}\notag \\
&\leq \sum_{r=4}^{\infty}L_{q-1}^{r}\lambda^{3r/4}2^{2r}\leq \frac{4L_{q-1}\lambda^{3/4}}{1-4L_{q-1}\lambda^{3/4}}  \, . \label{g11}
\end{align}
Next, let $\{\rho_k\}_{k\geq 1}$, $\{\sigma_k\}_{k\geq 1}$, $\alpha$, $\gamma$ and $\theta$ be the maps defined as in the proof of Lemma~\ref{uniquef0}. As noted in the proof of Lemma \ref{uniquef0} the maps $\{\rho_k\}_{k\geq 1}$ are injective and their images of are disjoint. Since $\iota(\delta)=\iota(\rho_k(\delta))+3$, we get
\begin{align}
\sum_{\delta \in \Delta^+}\sum_{k=1}^{\infty} D(\delta_1-k-1, k-1, \delta_2, \ldots, \delta_n) \lambda^{\iota(\delta)}&=\sum_{\delta \in \Delta^+}\sum_{k=1}^{\infty} D(\rho_k(\delta)) \lambda^{\iota(\rho_k(\delta))+3}\notag\\
&\leq \sum_{\delta \in \Delta^+}D(\delta) \lambda^{\delta+3}=\lambda^3F(\lambda)\, . \label{g5}
\end{align}
Similarly, the maps $\{\sigma_k\}_{k\geq 1}$ are also injective and have disjoint images. Since $\iota(\delta)=\iota(\sigma_k(\delta))+1$, we have
\begin{align}
\sum_{\delta \in \Delta^+}\sum_{k=1}^{\infty} D(\delta_1-k, k, \delta_2, \ldots, \delta_n) \lambda^{\iota(\delta)}&=\sum_{\delta \in \Delta^+}\sum_{k=1}^{\infty} D(\sigma_k(\delta)) \lambda^{\iota(\sigma_k(\delta))+1}\notag\\
&\leq \sum_{\delta \in \Delta^+}D(\delta) \lambda^{\delta+1}=\lambda F(\lambda)\, . \label{g6}
\end{align}
The maps $\alpha$ and $\gamma$ are also injective, and $\iota(\delta)=\iota(\alpha(\delta))-4$ and $\iota(\delta)=\iota(\gamma(\delta))-3$. Therefore
\begin{align}
\sum_{\delta \in \Delta^+} D(\delta_1+4, \delta_2, \ldots, \delta_n) \lambda^{\iota(\delta)}&=\sum_{\delta \in \Delta^+} D(\alpha(\delta)) \lambda^{\iota(\alpha(\delta))-4}\notag\\
&\leq \sum_{\delta \in \Delta^+}D(\delta) \lambda^{\delta-4}=\frac{F(\lambda)}{\lambda^{4}} \label{g7}\, ,
\end{align}
and
\begin{align}
\sum_{\delta \in \Delta^+} D(\delta_1, 4, \delta_2, \ldots, \delta_n) \lambda^{\iota(\delta)}&=\sum_{\delta \in \Delta^+} D(\gamma(\delta)) \lambda^{\iota(\gamma(\delta))-3}\notag\\
&\leq \sum_{\delta \in \Delta^+}D(\delta) \lambda^{\delta-3}=\frac{F(\lambda)}{\lambda^{3}}  \label{g8}\, .
\end{align}
Finally, observe that $\theta^{-1}(\delta_2, \ldots, \delta_n)\cap \Delta^+=\{(k, \delta_2, \ldots, \delta_n ):  k=4, 5, \ldots\}$. Hence
\begin{align}
\sum_{\delta \in \Delta^+} D(\delta_2, \ldots, \delta_n) \lambda^{\iota(\delta)}&=\sum_{k=4}^{\infty}\sum_{\delta \in \Delta^+:\,\delta_1=k} D(\theta(\delta)) \lambda^{\iota(\theta(\delta))+k-1}\notag\\
&\leq \sum_{k=1}^{\infty}\sum_{\delta' \in \Delta^+} D(\delta')
\lambda^{\iota(\delta')+k-1}=\frac{F(\lambda)}{1-\lambda} \label{g9}\, .
\end{align}
By combining inequalities \eqref{g11}, \eqref{g5}, \eqref{g6}, \eqref{g7}, \eqref{g8}, \eqref{g9} with \eqref{g4} we obtain
\[
F(\lambda)\leq \frac{8L_{q-1}\lambda^{3/4}}{1-4L_{q-1}\lambda^{3/4}}+\biggl(4\lambda^3+4\lambda+\frac{2|\beta|d}{\lambda^4}+\frac{2|\beta|d}{\lambda^3}+\frac{2|\beta|d}{1-\lambda}\biggr)F(\lambda)\, .
\]
If we let $\lambda=L_q^{-1}=(20L_{q-1})^{-4/3}$, then $4\lambda^3+4\lambda<1/4$ and
\[
\frac{8L_{q-1}\lambda^{3/4}}{1-4L_{q-1}\lambda^{3/4}}=\frac{1}{2}\, .
\]
So by choosing $\beta$ such that 
\[
\frac{2|\beta|d}{\lambda^4}+\frac{2|\beta|d}{\lambda^3}+\frac{2|\beta|d}{1-\lambda}<\frac{1}{4}
\]
we get $F(\lambda)\leq  1$. Hence
\[
|H_{q, N}(s)|\leq D(\delta(s))\leq \lambda^{-\iota(\delta(s))}=L_q^{\iota(s)} \leq L_q^{|s|}\, .
\]
This finishes the proof of the lemma.
\end{proof}
By inequality \eqref{ineqfk1} we deduce that $\lim_{N\to \infty} H_{k, N}(s)$ exists along some subsequence. If $f_{2k}(s)$ denotes this subsequential limit then from equality \eqref{equationhk} and induction hypothesis we see that $f_{2k}$ satisfies equation \eqref{fkmastereq}. So, it is enough to prove that this equation has a unique solution for all sufficiently small $\beta$.
\begin{lmm}\label{uniquefk}
There exists $\beta_1(d, k)>0$, depending only on dimension $d$, such that for all $|\beta|<\beta_1(d, k)$, there exists a unique loop function $f: \cs \to\cc$ such that $(1)$ $f(\emptyset)=0 \,$, $(2)$ $|f(s)|\leq L^{|s|}$ for some $L\geq 1$, and $(3)$ $f$ satisfies equation \eqref{fkmastereq}. 
\end{lmm}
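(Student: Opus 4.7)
The plan is to reduce the uniqueness claim to the argument already carried out in Lemma~\ref{uniquef0}. Suppose $f, \tilde f : \cs \to \cc$ both satisfy conditions (1)--(3), and set $g := f - \tilde f$. The right-hand side of \eqref{fkmastereq} is affine in $f_{2k}$: the inhomogeneous piece $t_1 t f_{2k-2}(s) + \sum_{\text{merge}\,s} f_{2k-2}$ depends only on the previously constructed function $f_{2k-2}$ (fixed by the outer induction on $k$), while only the splitting, deformation, and expansion sums involve $f_{2k}$ itself. Subtracting the two equations, the inhomogeneous parts cancel and $g$ satisfies the homogeneous master equation
\begin{align*}
m\, g(s) = \sum_{\text{split}\,s} g + \frac{\beta}{2}\sum_{\text{deform}\,s} g + \frac{\beta}{2}\sum_{\text{expand}\,s} g,
\end{align*}
together with $g(\emptyset) = 0$ and $|g(s)| \le 2 L^{|s|}$ for some $L \ge 1$. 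This is precisely the situation handled in Lemma~\ref{uniquef0}.

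From here I would follow that proof verbatim. Let $\Delta^+$ be the set of degree sequences with all entries at least $4$, define
\begin{align*}
D(\delta) := \max_{s \in \cs:\,\delta(s) \le \delta} |g(s)|, \qquad F(\lambda) := \sum_{\delta \in \Delta} D(\delta)\, \lambda^{\iota(\delta)},
\end{align*}
with $D(\delta) = 0$ for $\delta \notin \Delta^+$. The bound $|g(s)| \le 2L^{|s|}$ and the estimate $\binom{r-1}{n-1} \le 2^{r}$ on the number of compositions of $r$ into $n$ parts show that $F(\lambda) < \infty$ for every $\lambda < (2L)^{-4/3}$.

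The loop-operation bounds used in Lemma~\ref{uniquef0} then go through unchanged. Applying Lemmas~\ref{split2} and~\ref{split1} yields the splitting estimate \eqref{s0}, Lemma~\ref{deform} together with the fact that there are $2(d-1)$ positively oriented plaquettes through each edge yields the deformation estimate \eqref{d0}, and the expansion estimate \eqref{e0} follows from $|p| = 4$ for every plaquette. Reindexing with the injective maps $\rho_k, \sigma_k, \alpha, \gamma, \theta$ (defined in the proof of Lemma~\ref{uniquef0}) leads to the functional inequality
\begin{align*}
F(\lambda) \le \left( 4\lambda^3 + 4\lambda + \frac{2|\beta|d}{\lambda^4} + \frac{2|\beta|d}{\lambda^3} + \frac{2|\beta|d}{1-\lambda} \right) F(\lambda).
\end{align*}
Choosing $\lambda$ small so that $4\lambda^3 + 4\lambda < 1/4$, and then choosing $\beta_1(d,k) > 0$ small enough that the three $\beta$-dependent terms sum to less than $1/4$, forces $F(\lambda) \le 0$, hence $g \equiv 0$.

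There is no genuine obstacle here: the main structural observation is that the merger term in \eqref{fkmastereq} only involves $f_{2k-2}$, so it is inhomogeneous data that drops out when one forms the difference of two candidate solutions for $f_{2k}$. The difference equation is identical in form to the one analyzed for $f_0$, and no new combinatorial or analytic estimates beyond those already established are required. The only point to watch is that the threshold $\beta_1(d,k)$ genuinely depends on $L$, which in turn may depend on $k$ through the growth constant of $f_{2k-2}$ produced by the existence argument; this is absorbed into the definition of $\beta_1(d,k)$.
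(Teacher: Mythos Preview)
Your proof is correct and follows essentially the same route as the paper: observe that the difference $g=f-\tilde f$ satisfies the homogeneous equation \eqref{f0mastereq}, then feed this into the contraction argument of Lemma~\ref{uniquef0}. The only cosmetic difference is that the paper sets $g:=f-\tilde f+f_0$ so that $g(\emptyset)=1$ and can invoke Lemma~\ref{uniquef0} as a black box (concluding $g=f_0$), whereas you keep $g=f-\tilde f$ with $g(\emptyset)=0$ and re-run the $F(\lambda)$ argument by hand; the underlying estimates are identical.
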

\begin{proof}
We already showed that there exists a solution for all sufficiently small $\beta$. Now, suppose both $f$ and $\tilde{f}$ satisfy conditions of the lemma. Consider the loop function
\[
g(s):=f(s)-\tilde{f}(s)+f_0(s)\, .
\]
By subtracting the equation \eqref{fkmastereq} for $f$ and $\tilde{f}$ and adding the equation \eqref{f0mastereq} we see that $g$ also satisfies equation \eqref{f0mastereq}. Moreover, for any non-null loop sequence $s$
\[
|g(s)|\leq |f(s)|+|\tilde{f}(s)|+|f_0(s)|\leq L^{|s|}+L^{|s|}+1\leq (2L+1)^{|s|}\, .
\] 
Since $f(\emptyset)=\tilde{f}(\emptyset)=0$ and $f_0(\emptyset)=1$, the above inequality is still valid when $s$ is the null loop sequence and $g(\emptyset)=1$.  Thus $g$ satisfies all conditions of Lemma \ref{uniquef0}. Therefore $g(s)=f_0(s)$ for all $s$ for all sufficiently small $\beta$, which means $f=\tilde{f}$.
\end{proof}
Lemma \ref{uniquefk} and previous calculations finish the proof of Theorem \ref{fk}.
\section{Symmetrized master loop equation}\label{symmastereqsection}
The following theorem is the symmetrized version of Theorem \ref{fk}.
\begin{thm}\label{symfk}
The function $f_{2k}$ satisfies equation
\begin{align}
|s|f_{2k}(s)&=\ell(s)f_{2k-2}(s)+\sum_{s'\in \fst^{-}(s)}f_{2k-2}(s')-\sum_{s'\in \fst^{+}(s)}f_{2k-2}(s')+\sum_{s'\in \fs^{-}(s)}f_{2k}(s')-\sum_{s'\in \fs^{+}(s)}f_{2k}(s')\notag\\
&\quad +\frac{\beta}{2}\sum_{s'\in \fd^-(s)}f_{2k}(s')-\frac{\beta}{2}\sum_{s'\in \fd^+(s)}f_{2k}(s')+\frac{\beta}{2}\sum_{s'\in \fex^-(s)}f_{2k}(s')-\frac{\beta}{2}\sum_{s'\in \fex^+(s)}f_{2k}(s') \, .\label{symfkmastereq}
\end{align}
where $f_j$ is understood to be zero when $j<0$. 
\end{thm}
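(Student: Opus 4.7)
The plan is to prove Theorem~\ref{symfk} by symmetrizing the unsymmetric master equation \eqref{fkmastereq} in exactly the manner that Theorem~\ref{symfinmaster} was derived from Theorem~\ref{mastern} at the end of Section~\ref{mm1sec}. The starting observation is that $\phi_N(s)$ depends only on the unordered multiset of loops in the minimal representation of $s$, since $\phi_N(s)=N^{-n}\avg{W_{l_1}\cdots W_{l_n}}$ is a normalized expectation of a commuting product of scalar (trace) random variables. Passing to the limits provided by Theorem~\ref{fk}, each coefficient $f_{2k}$ inherits this permutation invariance. Consequently, although \eqref{fkmastereq} was derived by singling out the first edge of $l_1$, it in fact holds verbatim with $l_1$ replaced by any loop component $l_k$ of the minimal representation and with $e$ replaced by any edge in $l_k$. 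The coefficients $m$ and $t_1$ are then $m_k(e)$ and $t_k(e)$, and the four operation-sums run over the sets $\fst_k^{\pm}(s)$, $\fs_k^{\pm}(s)$, $\fd_k^{\pm}(s)$, $\fex_k^{\pm}(s)$ of operations applied to the $k$-th loop at the chosen edge; if $l_k$ contains neither $e$ nor $e^{-1}$, both sides vanish trivially.

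Having established this refined version of \eqref{fkmastereq}, I would fix $k$ and sum the resulting equation over all $e\in E^+$. The identity $\sum_{e\in E^+} m_k(e)=|l_k|$ converts the left-hand coefficient to $|l_k|$, and the coefficient of $f_{2k-2}(s)$ on the right becomes $\sum_{e\in E^+} t_k(e)\,t(e)$; this yields an intermediate identity exactly analogous to \eqref{c2}. Summing next over $k=1,\ldots,n$, the length identity $\sum_{k=1}^n |l_k|=|s|$ produces $|s| f_{2k}(s)$ on the left, and the swap of summation
\[
\sum_{k=1}^n\sum_{e\in E^+} t_k(e)\,t(e) = \sum_{e\in E^+} t(e)\sum_{k=1}^n t_k(e) = \sum_{e\in E^+} t(e)^2 = \ell(s)
\]
produces the $\ell(s) f_{2k-2}(s)$ term. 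For the four remaining sums, every $s'\in\fst^{\pm}(s)$ arises in exactly one way as an element of some $\fst^{\pm}_k(s)$ corresponding to a unique choice of $k$ and $e$ (namely the loop into which the merger is performed and the matching edge), and analogous one-to-one correspondences hold for splittings, deformations, and expansions. Thus the double sum over $k$ and $e$ reconstructs $\fst^{\pm}(s)$, $\fs^{\pm}(s)$, $\fd^{\pm}(s)$, $\fex^{\pm}(s)$ with the correct multiplicity.

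The argument involves no analytic difficulty; the entire challenge is a careful combinatorial bookkeeping, and this is precisely the bookkeeping already carried out in the proof of Theorem~\ref{symfinmaster}. The main obstacle, insofar as there is one, is to verify that the counting conventions spelled out in Section~\ref{string}---that the two orderings of a merger of two loops are counted as distinct, and that splittings at $(x,y)$ and $(y,x)$ are counted separately---give exactly the right multiplicity in the bijection between $\bigcup_{k,e}\fst_k^{\pm}(s)$ and $\fst^{\pm}(s)$, and likewise for the other operation types. Following the template of the earlier proof term by term completes the derivation.
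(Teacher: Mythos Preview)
Your approach is correct but differs from the paper's. You propose to symmetrize the unsymmetrized equation \eqref{fkmastereq} for $f_{2k}$ directly, repeating the combinatorial bookkeeping of the proof of Theorem~\ref{symfinmaster} at the level of the limit functions. The paper instead proceeds by induction on $k$: the base case $k=0$ follows by sending $N\to\infty$ in Theorem~\ref{symfinmaster}, and for the inductive step one sums the already-established symmetrized equations for $f_0,\ldots,f_{2k-2}$, subtracts this from \eqref{symfinmastereq}, multiplies by $N^{2k}$ to obtain a symmetrized identity for $H_{k,N}$, and then lets $N\to\infty$. Your route avoids induction and works entirely with the limit objects, but it requires re-justifying that \eqref{fkmastereq} holds for any loop component $l_k$ and any edge $e$ in it (permutation invariance of $f_{2k}$ handles the choice of component, while cyclic invariance of the loop representative handles the choice of edge---a point you pass over quickly). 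The paper's route uses Theorem~\ref{symfinmaster} as a black box and never revisits the edge-by-edge bookkeeping, at the cost of carrying the induction. Both arguments are short and rest on the same ingredients; neither has a real advantage over the other.
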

\begin{proof}
The proof is by induction on $k$. The case $k=0$ follows from equation \eqref{symfinmastereq} by letting $N\to \infty$ on both sides. Now, let $k\geq 1$ and suppose the claim holds for all $k'<k$. By induction hypothesis
\begin{align}
|s|\sum_{j=0}^{k-1}\frac{1}{N^{2j}}f_{2j}(s)=\frac{\ell(s)}{N^2}\sum_{j=0}^{k-2}\frac{1}{N^{2j}}f_{2j}(s)& + \frac{1}{N^2}\sum_{s\in \fst^-(s)}\sum_{j=0}^{k-2} \frac{1}{N^{2j}}f_{2j}-\frac{1}{N^2}\sum_{s\in \fst^+(s)}\sum_{j=0}^{k-2} \frac{1}{N^{2j}}f_{2j} \notag \\
&\quad + \sum_{s\in \fs^-(s)}\sum_{j=0}^{k-1} \frac{1}{N^{2j}}f_{2j}-\sum_{s\in \fs^+(s)}\sum_{j=0}^{k-1} \frac{1}{N^{2j}}f_{2j} \notag \\
&\quad + \frac{\beta}{2} \sum_{s\in \fd^-(s)}\sum_{j=0}^{k-1} \frac{1}{N^{2j}}f_{2j}- \frac{\beta}{2} \sum_{s\in \fd^+(s)}\sum_{j=0}^{k-1} \frac{1}{N^{2j}}f_{2j} \notag \\
&\quad + \frac{\beta}{2} \sum_{s\in \fex^-(s)}\sum_{j=0}^{k-1} \frac{1}{N^{2j}}f_{2j}- \frac{\beta}{2} \sum_{s\in \fex^+(s)}\sum_{j=0}^{k-1} \frac{1}{N^{2j}}f_{2j} \, .  \notag
\end{align}
Also we can write equation \eqref{symfinmastereq} as
\begin{align}
|s|\phi_N(s)=\frac{\ell(s)}{N^2}\phi_N(s)&+\frac{1}{N^2}\sum_{s'\in \fst^-(s)}\phi_N(s')-\frac{1}{N^2}\sum_{s'\in \fst^+(s)}\phi_N(s')\\
&\quad + \sum_{s'\in \fs^-(s)}\phi_N(s') - \sum_{s'\in \fs^+(s)}\phi_N(s')\notag\\
&\quad + \frac{\beta}{2}\sum_{s'\in \fd^-(s)}\phi_N(s') - \frac{\beta}{2}\sum_{s'\in \fd^+(s)}\phi_N(s')\notag\\
&\quad + \frac{\beta}{2}\sum_{s'\in \fex^-(s)}\phi_N(s') - \frac{\beta}{2}\sum_{s'\in \fex^+(s)}\phi_N(s') \, .\notag
\end{align}
By subtracting the first equation above from the second one and multiplying both sides by $N^{2k}$ we obtain
\begin{align}
|s|H_{k, N}(s)=\ell(s)H_{k-1, N}(s)&+\sum_{s'\in \fst^-(s)}H_{k-1, N}(s')-\sum_{s'\in \fst^+(s)}H_{k-1, N}(s')\notag\\
&\quad + \sum_{s'\in \fs^-(s)}H_{k, N}(s') - \sum_{s'\in \fs^+(s)}H_{k, N}(s')\notag\\
&\quad + \frac{\beta}{2}\sum_{s'\in \fd^-(s)}H_{k, N}(s') - \frac{\beta}{2}\sum_{s'\in \fd^+(s)}H_{k, N}(s')\notag\\
&\quad + \frac{\beta}{2}\sum_{s'\in \fex^-(s)}H_{k, N}(s') - \frac{\beta}{2}\sum_{s'\in \fex^+(s)}H_{k, N}(s')\, . \notag
\end{align}
By letting $N\to \infty$ in the last equation we get the desired result.
\end{proof}
\section{Series expansion}
Define collection of numbers $\{a_{i, k}(s): i, k\geq 0, s\in \cs\}$ inductively as follows. Let $a_{0, 0}(\emptyset)=1$, $a_{0,0}(s)=0$ for all $s\neq \emptyset$, and let $a_{i, 0}(\emptyset)=0$ for all $i\geq 1$. If $k\geq 1$, then let $a_{i, k}(\emptyset)=0$ for all $i\geq 0$ and $a_{0, k}(s)=0$ for all $s\in \cs$. Now, fix triple $(k, i, s)$ and suppose $a_{i', k'}(s')$ is defined for all $k'<k$ or $k'=k$ and $i'<i$ or $k'=k$, $i'=i$ and $\iota(s')<\iota(s)$. Then, let
\begin{align}
ma_{i,k}(s)=t_1t a_{i, k-1}(s)+\sum_{\text{merge}\,s}a_{i, k-1}+\sum_{\text{split}\,s}a_{i, k}+\frac{1}{2}\sum_{\text{deform}\,s}a_{i-1, k}+\frac{1}{2}\sum_{\text{expand}\,s}a_{i-1, k} \label{aik}
\end{align}
where $a_{i, k}(s)$ is understood to be zero for all $s$ if $i<0$ or $k<0$. Note that since $\iota(s')<\iota(s)$ for all $s'\in \fs(s)$ (Lemma \ref{iota2}) the third term on the right side of above equality was already defined. 
The following theorem is the main result of this section.
\begin{thm}\label{seriesfk}
Let $f_k$ be as in the previous section and let $a_{i,k}(s)$ be as in \eqref{aik}. There exists sequence of positive numbers $\{\beta'_0(d,k)\}_{k\geq 0}$, depending only on $d$, such that for any $|\beta|\leq \beta'_0(d,k)$ and loop sequence $s$
\[
f_k(s)=\sum_{i=0}^{\infty}a_{i,k}(s)\beta^i\, .
\]
Moreover, the above series is absolutely convergent.
\end{thm}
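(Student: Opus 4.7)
The strategy is standard: show that the formal series $g_k(s) := \sum_{i=0}^\infty a_{i,k}(s)\beta^i$ converges absolutely for $|\beta|$ small, verify that it satisfies the same master loop equation and growth bound as $f_{2k}$, and then conclude $g_k = f_{2k}$ by invoking the uniqueness results (Lemma \ref{uniquef0} for $k=0$ and Lemma \ref{uniquefk} for $k\ge 1$).

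The main technical step is to establish a bound of the form $|a_{i,k}(s)|\le M_k^{|s|}\,N_k^i$ (or similar), with $N_k$ explicit enough that we can choose $\beta_0'(d,k)$ so that $|\beta|N_k<1$.  I would prove this by a triple induction on $k$, then on $i$, then on $\iota(s)$, mirroring the structure already used in Lemma \ref{uniquefk} and Lemma \ref{ineqfk}.  The base cases $a_{0,0}(\emptyset)=1$ and $a_{i,k}(\emptyset)=0$ otherwise are built in.  At the inductive step, the recursion \eqref{aik} expresses $a_{i,k}(s)$ in terms of: (i) $a_{i,k-1}$ evaluated on $s$ and on mergers of $s$ (strictly smaller $k$), (ii) $a_{i-1,k}$ evaluated on deformations/expansions (strictly smaller $i$), and (iii) $a_{i,k}$ evaluated on splittings of $s$ (strictly smaller $\iota$ by Lemma \ref{iota2}).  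I would then repeat the generating-function argument from Section~\ref{unsymf0}: define $D_{i,k}(\delta)=\max\{|a_{i,k}(s)|:\delta(s)\le\delta\}$, set $F_{i,k}(\lambda)=\sum_\delta D_{i,k}(\delta)\lambda^{\iota(\delta)}$, and use the shifts $\rho_k,\sigma_k,\alpha,\gamma,\theta$ together with Lemmas \ref{merger}, \ref{deform}, \ref{expand}, \ref{split1}, \ref{split2} to get an inequality of the form
\begin{equation*}
F_{i,k}(\lambda)\le A(\lambda)F_{i,k}(\lambda)+B_k(\lambda)\bigl(\text{terms in $F_{i,k-1}$ and $F_{i-1,k}$}\bigr),
\end{equation*}
where $A(\lambda)=4\lambda^3+4\lambda$ can be made $<1/2$ by choosing $\lambda$ appropriately (no $\beta$ is present in $A(\lambda)$ because $\beta$ has been factored out of the recursion).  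Solving this recursively in $(i,k)$ yields the desired bound $|a_{i,k}(s)|\le M_k^{|s|}N_k^i$, and hence absolute convergence of $g_k(s)$ for $|\beta|<N_k^{-1}$.

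Once convergence is established, $g_k$ satisfies the master equation \eqref{fkmastereq} term by term: multiplying the recursion \eqref{aik} by $\beta^i$ and summing over $i\ge 0$ rearranges (by absolute convergence) into exactly \eqref{fkmastereq}, since the factor $\beta/2$ in front of the deformation and expansion terms combines with the index shift $i\mapsto i-1$.  Moreover the bound above gives $|g_k(s)|\le M_k^{|s|}\sum_i(|\beta|N_k)^i\le L_k^{|s|}$ for suitable $L_k$ when $|\beta|N_k<1/2$, and $g_k(\emptyset)=\delta_{k,0}$ matches the value of $f_{2k}(\emptyset)$ used in the uniqueness lemmas.  Setting $\beta_0'(d,k)$ to be the minimum of $N_k^{-1}/2$ and the constants $\beta_1(d,k')$ for $k'\le k$ coming from Lemmas \ref{uniquef0} and \ref{uniquefk}, the uniqueness statement applied inductively in $k$ yields $g_k=f_{2k}$.

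The main obstacle will be the careful bookkeeping in the triple induction that produces the bound $|a_{i,k}(s)|\le M_k^{|s|}N_k^i$.  In particular one must choose $\lambda$ (and hence $M_k$, $N_k$) uniformly in $i$ at each fixed level of $k$, so that the generating function $F_{i,k}$ remains finite before one tries to bound it.  A priori $a_{i,k}$ could grow very quickly in $i$, so the initial finiteness of $F_{i,k}(\lambda)$ should be established from a crude bound (coming from the finite number of operations used to build $a_{i,k}$ from smaller quantities), and then the refined bound obtained from the inequality on $F_{i,k}$.  The remaining steps are bookkeeping essentially identical to what has already been done in Sections \ref{unsymf0} and the proof of Theorem \ref{fk}, so I would state them as direct parallels rather than reprove them.
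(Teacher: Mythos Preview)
Your overall strategy is correct and matches the paper's: bound $|a_{i,k}(s)|$ suitably, deduce absolute convergence, verify that the sum satisfies the master equation \eqref{fkmastereq} term by term, and invoke the uniqueness Lemmas \ref{uniquef0} and \ref{uniquefk} inductively in $k$.

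The main technical difference is in how the bound on $|a_{i,k}(s)|$ is obtained. You propose to reuse the generating-function machinery $F_{i,k}(\lambda)=\sum_\delta D_{i,k}(\delta)\lambda^{\iota(\delta)}$ from Section~\ref{unsymf0}, whereas the paper instead proves (Lemma \ref{boundaik}) a direct pointwise bound
\[
|a_{i,k}(s)|\le K(d)^{(5+2k)i+\iota(\delta)}|\delta|^{3k}C_{\delta_1-1}\cdots C_{\delta_n-1}
\]
by triple induction, using the Catalan recursion $C_n=\sum_j C_{n-1-j}C_j$ (Lemma~\ref{catalan}) to absorb the splitting terms and Lemma~\ref{catalan1} to handle mergers. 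This Catalan ansatz makes the splitting step a one-line computation and completely sidesteps the finiteness issue you correctly flagged: since the inductive hypothesis is a closed-form inequality rather than a bound on a generating function, no a priori finiteness of $F_{i,k}(\lambda)$ is ever needed. Your generating-function route can be made to work (e.g.\ by truncating in $\iota(s)$ as you suggest), but it requires the extra bookkeeping you anticipate, while the paper's Catalan bound gives the constants $M_k,N_k$ explicitly in one pass and also feeds directly into the proof of part (iii) of Theorem~\ref{maintheorem}.
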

We need the following lemma to prove Theorem~\ref{seriesfk}.
\begin{lmm}\label{boundaik}
There exists $K(d)\geq 3$, depending only on dimension $d$, such that for any $s\in \cs$ with degree sequence $\delta=(\delta_1, \ldots, \delta_n)$ and $i\geq 0$
\begin{align}
|a_{i, k}(s)|\leq K(d)^{(5+2k)i+\iota(\delta)}|\delta|^{3k}C_{\delta_1-1}\cdots C_{\delta_n-1} \label{ineqaik}
\end{align}
where $C_i$ is the $i^{\text{th}}$ Catalan number. The product of Catalan numbers is assumed to be $1$ when $s=\emptyset$.
\end{lmm}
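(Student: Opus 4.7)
The proof proceeds by induction on the lexicographic order of the triple $(k, i, \iota(\delta))$. The base cases follow immediately from the definitions: $a_{0,0}(\emptyset) = 1$ (with the empty Catalan product interpreted as $1$), and $a_{i, k}(s) = 0$ in all other initial configurations, each satisfying the claimed bound trivially.

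For the inductive step at a nontrivial triple, I would apply the recursion \eqref{aik}, divide both sides by $m$, and bound each of the five terms on the right using the induction hypothesis and the structural lemmas of Section \ref{prelim}. The $t_1 t$ term is immediate from $|t_1| \leq m$ and $|t| \leq |\delta|$, contributing an extra factor of $|\delta|$ to the induction hypothesis on $a_{i,k-1}(s)$. The merger term uses Lemma \ref{merger} (length non-increasing, index growing by at most $1$) combined with the Catalan growth bound $C_{|l_1|+|l_r|-1} \leq (|l_1|+|l_r|)^2 C_{|l_1|-1} C_{|l_r|-1}$ from Lemma \ref{catalan1}; summing over at most $m|\delta|$ merger pairs and dividing by $m$ introduces a factor $|\delta|^3$ at each level of the recursion in $k$, precisely what $|\delta|^{3k}$ accumulates. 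The splitting term is the most delicate: it makes essential use of Lemma \ref{iota2} (strict decrement of $\iota$), so the induction hypothesis is applied at the same $(i,k)$ but smaller $\iota$ and provides a factor $K^{-1}$ of savings that dominates the constants from the sum; the summed products $C_{|l'_1|-1} C_{|l'_2|-1}$ over split locations are bounded by $O(C_{\delta_1-1})$ using Lemmas \ref{split1}, \ref{split2} and the Catalan identity (Lemma \ref{catalan}). The deformation and expansion terms both operate on $a_{i-1,k}$; by Lemmas \ref{deform} and \ref{expand} the length grows by at most $4$ and the index by at most $4$, with the Catalan product growing by at most a factor of $4^3 C_3$ via iterations of \eqref{catalan0}, and the $2(d-1)$ plaquettes through each edge contribute a $d$-dependent constant. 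The factor $K^{5+2k}$ gained from $i \mapsto i-1$ absorbs all of these constants once $K(d)$ is chosen large enough.

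The main obstacle is the simultaneous calibration of the $K$-exponent and the polynomial factor $|\delta|^{3k}$ so that every term fits within the target with a single constant $K(d)$. The constant $5$ in $(5+2k)$ is needed to pay for index growth under deformation (up to $4$) and expansion (exactly $3$); the extra $2k$ per unit of $i$ provides the slack required to accommodate the unit increments of $\iota$ under repeated mergers at higher orders in $\beta$; and $|\delta|^{3k}$ absorbs the three fresh powers of $|\delta|$ introduced at each merger iteration (one from the count of $(x,y)$ pairs, two from the Catalan growth factor of Lemma \ref{catalan1}). With these identifications in place, taking $K(d)$ sufficiently large in terms of the dimension-dependent constants appearing in all of the structural lemmas closes the induction and yields the stated bound.
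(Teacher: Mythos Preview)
Your proposal is correct and follows essentially the same three-fold induction as the paper, invoking the same structural lemmas (\ref{split1}--\ref{iota2}, \ref{catalan}, \ref{catalan1}, \eqref{catalan0}) at the same places. One small correction to your calibration commentary: the extra $2k$ in the exponent $(5+2k)i$ is not primarily there to pay for merger $\iota$-increments (those cost only $K^{1}$ and are covered by the $K^{2i}$ gained when passing from $k$ to $k-1$), but rather to absorb the $k$-dependent growth $(|\delta|+4)^{3k}/|\delta|^{3k}\le 2^{3k}$ in the deformation and expansion steps, which is why the paper needs $K\ge 3>2^{3/2}$.
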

\begin{proof}
We will use three fold induction: first on $k$, then on $i$ and finally on $\iota(s)$. The number $K=K(d)$ will be chosen later in the proof.  Since $a_{0, 0}(s)$ is either $0$ or $1$, inequality obviously holds for triples $(0, 0, s)$. Next let $i\geq 1$ and assume inequality holds for all triples $(i', 0, s)$  with $i'<i$. We will now use induction on $\iota(s)$ to prove inequality for the triples $(i, 0, s)$. Since $a_{i,0}(\emptyset)=0$, the claim is true for $(i, 0, \emptyset)$. So, let $s\neq \emptyset$ and assume the claim is also true for all triples $(i, 0, s')$ with $\iota(s')<\iota(s)$. Note that by equation \eqref{aik}
\begin{align}
|a_{i,0}(s)|\leq \frac{1}{m}\sideset{}{^+}\sum_{\text{split}\,s}|a_{i, 0}|+\frac{1}{2m}\sideset{}{^+}\sum_{\text{deform}\,s}|a_{i-1, 0}|+\frac{1}{2m}\sideset{}{^+}\sum_{\text{expand}\,s}|a_{i-1, 0}| \,. \label{b1}
\end{align}
Recall that
\begin{align*}
\frac{1}{m}\sideset{}{^+}\sum_{\text{split}\,s}|a_{i,0}|&=\frac{1}{m}\sum_{x\in A_1, \, y\in B_1} |a_{i,0}(\times_{x,y}^1 l_1, \times_{x,y}^2 l_1, \ldots, l_n)|+\frac{1}{m}\sum_{x\in B_1, \, y\in A_1} |a_{i,0}(\times_{x,y}^1 l_1, \times_{x,y}^2 l_1, \ldots, l_n)|\\
&\quad + \frac{1}{m}\sum_{\substack{x,y\in A_1\\ x\ne y}} |a_{i,0}(\times_{x,y}^1 l_1, \times_{x,y}^2 l_1, \ldots, l_n)|+\frac{1}{m} \sum_{\substack{x,y\in B_1\\ x\ne y}} |a_{i,0}(\times_{x,y}^1 l_1, \times_{x,y}^2 l_1, \ldots, l_n)|\, .
\end{align*}
By Lemma \ref{split2} $\iota(s')\leq \iota(s)-3$ for all $s'\in \fs^-(s)$. So by induction hypothesis and Lemma \ref{catalan}
\begin{align*}
\frac{1}{m}\sum_{x\in A_1, \, y\in B_1} |a_{i,0}(\times_{x,y}^1 l_1, \times_{x,y}^2 l_1, \ldots, l_n)|&\leq \frac{1}{m}\sum_{x\in A_1, \, y\in B_1}K^{5i+\iota(\delta)-3}C_{\delta_1-|y-x|-2}C_{|y-x|-2}\prod_{j=2}^{n} C_{\delta_j-1}\\
&\leq \frac{2}{m} \sum_{x\in A_1}\sum_{k=1}^{\delta_1-1} K^{5i+\iota(\delta)-3}C_{\delta_1-k-1}C_{k-1}\prod_{j=2}^{n} C_{\delta_j-1}\\
&\leq  \frac{2}{m} \sum_{x\in A_1}K^{5i+\iota(\delta)-3}C_{\delta_1-1}\prod_{j=2}^{n} C_{\delta_j-1}\\
&\leq 2K^{5i+\iota(\delta)-3}C_{\delta_1-1}\cdots C_{\delta_n-1}\, .
\end{align*}
The same bound is also valid if we swap $A_1$ and $B_1$. Similarly, by Lemma \ref{split1} $\iota(s')\leq \iota(s)-1$ for all $s'\in \fs^+(s)$. So, by induction hypothesis and Lemma \ref{catalan}
\begin{align*}
\frac{1}{m}\sum_{\substack{x,y\in A_1\\ x\ne y}} |a_{i,0}(\times_{x,y}^1 l_1, \times_{x,y}^2 l_1, \ldots, l_n)|&\leq \frac{1}{m}\sum_{\substack{x,y\in A_1\\ x\ne y}}K^{5i+\iota(\delta)-1}C_{\delta_1-|y-x|-1}C_{|y-x|-1}\prod_{j=2}^{n} C_{\delta_j-1}\\
&\leq \frac{2}{m} \sum_{x\in A_1}\sum_{k=1}^{\delta_1-1} K^{5i+\iota(\delta)-1}C_{\delta_1-k-1}C_{k-1}\prod_{j=2}^{n} C_{\delta_j-1}\\
&\leq \frac{2}{m} \sum_{x\in A_1} K^{5i+\iota(\delta)-1}C_{\delta_1-1}\prod_{j=2}^{n} C_{\delta_j-1}\\
&\leq 2K^{5i+\iota(\delta)-1}C_{\delta_1-1}\cdots C_{\delta_n-1}\, .
\end{align*}
The same inequality is also true if we replace $A_1$ with $B_1$. Thus
\begin{align}
\frac{1}{m}\sideset{}{^+}\sum_{\text{split}\,s}|a_{i,0}|&\leq (4K^{-3}+4K^{-1})K^{5i+\iota(\delta)}C_{\delta_1-1}\cdots C_{\delta_n-1}\, . \label{b2}
\end{align}
Next, recall that
\begin{align*}
\frac{1}{2m}\sideset{}{^+}\sum_{\text{deform}\,s}|a_{i-1,0}|&=\frac{1}{2m}\sum_{p\in \cp^+(e)}\sum_{x\in C_1}|a_{i-1,0}(l_1 \ominus_{x}p, \ldots, l_n)|\\
&\quad +\frac{1}{2m}\sum_{p\in \cp^+(e)}\sum_{x\in C_1}|a_{i-1,0}(l_1 \oplus_{x}p, \ldots, l_n)| \, .
\end{align*}
By Lemma \ref{deform} $\iota(s')\leq \iota(s)+4$ for any $s'\in \fd(s)$. Thus by induction hypothesis and inequality \eqref{catalan0}, for any plaquette $p$
\begin{align*}
|a_{i-1,0}(l_1 \ominus_{x}p, \ldots, l_n)|&\leq K^{5(i-1)+\iota(\delta)+4}C_{\delta_1+3}C_{\delta_2-1} \cdots C_{\delta_n-1}\\
&\leq 4^4K^{5i+\iota(\delta)-1}C_{\delta_1-1}C_{\delta_2-1} \cdots C_{\delta_n-1}\,.
\end{align*}
Since $|\cp^+(e)|=2(d-1)$, we get
\begin{align*}
\frac{1}{2m}\sum_{p\in \cp^+(e)}\sum_{x\in C_1}|a_{i-1,0}(l_1 \ominus_{x}p, \ldots, l_n)|& \leq 256d K^{5i+\iota(\delta)-1}C_{\delta_1-1}C_{\delta_2-1} \cdots C_{\delta_n-1}\, .
\end{align*}
The same inequality is also true for the second term in the deformation sum. Hence
\begin{align}
\frac{1}{2m}\sideset{}{^+}\sum_{\text{deform}\,s}|a_{i-1,0}|&\leq 512dK^{5i+\iota(\delta)-1}C_{\delta_1-1}C_{\delta_2-1} \cdots C_{\delta_n-1}\,. \label{b3}
\end{align}
Finally, note that
\begin{align*}
\frac{1}{2m}\sideset{}{^+}\sum_{\text{expand}\,s}|a_{i-1,0}|&=\frac{1}{2}\sum_{p\in \cp(e)}|a_{i-1,0}(l_1, p, l_2, \ldots, l_n)|+ 
\frac{1}{2}\sum_{p\in \cp(e^{-1})}|a_{i-1,0}(l_1, p, l_2, \ldots, l_n)| \, .
\end{align*}
By Lemma \ref{expand} $\iota(s')\leq \iota(s)+3$ for any $s\in\fex(s)$. Since $|p|=4$ and $|\cp(e)|=2(d-1)$, by induction hypothesis
\begin{align*}
\frac{1}{2}\sum_{p\in \cp(e)}|a_{i-1,0}(l_1, p, l_2, \ldots, l_n)|&\leq  dK^{5(i-1)+\iota(\delta)+3}C_{\delta_1-1}C_3C_{\delta_2-1} \cdots C_{\delta_n-1}\\
&=5dK^{5i+\iota(\delta)-2}C_{\delta_1-1}C_{\delta_2-1} \cdots C_{\delta_n-1} \, .
\end{align*}
The same inequality is also true for the second term in the expansion sum. Thus
\begin{align}
\frac{1}{2m}\sideset{}{^+}\sum_{\text{expand}\,s}|a_{i-1,0}|&\leq 10dK^{5i+\iota(\delta)-2}C_{\delta_1-1}C_{\delta_2-1} \cdots C_{\delta_n-1}\, . \label{b4}
\end{align}
By combining \eqref{b2}, \eqref{b3}, \eqref{b4} with \eqref{b1} we get
\begin{align*}
|a_{i,0}(s)|\leq (4K^{-3}+4K^{-1}+512dK^{-1}+10dK^{-2})K^{5i+\iota(\delta)}C_{\delta_1-1}C_{\delta_2-1} \cdots C_{\delta_n-1}\,.
\end{align*}
By choosing $K$ so that
\begin{align}
4K^{-3}+4K^{-1}+512dK^{-1}+10dK^{-2}&\leq 1 \label{ineqK1}
\end{align}
we conclude that inequality \eqref{ineqaik} is valid for triples $(i, 0, s)$. 

Next, let $k\geq 1$ and assume that inequality \eqref{ineqaik} is valid for triples $(i, k', s)$ with $k'<k$. Since $a_{0, k}(s)=0$ for all $s\in \cs$, the inequality also holds for the triples $(0, k, s)$. So, let $i\geq 1$ and assume the claim is true for all triples $(i', k, s)$ with $i'<i$. Now we will use induction on $\iota(s)$ to prove for the triples $(i, k, s)$. Since $a_{i, k}(\emptyset)=0$, triple $(i, k, \emptyset)$ satisfies inequality \eqref{ineqaik}. So let $s\neq \emptyset$ and assume that inequality \eqref{ineqaik} is also true for triples $(i, k, s')$ with $\iota(s')<\iota(s)$. By triangle inequality
\begin{align}
|a_{i,k}(s)|\leq \frac{t_1t}{m}|a_{i, k-1}(s)|+\frac{1}{m}\sideset{}{^+}\sum_{\text{merge}\,s}|a_{i, k-1}|+\frac{1}{m}\sideset{}{^+}\sum_{\text{split}\,s}|a_{i, k}|+\frac{1}{2m}\sideset{}{^+}\sum_{\text{deform}\,s}|a_{i-1, k}|+\frac{1}{2m}\sideset{}{^+}\sum_{\text{expand}\,s}|a_{i-1, k}|\,. \label{h0}
\end{align}
Since $t_1\leq m$ and $t\leq |s|$, by induction hypothesis
\begin{align}
\frac{t_1t}{m}|a_{i, k-1}(s)|&\leq |\delta|K^{(5+2k-2)i+\iota(\delta)}|\delta|^{3k-3}C_{\delta_1-1}\cdots C_{\delta_n-1}\notag\\
&\leq K^{(5+k)i+\iota(\delta)-2}|\delta|^{3k}C_{\delta_1-1}\cdots C_{\delta_n-1}\, .\label{h1}
\end{align}
Recall that 
\begin{align*}
\frac{1}{m}\sum_{\text{merge}\,s}|a_{i, k-1}|&=\frac{1}{m}\sum_{r=2}^{n}\sum_{x\in A_1, y\in B_r}|a_{i, k-1}(l_1\ominus_{x, y}l_r, l_2, \ldots, l_{r-1}, l_{r+1}, \ldots, l_n)|\\
&+\frac{1}{m}\sum_{r=2}^{n}\sum_{x\in B_1, y\in A_r}|a_{i, k-1}(l_1\ominus_{x, y}l_r, l_2, \ldots, l_{r-1}, l_{r+1}, \ldots, l_n)|\\
&+\frac{1}{m}\sum_{r=2}^{n}\sum_{x\in A_1, y\in A_r}|a_{i, k-1}(l_1\oplus_{x, y}l_r, l_2, \ldots, l_{r-1}, l_{r+1}, \ldots, l_n)|\\
&+\frac{1}{m}\sum_{r=2}^{n}\sum_{x\in B_1, y\in B_r}|a_{i, k-1}(l_1\oplus_{x, y}l_r, l_2, \ldots, l_{r-1}, l_{r+1}, \ldots, l_n)|\, .
\end{align*}
By Lemma \ref{merger} $|\delta(s')|\leq |\delta|$ and $\iota(\delta(s'))\leq \iota(\delta)+1$ for any $s'\in \fst(s)$. So, by induction hypothesis and Lemma \ref{catalan1}
\begin{align}
&\frac{1}{m}\sum_{r=2}^{n}\sum_{x\in A_1, y\in B_r}|a_{i, k-1}(l_1\ominus_{x, y}l_r, l_2, \ldots, l_{r-1}, l_{r+1}, \ldots, l_n)|\notag\\
&\quad \leq \frac{1}{m}\sum_{r=2}^{n}\sum_{x\in A_1, y\in B_r}K^{(5+2k-2)i+\iota(\delta)+1}|\delta|^{3k-3}C_{\delta_1+\delta_r-1}C_{\delta_2-1}\cdots C_{\delta_{r-1}-1}C_{\delta_{r+1}-1}\cdots C_{\delta_n-1}\notag\\
&\quad \leq \frac{1}{m}\sum_{r=2}^{n}\sum_{x\in A_1, y\in B_r}K^{(5+2k)i+\iota(\delta)-1}|\delta|^{3k-3}(\delta_1+\delta_r)^2C_{\delta_1-1}\cdots C_{\delta_n-1}\notag\\
&\quad \leq \sum_{r=2}^{n}\delta_r K^{(5+2k)i+\iota(\delta)-1}|\delta|^{3k-3}(\delta_1+\delta_r)^2C_{\delta_1-1}\cdots C_{\delta_n-1}\notag\\
&\quad \leq K^{(5+2k)i+\iota(\delta)-1}|\delta|^{3k}C_{\delta_1-1}\cdots C_{\delta_n-1}\, . \notag
\end{align}
The same bound also holds for the rest of the terms in the merging sum. Therefore
\begin{align}
\frac{1}{m}\sum_{\text{merge}\,s}|a_{i, k-1}|&\leq 4K^{(5+2k)i+\iota(\delta)-1}|\delta|^{3k}C_{\delta_1-1}\cdots C_{\delta_n-1} \, .\label{h2}
\end{align}
We will proceed as in case $k=0$ to bound the last three terms on the right of inequality \eqref{h0}. By Lemma \ref{split2} $|\delta(s')|\leq |\delta|$ and $\iota(s')\leq \iota(s)-3$ for all $s'\in \fs^-(s)$. So by induction hypothesis and Lemma \ref{catalan}
\begin{align*}
&\frac{1}{m}\sum_{x\in A_1, \, y\in B_1} |a_{i,k}(\times_{x,y}^1 l_1, \times_{x,y}^2 l_1, \ldots, l_n)|\\
&\quad \leq \frac{1}{m}\sum_{x\in A_1, \, y\in B_1}K^{(5+2k)i+\iota(\delta)-3}|\delta|^{3k}C_{\delta_1-|y-x|-2}C_{|y-x|-2}\prod_{j=2}^{n} C_{\delta_j-1}\\
&\quad \leq \frac{2}{m} \sum_{x\in A_1}\sum_{k=1}^{\delta_1-1} K^{(5+2k)i+\iota(\delta)-3}|\delta|^{3k} C_{\delta_1-k-1}C_{k-1}\prod_{j=2}^{n} C_{\delta_j-1}\\
&\quad \leq  \frac{2}{m} \sum_{x\in A_1}K^{(5+2k)i+\iota(\delta)-3}|\delta|^{3k}C_{\delta_1-1}\prod_{j=2}^{n} C_{\delta_j-1}\\
&\quad \leq 2K^{(5+2k)i+\iota(\delta)-3}|\delta|^{3k}C_{\delta_1-1}\cdots C_{\delta_n-1}\, .
\end{align*}
Similarly, by Lemma \ref{split1} if  $s'\in \fs^+(s)$, then $|\delta(s')|\leq |\delta|$ and $\iota(s')\leq \iota(s)-1$. So by induction hypothesis and Lemma \ref{catalan}
\begin{align*}
&\frac{1}{m}\sum_{\substack{x,y\in A_1\\ x\ne y}} |a_{i,k}(\times_{x,y}^1 l_1, \times_{x,y}^2 l_1, \ldots, l_n)|\\
&\quad \leq \frac{1}{m}\sum_{\substack{x,y\in A_1\\ x\ne y}}K^{(5+2k)i+\iota(\delta)-1}|\delta|^{3k}C_{\delta_1-|y-x|-1}C_{|y-x|-1}\prod_{j=2}^{n} C_{\delta_j-1}\\
&\quad \leq \frac{2}{m} \sum_{x\in A_1}\sum_{k=1}^{\delta_1-1} K^{(5+2k)i+\iota(\delta)-1}|\delta|^{3k}C_{\delta_1-k-1}C_{k-1}\prod_{j=2}^{n} C_{\delta_j-1}\\
&\quad \leq \frac{2}{m} \sum_{x\in A_1} K^{(5+2k)i+\iota(\delta)-1}|\delta|^{3k}C_{\delta_1-1}\prod_{j=2}^{n} C_{\delta_j-1}\\
&\quad \leq 2K^{(5+2k)i+\iota(\delta)-1}|\delta|^{3k}C_{\delta_1-1}\cdots C_{\delta_n-1}\, .
\end{align*}
The same bounds are still valid if we replace $A_1$ with $B_1$ in above two inequalities. Therefore
\begin{align}
\frac{1}{m}\sideset{}{^+}\sum_{\text{split}\,s}|a_{i,k}|&\leq (4K^{-3}+4K^{-1})K^{(5+2k)i+\iota(\delta)}|\delta|^{3k}C_{\delta_1-1}\cdots C_{\delta_n-1}\, . \label{h3}
\end{align}
By Lemma \ref{deform} $s'\in \fd(s)$, then $|\delta(s')|\leq |\delta|+4$ and $\iota(s')\leq \iota(s)+4$. Since $|\delta|\geq 4$ and $K\geq 3$, by induction hypothesis and inequality \eqref{catalan0}
\begin{align*}
|a_{i-1,k}(l_1 \ominus_{x}p, \ldots, l_n)|&\leq K^{(5+2k)(i-1)+\iota(\delta)+4}(|\delta|+4)^{3k}C_{\delta_1+3}C_{\delta_2-1} \cdots C_{\delta_n-1}\\
&\leq K^{(5+2k)i+\iota(\delta)-1-2k}(2|\delta|)^{3k}4^4C_{\delta_1-1}C_{\delta_2-1} \cdots C_{\delta_n-1}\\
&\leq 256K^{(5+2k)i+\iota(\delta)-1}|\delta|^{3k}C_{\delta_1-1}C_{\delta_2-1} \cdots C_{\delta_n-1}\,.
\end{align*}
Since there are less than  $2d$ positively oriented plaquettes passing through each edge we get
\begin{align*}
\frac{1}{2m}\sum_{p\in \cp^+(e)}\sum_{x\in C_1}|a_{i-1,k}(l_1 \ominus_{x}p, \ldots, l_n)|& \leq 256d K^{(5+2k)i+\iota(\delta)-1}|\delta|^{3k}C_{\delta_1-1}C_{\delta_2-1} \cdots C_{\delta_n-1}\, .
\end{align*}
The same bound also holds for the second term in the deformation sum. So
\begin{align}
\frac{1}{2m}\sideset{}{^+}\sum_{\text{deform}\,s}|a_{i-1,k}|&\leq 512dK^{(5+2k)i+\iota(\delta)-1}|\delta|^{3k}C_{\delta_1-1}C_{\delta_2-1} \cdots C_{\delta_n-1}\,. \label{h4}
\end{align}
By Lemma \ref{expand} if $s\in\fex(s)$, then $|\delta(s')|=|\delta|+4$ and $\iota(s')=\iota(s)+3$. Since $|p|=4$ for any plaquette $p$ and $|\cp(e)|=2(d-1)$ for any edge $e$ by induction hypothesis
\begin{align*}
\frac{1}{2}\sum_{p\in \cp(e)}|a_{i-1,k}(l_1, p, l_2, \ldots, l_n)|&\leq dK^{(5+2k)(i-1)+\iota(\delta)+3}(|\delta|+4)^{3k}C_{\delta_1-1}C_3C_{\delta_2-1} \cdots C_{\delta_n-1}\\
&\leq  5dK^{(5+2k)i+\iota(\delta)-2-2k}(2|\delta|)^{3k}C_{\delta_1-1}C_{\delta_2-1} \cdots C_{\delta_n-1}\\
&\leq 5dK^{(5+2k)i+\iota(\delta)-2}|\delta|^{3k}C_{\delta_1-1}C_{\delta_2-1} \cdots C_{\delta_n-1}
\end{align*}
The same inequality is also true for the second term in the expansion sum. Hence
\begin{align}
\frac{1}{2m}\sideset{}{^+}\sum_{\text{expand}\,s}|a_{i-1,k}|&\leq 10dK^{(5+2k)i+\iota(\delta)-2}|\delta|^{3k}C_{\delta_1-1}C_{\delta_2-1} \cdots C_{\delta_n-1}\, . \label{h5}
\end{align}
By combining \eqref{h1}, \eqref{h2}, \eqref{h3}, \eqref{h4}, \eqref{h5} and \eqref{h0} we get
\begin{align*}
|a_{i, k}(s)|&\leq (8K^{-1}+K^{-2}+4K^{-3}+512dK^{-1}+10dK^{-2})K^{(5+2k)i+\iota(\delta)-2}|\delta|^{3k}C_{\delta_1-1}C_{\delta_2-1} \cdots C_{\delta_n-1}\, . 
\end{align*}
We finish the proof by choosing $K$ so that 
\begin{align}
8K^{-1}+K^{-2}+4K^{-3}+512dK^{-1}+10dK^{-2}\leq 1\, .  \label{ineqK2}
\end{align}
\end{proof}
\begin{proof}[Proof of Theorem~\ref{seriesfk}]
The proof is by induction on $k$. Let
\[
\psi_{2k}(s):=\sum_{i=0}^{\infty}a_{i,k}(s)\beta^i \, ,
\]
and note that by Lemma~\ref{boundaik} the series on the right side is absolutely convergent for all $|\beta|<K^{-(5+2k)}$. Moreover, if $\delta=(\delta_1, \ldots, \delta_n)$ is the degree sequence of a non-null loop sequence $s$, then by Lemma~\ref{boundaik} and the fact that  $\iota(\delta)=|\delta|-\#\delta \leq |\delta|-1$ we have
\begin{align*}
|\psi_0(s)|&\leq \sum_{i=0}^{\infty}|a_{i,0}(s)||\beta|^{i}\leq \sum_{i=0}^{\infty}K^{5i+\iota(\delta)}C_{\delta_1-1}\ldots C_{\delta_n-1}|\beta|^{i}\\
&\leq \sum_{i=0}^{\infty}K^{5i+\iota(\delta)}4^{\delta_1-1}\cdots 4^{\delta_n-1}|\beta|^{i}=(4K)^{\iota(\delta)}\sum_{i=0}^{\infty}\left(|\beta|K^{5}\right)^{i}\\
&\leq \frac{(4K)^{|\delta|-1}}{1-|\beta|K^5} \leq \frac{(4K)^{|s|}}{4(1-|\beta|K^5)}\,. 
\end{align*}
So $|\psi_0(s)|\leq (4K)^{|s|}$ for all $|\beta|$ such that $2|\beta|K^5<1$ and for all non-null $s\in \cs$. Since $\psi_0(\emptyset)=1$ this bound is also valid when $s=\emptyset$. Finally, by the definition of $a_{i, 0}(s)$ for any non-null loop sequence $s$ we have
\begin{align*}
\psi_0(s)&=\sum_{i=0}^{\infty}a_{i,0}(s)\beta^i\\
&=\sum_{i=0}^{\infty}\biggl(\frac{1}{m}\sum_{\text{split}\,s}a_{i, 0}+\frac{1}{2m}\sum_{\text{deform}\,s}a_{i-1, 0}+\frac{1}{2m}\sum_{\text{expand}\,s}a_{i-1, 0}\biggr)\beta^i\\
&=\frac{1}{m}\sum_{\text{split}\,s}\sum_{i=0}^{\infty}a_{i, 0}\beta^{i}+\frac{\beta}{2m}\sum_{\text{deform}\,s}\sum_{i=1}^{\infty}a_{i-1, 0}\beta^{i-1}+\frac{\beta}{2m}\sum_{\text{expand}\,s}\sum_{i=1}^{\infty}a_{i-1, 0}\beta^{i-1}\\
&=\frac{1}{m}\sum_{\text{split}\,s}\psi_0+\frac{\beta}{2m}\sum_{\text{deform}\,s}\psi_0+\frac{\beta}{2m}\sum_{\text{expand}\,s}\psi_0\,.
\end{align*}
This means $\psi_0$ satisfies equation \eqref{f0mastereq}. So we showed that for all sufficiently small $\beta$, depending only on $d$, $\psi_0$ satisfies conditions $(a)$, $(b)$, $(c)$ of Lemma~\ref{uniquef0}. Therefore $\psi_0(s)=f_0(s)$ for all $s\in \cs$. This completes the base case of induction. 

Now, let $k\geq 1$ and suppose the claim holds for all $k'<k$. Since $a_{0, k}(s)=0$ for all $s$
\begin{align*}
|\psi_{2k}(s)|&\leq \sum_{i=1}^{\infty}|a_{i,k}(s)||\beta|^{i}\leq \sum_{i=1}^{\infty}K^{(5+2k)i+\iota(\delta)}|\delta|^{3k} C_{\delta_1-1}\ldots C_{\delta_n-1}|\beta|^{i}\\
&\leq \sum_{i=1}^{\infty}K^{(5+2k)i+|\delta|}2^{3k|\delta|}4^{\delta_1}\cdots 4^{\delta_n}|\beta|^{i}=(2^{3k+2}K)^{|\delta|}\sum_{i=1}^{\infty}\left(|\beta|K^{5+2k}\right)^{i}\\
&=(2^{3k+2}K)^{|s|}\frac{|\beta|K^{5+2k}}{1-|\beta|K^{5+2k}}\,. 
\end{align*}
So 
\begin{align}
|\psi_{2k}(s)| \leq (2^{3k+2}K)^{|s|} \label{ineqK3}
\end{align}
for all $\beta$ such that $2|\beta|K^{5+2k}\leq 1$. Since $\psi_{2k}(\emptyset)=0$ this bound is still valid when $s=\emptyset$. Finally, observe that by definition of $a_{i, k}(s)$
\begin{align*}
\psi_{2k}(s)&=\sum_{i=0}^{\infty}a_{i,k}(s)\beta^i\\
&=\sum_{i=0}^{\infty}\biggl(\frac{t_1t}{m}a_{i, k-1}(s)+\frac{1}{m}\sum_{\text{merge}\,s}a_{i, k-1}+\frac{1}{m}\sum_{\text{split}\,s}a_{i, k}\\
&\qquad \qquad \qquad \qquad + \frac{1}{2m}\sum_{\text{deform}\,s}a_{i-1, k}+\frac{1}{2m}\sum_{\text{expand}\,s}a_{i-1, k}\biggr)\beta^i\\
&=\frac{t_1t}{m}\sum_{i=0}^{\infty}a_{i, k-1}\beta^{i}+\frac{1}{m}\sum_{\text{merge}\,s}\sum_{i=0}^{\infty}a_{i, k-1}\beta^{i}+\frac{1}{m}\sum_{\text{split}\,s}\sum_{i=0}^{\infty}a_{i, k}\beta^{i}\\
&\qquad \qquad \qquad \qquad + \frac{\beta}{2m}\sum_{\text{deform}\,s}\sum_{i=1}^{\infty}a_{i-1, k}\beta^{i-1}+\frac{\beta}{2m}\sum_{\text{expand}\,s}\sum_{i=1}^{\infty}a_{i-1, k}\beta^{i-1}\\
&=\frac{t_1t}{m}\psi_{2k-2}(s) + \frac{1}{m}\sum_{\text{merge}\,s}\psi_{2k-2} + \frac{1}{m}\sum_{\text{split}\,s}\psi_{2k} + \frac{\beta}{2m}\sum_{\text{deform}\,s}\psi_{2k} + \frac{\beta}{2m}\sum_{\text{expand}\,s}\psi_{2k} \, .
\end{align*}
So by induction hypothesis
\begin{align*}
\psi_{2k}(s)&=\frac{t_1t}{m}f_{2k-2}(s) + \frac{1}{m}\sum_{\text{merge}\,s}f_{2k-2} + \frac{1}{m}\sum_{\text{split}\,s}\psi_{2k} + \frac{\beta}{2m}\sum_{\text{deform}\,s}\psi_{2k} + \frac{\beta}{2m}\sum_{\text{expand}\,s}\psi_{2k} \, .
\end{align*}
Therefore, $\psi_{2k}$ satisfies conditions $(a)$, $(b)$, $(c)$ of Lemma~\ref{uniquefk} for all sufficiently small $|\beta|$. Hence $\psi_{2k}(s)=f_{2k}(s)$ for all $s\in \cs$.
\end{proof}
One immediate consequence of Theorem \ref{seriesfk} is the following corollary which gives the symmetrized version of equation \eqref{aik}.
\begin{cor}
The numbers $\{a_{i,k}(s):i, k\geq 0, s\in \cs\}$ satisfy equation
\begin{align}
|s|a_{i,k}(s)&=\ell(s)a_{i, k-1} + \sum_{s'\in \fst^-(s)}a_{i,k-1}(s') - \sum_{s'\in \fst^+(s)}a_{i,k-1}(s') + \sum_{s'\in \fs^-(s)}a_{i,k}(s') - \sum_{s'\in \fs^+(s)}a_{i,k}(s') \notag \\
&+ \frac{\beta}{2}\sum_{s'\in \fd^-(s)}a_{i-1,k}(s') - \frac{\beta}{2}\sum_{s'\in \fd^+(s)}a_{i-1,k}(s') + \frac{\beta}{2}\sum_{s'\in \fex^+(s)}a_{i-1,k}(s') - \frac{\beta}{2}\sum_{s'\in \fex^-(s)}a_{i-1,k}(s')\, .\label{symaik}
\end{align}
\end{cor}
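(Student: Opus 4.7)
The plan is to derive the corollary by substituting the power-series representation $f_{2k}(s) = \sum_{i \ge 0} a_{i,k}(s) \beta^i$ from Theorem \ref{seriesfk} into the symmetrized master loop equation \eqref{symfkmastereq} of Theorem \ref{symfk}, and then reading off the coefficient of $\beta^i$ on both sides.

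First I would fix any loop sequence $s$ and restrict $\beta$ so that $|\beta|$ is below the minimum of the convergence radii $\beta_0'(d,k)$ and $\beta_0'(d,k-1)$ appearing in Theorem \ref{seriesfk}. Because every one of the operation sets $\fst^\pm(s)$, $\fs^\pm(s)$, $\fd^\pm(s)$, $\fex^\pm(s)$ is finite (as used implicitly already in Lemma \ref{fintraj}), only finitely many loop sequences $s'$ arise in the right-hand side of \eqref{symfkmastereq}, and Lemma \ref{boundaik} provides uniform $\beta$-convergence for the power series $f_{2k-2}(s')$ and $f_{2k}(s')$ over this finite collection. Consequently, inserting the series into \eqref{symfkmastereq} produces an identity between two absolutely convergent power series in $\beta$ on a common open neighborhood of the origin, since interchanging the finite operation sums with the absolutely convergent series in $\beta$ is immediate.

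Next I would equate coefficients of $\beta^i$. By uniqueness of Taylor coefficients of a convergent power series, the identity of the two sides as functions of $\beta$ forces equality coefficient-by-coefficient. On the left side the coefficient of $\beta^i$ is simply $|s|\, a_{i,k}(s)$. On the right side the $f_{2k-2}$-type terms (the $\ell(s)$ coefficient and the $\fst^\pm$ sums) contribute $a_{i,k-1}$ terms with no shift of $i$; the $\fs^\pm$ sums contribute $a_{i,k}$ terms; and the $\fd^\pm$ and $\fex^\pm$ sums, which already carry an explicit factor of $\beta/2$, contribute $\pm \tfrac{1}{2} a_{i-1,k}$ terms after the coefficient shift.

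Assembling these contributions reproduces \eqref{symaik} exactly. There is no real obstacle here: the analytical work has all been done in Theorem \ref{seriesfk} and Lemma \ref{boundaik}, which together justify the series manipulation, and the combinatorial content is already encoded in the symmetrized loop equation of Theorem \ref{symfk}. The corollary is therefore a clean bookkeeping consequence of those two results, packaged as a recursion that is manifestly valid for each coefficient $a_{i,k}(s)$ independently of $\beta$.
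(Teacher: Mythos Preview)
Your proposal is correct and follows essentially the same approach as the paper, which simply says the proof follows by equating coefficients of $\beta^i$ on both sides of equation \eqref{symfkmastereq}. You supply more justification (finiteness of the operation sets, absolute convergence from Lemma \ref{boundaik}) than the paper does, but the underlying idea is identical.
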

\begin{proof}
The proof follows by equating coefficients of $\beta^i$ on both sides of equation \eqref{symfkmastereq}.
\end{proof}
We finish this section by proving part (iii) of Theorem \ref{maintheorem}.
\begin{proof}[Proof of part \text{(iii)} of Theorem \ref{maintheorem}]
Note that $K=1024d$ satisfies both of inequalities \eqref{ineqK1} and \eqref{ineqK2}. Therefore from inequality \eqref{ineqK3} we get
\[
|f_{2k}(s)|\leq (2^{3k+12}d)^{|s|}\, .
\]
\end{proof}

\section{Absolute convergence of the sum over trajectories}\label{absconv}
The goal of this section is to prove the following theorem.
\begin{thm}\label{absconvmxk}
There exists a sequence of positive real numbers $\{\beta_0''(d, k)\}_{k\geq 0}$ such that for any $|\beta|<\beta_0''(d, k)$ the sum
\begin{align}
\sum_{X\in \mx_k(s)}w_{\beta}(X) \label{sumxk}
\end{align}
is absolutely convergent.
\end{thm}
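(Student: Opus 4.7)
The plan is to identify $\sum_{X \in \mx_{i,k}(s)} w_\beta(X)$ with $\beta^i a_{i,k}(s)$ by a trajectory-unrolling argument, and then to reuse the Catalan-type bound from Lemma \ref{boundaik} together with the geometric-series estimate from the proof of Theorem \ref{seriesfk}.

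First I would note that every deformation and every expansion step in a trajectory contributes exactly one factor of $\beta$ to the trajectory weight, whereas mergers, splittings, and inactions contribute factors independent of $\beta$. Hence for any $X \in \mx_{i,k}(s)$ one has $w_\beta(X) = \beta^i r(X)$ with $r(X)$ depending only on combinatorial data, and $\mx_k(s) = \bigsqcup_{i \ge 0} \mx_{i,k}(s)$ is a disjoint union. Set $g_{i,k}(s) := \sum_{X \in \mx_{i,k}(s)} r(X)$; this sum is finite by Lemma \ref{fintraj}. Unrolling the first transition $s \to s_1$ of a trajectory and classifying it by operation type, one reads off from the definition of $w_\beta$ the recursion
\begin{align*}
|s| g_{i,k}(s) &= \ell(s) g_{i,k-1}(s) + \sum_{s' \in \fst^-(s)} g_{i,k-1}(s') - \sum_{s' \in \fst^+(s)} g_{i,k-1}(s') + \sum_{s'\in \fs^-(s)} g_{i,k}(s') - \sum_{s'\in \fs^+(s)} g_{i,k}(s') \\
&\quad + \tfrac{1}{2}\biggl(\sum_{s' \in \fd^-(s)} g_{i-1,k}(s') - \sum_{s' \in \fd^+(s)} g_{i-1,k}(s') + \sum_{s' \in \fex^-(s)} g_{i-1,k}(s') - \sum_{s' \in \fex^+(s)} g_{i-1,k}(s')\biggr),
\end{align*}
which, upon equating coefficients of $\beta^i$ in \eqref{symfkmastereq}, is precisely the recursion satisfied by $a_{i,k}(s)$. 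Matching the base cases---$g_{0,0}(\emptyset) = 1$, $g_{i,k}(\emptyset) = 0$ for $(i,k) \ne (0,0)$, and $g_{0,0}(s) = 0$ for $s \ne \emptyset$ (since splittings alone cannot reduce the number of loops to zero)---against those of $a_{i,k}$, a triple induction on $(k, i, \iota(s))$ yields $g_{i,k}(s) = a_{i,k}(s)$.

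To upgrade this identification to absolute convergence, I would repeat the unrolling with absolute values. Put $\tilde g_{i,k}(s) := \sum_{X \in \mx_{i,k}(s)} |r(X)|$, so that $\sum_{X \in \mx_{i,k}(s)} |w_\beta(X)| = |\beta|^i \tilde g_{i,k}(s)$. The triangle inequality at each unrolling step replaces the signed recursion by the positive inequality
\[
|s|\tilde g_{i,k}(s) \leq \ell(s)\tilde g_{i,k-1}(s) + \sideset{}{^+}\sum_{\text{merge}\,s}\tilde g_{i,k-1} + \sideset{}{^+}\sum_{\text{split}\,s}\tilde g_{i,k} + \tfrac12\sideset{}{^+}\sum_{\text{deform}\,s}\tilde g_{i-1,k} + \tfrac12\sideset{}{^+}\sum_{\text{expand}\,s}\tilde g_{i-1,k},
\]
which is identical to the inequality driving the induction in Lemma \ref{boundaik}. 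Consequently, with the very same choice of $K = K(d)$ (satisfying both \eqref{ineqK1} and \eqref{ineqK2}) and the same triple induction, one obtains
\[
\tilde g_{i,k}(s) \;\leq\; K(d)^{(5+2k)i + \iota(\delta(s))}\, |\delta(s)|^{3k} \prod_{j=1}^{\#s} C_{\delta_j-1}.
\]

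Finally, setting $\beta_0''(d,k) := (2K(d)^{5+2k})^{-1}$, the geometric-series estimate from the proof of Theorem \ref{seriesfk} (applied with $\tilde g_{i,k}$ in place of $|a_{i,k}|$) gives, for all $|\beta| \le \beta_0''(d,k)$,
\[
\sum_{X \in \mx_k(s)} |w_\beta(X)| \;=\; \sum_{i \ge 0} |\beta|^i \tilde g_{i,k}(s) \;\leq\; \bigl(2^{3k+2} K(d)\bigr)^{|s|},
\]
which establishes the desired absolute convergence of \eqref{sumxk}. The principal technical hurdle is the bookkeeping in the unrolling step: one must verify that summing over the choice of first operation (with its unbounded but $\iota$-decreasing splitting sub-choices) reproduces exactly the coefficients, signs, and $1/|s|$, $\ell(s)/|s|$, $\beta/(2|s|)$ prefactors of the symmetrized recursion---a careful but routine case analysis given the explicit table of transition weights.
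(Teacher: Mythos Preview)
Your approach is essentially the paper's: decompose $\mx_k(s)=\bigsqcup_{i\ge 0}\mx_{i,k}(s)$, recognize $\sum_{X\in\mx_{i,k}(s)}|w_\beta(X)|=|\beta|^i\,\tilde g_{i,k}(s)$, derive a recursion for $\tilde g_{i,k}$ by unrolling the first step of a trajectory, bound it by the Catalan-type estimate, and sum the geometric series in $i$. Two small corrections are worth noting. First, the signed identification $g_{i,k}=a_{i,k}$ is irrelevant to absolute convergence and the paper postpones it to Lemma~\ref{tiklmm}; you may drop that paragraph. Second, the recursion you display for $\tilde g_{i,k}$ must carry the \emph{symmetrized} sums $\sum_{s'\in\fst(s)}$, $\sum_{s'\in\fs(s)}$, $\sum_{s'\in\fd(s)}$, $\sum_{s'\in\fex(s)}$, not the unsymmetrized $\sideset{}{^+}\sum_{\text{merge}\,s}$ notation, since a trajectory's first step may act on any component of $s$, not only on $l_1$; in fact the unrolling gives an \emph{equality} (the paper's Lemma~\ref{siklmm}, with $\tilde g_{i,k}=b_{i,k}$), and the controlling Catalan bound is then Lemma~\ref{boundbik}, not Lemma~\ref{boundaik} (which treats the unsymmetrized recursion with $m$ and $t_1t$ rather than $|s|$ and $\ell(s)$). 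The induction and the final constant $K(d)$ are otherwise as you describe.
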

Define the sequence of numbers $\{b_{i, k}(s): i, k\geq 0, s \in \cs\}$ inductively as follows. Let $b_{0,0}(\emptyset)=1$, $b_{0, 0}(s)=0$ for all non-null loop sequence $s$ and let $b_{i, 0}(\emptyset)=0$ for all $i\geq 1$. If $k\geq 1$, let $b_{i, k}(\emptyset)=0$ for all $i\geq 0$ and $b_{0, k}(s)=0$ for all $s\in \cs$. Now suppose $b_{i', k'}(s')$ has been defined for all triples $(i', k', s')$ such that either $k'<k$, or $k'=k$ and $i'<i$ or $k'=k$, $i'=i$ and $\iota(s')<\iota(s)$. Then, let 
\begin{align}
b_{i,k}(s)&=\frac{\ell(s)}{|s|}b_{i, k-1}(s) + \frac{1}{|s|}\sum_{s'\in \fst(s)}b_{i,k-1}(s') + \frac{1}{|s|}\sum_{s'\in \fs(s)}b_{i,k}(s')\notag \\
&\qquad  + \frac{1}{2|s|}\sum_{s'\in \fd(s)}b_{i-1,k}(s') + \frac{1}{2|s|}\sum_{s'\in \fex(s)}b_{i-1,k}(s')\,. \label{bik}
\end{align}
where $b_{i, k}(s)$ is understood to be zero for all $s$ if $i<0$ or $k<0$.  Since $\iota(s')<\iota(s)$ for all $s'\in \fs(s)$, by the time of definition of $b_{i,k}(s)$ the third term on the right has already been defined.
\begin{lmm}\label{boundbik}
There exists $K(d)\geq 3$ such that for any loop sequence $s$ with degree sequence $\delta=(\delta_1, \ldots, \delta_n)$
\begin{align}
0\leq b_{i,k}(s)\leq K(d)^{(5+2k)i+\iota(\delta)}|\delta|^{3k}C_{\delta_1-1}\cdots C_{\delta_n-1} \label{ineqbik}
\end{align}
where $C_i$ is the $i^{\text{th}}$ Catalan number. The product of Catalan numbers is assumed to be $1$ when $s=\emptyset$.
\end{lmm}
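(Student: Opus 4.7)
The plan is to prove both claims by triple induction on $(k, i, \iota(s))$ ordered lexicographically, in direct parallel with the proof of Lemma \ref{boundaik}. The non-negativity $b_{i,k}(s) \geq 0$ is immediate: the base values $b_{0,0}(\emptyset) = 1$ and $b_{i,k}(s) = 0$ in every other base case are non-negative, and each coefficient on the right-hand side of \eqref{bik} is non-negative, so the inductive step preserves non-negativity automatically.

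For the upper bound, the key observation is that \eqref{bik} is the symmetric counterpart of \eqref{aik}, obtained by summing the unsymmetric equation over all choices of first edge: single-edge sums are replaced by the symmetric sums $\fst(s)$, $\fs(s)$, $\fd(s)$, $\fex(s)$, the normalization $1/m(e)$ is replaced by $1/|s|$, and the coefficient $t_1 t / m$ becomes $\ell(s)/|s|$. Consequently, each of the five terms on the right of \eqref{bik} admits an estimate structurally identical to the corresponding estimate in the proof of Lemma \ref{boundaik}. First I would use $\ell(s) \leq |s|^2$, hence $\ell(s)/|s| \leq |\delta|$, for the $\ell(s)/|s|$ term; next Lemma \ref{merger} (giving $|\delta(s')| \leq |\delta|$ and $\iota(s') \leq \iota(s)+1$) together with Lemma \ref{catalan1} for the merger term; then Lemmas \ref{split1}--\ref{split2} and the Catalan identity (Lemma \ref{catalan}) for the splitting term, split into its $\fs^+$ and $\fs^-$ pieces; next Lemma \ref{deform} (giving $|\delta(s')| \leq |\delta|+4$, $\iota(s') \leq \iota(s)+4$) and inequality \eqref{catalan0} together with $|\cp^+(e)| = 2(d-1)$ for the deformation term; and finally Lemma \ref{expand} together with $C_3 = 5$ for the expansion term.

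Adding the resulting five bounds produces an inequality of the shape
\begin{equation*}
b_{i,k}(s) \leq \bigl( c_1(d)\, K^{-1} + c_2(d)\, K^{-2} + c_3(d)\, K^{-3} \bigr) K^{(5+2k)i + \iota(\delta)} |\delta|^{3k} C_{\delta_1 - 1} \cdots C_{\delta_n - 1}
\end{equation*}
for constants $c_j(d)$ independent of $k$, $i$, and $s$, entirely analogous to \eqref{ineqK1}--\eqref{ineqK2}. Choosing $K = K(d) \geq 3$ sufficiently large (the same $K = 1024d$ used at the end of Section \ref{seriesfk}, possibly adjusted by an absolute constant) makes the parenthesized coefficient at most $1$ and closes the induction.

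The main bookkeeping obstacle is to verify that when the symmetric sums are expanded over all loop components and all edges within them, the combinatorial factors produced by the Catalan-style estimates correctly reassemble into a factor of $|s|$ that cancels the denominator $1/|s|$. This is exactly the same bookkeeping that was handled in Lemma \ref{boundaik} by singling out the first edge of $l_1$; here one can either argue by symmetry --- averaging the unsymmetric estimate over all edges of all loops --- or redo the term-by-term estimates directly in the symmetric form, using that each of the five types of operations is counted with multiplicity that scales in the appropriate way with the degree sequence $\delta$.
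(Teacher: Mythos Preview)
Your proposal is correct and follows essentially the same approach as the paper: the paper carries out the triple induction on $(k,i,\iota(s))$ and, of the two options you mention at the end, takes the second one, redoing each of the five term-by-term estimates directly in symmetric form by summing over all loop components $r=1,\ldots,n$ and using $\sum_r \delta_r = |s|$ to cancel the $1/|s|$. The resulting constants are slightly smaller than those in Lemma~\ref{boundaik} (e.g.\ $2K^{-1}$ in place of $4K^{-1}$ for the splitting terms, since the symmetric sum counts each splitting once rather than once per edge in $C_1$), but the structure and the choice of $K$ are exactly as you describe.
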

\begin{proof}
The lower bound trivially follows from the recursive definition \eqref{bik}. The proof of the upper bound is by our usual three fold induction: first on $k$, then on $i$ and then on $\iota(s)$. The number $K=K(d)$ will be chosen later in the proof. First suppose that $k=0$. Since $b_{0,0}(s)$ is either $0$ or $1$ inequality is true for triples $(0, 0, s)$. Fix $i\geq 1$ and suppose inequality holds for triples $(i', 0, s)$ with $i'<i$ and $s\in \cs$. We will use induction on $\iota(s)$ to prove that it also holds for triples $(i, 0, s)$. Since $b_{i, 0}(\emptyset)=0$, it is true for $(i, 0, \emptyset)$. Now, let $s\neq \emptyset$ and suppose the claim is also true for triples $(i, 0, s')$ with $\iota(s')<\iota(s)$. Let $\fs^-_r(s)$ be the set of all loop sequences obtained by applying negative splitting operation to $r^{\text{th}}$ component of $s$. Similarly, define $\fs^+_r(s)$, $\fd^-_r(s)$, $\fd^+_r(s)$, $\fex^-_r(s)$ and $\fex^+_r(s)$. By Lemma \ref{split2} $\iota(s')\leq \iota(s)-3$ for all $s'\in \fs^-(s)$. So by induction hypothesis
\begin{align*}
\frac{1}{|s|}\sum_{s'\in \fs^-(s)}b_{i,0}(s')&=\frac{1}{|s|}\sum_{r=1}^{n}\sum_{s'\in \fs^-_r(s)}b_{i,0}(s')\\
&\leq \frac{1}{|s|}\sum_{r=1}^{n}\sum_{1\leq x\neq y \leq \delta_r}b_{i,0}(l_1, \ldots, l_{r-1},\times_{x,y}^1 l_r, \times_{x,y}^2 l_r, l_{r+1}, \ldots, l_n)\\
&\leq \frac{1}{|s|}\sum_{r=1}^{n}\sum_{1\leq x\neq y \leq \delta_r}K^{5i+\iota(\delta)-3}C_{\delta_r-|x-y|-2}C_{|x-y|-2}\prod\limits_{\substack{1\leq i \leq n\\i\neq r}}C_{\delta_i-1}\\
&\leq \frac{2}{|s|}\sum_{r=1}^{n}\sum_{x=1}^{\delta_r}\sum_{u=2}^{\delta_r-2}K^{5i+\iota(\delta)-3}C_{\delta_r-u-2}C_{u-2}\prod\limits_{\substack{1\leq i \leq n\\i\neq r}}C_{\delta_i-1}\, .
\end{align*}
So, by Lemma~\ref{catalan}
\begin{align*}
\frac{1}{|s|}\sum_{s'\in \fs^-(s)}b_{i,0}(s')&\leq  \frac{2}{|s|}\sum_{r=1}^{n}\sum_{x=1}^{\delta_r}K^{5i+\iota(\delta)-3}C_{\delta_r-3}\prod\limits_{\substack{1\leq i \leq n\\i\neq r}}C_{\delta_i-1}\\
&\leq \frac{2}{|s|}\sum_{r=1}^{n}\delta_rK^{5i+\iota(\delta)-3}C_{\delta_1-1}\cdots C_{\delta_n-1}\\
&=2K^{5i+\iota(\delta)-3}C_{\delta_1-1}\cdots C_{\delta_n-1} \, .
\end{align*}
Similarly, by Lemma \ref{split1} $\iota(s')\leq \iota(s)-1$ for all $s'\in \fs^+(s)$. So, by induction hypothesis
\begin{align*}
\frac{1}{|s|}\sum_{s'\in \fs^+(s)}b_{i,0}(s')&=\frac{1}{|s|}\sum_{r=1}^{n}\sum_{s'\in \fs^+_r(s)}b_{i,0}(s')\\
&\leq \frac{1}{|s|}\sum_{r=1}^{n}\sum_{1\leq x\neq y \leq \delta_r}b_{i,0}(l_1, \ldots, l_{r-1},\times_{x,y}^1 l_r, \times_{x,y}^2 l_r, l_{r+1}, \ldots, l_n)\\
&\leq \frac{1}{|s|}\sum_{r=1}^{n}\sum_{1\leq x\neq y \leq \delta_r}K^{5i+\iota(\delta)-1}C_{\delta_r-|x-y|-1}C_{|x-y|-1}\prod\limits_{\substack{1\leq i \leq n\\i\neq r}}C_{\delta_i-1}\\
&\leq \frac{2}{|s|}\sum_{r=1}^{n}\sum_{x=1}^{\delta_r}\sum_{u=1}^{\delta_r-1}K^{5i+\iota(\delta)-1}C_{\delta_r-u-1}C_{u-1}\prod\limits_{\substack{1\leq i \leq n\\i\neq r}}C_{\delta_i-1}\, .
\end{align*}
By  Lemma~\ref{catalan} 
\begin{align*}
\frac{1}{|s|}\sum_{s'\in \fs^+(s)}b_{i,0}(s')&\leq  \frac{2}{|s|}\sum_{r=1}^{n}\sum_{x=1}^{\delta_r}K^{5i+\iota(\delta)-1}C_{\delta_r-1}\prod\limits_{\substack{1\leq i \leq n\\i\neq r}}C_{\delta_i-1}\\
&\leq \frac{2}{|s|}\sum_{r=1}^{n}\delta_rK^{5i+\iota(\delta)-1}C_{\delta_1-1}\cdots C_{\delta_n-1}\\
&=2K^{5i+\iota(\delta)-1}C_{\delta_1-1}\cdots C_{\delta_n-1}\, .
\end{align*}
By combining above inequalities obtain
\begin{align}
\frac{1}{|s|}\sum_{s'\in \fs(s)}b_{i,0}(s')&\leq (2K^{-3}+2K^{-1})K^{5i+\iota(\delta)}C_{\delta_1-1}\cdots C_{\delta_n-1}\, .\label{d1}
\end{align}
Next, by Lemma \ref{deform} $\iota(s')\leq \iota(s)+4$ for all $s'\in \fd(s)$. Since $|\cp^+(e)|=2(d-1)$ for any edge $e$, each edge of $s$ can be deformed by less than $2d$ plaquettes. This means $|\fd^+_r(s)|\leq 2d\delta_r$. So, by induction hypothesis and inequality \eqref{catalan0}
\begin{align*}
\frac{1}{2|s|}\sum_{s'\in \fd^+(s)}b_{i-1,0}(s')&=\frac{1}{2|s|}\sum_{r=1}^{n}\sum_{s'\in \fd^+_r(s)}b_{i-1,0}(s')\\
&\leq \frac{d}{|s|}\sum_{r=1}^{n} \delta_r K^{5(i-1)+\iota(\delta)+4}C_{\delta_r+3}\prod\limits_{\substack{1\leq i \leq n\\i\neq r}}C_{\delta_i-1}\\
&\leq d K^{5i+\iota(\delta)-1}4^4C_{\delta_r-1}\prod\limits_{\substack{1\leq i \leq n\\i\neq r}}C_{\delta_i-1}\\
&= 256d K^{5i+\iota(\delta)-1}C_{\delta_1-1}\cdots C_{\delta_n-1}\, .
\end{align*}
The same inequality is also true for the sum over negative deformations. Thus
\begin{align}
\frac{1}{2|s|}\sum_{s'\in \fd(s)}b_{i-1,0}(s')&\leq 512d K^{5i+\iota(\delta)-1}C_{\delta_1-1}\cdots C_{\delta_n-1}\, . \label{d2}
\end{align}
By Lemma \ref{expand} $\iota(s')=\iota(s)+3$ for all $s'\in \fex(s)$. Since $|\cp(e)|=2(d-1)$ for all $e$, each edge of $s$ can be expanded by no more than $2d$ plaquettes. Therefore, $|\fex^+_r(s)| \leq 2d\delta_r$. Then, by induction hypothesis and inequality \eqref{catalan0}
\begin{align*}
\frac{1}{2|s|}\sum_{s'\in \fex^+(s)}b_{i-1,0}(s')&=\frac{1}{2|s|}\sum_{r=1}^{n}\sum_{s'\in \fex^+_r(s)}b_{i-1,0}(s')\\
&\leq \frac{d}{|s|}\sum_{r=1}^{n} \delta_r K^{5(i-1)+\iota(\delta)+3}C_{\delta_1-1}\cdots C_{\delta_{r-1}}C_{\delta_r-1}C_3C_{\delta_{r+1}-1}\cdots C_{\delta_n-1}\\
&= 5d K^{5i+\iota(\delta)-2}C_{\delta_1-1}\cdots C_{\delta_n-1}\, .
\end{align*}
The same inequality still holds for the sum over negative expansions. Hence
\begin{align}
\frac{1}{2|s|}\sum_{s'\in \fex(s)}b_{i-1,0}(s')&\leq 10d K^{5i+\iota(\delta)-2}C_{\delta_1-1}\cdots C_{\delta_n-1}\, . \label{d3}
\end{align}
Combining inequalities \eqref{d1}, \eqref{d2}, \eqref{d3} with equation \eqref{bik} we obtain
\begin{align}
|b_{i,0}(s)|&\leq(2K^{-3}+2K^{-1}+512dK^{-1}+ 10dK^{-2}) K^{5i+\iota(\delta)}C_{\delta_1-1}\cdots C_{\delta_n-1}\, .\notag
\end{align}
By choosing $K$ so that the number in the brackets is $\leq 1$ we finish the proof for the base case $k=0$.

Next, fix $k\geq 1$ and suppose inequality \eqref{ineqbik} is true for all triples $(i, k', s)$ with $k'<k$. Since $b_{0, k}(s)=0$ for all $s$, the triples $(0, k, s)$ also satisfy this inequality. Now, fix $i\geq 1$ and suppose further that triples $(i', k, s)$ with $i'<i$ also satisfy \eqref{ineqbik}. We will use induction on $\iota(s)$ to prove the claim for the triples $(i, k, s)$. Since $b_{i, k}(\emptyset)=0$, the base case $s=\emptyset$ is trivially true. So let $s\neq \emptyset$ and assume that the claim holds for the triples $(i, k, s')$ with $\iota(s')<\iota(s)$. By induction hypothesis and since $\ell(s) \leq |s|^2$ 
\begin{align}
\frac{\ell(s)}{|s|}b_{i, k-1}(s) &\leq |s| K^{(5+2k-2)i+\iota(\delta)}|\delta|^{3k-3} C_{\delta_1-1}\cdots C_{\delta_n-1} \notag \\
&\leq K^{(5+2k)i+\iota(\delta)-2}|\delta|^{3k} C_{\delta_1-1}\cdots C_{\delta_n-1} \, . \label{j1}
\end{align}
By Lemma \ref{merger} if $s'\in \fst(s)$, then $|\delta(s')|\leq |\delta|$ and $\iota(s')\leq \iota(s)+1$. So by induction hypothesis
\begin{align*}
&\frac{1}{|s|}\sum_{s'\in \fst^-(s)}b_{i,k-1}(s')\\
&\quad = \frac{1}{|s|}\sum_{1\leq u < v\leq n}\sum_{\substack{x\in A_u,\,y \in B_v\\\textup{or}\\x\in B_u\, , y\in A_v}} b_{i,k-1}(l_1, \ldots, l_{u-1}, l_{u}\ominus_{x, y}l_{v}, l_{u+1}, \ldots, l_{v-1}, l_{v+1}, \ldots, l_n)\\
&\qquad + \frac{1}{|s|}\sum_{1\leq u < v\leq n}\sum_{\substack{x\in A_u,\, y \in B_v\\\textup{or}\\x\in B_u\, , y\in A_v}} b_{i,k-1}(l_1, \ldots, l_{u-1}, l_{u+1}, \ldots, l_{v-1}, l_{v}\ominus_{x, y}l_{u}, l_{v+1}, \ldots, l_n)\\
&\quad \leq \frac{2}{|s|}\sum_{1\leq u < v\leq n}\sum_{\substack{x\in A_u,\, y \in B_v\\\textup{or}\\x\in B_u\, , y\in A_v}} K^{(5+2k-2)i+\iota(\delta)+1}|\delta|^{3k-3} C_{\delta_u+\delta_v-1}\prod_{\substack{1\leq i\leq n\\i\notin\{u, v\}}} C_{\delta_i-1}\\
\end{align*}
By Lemma \ref{catalan1}
\begin{align*}
\frac{1}{|s|}\sum_{s'\in \fst^-(s)}b_{i,k-1}(s') &\leq \frac{2}{|s|}\sum_{1\leq u < v\leq n}\sum_{\substack{x\in A_u,\, y \in B_v\\\textup{or}\\x\in B_u\, , y\in A_v}} K^{(5+2k-2)i+\iota(\delta)+1}|\delta|^{3k-3} (\delta_u+\delta_v)^{2}C_{\delta_1-1}\cdots C_{\delta_n-1}\\
&\quad \leq \frac{2}{|s|}\sum_{1\leq u < v\leq n}\delta_u\delta_v (\delta_u+\delta_v)^{2} K^{(5+2k-2)i+\iota(\delta)+1}|\delta|^{3k-3} C_{\delta_1-1}\cdots C_{\delta_n-1}\\
&\quad \leq K^{(5+2k)i+\iota(\delta)-1}|\delta|^{3k}C_{\delta_1-1}\cdots C_{\delta_n-1}\, .
\end{align*}
The same bound is also valid for the sum over positive mergers. Therefore
\begin{align}
\frac{1}{|s|}\sum_{s'\in \fst(s)}b_{i, k-1}(s')&\leq 2K^{(5+2k)i+\iota(\delta)-1}|\delta|^{3k} C_{\delta_1-1}\cdots C_{\delta_n-1} \, . \label{j2}
\end{align}
By Lemma \ref{split2} if $s'\in \fs^-(s)$, then $|\delta(s')|\leq |\delta|$ and $\iota(s')\leq \iota(s)-3$. Hence by induction hypothesis
\begin{align*}
\frac{1}{|s|}\sum_{s'\in \fs^-(s)}b_{i,k}(s')&=\frac{1}{|s|}\sum_{r=1}^{n}\sum_{s'\in \fs^-_r(s)}b_{i,k}(s')\\
&\leq \frac{1}{|s|}\sum_{r=1}^{n}\sum_{1\leq x\neq y \leq \delta_r}b_{i,k}(l_1, \ldots, l_{r-1},\times_{x,y}^1 l_r, \times_{x,y}^2 l_r, l_{r+1}, \ldots, l_n)\\
&\leq \frac{1}{|s|}\sum_{r=1}^{n}\sum_{1\leq x\neq y \leq \delta_r}K^{(5+2k)i+\iota(\delta)-3}|\delta|^{3k}C_{\delta_r-|x-y|-2}C_{|x-y|-2}\prod\limits_{\substack{1\leq i \leq n\\i\neq r}}C_{\delta_i-1}\\
&\leq \frac{2}{|s|}\sum_{r=1}^{n}\sum_{x=1}^{\delta_r}\sum_{u=2}^{\delta_r-2}K^{(5+2k)i+\iota(\delta)-3}|\delta|^{3k}C_{\delta_r-u-2}C_{u-2}\prod\limits_{\substack{1\leq i \leq n\\i\neq r}}C_{\delta_i-1}\, .
\end{align*}
Applying Lemma~\ref{catalan} we get
\begin{align*}
\frac{1}{|s|}\sum_{s'\in \fs^-(s)}b_{i,k}(s')&\leq  \frac{2}{|s|}\sum_{r=1}^{n}\sum_{x=1}^{\delta_r}K^{(5+2k)i+\iota(\delta)-3}|\delta|^{3k}C_{\delta_r-3}\prod\limits_{\substack{1\leq i \leq n\\i\neq r}}C_{\delta_i-1}\\
&\leq \frac{2}{|s|}\sum_{r=1}^{n}\delta_rK^{(5+2k)i+\iota(\delta)-3}|\delta|^{3k}C_{\delta_1-1}\cdots C_{\delta_n-1}\\
&=2K^{(5+2k)i+\iota(\delta)-3}|\delta|^{3k}C_{\delta_1-1}\cdots C_{\delta_n-1}\, .
\end{align*}
Similarly, by Lemma \ref{split1} if $s'\in \fs^+(s)$, then $|\delta(s')|\leq |\delta|$ and $\iota(s')\leq \iota(s)-1$. So by induction hypothesis
\begin{align*}
\frac{1}{|s|}\sum_{s'\in \fs^+(s)}b_{i,k}(s')&=\frac{1}{|s|}\sum_{r=1}^{n}\sum_{s'\in \fs^+_r(s)}b_{i,k}(s')\\
&\leq \frac{1}{|s|}\sum_{r=1}^{n}\sum_{1\leq x\neq y \leq \delta_r}b_{i,k}(l_1, \ldots, l_{r-1},\times_{x,y}^1 l_r, \times_{x,y}^2 l_r, l_{r+1}, \ldots, l_n)\\
&\leq \frac{1}{|s|}\sum_{r=1}^{n}\sum_{1\leq x\neq y \leq \delta_r}K^{(5+2k)i+\iota(\delta)-1}|\delta|^{3k}C_{\delta_r-|x-y|-1}C_{|x-y|-1}\prod\limits_{\substack{1\leq i \leq n\\i\neq r}}C_{\delta_i-1}\\
&\leq \frac{2}{|s|}\sum_{r=1}^{n}\sum_{x=1}^{\delta_r}\sum_{u=1}^{\delta_r-1}K^{(5+2k)i+\iota(\delta)-1}|\delta|^{3k}C_{\delta_r-u-1}C_{u-1}\prod\limits_{\substack{1\leq i \leq n\\i\neq r}}C_{\delta_i-1}\, .
\end{align*}
By Lemma~\ref{catalan} we obtain 
\begin{align*}
\frac{1}{|s|}\sum_{s'\in \fs^+(s)}b_{i,k}(s')&\leq  \frac{2}{|s|}\sum_{r=1}^{n}\sum_{x=1}^{\delta_r}K^{(5+2k)i+\iota(\delta)-1}|\delta|^{3k}C_{\delta_r-1}\prod\limits_{\substack{1\leq i \leq n\\i\neq r}}C_{\delta_i-1}\\
&=\frac{2}{|s|}\sum_{r=1}^{n}\delta_rK^{5i+\iota(\delta)-1}|\delta|^{3k}C_{\delta_1-1}\cdots C_{\delta_n-1}\\
&=2K^{5i+\iota(\delta)-1}|\delta|^{3k}C_{\delta_1-1}\cdots C_{\delta_n-1}\, .
\end{align*}
By combining above two displays we get
\begin{align}
\frac{1}{|s|}\sum_{s'\in \fs(s)}b_{i,k}(s')&\leq (2K^{-3}+2K^{-1})K^{(5+2k)i+\iota(\delta)}|\delta|^{3k}C_{\delta_1-1}\cdots C_{\delta_n-1}\, .\label{j3}
\end{align}
By Lemma \ref{deform} if  $s'\in \fd(s)$, then $|\delta(s')|\leq |\delta|+4$ and $\iota(s')\leq \iota(s)+4$. Since $|\fd^+_r(s)|\leq 2d\delta_r$, by induction hypothesis and inequality \eqref{catalan0} we get
\begin{align*}
\frac{1}{2|s|}\sum_{s'\in \fd^+(s)}b_{i-1,k}(s')&=\frac{1}{2|s|}\sum_{r=1}^{n}\sum_{s'\in \fd^+_r(s)}b_{i-1,k}(s')\\
&\leq \frac{d}{|s|}\sum_{r=1}^{n} \delta_r K^{(5+2k)(i-1)+\iota(\delta)+4}(|\delta|+4)^{3k}C_{\delta_r+3}\prod\limits_{\substack{1\leq i \leq n\\i\neq r}}C_{\delta_i-1}\\
&\leq  \frac{d}{|s|}\sum_{r=1}^{n} \delta_r K^{(5+2k)i+\iota(\delta)-2k-1}(2|\delta|)^{3k}4^4C_{\delta_r-1}\prod\limits_{\substack{1\leq i \leq n\\i\neq r}}C_{\delta_i-1}\\
&\leq 256d K^{(5+2k)i+\iota(\delta)-1}|\delta|^{3k}C_{\delta_1-1}\cdots C_{\delta_n-1}\, .
\end{align*}
The same bound also holds for the sum over negative deformations. Hence
\begin{align}
\frac{1}{2|s|}\sum_{s'\in \fd(s)}b_{i-1,0}(s')&\leq 512d K^{(5+2k)i+\iota(\delta)-1}|\delta|^{3k}C_{\delta_1-1}\cdots C_{\delta_n-1}\, . \label{j4}
\end{align}
By Lemma \ref{expand} if $s'\in \fex(s)$, then $|\delta(s')|=|\delta|+4$ and $\iota(s')=\iota(s)+3$. Since $|\fex^+_r(s)|\leq 2d\delta_r$, by induction hypothesis and inequality \eqref{catalan0}
\begin{align*}
&\frac{1}{2|s|}\sum_{s'\in \fex^+(s)}b_{i-1,k}(s')=\frac{1}{2|s|}\sum_{r=1}^{n}\sum_{s'\in \fex^+_r(s)}b_{i-1,k}(s')\\
&\qquad\leq \frac{d}{|s|}\sum_{r=1}^{n}\delta_r K^{(5+2k)(i-1)+\iota(\delta)+3}(|\delta|+4)^{3k}C_{\delta_1-1}\cdots C_{\delta_{r-1}}C_{\delta_r-1}C_3C_{\delta_{r+1}-1}\cdots C_{\delta_n-1}\\
&\qquad = 5d K^{(5+2k)i+\iota(\delta)-2k-2}(2|\delta|)^{3k}C_{\delta_1-1}\cdots C_{\delta_n-1}\\
&\qquad \leq 5d K^{(5+2k)i+\iota(\delta)-2}|\delta|^{3k}C_{\delta_1-1}\cdots C_{\delta_n-1}\, .
\end{align*}
The same inequality still holds for the sum over negative expansions. Hence
\begin{align}
\frac{1}{2|s|}\sum_{s'\in \fex(s)}b_{i-1,k}(s')&\leq 10d K^{(5+2k)i+\iota(\delta)-2}|\delta|^{3k}C_{\delta_1-1}\ldots C_{\delta_n-1}\, . \label{j5}
\end{align}
Combining inequalities \eqref{j1}, \eqref{j2}, \eqref{j3}, \eqref{j4} and \eqref{j5} with equation \eqref{bik} we get
\begin{align}
|b_{i,k}(s)|&\leq(2K^{-3}+K^{-2}+4K^{-1}+512dK^{-1}+ 10dK^{-2}) K^{(5+2k)i+\iota(\delta)}|\delta|^{3k}C_{\delta_1-1}\cdots C_{\delta_n-1}\, .\notag
\end{align}
We finish the proof by choosing $K$ so that the number in the brackets is $\leq 1$.
\end{proof}
\begin{lmm}\label{siklmm}
For any loop sequence $s$ and non-negative integers $i$ and $k$ let $\mx_{i, k}(s)$ be as in Section \ref{string} and $b_{i,k}(s)$ as in \eqref{bik}. Then
\begin{align}
\sum_{X\in \mx_{i,k}(s)}|w_{\beta}(X)|=b_{i,k}(s)|\beta|^{i} \, . \label{sik}
\end{align}
\end{lmm}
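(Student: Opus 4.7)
The plan is to prove the identity by induction on the triple $(k, i, \iota(s))$ ordered lexicographically, in parallel with the three-fold induction used to define $b_{i,k}(s)$. The key observation is that $\mx_{i,k}(s)$ admits a natural bijective decomposition obtained by peeling off the first operation of each trajectory, and that the absolute value $|w_\beta(s,s')|$ of each transition weight matches exactly the coefficient appearing in the defining recursion \eqref{bik} of $b_{i,k}(s)$.

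First I would dispose of the degenerate base cases. When $s=\emptyset$, the only vanishing trajectory starting at $s$ is the singleton $(\emptyset)$, which has empty weight product equal to $1$ and lies in $\mx_{0,0}(\emptyset)$; this matches $b_{0,0}(\emptyset)=1$ and $b_{i,k}(\emptyset)=0$ for $(i,k)\neq(0,0)$. When $s\neq\emptyset$ and $(i,k)=(0,0)$, any vanishing trajectory must contain at least one non-trivial step, so $\mx_{0,0}(s)=\emptyset$ agrees with $b_{0,0}(s)=0$.

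Next, for the inductive step with $s\neq\emptyset$, I would partition $\mx_{i,k}(s)$ according to the type of the first operation $s=s_0\to s_1$. Given $X=(s_0,s_1,\ldots,s_n)\in\mx_{i,k}(s)$, let $X'=(s_1,\ldots,s_n)$. Then $X'$ is itself a vanishing trajectory starting at $s_1$, and depending on the type of the first operation:
\begin{itemize}
\item inaction ($s_1=s$): $X'\in\mx_{i,k-1}(s)$ with $|w_\beta(s_0,s_1)|=\ell(s)/|s|$;
\item merger ($s_1\in\fst(s)$): $X'\in\mx_{i,k-1}(s_1)$ with $|w_\beta(s_0,s_1)|=1/|s|$;
\item splitting ($s_1\in\fs(s)$): $X'\in\mx_{i,k}(s_1)$ with $|w_\beta(s_0,s_1)|=1/|s|$;
\item deformation ($s_1\in\fd(s)$): $X'\in\mx_{i-1,k}(s_1)$ with $|w_\beta(s_0,s_1)|=|\beta|/(2|s|)$;
\item expansion ($s_1\in\fex(s)$): $X'\in\mx_{i-1,k}(s_1)$ with $|w_\beta(s_0,s_1)|=|\beta|/(2|s|)$.
\end{itemize}
This decomposition is a bijection onto the disjoint union of the listed sets (with multiplicity when the same loop sequence arises from distinct operations, exactly as explained at the end of Section~\ref{string}), and the trajectory weight factorizes as $|w_\beta(X)|=|w_\beta(s_0,s_1)|\,|w_\beta(X')|$.

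Applying the induction hypothesis to each piece --- permissible because in the inaction and merger cases $k$ strictly decreases, in the deformation and expansion cases $k$ is fixed but $i$ strictly decreases, and in the splitting case $(i,k)$ is fixed but $\iota(s_1)<\iota(s)$ by Lemma~\ref{iota2} --- yields
\begin{align*}
\sum_{X\in\mx_{i,k}(s)}|w_\beta(X)|
&=\frac{\ell(s)}{|s|}\,b_{i,k-1}(s)\,|\beta|^i
+\frac{1}{|s|}\sum_{s'\in\fst(s)}b_{i,k-1}(s')\,|\beta|^i
+\frac{1}{|s|}\sum_{s'\in\fs(s)}b_{i,k}(s')\,|\beta|^i\\
&\quad+\frac{|\beta|}{2|s|}\sum_{s'\in\fd(s)}b_{i-1,k}(s')\,|\beta|^{i-1}
+\frac{|\beta|}{2|s|}\sum_{s'\in\fex(s)}b_{i-1,k}(s')\,|\beta|^{i-1},
\end{align*}
and the bracketed quantity inside the common factor $|\beta|^i$ is precisely $b_{i,k}(s)$ by the defining recursion \eqref{bik}.

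The only real care required is to verify that the lexicographic induction actually covers every case and matches the order of definition of $b_{i,k}$; the finiteness of $\mx_{i,k}(s)$ (Lemma~\ref{fintraj}) guarantees that the finite sum on the left is well defined and justifies the rearrangement. Everything else is bookkeeping: the absolute values wipe out the sign distinction between positive and negative variants of each operation, so the combined sums $\sum_{s'\in\fst(s)}$, $\sum_{s'\in\fs(s)}$, $\sum_{s'\in\fd(s)}$, $\sum_{s'\in\fex(s)}$ appear, matching \eqref{bik} term by term.
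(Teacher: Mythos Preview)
Your proof is correct and follows essentially the same route as the paper: a three-fold (lexicographic) induction on $(k,i,\iota(s))$, decomposing each trajectory by its first operation and matching the resulting recursion against the defining recursion \eqref{bik} for $b_{i,k}$, with Lemma~\ref{fintraj} ensuring the left side is a finite sum. The one place your write-up is slightly loose is the base case $s\neq\emptyset$, $(i,k)=(0,0)$: the fact that a vanishing trajectory must contain a step does not by itself give $\mx_{0,0}(s)=\emptyset$; rather, the only permissible steps when $i=k=0$ are splittings, and by Lemma~\ref{iota2} a splitting-only trajectory never reaches $\emptyset$.
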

\begin{proof}
Define 
\begin{align*}
S_{\beta, i, k}(s):=\sum_{X\in \mx_{i,k}(s)}|w_{\beta}(X)|\, . 
\end{align*}
Note that by Lemma \ref{fintraj} the set $\mx_{i,k}(s)$ is finite and therefore $S_{\beta, i, k}(s)$ is well-defined.

The proof is again by three fold induction: first on $k$, then on $i$ and then on $\iota(s)$. For the base case of induction assume $k=0$. Note that any trajectory in $\mx_{0,0}(s)$ can contain only splitting operations. By Lemma \ref{iota2} we know that a trajectory of non-null loop sequence that contains only splitting operations does not vanish. Thus $\mx_{0, 0}(s)$ is empty if $s\neq \emptyset$ and $\mx_{0, 0}(\emptyset)$ contains only the null trajectory. This means both sides of \eqref{sik} are zero when $s\neq \emptyset$) and one when $s=\emptyset$. Now, fix $i\geq 1$ and suppose the claim holds for all triples $(i', 0, s)$ with $i'<i$. We will use induction on $\iota(s)$ to show that it also holds for the triples $(i, 0, s)$. Since $\mx_{i,0}(\emptyset)$ is empty and $\beta_{i, 0}(\emptyset)=0$, when $s=\emptyset$ both sides of \eqref{sik} are zero. So we can assume that $s\neq \emptyset$ and the claim also holds for the triples $(i, 0, s')$ with $\iota(s')<\iota(s)$. Note that for any $X\in \mx_{i, 0}(s)$ we have $X=(s, X')$ where either $X'\in \mx_{i, 0}(s')$ for some $s'\in\fs(s)$ or $X'\in \mx_{i-1, 0}(s')$ for some $s'\in\fd(s)$ or $X'\in \mx_{i-1, 0}(s')$ for some $s'\in\fex(s)$. So
\begin{align*}
S_{\beta, i, 0}(s)=\sum_{s'\in \fs(s)}\sum_{\substack{X=(s, X')\\X'\in \mx_{i, 0}(s')}}|w_{\beta}(X)|+\sum_{s'\in \fd(s)}\sum_{\substack{X=(s, X')\\X'\in \mx_{i-1, 0}(s')}}|w_{\beta}(X)|+\sum_{s'\in \fex(s)}\sum_{\substack{X=(s, X')\\X'\in \mx_{i-1, 0}(s')}}|w_{\beta}(X)|\, .
\end{align*}
By definition of the weight of a trajectory, induction hypothesis and Lemma \ref{iota2}
\begin{align*}
\sum_{s'\in \fs(s)}\sum_{\substack{X=(s, X')\\X'\in \mx_{i, 0}(s')}}|w_{\beta}(X)|=\frac{1}{|s|}\sum_{s'\in \fs(s)}\sum_{X'\in \mx_{i, 0}(s')}|w_{\beta}(X')|=\frac{1}{|s|}\sum_{s'\in \fs(s)}b_{i,0}(s')|\beta|^{i}\, .
\end{align*}
Similarly, 
\begin{align*}
\sum_{s'\in \fd(s)}\sum_{\substack{X=(s, X')\\X'\in \mx_{i, 0}(s')}}|w_{\beta}(X)|=\frac{|\beta|}{2|s|}\sum_{s'\in \fd(s)}\sum_{X'\in \mx_{i-1, 0}(s')}|w_{\beta}(X')|=\frac{|\beta|}{2|s|}\sum_{s'\in \fd(s)}b_{i-1,0}(s')|\beta|^{i-1}\, ,
\end{align*}
and
\begin{align*}
\sum_{s'\in \fex(s)}\sum_{\substack{X=(s, X')\\X'\in \mx_{i, 0}(s')}}|w_{\beta}(X)|=\frac{|\beta|}{2|s|}\sum_{s'\in \fex(s)}\sum_{X'\in \mx_{i-1, 0}(s')}|w_{\beta}(X')|=\frac{|\beta|}{2|s|}\sum_{s'\in \fex(s)}b_{i-1,0}(s')|\beta|^{i-1}\, .
\end{align*}
Putting together above four displays we get
\begin{align*}
S_{\beta, i, 0}(s)=\biggl(\frac{1}{|s|}\sum_{s'\in \fs(s)}b_{i,0}(s')+\frac{1}{2|s|}\sum_{s'\in \fd(s)}b_{i-1,0}(s')+\frac{1}{2|s|}\sum_{s'\in \fex(s)}b_{i-1,0}(s')\biggr)|\beta|^{i}=b_{i, 0}(s)|\beta|^{i}\, .
\end{align*}
This finishes the proof of the base case $k=0$.

Next, fix $k\geq 1$ and assume equality \eqref{sik} is true for all triples $(i, k', s)$ with $k'<k$. First suppose that $i=0$. We will use induction on $\iota(s)$ to prove that the claim holds for all triples $(0, k, s)$. Since $\mx_{0, k}(\emptyset)$ is empty and $b_{0, k}(\emptyset)=0$, both sides of \eqref{sik} are zero when $i=0$ and $s = \emptyset$. This proves equality \eqref{sik} for the triple $(0, k, \emptyset)$. Now, let $s\neq \emptyset$ and  assume that the claim holds for all triples $(0, k, s')$ with $\iota(s')<\iota(s)$. Note that any $X\in \mx_{0, k}(s)$ can be written as $X=(s, X')$ where either $X'\in \mx_{0, k-1}(s)$ or $X'\in \mx_{0, k-1}(s')$ for some $s'\in\fst(s)$ or $X'\in \mx_{0, k}(s')$ for some $s'\in\fs(s)$. So
\begin{align*}
S_{\beta, 0, k}(s)=\sum_{\substack{X=(s, X')\\X'\in \mx_{0, k-1}(s)}}|w_{\beta}(X)|+\sum_{s'\in \fst(s)}\sum_{\substack{X=(s, X')\\X'\in \mx_{0, k-1}(s')}}|w_{\beta}(X)|+\sum_{s'\in \fs(s)}\sum_{\substack{X=(s, X')\\X'\in \mx_{0, k}(s')}}|w_{\beta}(X)|\, .
\end{align*}
By definition of the weight of a trajectory and induction hypothesis
\begin{align*}
\sum_{\substack{X=(s, X')\\X'\in \mx_{0, k-1}(s)}}|w_{\beta}(X)|=\frac{\ell(s)}{|s|}\sum_{X'\in \mx_{0, k-1}(s)}|w_{\beta}(X')|=\frac{\ell(s)}{|s|}b_{0, k-1}(s) \, .
\end{align*}
Similarly, 
\begin{align*}
\sum_{s'\in \fst(s)}\sum_{\substack{X=(s, X')\\X'\in \mx_{0, k-1}(s')}}|w_{\beta}(X)|=\frac{1}{|s|}\sum_{s'\in \fst(s)}\sum_{X'\in \mx_{0, k-1}(s')}|w_{\beta}(X')|=\frac{1}{|s|}\sum_{s'\in \fst(s)}b_{0,k-1}(s')\, ,
\end{align*}
and
\begin{align*}
\sum_{s'\in \fs(s)}\sum_{\substack{X=(s, X')\\X'\in \mx_{0, k}(s')}}|w_{\beta}(X)|=\frac{1}{|s|}\sum_{s'\in \fs(s)}\sum_{X'\in \mx_{0, k}(s')}|w_{\beta}(X')|=\frac{1}{|s|}\sum_{s'\in \fs(s)}b_{0,k}(s')\, .
\end{align*}
Putting together above four displays we get
\begin{align*}
S_{\beta, 0, k}(s)=\frac{\ell(s)}{|s|}b_{0, k-1}(s)+\frac{1}{|s|}\sum_{s'\in \fst(s)}b_{0,k-1}(s')+\frac{1}{|s|}\sum_{s'\in \fs(s)}b_{0,k}(s')=b_{0, k}(s)|\, .
\end{align*}
This finishes the proof for the triples $(0, k, s)$. 

Now, fix $i\geq 1$ and assume that equality \eqref{sik} is also valid for all triples $(i', k, s)$ where $i'<i$. Now we will use induction on $\iota(s)$ to prove the claim for triples $(i, k, s)$. If $s=\emptyset$, then both sides of \eqref{sik} are zero. So let $s\neq \emptyset$ and assume that the claim also holds for triples $(i, k, s')$ with $\iota(s')<\iota(s)$. Note that any $X\in \mx_{i, k}(s)$ can be written as $X=(s, X')$ where either $X'\in \mx_{i, k-1}(s)$ or $X'\in \mx_{i, k-1}(s')$ for some $s'\in\fst(s)$ or $X'\in \mx_{i, k}(s')$ for some $s'\in\fs(s)$ or $X'\in \mx_{i-1, k}(s')$ for some $s'\in\fd(s)$ or $X'\in \mx_{i-1, k}(s')$ for some $s'\in\fex(s)$. So
\begin{align*}
S_{\beta, i, k}(s)&=\sum_{\substack{X=(s, X')\\X'\in \mx_{i, k-1}(s)}}|w_{\beta}(X)|+\sum_{s'\in \fst(s)}\sum_{\substack{X=(s, X')\\X'\in \mx_{i, k-1}(s')}}|w_{\beta}(X)|+\sum_{s'\in \fs(s)}\sum_{\substack{X=(s, X')\\X'\in \mx_{i, k}(s')}}|w_{\beta}(X)|\\
&\qquad \qquad +\sum_{s'\in \fd(s)}\sum_{\substack{X=(s, X')\\X'\in \mx_{i-1, k}(s')}}|w_{\beta}(X)|+\sum_{s'\in \fex(s)}\sum_{\substack{X=(s, X')\\X'\in \mx_{i-1, k}(s')}}|w_{\beta}(X)|\, .
\end{align*}
By definition of the weight of a trajectory and induction hypothesis
\begin{align*}
\sum_{\substack{X=(s, X')\\X'\in \mx_{i, k-1}(s)}}|w_{\beta}(X)|=\frac{\ell(s)}{|s|}\sum_{X'\in \mx_{i, k-1}(s)}|w_{\beta}(X')|=\frac{\ell(s)}{|s|}b_{i, k-1}(s)|\beta|^{i} \, .
\end{align*}
Similarly, 
\begin{align*}
\sum_{s'\in \fst(s)}\sum_{\substack{X=(s, X')\\X'\in \mx_{i, k-1}(s')}}|w_{\beta}(X)|&=\frac{1}{|s|}\sum_{s'\in \fst(s)}\sum_{X'\in \mx_{i, k-1}(s')}|w_{\beta}(X')|=\frac{1}{|s|}\sum_{s'\in \fs(s)}b_{i,k-1}(s')|\beta|^{i}\, ,\\
\sum_{s'\in \fs(s)}\sum_{\substack{X=(s, X')\\X'\in \mx_{i, k}(s')}}|w_{\beta}(X)|&=\frac{1}{|s|}\sum_{s'\in \fs(s)}\sum_{X'\in \mx_{i, k}(s')}|w_{\beta}(X')|=\frac{1}{|s|}\sum_{s'\in \fs(s)}b_{i,k}(s')|\beta|^{i}\, ,\\
\sum_{s'\in \fd(s)}\sum_{\substack{X=(s, X')\\X'\in \mx_{i-1, k}(s')}}|w_{\beta}(X)|&=\frac{|\beta|}{2|s|}\sum_{s'\in \fd(s)}\sum_{X'\in \mx_{i-1, k}(s')}|w_{\beta}(X')|=\frac{|\beta|}{2|s|}\sum_{s'\in \fd(s)}b_{i-1,k}(s')|\beta|^{i-1}\, ,
\end{align*}
and
\begin{align*}
\sum_{s'\in \fex(s)}\sum_{\substack{X=(s, X')\\X'\in \mx_{i-1, k}(s')}}|w_{\beta}(X)|&=\frac{|\beta|}{2|s|}\sum_{s'\in \fex(s)}\sum_{X'\in \mx_{i-1, k}(s')}|w_{\beta}(X')|=\frac{|\beta|}{2|s|}\sum_{s'\in \fex(s)}b_{i-1,k}(s')|\beta|^{i-1}\, .
\end{align*}
Putting together above six displays we get
\begin{align*}
S_{\beta, i, k}(s)&=\biggl(\frac{\ell(s)}{|s|}b_{i, k-1}(s)+\frac{1}{|s|}\sum_{s'\in \fst(s)}b_{i,k-1}(s')+\frac{1}{|s|}\sum_{s'\in \fs(s)}b_{i,k}(s')\\
&\qquad\qquad +\frac{1}{2|s|}\sum_{s'\in \fd(s)}b_{i-1,k}(s')+\frac{1}{2|s|}\sum_{s'\in \fex(s)}b_{i-1,k}(s')\biggr)|\beta|^{i}\\
&=b_{i, k}(s)|\beta|^{i}\, .
\end{align*}
This completes the proof of this lemma.
\end{proof}
Now we are ready to prove absolute convergence of the sum \eqref{sumxk}.
\begin{proof}[Proof of Theorem \ref{absconvmxk}]
Note that for any $k$ any vanishing trajectory $X \in \mx_k(s)$ is in exactly one of $\mx_{i,k}(s)$ for $i=0, 1, \ldots$. Hence
\[
\sum_{X\in \mx_{k}(s)}|w_{\beta}(X)|=\sum_{i=0}^{\infty}\sum_{X\in \mx_{i,k}(s)}|w_{\beta}(X)|=\sum_{i=0}^{\infty}b_{i, k}(s)|\beta^{i}|\,.
\]
By Lemma \ref{boundbik} this sum is convergent for all sufficiently small $|\beta|$.
\end{proof}
\section{Gauge-string duality}
The goal of this section is to provide representation of loop functions $f_{2k}(s)$ as a sum of weights of vanishing trajectories.
\begin{lmm}\label{tiklmm}
For any loop sequence $s$ and non-negative integers $i$ and $k$ let $\mx_{i, k}(s)$ be as in Section \ref{string} and $a_{i,k}(s)$ as in \eqref{symaik}. Then
\begin{align}
\sum_{X\in \mx_{i,k}(s)}w_{\beta}(X)=a_{i,k}(s)\beta^{i}\, . \label{tik}
\end{align}
\end{lmm}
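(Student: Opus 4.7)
The plan is to mirror the inductive argument of Lemma \ref{siklmm}, keeping track of signs. I will use the same three-fold induction as there: first on $k$, then on $i$, and finally on $\iota(s)$. Let $T_{\beta, i, k}(s) := \sum_{X \in \mx_{i,k}(s)} w_\beta(X)$. The absolute convergence of this sum (which is needed before we can manipulate it by grouping trajectories) follows from Lemma~\ref{siklmm} together with Lemma~\ref{fintraj}, so all rearrangements below are justified.

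The base cases are straightforward: $\mx_{0,0}(s)$ is empty if $s \neq \emptyset$ and contains only the trivial trajectory on $\emptyset$ otherwise (by Lemma \ref{iota2}, no sequence of splittings alone can vanish a non-null loop sequence), while $a_{0,0}$ is the indicator of $\emptyset$; also $\mx_{i,k}(\emptyset) = \emptyset$ whenever $(i,k) \neq (0,0)$, matching $a_{i,k}(\emptyset) = 0$. For the inductive step, fix $(i,k,s)$ with $s \neq \emptyset$ and decompose $\mx_{i,k}(s)$ according to the first transition $s \to s'$. Every $X \in \mx_{i,k}(s)$ has the form $X = (s, X')$ with $w_\beta(X) = w_\beta(s, s') w_\beta(X')$, and depending on which of the five operation types occurs first, the tail $X'$ lies in $\mx_{i,k-1}(s)$ (inaction), $\mx_{i,k-1}(s')$ (merger), $\mx_{i,k}(s')$ (splitting), or $\mx_{i-1,k}(s')$ (deformation or expansion). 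Plugging in the transition weights from Section \ref{string} and applying the induction hypothesis term by term gives
\begin{align*}
T_{\beta,i,k}(s) &= \frac{\ell(s)}{|s|} a_{i,k-1}(s)\beta^i + \frac{1}{|s|}\Bigl(\sum_{s'\in\fst^-(s)} - \sum_{s'\in\fst^+(s)}\Bigr) a_{i,k-1}(s')\beta^i \\
&\quad + \frac{1}{|s|}\Bigl(\sum_{s'\in\fs^-(s)} - \sum_{s'\in\fs^+(s)}\Bigr) a_{i,k}(s')\beta^i \\
&\quad + \frac{\beta}{2|s|}\Bigl(\sum_{s'\in\fd^-(s)} - \sum_{s'\in\fd^+(s)}\Bigr) a_{i-1,k}(s')\beta^{i-1} \\
&\quad + \frac{\beta}{2|s|}\Bigl(\sum_{s'\in\fex^-(s)} - \sum_{s'\in\fex^+(s)}\Bigr) a_{i-1,k}(s')\beta^{i-1}.
\end{align*}
Factoring out $\beta^i/|s|$ and invoking the symmetrized recursion \eqref{symaik} for the coefficients $a_{i,k}(s)$ yields $T_{\beta,i,k}(s) = a_{i,k}(s)\beta^i$, as required.

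The step I expect to demand the most care is the sign bookkeeping: the transition weight is $-1/|s|$ on $\fst^+ \cup \fs^+$ and $+1/|s|$ on $\fst^- \cup \fs^-$, while it is $-\beta/(2|s|)$ on $\fd^+ \cup \fex^+$ and $+\beta/(2|s|)$ on $\fd^- \cup \fex^-$, and one must check that these signs align precisely with the signed combinations appearing in the symmetrized master loop equation \eqref{symaik}. No genuinely new technical difficulty arises beyond this matching; the combinatorial machinery (partitioning $\mx_{i,k}(s)$ by first transition, reducing either $k$, $i$, or $\iota(s)$) is identical to what was done for $b_{i,k}(s)$ in Lemma \ref{siklmm}.
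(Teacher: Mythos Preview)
Your proof is correct and follows essentially the same approach as the paper's own argument: three-fold induction on $k$, then $i$, then $\iota(s)$, decomposing $\mx_{i,k}(s)$ by the first transition and invoking the symmetrized recursion \eqref{symaik}. One cosmetic remark: since $\mx_{i,k}(s)$ is finite by Lemma~\ref{fintraj}, the sum $T_{\beta,i,k}(s)$ is trivially well-defined and no appeal to absolute convergence via Lemma~\ref{siklmm} is needed.
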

\begin{proof}
Define 
\begin{align*}
T_{\beta, i, k}(s):=\sum_{X\in \mx_{i,k}(s)}w_{\beta}(X)\, . 
\end{align*}
$T_{\beta, i, k}(s)$ is well-defined because by Lemma \ref{fintraj} the set $\mx_{i,k}(s)$ is finite.

The proof is by our usual three fold induction. For the base case of induction assume $k=0$. As noted in the proof of Lemma \ref{siklmm} the set $\mx_{0, 0}(s)$ is empty if $s\neq \emptyset$ and $\mx_{0, 0}(\emptyset)$ contains only the null trajectory. Hence both sides of \eqref{tik} are either zero when $s \neq \emptyset$ and one when $s = \emptyset$. Next, fix $i\geq 1$ and suppose the claim holds for all triples $(i', 0, s)$ with $i'<i$. We will use induction on $\iota(s)$ to prove that equality \eqref{tik} also holds for the triples $(i, 0, s)$. Since $\mx_{i,0}(\emptyset)$ is an empty set and $a_{i, 0}(\emptyset)=0$, if $k=0$ and $s=\emptyset$, then both sides of \eqref{tik} are zero. Thus, the claim is true for triple $(i, 0, \emptyset)$. Now, let $s\neq \emptyset$ and suppose that the claim also holds for the triples $(i, 0, s')$ with $\iota(s')<\iota(s)$. Note that any $X\in \mx_{i, 0}(s)$ can be written as $X=(s, X')$ where either $X'\in \mx_{i, 0}(s')$ for some $s'\in\fs(s)$ or $X'\in \mx_{i-1, 0}(s')$ for some $s'\in\fd(s)$ or $X'\in \mx_{i-1, 0}(s')$ for some $s'\in\fex(s)$. So
\begin{align*}
T_{\beta, i, 0}(s)=\sum_{s'\in \fs(s)}\sum_{\substack{X=(s, X')\\X'\in \mx_{i, 0}(s')}}w_{\beta}(X)+\sum_{s'\in \fd(s)}\sum_{\substack{X=(s, X')\\X'\in \mx_{i-1, 0}(s')}}w_{\beta}(X)+\sum_{s'\in \fex(s)}\sum_{\substack{X=(s, X')\\X'\in \mx_{i-1, 0}(s')}}w_{\beta}(X)\, .
\end{align*}
By definition of the weight of a trajectory, induction hypothesis and Lemma \ref{iota2}
\begin{align*}
\sum_{s'\in \fs(s)}\sum_{\substack{X=(s, X')\\X'\in \mx_{i, 0}(s')}}w_{\beta}(X)&=\frac{1}{|s|}\sum_{s'\in \fs^-(s)}\sum_{X'\in \mx_{i, 0}(s')}w_{\beta}(X')-\frac{1}{|s|}\sum_{s'\in \fs^+(s)}\sum_{X'\in \mx_{i, 0}(s')}w_{\beta}(X')\\
&=\frac{1}{|s|}\sum_{s'\in \fs^-(s)}a_{i,0}(s')\beta^{i}-\frac{1}{|s|}\sum_{s'\in \fs^+(s)}a_{i,0}(s')\beta^{i}\, .
\end{align*}
Similarly, 
\begin{align*}
\sum_{s'\in \fd(s)}\sum_{\substack{X=(s, X')\\X'\in \mx_{i, 0}(s')}}w_{\beta}(X)&=\frac{\beta}{2|s|}\sum_{s'\in \fd^-(s)}\sum_{X'\in \mx_{i-1, 0}(s')}w_{\beta}(X')-\frac{\beta}{2|s|}\sum_{s'\in \fd^+(s)}\sum_{X'\in \mx_{i-1, 0}(s')}w_{\beta}(X')\\
&=\frac{\beta}{2|s|}\sum_{s'\in \fd^-(s)}a_{i-1,0}(s')\beta^{i-1}-\frac{\beta}{2|s|}\sum_{s'\in \fd^+(s)}a_{i-1,0}(s')\beta^{i-1}\, ,
\end{align*}
and
\begin{align*}
\sum_{s'\in \fex(s)}\sum_{\substack{X=(s, X')\\X'\in \mx_{i, 0}(s')}}w_{\beta}(X)&=\frac{\beta}{2|s|}\sum_{s'\in \fex^-(s)}\sum_{X'\in \mx_{i-1, 0}(s')}w_{\beta}(X')-\frac{\beta}{2|s|}\sum_{s'\in \fex^+(s)}\sum_{X'\in \mx_{i-1, 0}(s')}w_{\beta}(X')\\
&=\frac{\beta}{2|s|}\sum_{s'\in \fex^-(s)}a_{i-1,0}(s')\beta^{i-1}-\frac{\beta}{2|s|}\sum_{s'\in \fex^+(s)}a_{i-1,0}(s')\beta^{i-1}\, .
\end{align*}
Putting together above four displays we get
\begin{align*}
T_{\beta, i, 0}(s)&=\biggl(\frac{1}{|s|}\sum_{s'\in \fs^-(s)}a_{i,0}(s')-\frac{1}{|s|}\sum_{s'\in \fs^-(s)}a_{i,0}(s')\\
&\quad + \frac{1}{2|s|}\sum_{s'\in \fd^-(s)}a_{i-1,0}(s')-\frac{1}{2|s|}\sum_{s'\in \fd^-(s)}a_{i-1,0}(s')\\
&\quad + \frac{1}{2|s|}\sum_{s'\in \fex^-(s)}a_{i-1,0}(s')-\frac{1}{2|s|}\sum_{s'\in \fex^+(s)}a_{i-1,0}(s')\biggr)\beta^{i}\\
&=a_{i, 0}(s)\beta^{i}\, .
\end{align*}
This completes the proof of the base case $k=0$.

Now, fix $k\geq 1$ and suppose that the claim is true for all triples $(i, k', s)$ with $k'<k$.  To prove for the triples $(i, k, s)$ first suppose that $i=0$. We will use induction on $\iota(s)$ to show that the claim is also true for triples $(0, k, s)$. Since $X_{0, k}(\emptyset)$ is an empty set and $a_{0, k}(\emptyset)=0$, both sides of \eqref{tik} are zero. This proves the claim for the triple $(0, k, \emptyset)$. Next, fix $s\neq \emptyset$ and assume that the claim holds for all triples $(0, k, s')$ with $\iota(s')<\iota(s)$. Note that any $X\in \mx_{0, k}(s)$ can be written as $X=(s, X')$ where either $X'\in \mx_{0, k-1}(s)$ or $X'\in \mx_{0, k-1}(s')$ for some $s'\in\fst(s)$ or $X'\in \mx_{0, k}(s')$ for some $s'\in\fs(s)$. Therefore
\begin{align*}
T_{\beta, 0, k}(s)=\sum_{\substack{X=(s, X')\\X'\in \mx_{0, k-1}(s)}}w_{\beta}(X)+\sum_{s'\in \fst(s)}\sum_{\substack{X=(s, X')\\X'\in \mx_{0, k-1}(s')}}w_{\beta}(X)+\sum_{s'\in \fs(s)}\sum_{\substack{X=(s, X')\\X'\in \mx_{0, k}(s')}}w_{\beta}(X)\, .
\end{align*}
By definition of the weight of a trajectory and induction hypothesis
\begin{align*}
\sum_{\substack{X=(s, X')\\X'\in \mx_{0, k-1}(s)}}w_{\beta}(X)&=\frac{\ell(s)}{|s|}\sum_{X'\in \mx_{0, k-1}(s)}w_{\beta}(X')=\frac{\ell(s)}{|s|}a_{0, k-1}(s) \, .
\end{align*}
Similarly, 
\begin{align*}
\sum_{s'\in \fst(s)}\sum_{\substack{X=(s, X')\\X'\in \mx_{0, k-1}(s')}}w_{\beta}(X)&=\frac{1}{|s|}\sum_{s'\in \fst^-(s)}\sum_{X'\in \mx_{0, k-1}(s')}w_{\beta}(X')-\frac{1}{|s|}\sum_{s'\in \fst^+(s)}\sum_{X'\in \mx_{0, k-1}(s')}w_{\beta}(X')\\
&=\frac{1}{|s|}\sum_{s'\in \fst^-(s)}a_{0,k-1}(s')-\frac{1}{|s|}\sum_{s'\in \fst^+(s)}a_{0,k-1}(s')\, ,
\end{align*}
and
\begin{align*}
\sum_{s'\in \fs(s)}\sum_{\substack{X=(s, X')\\X'\in \mx_{0, k}(s')}}w_{\beta}(X)&=\frac{1}{|s|}\sum_{s'\in \fs^-(s)}\sum_{X'\in \mx_{0, k}(s')}w_{\beta}(X')-\frac{1}{|s|}\sum_{s'\in \fs^+(s)}\sum_{X'\in \mx_{0, k}(s')}w_{\beta}(X')\\
&=\frac{1}{|s|}\sum_{s'\in \fs^-(s)}a_{0,k}(s')-\frac{1}{|s|}\sum_{s'\in \fs^+(s)}a_{0,k}(s')\, .
\end{align*}
By combining above four displays we get
\begin{align*}
T_{\beta, 0, k}(s)&=\frac{\ell(s)}{|s|}a_{0, k-1}(s)+\frac{1}{|s|}\sum_{s'\in \fst^-(s)}a_{0,k-1}(s')-\frac{1}{|s|}\sum_{s'\in \fst^+(s)}a_{0,k-1}(s')\\
&\qquad \qquad +\frac{1}{|s|}\sum_{s'\in \fs^-(s)}a_{0,k}(s')-\frac{1}{|s|}\sum_{s'\in \fs^+(s)}a_{0,k}(s')\\
&=a_{0, k}(s)\, .
\end{align*}
So the claim is true for the triples $(0, k, s)$. 

Next, fix $i\geq 1$ and assume that the claim is also true for all triples $(i', k, s)$ with $i'<i$. We will use induction on $\iota(s)$ to prove the claim for triples $(i, k, s)$. If $s=\emptyset$, then both sides of \eqref{tik} are zero. Fix $s\neq \emptyset$ and assume that the claim also holds for triples $(i, k, s')$ with $\iota(s')<\iota(s)$. Note that any $X\in \mx_{i, k}(s)$ can be written as $X=(s, X')$ where either $X'\in \mx_{i, k-1}(s)$ or $X'\in \mx_{i, k-1}(s')$ for some $s'\in\fst(s)$ or $X'\in \mx_{i, k}(s')$ for some $s'\in\fs(s)$ or $X'\in \mx_{i-1, k}(s')$ for some $s'\in\fd(s)$ or $X'\in \mx_{i-1, k}(s')$ for some $s'\in\fex(s)$. Thus
\begin{align*}
T_{\beta, i, k}(s)&=\sum_{\substack{X=(s, X')\\X'\in \mx_{i, k-1}(s)}}w_{\beta}(X)+\sum_{s'\in \fst(s)}\sum_{\substack{X=(s, X')\\X'\in \mx_{i, k-1}(s')}}w_{\beta}(X)+\sum_{s'\in \fs(s)}\sum_{\substack{X=(s, X')\\X'\in \mx_{i, k}(s')}}w_{\beta}(X)\\
&\qquad \qquad +\sum_{s'\in \fd(s)}\sum_{\substack{X=(s, X')\\X'\in \mx_{i-1, k}(s')}}w_{\beta}(X)+\sum_{s'\in \fex(s)}\sum_{\substack{X=(s, X')\\X'\in \mx_{i-1, k}(s')}}w_{\beta}(X)\, .
\end{align*}
By definition of the weight of a trajectory and induction hypothesis
\begin{align*}
\sum_{\substack{X=(s, X')\\X'\in \mx_{i, k-1}(s)}}w_{\beta}(X)=\frac{\ell(s)}{|s|}\sum_{X'\in \mx_{i, k-1}(s)}w_{\beta}(X')=\frac{\ell(s)}{|s|}a_{i, k-1}(s)\beta^{i} \, .
\end{align*}
Similarly, 
\begin{align*}
\sum_{s'\in \fst(s)}\sum_{\substack{X=(s, X')\\X'\in \mx_{i, k-1}(s')}}w_{\beta}(X)&=\frac{1}{|s|}\sum_{s'\in \fst^-(s)}\sum_{X'\in \mx_{i, k-1}(s')}w_{\beta}(X')-\frac{1}{|s|}\sum_{s'\in \fst^+(s)}\sum_{X'\in \mx_{i, k-1}(s')}w_{\beta}(X')\\
&=\frac{1}{|s|}\sum_{s'\in \fst^-(s)}a_{i,k-1}(s')\beta^{i}-\frac{1}{|s|}\sum_{s'\in \fst^+(s)}a_{i,k-1}(s')\beta^{i}\, ,
\end{align*}
\begin{align*}
\sum_{s'\in \fs(s)}\sum_{\substack{X=(s, X')\\X'\in \mx_{i, k}(s')}}w_{\beta}(X)&=\frac{1}{|s|}\sum_{s'\in \fs^-(s)}\sum_{X'\in \mx_{i, k}(s')}w_{\beta}(X')-\frac{1}{|s|}\sum_{s'\in \fs^+(s)}\sum_{X'\in \mx_{i, k}(s')}w_{\beta}(X')\\
&=\frac{1}{|s|}\sum_{s'\in \fs^-(s)}a_{i,k}(s')\beta^{i}-\frac{1}{|s|}\sum_{s'\in \fs^+(s)}a_{i,k}(s')\beta^{i}\, ,
\end{align*}
\begin{align*}
\sum_{s'\in \fd(s)}\sum_{\substack{X=(s, X')\\X'\in \mx_{i-1, k}(s')}}w_{\beta}(X)&=\frac{\beta}{2|s|}\sum_{s'\in \fd^-(s)}\sum_{X'\in \mx_{i-1, k}(s')}w_{\beta}(X')-\frac{\beta}{2|s|}\sum_{s'\in \fd^+(s)}\sum_{X'\in \mx_{i-1, k}(s')}w_{\beta}(X')\\
&=\frac{\beta}{2|s|}\sum_{s'\in \fd^-(s)}a_{i-1,k}(s')\beta^{i-1}-\frac{\beta}{2|s|}\sum_{s'\in \fd^+(s)}a_{i-1,k}(s')\beta^{i-1}\, ,
\end{align*}
and
\begin{align*}
\sum_{s'\in \fex(s)}\sum_{\substack{X=(s, X')\\X'\in \mx_{i-1, k}(s')}}w_{\beta}(X)&=\frac{\beta}{2|s|}\sum_{s'\in \fex^-(s)}\sum_{X'\in \mx_{i-1, k}(s')}w_{\beta}(X')-\frac{\beta}{2|s|}\sum_{s'\in \fex^+(s)}\sum_{X'\in \mx_{i-1, k}(s')}w_{\beta}(X')\\
&=\frac{\beta}{2|s|}\sum_{s'\in \fex^-(s)}a_{i-1,k}(s')\beta^{i-1}-\frac{\beta}{2|s|}\sum_{s'\in \fex^+(s)}a_{i-1,k}(s')\beta^{i-1}\, .
\end{align*}
By combining above six displays we obtain
\begin{align*}
T_{\beta, i, k}(s)&=\biggl(\frac{\ell(s)}{|s|}a_{i, k-1}(s)+\frac{1}{|s|}\sum_{s'\in \fst^-(s)}a_{i,k-1}(s')-\frac{1}{|s|}\sum_{s'\in \fst^+(s)}a_{i,k-1}(s')\\
&\qquad + \frac{1}{|s|}\sum_{s'\in \fs^-(s)}a_{i,k}(s') - \frac{1}{|s|}\sum_{s'\in \fs^+(s)}a_{i,k}(s')\\
&\qquad + \frac{1}{2|s|}\sum_{s'\in \fd^-(s)}a_{i-1,k}(s') - \frac{1}{2|s|}\sum_{s'\in \fd^+(s)}a_{i-1,k}(s')\\
&\qquad + \frac{1}{2|s|}\sum_{s'\in \fex^-(s)}a_{i-1,k}(s') - \frac{1}{2|s|}\sum_{s'\in \fex^+(s)}a_{i-1,k}(s')\biggr)\beta^{i}\\
&=a_{i, k}(s)\beta^{i}\, .
\end{align*}
This finishes the induction and the proof of this lemma.
\end{proof}
Now we are ready to prove part (i) of Theorem \ref{maintheorem}.
\begin{proof}[Proof of part (i) of Theorem \ref{maintheorem}]
Fix $k$. Since any vanishing trajectory $X \in \mx_k(s)$ is in exactly one of $\mx_{i,k}(s)$ for $i=0, 1, \ldots$. So, by Theorem \ref{seriesfk} and Lemma \ref{tiklmm}
\[
\sum_{X\in \mx_{k}(s)}w_{\beta}(X)=\sum_{i=0}^{\infty}\sum_{X\in \mx_{i,k}(s)}w_{\beta}(X)=\sum_{i=0}^{\infty}a_{i, k}(s)\beta^{i}=f_k(s)\,.
\]
\end{proof}
\section{Proof of Corollary \ref{factor}}
The following two lemmas, taken from \cite{chatterjee15}, will be the main ingredients of the proof. Throughout this section let $\ma(n,m)$ be the set of all nondecreasing functions $\alpha:\{0,1,\ldots, n+m\}\ra\{0,1,\ldots, n\}$ such that $\alpha(0)=0$, $\alpha(n+m)=n$ and $\alpha(i+1)-\alpha(i)\le 1$ for each $i<n$.
\begin{lmm}\label{general}
Let $n$ and $m$ be two positive integers. Let $a_0,a_1,\ldots, a_{n-1}$ and $b_0,\ldots, b_{m-1}$ be positive real numbers, and $a_n=b_m=0$. Then 
\begin{align*}
\sum_{\alpha \in \ma(n,m)}\prod_{i=0}^{n+m-1}\frac{1}{a_{\alpha(i)}+b_{i-\alpha(i)}} = \frac{1}{a_0a_1\cdots a_{n-1} b_0b_1\cdots b_{m-1}}\, .
\end{align*}
\end{lmm}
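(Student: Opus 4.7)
The plan is to proceed by induction on $n+m$, using the natural decomposition of an element of $\ma(n,m)$ according to its first step. I would think of $\alpha$ as a monotone lattice path from $(0,0)$ to $(n,m)$, where $\alpha(i)$ records the number of ``up'' steps taken in the first $i$ positions and $i-\alpha(i)$ records the number of ``right'' steps; the vertex visited at time $i$ has coordinates $(\alpha(i),\,i-\alpha(i))$ and contributes the factor $1/(a_{\alpha(i)}+b_{i-\alpha(i)})$. The conventions $a_n=b_m=0$ guarantee that once the path touches the top edge (respectively, the right edge) of the rectangle, the contributing factor collapses to $1/b_{i-\alpha(i)}$ (respectively, $1/a_{\alpha(i)}$), so no denominator ever vanishes along the way.

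For the base cases $n=0$ and $m=0$ there is a unique path and the identity reduces, after applying the boundary conventions, to the trivial $\prod_{i=0}^{m-1} 1/b_i$ or $\prod_{i=0}^{n-1} 1/a_i$.

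For the induction step I would split the sum over $\ma(n,m)$ according to whether the first step is up ($\alpha(1)=1$) or right ($\alpha(1)=0$). In both branches the $i=0$ factor equals $1/(a_0+b_0)$ and pulls out in front. The re-indexing $\tilde\alpha(j):=\alpha(j+1)-1$ is a bijection from the up-first paths onto $\ma(n-1,m)$ and rewrites the remaining factors using the shifted sequence $a'_k:=a_{k+1}$ (for which $a'_{n-1}=a_n=0$ still holds). Symmetrically, $\tilde\alpha(j):=\alpha(j+1)$ identifies the right-first paths with $\ma(n,m-1)$ under the shift $b'_k:=b_{k+1}$. Applying the induction hypothesis to each of the two smaller sums yields
\[
\sum_{\alpha\in\ma(n,m)}\prod_{i=0}^{n+m-1}\frac{1}{a_{\alpha(i)}+b_{i-\alpha(i)}}=\frac{1}{a_0+b_0}\left(\frac{1}{a_1\cdots a_{n-1}\,b_0\cdots b_{m-1}}+\frac{1}{a_0\cdots a_{n-1}\,b_1\cdots b_{m-1}}\right).
\]
Putting the bracketed expression over the common denominator $a_0\cdots a_{n-1}\,b_0\cdots b_{m-1}$ produces a numerator of $a_0+b_0$, which cancels the prefactor and gives the claimed formula.

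The only real obstacle is bookkeeping: verifying that the two re-indexings really are bijections onto the smaller $\ma$-sets and that the shifted sequences continue to satisfy the hypotheses of the lemma so that the induction hypothesis applies in the required form. The algebraic engine is the trivial partial-fraction identity $(a_0+b_0)^{-1}(a_0^{-1}+b_0^{-1})=(a_0 b_0)^{-1}$, which is what drives the cancellation at every inductive step.
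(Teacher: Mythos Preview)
Your induction on $n+m$ via the first-step decomposition is correct and complete; the bijections and the partial-fraction cancellation $(a_0+b_0)^{-1}(a_0^{-1}+b_0^{-1})=(a_0b_0)^{-1}$ work exactly as you describe. Note that the paper does not actually prove this lemma but merely quotes it from \cite{chatterjee15}, so there is no proof here to compare against; your argument is the standard one and would serve perfectly well as a self-contained proof.
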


Given two loop sequences $s=(l_1, l_2, \ldots, l_n)$ and $s'=(l'_1, l'_2, \ldots, l'_m)$, define their concatenation as $(s, s'):=(l_1, l_2, \ldots, l_n,l'_1, l'_2, \ldots, l'_m )$. Let $X=(s_0, s_1, \ldots, s_n)$ and $X'=(s'_0, s'_1, \ldots, s'_m)$ be two trajectories. Define the concatenation of these trajectories by a map $\alpha \in \ma (n, m)$ to be a trajectory whose $i$th component is $(s_{\alpha(i)}, s'_{i-\alpha(i)})$ and denote it by $\alpha(X, X')$. Finally, denote the set of all such concatenations by $\ma(X, X')$. 

\begin{lmm}\label{bijec}
Take any two non-null loops $l$ and $l'$. For any $Y\in \mx_0(l,l')$, there exist unique $X\in \mx_0(l)$, $X'\in \mx_0(l')$ and $\alpha \in \ma(X,X')$ such that $Y = \alpha(X, X')$. Conversely, for any $X\in \mx_0(l)$, $X'\in \mx_0(l')$ and $\alpha \in \ma(X,X')$, $\alpha(X,X')\in \mx_0(l,l')$. 
\end{lmm}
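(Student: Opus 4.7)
\textbf{Proof plan for Lemma \ref{bijec}.} The essential observation is that trajectories in $\mx_0(l,l')$ admit no merger and no inaction step, so once we start from $(l,l')$ the operations along the trajectory never mix or duplicate loops across the two ``sides''. Every transition is a splitting, deformation, or expansion, and each such operation acts on exactly one loop component of the current loop sequence. Therefore, at every time $i$ the loop sequence $r_i$ in $Y=(r_0,r_1,\ldots,r_{n+m})$ admits a canonical partition into the loops descended from $l$ and the loops descended from $l'$: one keeps track of this partition inductively, assigning each newly created loop (from a splitting or expansion) to the side of its parent. This is the structural fact that powers the decomposition.

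First I would make the forward assignment $Y\mapsto (X,X',\alpha)$ precise. At step $i\to i+1$, exactly one side is acted on. Define $\alpha(i+1)=\alpha(i)+1$ in the first case and $\alpha(i+1)=\alpha(i)$ in the second; set $\alpha(0)=0$. Reading off, for each $i$, the $l$-descendant part of $r_i$ produces a sequence $s_0,s_1,\ldots,s_n$ in which consecutive terms differ by a single splitting/deformation/expansion (or repeat, but since each step is an action on one side, consecutive $s$'s will be distinct whenever the first side is chosen), and similarly for $s'_0,\ldots,s'_m$. Since $Y$ is vanishing and the two sides evolve independently, both side-sequences end at $\emptyset$; thus $X=(s_0,\ldots,s_n)\in\mx_0(l)$ and $X'=(s'_0,\ldots,s'_m)\in\mx_0(l')$ (neither contains a merger or inaction, and both are vanishing). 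By construction the $i$th entry of $Y$ is exactly $(s_{\alpha(i)},s'_{i-\alpha(i)})$, so $Y=\alpha(X,X')$. The constraints $\alpha(0)=0$, $\alpha(n+m)=n$, and the unit-increment property force $\alpha\in\ma(n,m)$. Uniqueness is immediate: any admissible decomposition must assign each step of $Y$ to the side that was actually acted on, which is intrinsic to the sequence $Y$.

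For the converse, given $X=(s_0,\ldots,s_n)\in\mx_0(l)$, $X'=(s'_0,\ldots,s'_m)\in\mx_0(l')$, and $\alpha\in\ma(X,X')\subseteq\ma(n,m)$, let $r_i:=(s_{\alpha(i)},s'_{i-\alpha(i)})$ and $Y=(r_0,\ldots,r_{n+m})$. Then $r_0=(l,l')$ and $r_{n+m}=(s_n,s'_m)=(\emptyset,\emptyset)$, so $Y$ starts at $(l,l')$ and vanishes. At step $i\to i+1$ exactly one of the coordinates advances by one (because $\alpha$ is nondecreasing with unit increments), and the transition in that coordinate is a valid splitting, deformation, or expansion inherited from $X$ or $X'$. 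Since these operations act on a single loop component in $r_i$ and never require interaction between the two sides, the transition $r_i\to r_{i+1}$ is a legitimate splitting/deformation/expansion of $r_i$. In particular $Y\in\mx_0(l,l')$.

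The main technical point is to justify that the ``descendant partition'' of each $r_i$ is well-defined and stable under the three allowed operations. The only subtle case is a splitting, which turns one loop into two; one must check that the new pair stays on the same side as the original loop, and that this assignment is intrinsic (not a choice). This is straightforward once one notes that splittings and expansions produce offspring that are combinatorially traceable to a single parent loop, and that neither deformation nor expansion mixes loops from different sides. Everything else is bookkeeping with $\alpha$, and the bijection then follows by pairing the forward and inverse constructions.
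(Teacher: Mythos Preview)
Your proposal is correct and follows the natural decomposition argument. The paper itself does not prove this lemma; it simply cites \cite{chatterjee15} (where the lemma was proved for $SO(N)$, with only splittings and deformations) and remarks that the same proof goes through once expansions are allowed, since expansion, like splitting and deformation, acts on a single loop component and hence respects the two-sided descendant partition you describe.
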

Although in \cite{chatterjee15} the second lemma was proven for $SO(N)$ lattice gauge theory, where trajectories in $\mx_0(s)$ contain only splitting and deformation, the proof is still valid in $SU(N)$ setting, where trajectories may also contain expansion operation.
\begin{proof}[Proof of Corollary \ref{factor}]
For any trajectory $X$, let $\delta^+(X)$ and $\delta^-(X)$ be the number of positive and negative deformations of $X$, let $\xi^+(X)$ and $\xi^-(X)$ be the number of positive and negative expansions of $X$, and let $\chi^+(X)$ and $\chi^-(X)$ be the number of positive and negative splittings of $X$. Then, for any vanishing trajectory $X=(s_0, s_1, \ldots, s_n)$
\begin{align*}
w_{\beta}(X)=\frac{C(X)}{|s_0||s_1|\cdots |s_n|}\, .
\end{align*}
where 
$$C(X):=\left(-1\right)^{\chi^+(X)}\left(-\frac{\beta}{2}\right)^{\delta^+(X)}\left(\frac{\beta}{2}\right)^{\delta^-(X)}\left(-\frac{\beta}{2}\right)^{\xi^+(X)}\left(\frac{\beta}{2}\right)^{\xi^-(X)}\, .$$
Now, let $l$ be a loop and consider a trajectory $Y \in \mx_0(l,l)$. By Lemma \ref{bijec} we can write $Y=\alpha(X,X')$ with $X, X' \in \mx_0(l)$ and $\alpha \in \ma(X,X')$ in a unique way. Note that by definition 
$$\delta^+(Y)=\delta^+(X)+\delta^+(X')\, .$$ 
The similar equality also holds for $\delta^-(X)$, $\chi^+(X)$, $\chi^-(X)$, $\xi^+(X)$ and $\xi^-(X)$. Therefore these quantities do not depend on the map $\alpha$, and
$$C(Y)=C(X)C(X')\, .$$
Thus,
\begin{align}
w_{\beta}(Y)=C(X)C(X')\prod_{i=0}^{n+m} \frac{1}{|s_{\alpha(i)}+s'_{i-\alpha(i)}|}\, . \label{i0}
\end{align}
Also note that by Lemma \ref{general}
\begin{align}
\sum_{\alpha \in \ma(X, X')}\prod_{i=0}^{n+m} \frac{1}{|s_{\alpha(i)}+s'_{i-\alpha(i)}|}&=\frac{1}{|s_{0}||s_1|\cdots |s_n||s'_{0}||s'_{1}|\cdots |s'_m|} \label{i1}
\end{align}
By combining equations $\eqref{i0}$ and \eqref{i1} we get
\begin{align*}
\sum_{Y\in \mx_0(l,l)}w_\beta(Y)&=\biggr(\sum_{X \in \mx_0(l)}w_{\beta}(X)\biggr)^2\, .
\end{align*}
So, by Theorem \ref{maintheorem}
\begin{align*}
\lim_{N\to \infty}\frac{\smallavg{W_{l}^2}}{N^2}&=\biggl(\lim_{N\to \infty}\frac{\smallavg{W_l}}{N}\biggr)^2\,.
\end{align*}
Since $|W_{l}|\leq N$ for all $l$,
\begin{align*}
&\left|\lim_{N\to \infty}\frac{\smallavg{W_{l_1}W_{l_2} \cdots W_{l_n}}}{N^n}-\lim_{N\to \infty}\frac{\smallavg{W_{l_1}}}{N}\lim_{N\to \infty}\frac{\smallavg{W_{l_2} \cdots W_{l_n}}}{N^{n-1}}\right|=\lim_{N\to \infty}\frac{|\smallavg{(W_{l_1}-\smallavg{W_{l_1}}) W_{l_2} \cdots W_{l_n}}|}{N^n}\\
&\qquad \leq \lim_{N\to \infty}\frac{\smallavg{|W_{l_1}-\smallavg{W_{l_1}}|}}{N}\leq \lim_{N\to \infty}\frac{\smallavg{(W_{l_1}-\smallavg{W_{l_1}})^2}^{1/2}}{N}=\biggl(\lim_{N\to \infty}\frac{\smallavg{W_{l_1}^2}}{N^2}-\lim_{N\to \infty}\frac{\smallavg{W_{l_1}}^2}{N^2}\biggr)^{1/2}=0\, .
\end{align*}
We finish the proof by applying induction on $n$.
\end{proof}
\section{Proof of Corollary \ref{corresp}}
If $s'$ is an expansion of $s$ by a plaquette $p$, then by Corollary \ref{factor} 
$$f_0(s')=f_0(s)f_0(p)\, .$$
Therefore, for any fixed edge $e$
\begin{align*}
\sum_{\text{expand}\,s} f_0&=\sum_{x\in A_1}\sum_{p\in \cp(e)}f_0(s)f_0(p)+\sum_{x\in B_1}\sum_{p\in \cp(e^{-1})}f_0(s)f_0(p)\\
&\qquad -\sum_{x\in B_1}\sum_{p\in \cp(e)}f_0(s)f_0(p)-\sum_{x\in A_1}\sum_{p\in \cp(e^{-1})}f_0(s)f_0(p)\\
&=t_1f_0(s)\biggl(\sum_{p\in \cp(e)}f_0(p)-\sum_{p\in \cp(e^{-1})}f_0(p)\biggr)=0\, .
\end{align*}
So, equation \eqref{f0mastereq} for the inverse coupling strength $2\beta$ can be written as
\begin{align}
mf_0(s)&=\sum_{\text{split }s}f_0+\beta \sum_{\text{deform }s}f_0\, . \label{f0simpleeq}
\end{align}
This is the functional equation, presented in \cite{chatterjee15}, for the leading term of the asymptotic $1/N$ expansion of $SO(N)$ Wilson loop expectation at coupling strength $\beta$. Furthermore, it was proved in \cite{chatterjee15} that the equation \eqref{f0simpleeq} has a unique solution for all sufficiently small $|\beta|$, depending only on dimension $d$. This finishes the proof of the corollary.
\section*{Acknowledgments}
I thank Sourav Chatterjee for his helpful comments and constant encouragement.

\bibliographystyle{amsplain}

\end{document}